\documentclass[letterpaper,reqno,10pt,twoside]{amsart}
\usepackage{amsmath,amsthm,amsfonts,amssymb,euscript,mathrsfs,graphics,color,latexsym,marginnote}
\usepackage{stmaryrd}
\usepackage[dvips]{graphicx}
\usepackage[left=1 in, right=1 in,top=1 in, bottom=1 in]{geometry}
\usepackage{hyperref}
\usepackage[toc,page]{appendix}
\usepackage{relsize}
\usepackage[shortlabels]{enumitem}
\usepackage[all]{xy}
\usepackage{float}

\usepackage{xargs}
\usepackage[dvipsnames]{xcolor}

\usepackage{hyperref}
\hypersetup{colorlinks=true, pdfstartview=FitV,linkcolor=blue!70!black,citecolor=red!70!black, urlcolor=green!60!black}
\definecolor{labelkey}{rgb}{0.6,0,0}

\usepackage[colorinlistoftodos,prependcaption,textsize=small]{todonotes}
\newcommandx{\change}[2][1=]{\todo[#1]{#2}}
\newcommandx{\unsure}[2][1=]{\todo[linecolor=red,backgroundcolor=red!25,bordercolor=red,#1]{#2}}
\newcommandx{\rmk}[2][1=]{\todo[linecolor=blue,backgroundcolor=blue!25,bordercolor=blue,#1]{#2}}
\newcommandx{\info}[2][1=]{\todo[linecolor=OliveGreen,backgroundcolor=OliveGreen!25,bordercolor=OliveGreen,#1]{#2}}
\newcommandx{\improvement}[2][1=]{\todo[linecolor=Plum,backgroundcolor=Plum!25,bordercolor=Plum,#1]{#2}}
\newcommandx{\thiswillnotshow}[2][1=]{\todo[disable,#1]{#2}}

\makeatletter
\renewcommand \theequation {%
\ifnum \c@section>\z@ \@arabic\c@section.%
\fi\@arabic\c@equation} \@addtoreset{equation}{section}
\@namedef{subjclassname@2020}{2020 Mathematics Subject Classification}
\makeatother

\newtheorem{theorem}{Theorem}[section]

\newtheorem{lemma}[theorem]{Lemma}
\newtheorem{proposition}[theorem]{Proposition}

\theoremstyle{definition}
\newtheorem{definition}{Definition}[section]
\theoremstyle{remark}

\def\XXint#1#2#3{{\setbox0=\hbox{$#1{#2#3}{\int}$ }
\vcenter{\hbox{$#2#3$ }}\kern-.6\wd0}}

\def\dt{\partial_t}
\def\p{\partial}

\def\Om{\Omega}
\def\om{\omega}
\def\dive{\mathop{\rm div}\nolimits}

\def\f{\frac}

\begin{document}
\title{Global Well-Posedness of sessile drop problem: 2D Navier-Stokes Flow}

\author[X. Yang]{Xiaoding Yang}
\address[X. Yang]{
\newline\indent Division of Applied Mathematics, Brown University, 170 Hope st., Providence, RI 02912, USA}
\email{xiaoding\_yang@brown.edu}
\thanks{X. Yang was supported by NSF Grant DMS-2106650.}
\begin{abstract}
We prove the global well-posedness of small perturbations of two-dimensional
sessile droplet equilibria for the incompressible Navier--Stokes equations with
surface tension, Navier-slip boundary conditions, and a dynamic contact-point
law. The main difficulty is the construction of solutions in the presence of
the horizontal translational degeneracy of the equilibrium manifold. To remove
this degeneracy, we work in a moving polar coordinate system determined by an
orthogonality condition. We then
establish local well-posedness through a Galerkin construction of pressureless
weak solutions, recovery of the pressure, higher-order estimates, and a
contraction argument. Combining this local theory with the global
energy--dissipation estimates obtained in our previous work yields a unique
global solution and the corresponding exponential decay estimate.
\end{abstract}

\maketitle

\tableofcontents

%%%%%%%%%%%%%%%%%%%%%%%%%%%%%%%%%%%%%%%%%%%%%%
\section{Introduction}
%%%%%%%%%%%%%%%%%%%%%%%%%%%%%%%%%%%%%%%%%%%%%%

%%%%%%%%%%%%%%%%%%%%%%%%%%%%%%%%%%%%%%%%%%%%%%
\subsection{Formulation and Origins of the Problem}
%%%%%%%%%%%%%%%%%%%%%%%%%%%%%%%%%%%%%%%%%%%%%%

\begin{figure}
\centering
\scalebox{1.0}{\begin{tikzpicture}
  \draw  (-4, 0)--(3, 0);
  \filldraw [blue!5, draw=black] (1, 0) arc(330: 570: 2);
  \node at(-0.6,1) {$\Omega$};
\end{tikzpicture}}
\caption{A droplet.}
{\label{figure1}}
\end{figure}

Consider a two-dimensional droplet of viscous incompressible fluid evolving above a one-dimensional
flat surface. The interface between this droplet and the vapor may not be a graph of $x$ in Cartesian coordinates. See Figure \ref{figure1}. Therefore, we introduce polar coordinates and denote the spatial variable in polar coordinates via $z = (\theta, r)\in \mathbb{R}^{2}$. We then assume that at time $t \geq 0$ the fluid
occupies the moving droplet domain
\begin{align}
\Omega(t) := \{z \in  \mathbb{R}^{2}
: 0 < r < \rho(t, \theta)\},
\end{align}
where the free surface of the droplet is given by the unknown function $\rho(t,\cdot):[0,\pi]\rightarrow \mathbb{R}^{+}$. We define the free surface at the top of the droplet as:
\begin{align}
    \Sigma(t):=\{(\theta,r):\theta\in [0,\pi], r=\rho(t,\theta)\},
\end{align}
\noindent and at the bottom as
\begin{align}
    \Sigma_{s}:=\{(\theta,r):\theta=0~\operatorname{or}~\theta=\pi\}.
\end{align}
See Figure \ref{figure1} for an example of such a fluid droplet domain. The state of the fluid at each time is determined by its velocity and pressure $(u(t,\cdot), P(t,\cdot)):\Omega(t)\rightarrow \mathbb{R}^{2}\times\mathbb{R}$, for which the associated viscous stress tensor is given by $S(P,u):\Omega(t)\rightarrow \mathbb{R}^{2\times 2}$ via
\begin{align}
    S(P,u):=PI-\mu\mathbb{D}u,
\end{align}
\noindent where I is the $2\times 2$ identity, $\mu>0$ is the fluid viscosity, and the symmetrized gradient is $\mathbb{D}u=\nabla u+(\nabla u)^{T}$. We note that a simple computation reveals that if $\dive_{z} u=0$, then $\dive_{z}S(P,u)=-\mu\Delta_{z}u+\nabla_{z}P$.

For each $t>0$, the fluid is described by its velocity, pressure $(u,P):\Om(t)\rightarrow\mathbb{R}^2\times\mathbb{R}$, and surface $\rho:(0,\pi)\rightarrow \mathbb{R}$ which are governed by the incompressible Navier-Stokes equations:

\begin{equation}{\label{equ:fix_1}}
    \begin{cases}
        \partial_{t}u+u\cdot \nabla u+\dive S(P,u)=0~~~&\operatorname{in}~\Omega(t), \\
        \dive u=0&\operatorname{in}~\Omega(t),\\
        S(P,u)\nu=(g\rho\sin\theta-\sigma\mathcal{H}(\rho))\nu~~&\operatorname{on}~\Sigma(t),\\
        \partial_{t}\rho=\frac{1}{\rho}u\cdot \mathcal{N}~~&\operatorname{on}~\Sigma(t),\\
        (S(P,u)\cdot \nu-\beta u)\cdot \tau=0~~&\operatorname{on}~\Sigma_{s},\\
        u\cdot \nu=0~&\operatorname{on}~\Sigma_{s},\\
       {\mathscr{W}}(\p_{t}\rho(\pi,t))=\sigma\frac{\rho'}{(\rho^{2}+\rho'^{2})^{\frac{1}{2}}}(\pi,t)+[\![\gamma]\!],\\
        {\mathscr{W}}(\partial_{t}\rho(0,t))=-(\sigma\frac{\rho'}{(\rho^{2}+\rho'^{2})^{\frac{1}{2}}}(0,t))+[\![\gamma]\!].
    \end{cases}
\end{equation}

In the above system \eqref{equ:fix_1}, $S(P,u)$ denotes the viscous stress tensor, $\mu>0$ is the viscosity coefficient, and
$P=\bar P+gy_2-P_{\mathrm{atm}}$
is the modified pressure obtained from the physical pressure $\bar P$ by incorporating gravity and subtracting the constant atmospheric pressure $P_{\mathrm{atm}}$. Moreover, $\nu$ denotes the outward unit normal, $\tau$ denotes the unit tangent, $\sigma>0$ is the surface tension coefficient, and
$\mathcal{H}(\rho)=\frac{2(\rho')^2-\rho\rho''+\rho^2}{\big(\rho^2+(\rho')^2\big)^{3/2}}$
is twice the mean curvature of the free surface. The constant $\beta>0$ is the Navier-slip friction coefficient on the solid boundary, and the boundary conditions imposed on $\Sigma_s(t)$ are the Navier-slip boundary conditions. We also define
$\mathcal{N}:=-\rho'(\theta)\hat e_{\phi}+\rho(\theta)\hat e_r$,
where $\hat e_{\phi}$ and $\hat e_r$ are the polar unit vectors. The function $\mathscr W:\mathbb R\to\mathbb R$ is the inverse of contact-point velocity response function, which is assumed to be a $C^2$ increasing diffeomorphism satisfying $\mathscr W(0)=0$. Finally, we set
$[\![\gamma]\!]:=\gamma_{sv}-\gamma_{sf}$,
where $\gamma_{sv},\gamma_{sf}\in\mathbb R$ denote the free energies per unit length associated with the solid--vapor and solid--fluid interfaces, respectively. We assume the Young relation
$|[\![\gamma]\!]|<\sigma$,
which is necessary for the equilibrium state considered below; see \cite{Yo}.

Contact points in two dimensions and contact lines in three-dimensional domain have been studied for a long time, and we may refer to \cite[de Gennes]{dG} for an overview. There is a large literature dealing with the free boundary problems of contact points or contact lines, and so we will only mention some works closely related to our background settings.

The analysis for Navier-slip conditions with contact angle can  be referred to \cite[Ren-E]{RE}, \cite[Cox]{C1986} and \cite[Guo-Tice]{GT2020} for a complete overview.
Note that the model \eqref{equ:fix_1} allows both the contact points and contact angles to move over time.
% We now review some results related to the model \eqref{eq:navier-stokes}.
The global a priori estimates for \eqref{equ:fix_1} are established in \cite[Guo-Tice]{GT2020}. When the contact angle is fixed by $\frac\pi2$, the well-posedness of the 2D contact-point problem in the Navier-Stokes flow is studied by \cite[Schweizer]{S01} and the 3D contact-line problem with an additional periodic direction is studied by \cite[Bodea]{Bo06}.

There are also many results for the stationary problem of contact points or lines. The solvability of the stationary Navier-Stokes problem for dynamic contact angles and moving contact lines are studied by \cite[Kr\"oner]{Kr87} and \cite[Socolosky]{Soco93}, respectively. The local well-posedness and stability for the 2D moving contact points in the Stokes flow are studied by \cite[Guo-Tice]{GT18} and \cite[Tice-Zheng]{ZhT17}. The existence results for the Navier-Stokes equations for both static and dynamic contact points and lines in weighted H\"older spaces are proved by \cite[Solonnikov]{Sol95, Sol98}. The 3D contact-line problem for stationary Navier-Stokes equations with fixed angle $\frac{\pi}{2}$ is studied by \cite[Jin]{Jin05}.

The contact angles are also closely related to the droplet problem. The stability problem for droplets governed by the Stokes equations is studied by \cite[Tice-Wu]{TW21}. The well-posedness for droplets governed by Darcy's law is studied by \cite[Kn\"upfer-Masmoudi]{KM15}.

However, to the best of our knowledge, a global well-posedness theory for
two-dimensional Navier--Stokes sessile droplets with moving contact points, 
dynamic contact angles, and the translational degeneracy of the equilibrium
manifold has not been previously established.

\subsection{Technical Overview}

We now turn to the principal mathematical difficulties of the present work and explain the main differences from the vessel problem studied in \cite{YXD2}. Our strategy is to establish local well-posedness through a contraction mapping argument. Combined with the global a priori estimates obtained in \cite{YXD1}, this local theory yields global well-posedness.

The first difficulty arises from the moving-pole formulation. In the moving polar coordinate system, the position of the pole is determined dynamically through an orthogonality condition that removes the translational mode. Consequently, the resulting free-boundary Navier--Stokes system contains an additional unknown function $\mathfrak{n}(t)$ whose evolution is coupled to both the velocity field and the free surface. This feature is absent from the vessel geometry considered in \cite{YXD2} and substantially complicates the construction of solutions.

A second difficulty appears at the linear level. In the nonlinear problem, the pole position can be recovered from the solution through the orthogonality condition. However, once the prescribed coefficients and unknown variables are separated in the linear system used to construct the contraction map, this representation is no longer directly available. To overcome this issue, we introduce a pseudo-linear formulation containing an auxiliary parameter $\mathfrak{m}(t)$ chosen so that the free-surface perturbation remains orthogonal to the translational mode. This construction preserves the coercivity properties established in \cite{YXD1} and yields a linear problem suitable for energy estimates.

A third difficulty concerns the construction of admissible initial data. The introduction of the additional parameters $\mathfrak n(t)$ and $\mathfrak m(t)$ significantly complicates the compatibility conditions. Indeed, one must not only construct initial data for $(u,p,\eta)$ satisfying the compatibility conditions of the free-boundary system, but also determine the initial values of $\mathfrak n$ and $\mathfrak m$ so that the orthogonality condition and the volume constraint are preserved at the initial time. To address this issue, we introduce an auxiliary function $\xi_{5}$ that allows these constraints to be incorporated into the construction of the initial data.

The remainder of this introduction explains how these difficulties are resolved. We first review the functional framework and the coercivity properties inherited from the moving-polar-coordinate construction. We then introduce the pseudo-linear system and derive the evolution equations for the auxiliary parameters. Next, we discuss the construction of admissible initial data and the role of the auxiliary function. After that, we present the Galerkin approximation scheme, the construction of pressureless weak solutions, and the recovery of the pressure. Finally, we derive the higher-order estimates required for the linear theory and explain how these estimates are combined with a contraction mapping argument and the global a priori bounds from \cite{YXD1} to establish Theorem~\ref{thm:main}.

\subsection{Moving Polar Coordinates}
For a sessile drop on a flat surface, the droplet may undergo horizontal motion. Consequently, its free surface can no longer be represented conveniently in the original polar coordinate system. This horizontal translation also creates a geometric degeneracy: sessile equilibria are invariant under horizontal translations, and hence the second variation of the gravity--capillary energy is not coercive on the full perturbation space. To remove this degeneracy, we introduce a moving polar coordinate system. The key advantage of this choice is that the perturbation can be arranged to be orthogonal to the translational mode, which yields the coercivity estimate

\begin{align}\label{equ:pos_0}
    (\xi,\xi)_{1,\Sigma_{0}}\gtrsim \|\xi\|_{H^{1}}^{2}
    \qquad \text{if } \int_{0}^{\pi}\xi \rho_{0}\,d\theta=0
    \quad \text{and} \quad
    \int_{0}^{\pi}\xi \xi_{s}\,d\theta=0.
\end{align}
Here, $(\cdot,\cdot)_{1,\Sigma_{0}}$ denotes the bilinear form.
\begin{align}
    (\rho_1,\rho_2)_{1,\Sigma_{0}}
    =&\,
    g\int_{0}^{\pi}\rho_{0}\rho_1\rho_2\sin\theta \,d\theta
    +\sigma\int_{0}^{\pi}\frac{\rho_{0}\rho_1'\rho_2'}{(\rho_{0}^{2}+\rho_{0}'^{2})^{\frac{3}{2}}}\,d\theta
    -\sigma\int_{0}^{\pi}\frac{\rho_{0}'\rho_1'\rho_2}{(\rho_{0}^{2}+\rho_{0}'^{2})^{\frac{3}{2}}}\,d\theta \notag\\
    &\,
    -\sigma\int_{0}^{\pi}\frac{\rho_{0}'\rho_1\rho_2'}{(\rho_{0}^{2}+\rho_{0}'^{2})^{\frac{3}{2}}}\,d\theta
    +\sigma\int_{0}^{\pi}\frac{\rho_{0}''\rho_{0}-\rho_{0}'^{2}-\rho_{0}^{2}}{(\rho_{0}^{2}+\rho_{0}'^{2})^{\frac{3}{2}}}\rho_1\rho_2\,d\theta.
\end{align}

Let $\mathfrak n(t)$ denote the position of the pole of the moving polar coordinate system at time $t$. As shown in \cite{YXD1}, $\mathfrak n(t)$ satisfies

\begin{align}
    \mathfrak{n}'(t)=& \lambda(t)\int_{0}^{\pi}\frac{1}{\rho}\, u\cdot \mathcal{N}\,\xi_s\,d\theta, \label{equ:ODE}\\
    \lambda(t)=&\frac{1}{\int_{0}^{\pi}\xi_s\xi_{3}(t)\, d\theta}.
\end{align}

We then introduce polar coordinates centered at $(\mathfrak n(t),0)$ and formulate the equations in this moving coordinate system. For simplicity, we continue to denote the free surface profile in the moving coordinates by $\rho(t,\theta)$. We also write the unknowns as perturbations of equilibrium, $u=0+u,~ P=P_0+p,~ \rho=\rho_0+\xi.$
 We have the following equation by transforming equation \eqref{equ:fix_1} to the equilibrium domain $\Omega$ constructed in \cite{YXD}
 \begin{equation}\label{equ:fix_2}
     \begin{cases}
        \partial_{t}u-(\cos\theta W\partial_{t}\bar{\xi},\sin\theta W\partial_{t}\bar{\xi})\mathcal{A}(\partial_{x}u,\partial_{y}u)^{T}-\mathfrak{n}'(t)\partial_{x}u+u\cdot \nabla_{\mathcal{A}}u+\dive_{\mathcal{A}}S_{\mathcal{A}}(p,u)=0 &\text{in }\Omega, \\
        \dive_{\mathcal{A}}u=0 &\text{in }\Omega,\\
        S_{\mathcal{A}}(p,u)\mathcal{N}=
        \Bigl(g\xi\sin\theta+\sigma\bigl(\mathcal{P}_1(\rho_0,\rho_0')\xi+\mathcal{P}_{2}(\rho_{0},\rho_{0}')\xi'
        -\frac{1}{\rho_{0}}\partial_{\theta}\bigl(\frac{\rho_{0}^{2}\xi'}{(\rho_{0}^{2}+\rho_{0}'^{2})^{\frac{3}{2}}}
        -\frac{\rho_{0}'\rho_{0}\xi}{(\rho_{0}^{2}+\rho_{0}'^{2})^{\frac{3}{2}}}
        +\mathcal{R}_{1}\bigr)+\mathcal{R}_{2}\bigr)\Bigr)\mathcal{N}
        &\text{on }\Sigma,\\
        \partial_{t}\xi+\mathfrak{n}^{\prime}(t)\xi_s=\frac{1}{\rho_{0}}u\cdot \mathcal{N}+G &\text{on }\Sigma,\\
        (S_{\mathcal{A}}(p,u)\cdot \nu-\beta u)\cdot \tau=0 &\text{on }\Sigma_{s},\\
        u\cdot \nu=0 &\text{on }\Sigma_{s},\\
\mathscr{W}(\partial_{t}\xi(\frac{\pi}{2}\mp \frac{\pi}{2},t)\pm\mathfrak{n}^{\prime}(t))
        =\pm\Bigl(\sigma\frac{\rho_{0}^{2}\xi'}{(\rho_{0}^{2}+\rho_{0}'^{2})^{\frac{3}{2}}}(\frac{\pi}{2}\pm\frac{\pi}{2},t)
        -\sigma\frac{\rho_0'\rho_{0}\xi}{(\rho_0^{2}+\rho_{0}'^{2})^{\frac{3}{2}}}(\frac{\pi}{2}\pm\frac{\pi}{2},t)\Bigr),
    \end{cases}
\end{equation}

where $G,\mathcal{P}_{1},\mathcal{P}_{2}$ have the following representations

\begin{align}
    G=&\Bigl(\frac{1}{\rho}-\frac{1}{\rho_{0}}\Bigr)u\cdot \mathcal{N}
    +\mathfrak{n}'(t)\sin\theta \Bigl(\frac{\rho'}{\rho}-\frac{\rho_{0}'}{\rho_{0}}\Bigr),\\
     \mathcal{P}_{1}(\rho_0,\rho_0')
    =&\frac{\rho_{0}''-\rho_{0}-\frac{\rho_{0}'^{2}}{\rho_{0}}}{(\rho_{0}^{2}+\rho_{0}'^{2})^{\frac{3}{2}}}, \qquad\mathcal{P}_{2}(\rho_{0},\rho_{0}')
    =-\frac{\rho_{0}'}{(\rho_{0}^{2}+\rho_{0}'^{2})^{\frac{3}{2}}}.\label{equ:P}
\end{align}
 Moreover, $\mathcal{A}(t)=((\nabla \Phi(t))^{-1})^{T}$, where $\Phi(t)$ is a time-dependent diffeomorphism mapping the moving domain $\Omega(t)$ onto the fixed equilibrium domain $\Omega_{0}$. We refer the reader to the introduction of \cite{YXD1} for the detailed construction of this diffeomorphism. The most important property of this transformation for the present paper is that $|\partial_{t}^{j}\nabla \Phi|\lesssim |\nabla \partial_{t}^{j}\xi|$
 for $j=0,1,2$.

%%%%%%%%%%%%%%%%%%%%%%%%%%%%%%%%%%%%%%%%%%%%%%
\subsection{Main Theorem}
%%%%%%%%%%%%%%%%%%%%%%%%%%%%%%%%%%%%%%%%%%%%%%

% In order to state our result precisely, we need to explain our notation for regularity parameters, Sobolev spaces and norms.
We use the same notations for equilibrium contact angle $\om_{eq}$ formed between the surface $\zeta_0$ and the wall $\Sigma_s$, and the parameter $\varepsilon_{max} := \min\{1, -1+ \pi/\om_{eq}\}$ as in \cite{GT2020},  such that we can  find three parameters $\alpha$, $\varepsilon_-$ and $\varepsilon_+$ satisfying
\begin{equation}\label{parameters}
  0<\alpha<\varepsilon_-<\varepsilon_+<\varepsilon_{max},\quad \alpha<\min\left\{\frac{\varepsilon_-}2, \frac{\varepsilon_+-\varepsilon_-}2\right\}, \quad \varepsilon_+\leq \frac{\varepsilon_-+1}2,
\end{equation}
\begin{equation}\label{eq:parameter2}
 1< q_-:=\frac{2}{2-\varepsilon_-} < q_+:=\frac{2}{2-\varepsilon_+} <q_{max}:=\frac{2}{2-\varepsilon_{max}} < 2.
\end{equation}

% We take $W^{k,q}(\Om)$ and $W^{s,q}(\Sigma)$ for $k$, $s\ge0$ to be the usual Sobolev spaces. We write norms $\|\p_t^ju\|_k$ and $\|\p_t^jp\|_k$ for the space $H^k(\Om)$, and $\|\p_t^j\eta\|_s$ for space $H^s(\Sigma)$.

We now define the energy and dissipation by 
\begin{equation}\label{energy}
\begin{aligned}
\mathcal{E}(t): =&\sum_{k=0}^{2}\vert \vert \partial_{t}^{k}u\vert \vert^{2}_{L^{2}}+\vert \vert \partial_{t}^{k}\xi\vert \vert_{H^{1}}^{2}+\vert \vert u\vert \vert_{W^{2,q_{+}}}^{2}+\vert \vert \partial_{t}u\vert \vert^{2}_{H^{1+\frac{\varepsilon_{-}}{2}}}+\vert \vert p\vert \vert^{2}_{W^{1,q_{+}}}+\vert \vert \partial_{t}p\vert \vert_{L^{2}}^{2}\\
        &+|\mathfrak{n}^{\prime}(t)|^{2}+|\mathfrak{n}^{\prime\prime}(t)|^{2}+\vert \vert \xi\vert \vert^{2}_{W^{3-\frac{1}{q_{+}}}}+\vert \vert \partial_{t}\xi\vert \vert_{H^{\frac{3}{2}+\frac{\varepsilon_{-}-\alpha}{2}}}^{2}+\vert \vert \partial_{t}^{2}\xi\vert \vert^{2}_{H^{1}},
\end{aligned}
\end{equation}
\begin{align}{\label{dissipation}}
\begin{aligned}
        \mathcal{D}=&\vert \vert u\vert \vert^{2}_{W^{2,q_{+}}}+\vert \vert \partial_{t}u\vert \vert^{2}_{W^{2,q_{-}}}+\sum_{k=0}^{2}[\partial_{t}^{k+1}\xi]^{2}_{\theta}+\sum_{k=0}^{2}[\partial_{t}^{k}\partial_{\theta}\xi]_{\theta}^{2}+\vert \vert p\vert \vert^{2}_{W^{1,q_{+}}}+\vert \vert \partial_{t}p\vert \vert^{2}_{W^{1,q_{-}}}+\sum_{k=0}^{2}|\mathfrak{n}^{(k+1)}|^{2}\\
        &+\sum_{k=0}^{2}\vert \vert \partial_{t}^{k}\xi\vert \vert^{2}_{H^{\frac{3}{2}-\alpha}}+\vert \vert \xi\vert \vert_{W^{3-\frac{1}{q_{+}},q_{+}}}^{2}+\vert \vert \partial_{t}\xi\vert \vert^{2}_{W^{3-\frac{1}{q_{-}},q_{-}}}+\vert \vert \partial_{t}^{3}\xi\vert \vert^{2}_{H^{\frac{1}{2}-\alpha}}+\sum_{k=0}^{2}(\vert \vert \partial_{t}^{k}u\vert \vert^{2}_{H^{1}}+\vert \vert \partial_{t}^{k}u\vert\vert^{2}_{L^{2}}+[\p_{t}^{k}u\cdot \mathcal{N}]_{\theta}^{2}).
    \end{aligned}
    \end{align}
% where $[f]_\pi^2:=\kappa \big(f(\pi)^2+f(0)^2\big)$.
% We usually write $\mathcal{E}=\mathcal{E}(u, p, \eta)$ and $\mathcal{D}=\mathcal{D}(u, p, \eta)$.

% With the above preparations, we now state our main result for the solutions to \eqref{eq:geometric} global in time.
\begin{theorem}\label{thm:main}
Assume that the equilibrium contact angle satisfies $\omega_{eq}\in(0,\pi)$ and that $\varepsilon_{\max}\in(0,1)$. Suppose that the initial data
\[
\bigl(u_0, p_0, \xi_0, \partial_tu(0), \partial_tp(0), \partial_t\xi(0), \partial_t^2u(0), \partial_t^2\xi(0)\bigr)
\]
satisfy the following compatibility conditions: for $j=0,1,2$,
\begin{equation}\label{compat_C2}
\left\{
\begin{aligned}
&\dive_{\mathcal{A}_0}D_t^ju(0)=0
\quad &&\text{in } \Omega,\\
&D_t^ju(0)\cdot\nu=0
\quad &&\text{on } \Sigma_s,\\
&D_t^ju(0)\cdot\mathcal{N}(0)
=
\partial_t^{j+1}\bigl(\rho(0)\xi(0)-\mathfrak{n}'(0)\xi_{3}\bigr)
\quad &&\text{on } \Sigma,
\end{aligned}
\right.
\end{equation}
and the zero-average conditions: for $k=0,1,2$,
\begin{equation}\label{cond:zero}
\int_{0}^{\pi}\partial_t^k\bigl(\rho(0)\xi(0)\bigr)\,d\theta=0.
\end{equation}
(Here all temporal derivatives appearing in the initial compatibility conditions are understood as formal derivatives.)
Then there exists a sufficiently small constant $\delta_{0}>0$ such that if
\[
\sqrt{\mathcal{E}(0)}
+
\|\partial_t^2\xi(0)\|_{W^{2-\frac1{q_+},\,q_+}}
\le \delta_{0},
\]
then there exists a unique global solution $(u,p,\xi)$ to
\eqref{equ:fix_2} on $[0,\infty)$ attaining the prescribed initial data and satisfying

\begin{equation}
\sup_{t\ge 0} e^{\mu t}\mathcal{E}(t)
+
\int_0^\infty \mathcal{D}(t)\,\mathrm{d}t
\le C\mathcal{E}(0)
\end{equation}
for some universal constants $C>0$ and $\mu>0$.
\end{theorem}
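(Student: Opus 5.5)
The proof combines a local well-posedness theorem for \eqref{equ:fix_2} with the global a priori energy--dissipation estimates of \cite{YXD1}, through a continuation argument.

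\textbf{Step 1: linear problem.} Fix coefficients $(\mathcal{A},\mathcal{N},\xi)$ coming from a given free surface $\eta$ of small energy. For these coefficients, set up the pseudo-linear system in which the pole velocity $\mathfrak{n}'$ is replaced by an auxiliary parameter $\mathfrak{m}(t)$. This parameter is determined by requiring $\int_0^\pi \xi\,\xi_s\,d\theta=0$ together with the volume constraint $\int_0^\pi\rho_0\xi\,d\theta=0$, so that the coercivity \eqref{equ:pos_0} is available. Differentiating the orthogonality condition in time and using the kinematic equation gives a scalar ODE for $\mathfrak{m}$ of the same form as \eqref{equ:ODE}; its coefficient $\lambda$ is well defined for $\xi$ close to equilibrium.

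Construct pressureless weak solutions by a Galerkin scheme in the space of $\mathcal{A}$-divergence-free fields satisfying $u\cdot\nu=0$ on $\Sigma_s$. The uniform bounds come from the basic energy identity, whose boundary terms involve $(\xi,\xi)_{1,\Sigma_0}$, the Navier-slip friction $\beta$, and the contact-point dissipation $[\partial_t\xi]_\theta^2$. Coercivity follows from \eqref{equ:pos_0}. Recover the pressure by de Rham / Bogovskii arguments adapted to $\mathcal{A}$. Then differentiate in time twice (the $D_t$ formulation) and repeat the argument; this yields the $\partial_t^j u$ and $\partial_t^j\xi$ bounds. Elliptic regularity for the Stokes problem with contact angle in the weighted $W^{2,q_\pm}$ scale of \cite{GT2020} then gives $u\in W^{2,q_+}$, $\partial_t u\in W^{2,q_-}$, and the corresponding $\xi$ norms. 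Together these control $\mathcal{E}$ and $\int\mathcal{D}$ on $[0,T]$ in terms of the data and forcing.

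\textbf{Step 2: initial data and contraction.} Build initial data $(\partial_t^j u(0),\partial_t^j p(0),\partial_t^j\xi(0))$ and $\mathfrak{n}^{(j)}(0),\mathfrak{m}^{(j)}(0)$ satisfying \eqref{compat_C2} and \eqref{cond:zero}. For this, introduce the auxiliary function $\xi_5$: the free parameters $\mathfrak{n}'(0)$ and $\mathfrak{n}''(0)$ are fixed by solving a small $2\times2$ linear system that enforces orthogonality to $\xi_s$ and zero average at each order. Solvability follows from $\int\xi_s\xi_3\neq0$ near equilibrium. The extra norm $\|\partial_t^2\xi(0)\|_{W^{2-1/q_+,q_+}}$ is exactly what is needed to solve the elliptic problem for $\partial_t^2 u(0)$ and $\partial_t p(0)$.

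Next define the map $\eta\mapsto\xi$ (together with $\mathfrak{n}$) on a ball of the space defined by $\sup\mathcal{E}+\int_0^T\mathcal{D}\le \delta$. Show it maps the ball into itself and is a contraction in a weaker norm, for $T$ small or data small. Its fixed point satisfies $\mathfrak{m}=\mathfrak{n}'$ and is therefore a local solution of \eqref{equ:fix_2}, and the contraction gives uniqueness.

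\textbf{Step 3: global existence and decay.} Let $T^*$ be the maximal time on which $\sup\mathcal{E}\le\delta$. On $[0,T^*)$ the a priori estimates of \cite{YXD1} give $\frac{d}{dt}\mathcal{E}+c\mathcal{D}\le0$ (up to equivalent energies) with $\mathcal{E}\lesssim\mathcal{D}$. Hence $\mathcal{E}(t)\le Ce^{-\mu t}\mathcal{E}(0)\le C\delta_0^2<\delta/2$. Restarting the local theory at times near $T^*$ then contradicts maximality, so $T^*=\infty$ and the stated bound holds.

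\textbf{Main obstacle.} I expect the hardest part to be the higher-order time-differentiated linear estimates. Commutators involving $\partial_t^j\mathcal{A}$ and $\mathfrak{m}^{(j)}$ must be bounded using only $|\partial_t^j\nabla\Phi|\lesssim|\nabla\partial_t^j\xi|$, at the low $W^{2,q_-}$ regularity allowed near the contact points. Closing the ODE for $\mathfrak{m}$ requires controlling $\mathfrak{m}''$ and $\mathfrak{m}'''$ by $[\partial_t^k u\cdot\mathcal{N}]_\theta$ and the trace norms. I would first estimate these terms by trace interpolation between $H^1$ and $W^{2,q_-}$, absorbing with the small factor coming from $\delta$.
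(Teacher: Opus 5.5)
Your architecture is the paper's: a pseudo-linear problem with a scalar $\mathfrak m$ enforcing orthogonality to $\xi_s$, Galerkin plus pressure recovery plus the contact-point elliptic theory, $\xi_5$-corrected initial data, a contraction in a weak metric, and continuation through the a priori estimates of \cite{YXD1}. The gap is in Step~1: the linear estimates do not close the way you describe.

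\textbf{The obstruction.} In the time-differentiated pseudo-linear system, the evolution of $\p_t\xi_d$ is a transport equation. By \eqref{eq:kin_m}, $\p_t(\p_t\xi_d)$ contains $-\frac{1}{\rho}u_\theta\p_\theta\p_t\xi_d+\mathfrak n'\frac{\sin\theta}{\rho}\p_\theta\p_t\xi_d$, acting on the unknown.
\begin{itemize}
\item These terms cannot be moved into the forcing. The linear estimates need $\p_tF^6\in L^2_tW^{1,1/(1-\alpha)}$, and $\p_t(u_\theta\p_\theta\p_t\eta)$ contains $u_\theta\p_\theta\p_t^2\eta$. That would require $\p_t^2\eta\in L^2W^{2,1/(1-\alpha)}$, which is more than the $L^2H^{3/2-\alpha}$ supplied by $\mathcal D$.
\item Kept on the unknown, pairing them with $\p_t\xi_d$ in $(\cdot,\cdot)_{1,\Sigma}$ and integrating by parts leaves $\int_0^\pi\p_\theta(u_\theta/\rho)\,|\p_\theta\p_t\xi_d|^2$. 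Through the $\mathcal R_{1a}$-weight it also leaves $\int_0^\pi|\p_\theta^2\eta|\,|\p_\theta\p_t\xi_d|^2$.
\item Since $q_+<2$, neither $\p_\theta u|_\Sigma$ (for $u\in W^{2,q_+}$) nor $\p_\theta^2\eta$ (for $\eta\in W^{3-1/q_+,q_+}$) need be bounded. So these terms close only if $\p_t\xi_d$ is already controlled in $H^{3/2-\alpha}$, as in \eqref{eq:n4}. Galerkin approximants have no such control.
\end{itemize}
Your remedy, interpolating between $H^1$ and $W^{2,q_-}$ and absorbing with $\delta$, fails for the same reason. $W^{2,q_-}$ bounds on the unknowns come from elliptic regularity only after the Galerkin limit has been taken.

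\textbf{What is missing is the paper's regularization scheme.}
\begin{enumerate}
\item Replace $(u,\eta)$ by smooth $(u^k,\eta^k)$ and $\eta^n$.
\item Replace the undifferentiated unknowns $(v,\xi)$ appearing in $\p_t(Rv)$, $\dive_{\mathcal A}\nabla_{\mathcal A}(Rv)$ and the $\int_0^t$ terms by smoothed prescribed fields $(v_l^k,\xi_l^k)$.
\item Solve the kinematic equation along characteristics; these fix $\theta=0,\pi$ because $u_\theta=\sin\theta=0$ there.
\item Close Galerkin with $k$-dependent constants, then upgrade to strong solutions.
\item Bootstrap the $L^2H^{3/2-\alpha}$ bound on $\p_t\xi_d$ through the $D^s_j$ functional calculus of the gravity--capillary operator, with $\tilde\alpha_{i+1}=\tilde\alpha_i/2$.
\item Redo the estimates uniformly in $k$ and let $k\to\infty$.
\item Close an inner contraction $(v_l,\xi_l)\mapsto(v,\xi)$ for fixed $n$.
\item Let $n\to\infty$ using the $q_+$ estimate of Theorem~\ref{thm:pressure_+}.
\end{enumerate}

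\textbf{Smaller corrections.}
\begin{itemize}
\item One scalar cannot be fixed by both orthogonality and the volume constraint. In the paper, $\mathfrak m$ is fixed by $\int_0^\pi\xi_s\,\p_t\xi_d=0$ alone; it plays the role of $\mathfrak n''$ in the differentiated system. The $\rho_0$-weighted mean of $\p_t\xi_d$ is handled separately, by subtracting $a_0(t)\rho_0$ before applying the functional calculus.
\item In your initial-data step, $\mathfrak n'(0)$ and $\mathfrak n''(0)$ alone cannot satisfy orthogonality and zero average at two orders. Order zero also needs the pole itself. The paper proceeds as follows:
\begin{itemize}
\item It chooses $\mathfrak n(0)$ by least squares over the translates $\rho_{0,c}$, which gives $\int_0^\pi\xi_0\xi_s=0$.
\item It imposes the volume and compatibility conditions by adding multiples of $\xi_5$ to $\xi_0$ and $\p_t\xi_0$. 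This works because $g\xi_5\sin\theta+\sigma\mathcal H(\xi_5)=0$ with homogeneous contact-point data and $\int_0^\pi\rho_0\xi_5\neq0$. At order zero this is a quadratic equation for the coefficient.
\item Only then does it read off $\mathfrak n'(0)$ and $\mathfrak n''(0)$ from \eqref{equ:ODE}; this is where $\int_0^\pi\xi_s\xi_3\neq0$ is used.
\end{itemize}
\item The norm $\|\p_t^2\xi(0)\|_{W^{2-1/q_+,q_+}}$ is the normal-trace datum of the elliptic problem that produces $D_tu(0)$ and $\p_tp(0)$. $D_t^2u(0)$ is prescribed in $H^0$, not solved for.
\end{itemize}
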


%%%%%%%%%%%%%%%%%%%%%%%%%%%%%%%%%%%%%%%%%%%%%%
\subsection{Methodology}\label{section:discussion}
%%%%%%%%%%%%%%%%%%%%%%%%%%%%%%%%%%%%%%%%%%%%%%

The positivity condition \eqref{equ:pos_0} forms the key ingredient  to derive the basic energy-dissipation structure. Our basic strategy for solving this problem follows \cite{YXD2}.

\noindent$\bullet$ \underline{\textsl{Pseudo-linear construction}:}

 We first introduce the following first-order system in quasi-linear form which is equivalent to \eqref{equ:fix_2} after integrating from $0$ to arbitrary time $t$.

\begin{equation}\label{eq:first}
    \begin{cases}
         \partial_{t}D_{t}u+\operatorname{div}_{\mathcal{A}}S_{\mathcal{A}}(D_{t}u,\p_{t}p)+\p_{t}(Ru)+\dive_{\mathcal{A}}\nabla_{\mathcal{A}}((Ru))=F^{1}(u,p,\xi)~~~&\operatorname{in}~~\Omega,\\
    \operatorname{div}_{\mathcal{A}}D_{t}u=0~~~&\operatorname{in}~~\Omega,\\
    S_{\mathcal{A}}(\p_{t}p,D_{t}u)\mathcal{N}+\nabla_{\mathcal{A}}((Ru))\mathcal{N}=(g\p_{t}\xi\sin\theta+\sigma(\mathcal{P}_1(\rho_0,\rho_0')\p_{t}\xi+\mathcal{P}_{2}(\rho_{0},\rho_{0}')\p_{t}\xi'\\
    \quad\quad\quad\quad\quad\quad\quad\quad-\frac{1}{\rho_{0}}\partial_{\theta}(\frac{\rho_{0}^{2}\p_{t}\xi'}{(\rho_{0}^{2}+\rho_{0}'^{2})^{\frac{3}{2}}}-\frac{\rho_{0}'\rho_{0}\xi}{(\rho_{0}^{2}+\rho_{0}'^{2})^{\frac{3}{2}}}))\mathcal{N}+\sigma\p_{\theta}(\int_{0}^{t}\mathcal{R}_{1a}(\rho_{0},\partial_{\theta}\xi,\xi)\partial_{\theta}\partial_{t}^{2}\xi)\mathcal{N}+\p_{\theta}I_{1}\mathcal{N}\\
    \quad\quad\quad\quad\quad\quad\quad\quad+\sigma\p_{\theta}(\int_{0}^{t}\mathcal{R}_{1aa}(\rho_{0},\partial_{\theta}\xi,\xi)(\partial_{\theta}\partial_{t}\xi)^{2})\mathcal{N}+\sigma\p_{\theta}(\int_{0}^{t}\mathcal{R}_{1ab}(\rho_{0},\partial_{\theta}\xi,\xi)(\partial_{t}\xi)\p_{t}\p_{\theta}\xi)\mathcal{N}+F^{4}(u,p,\xi)~~&\operatorname{on}~~\Sigma,\\
    (S_{\mathcal{A}}(\p_{t}p,D_{t}u)\nu+\nabla_{\mathcal{A}}(Ru)\nu-\beta D_{t}u)\cdot \tau=F^{5}(u,\xi,p)~~~&\operatorname{on}~~\Sigma_{s},\\
    D_{t}u\cdot \nu=0~~~&\operatorname{on}~~\Sigma_{s},\\
    \partial_{t}^{2}\xi=\frac{1}{\rho}D_{t}u\cdot \mathcal{N}+\frac{1}{\rho}(Ru)\cdot \mathcal{N}+\int_{0}^{t}\frac{1}{\rho}(\partial_{t}u\cdot\partial_{t}\mathcal{N}+u\cdot \p_{t}^{2}\mathcal{N})-\int_{0}^{t}\frac{\p_{t}\xi}{\rho^{2}}u\cdot \p_{t}\mathcal{N}+I_{2}-\p_{t}(\mathfrak{n}^{\prime}(t)\xi_{3})-\frac{\p_{t}\xi}{\rho^{2}}u\cdot \mathcal{N}~~~&\operatorname{on}~~\Sigma,\\
    \sigma\big(\p_{t}(\mp \frac{\rho_{0}^{2}\xi'}{(\rho_{0}^{2}+\rho_{0}'^{2})^{\frac{3}{2}}}\pm\frac{\rho_{0}'\rho_{0}\xi}{(\rho_{0}^{2}+\rho_{0}'^{2})^{\frac{3}{2}}})\pm \int_{0}^{t}\mathcal{R}_{1a}(\rho_{0},\partial_{\theta}\xi,\xi)\partial_{t}^{2}\partial_{\theta}\xi)ds\pm\int_{0}^{t}\mathcal{R}_{1ab}(\rho_{0},\p_{\theta}\xi,\xi)\p_{t}\p_{\theta}\xi\p_{t}\xi ds\\\pm\int_{0}^{t}\mathcal{R}_{1aa}(\rho_{0},\partial_{\theta}\xi,\xi)(\partial_{t}\partial_{\theta}\xi)^{2}ds\pm I_{1}\big)(\frac{\pi}{2}\pm \frac{\pi}{2})
    =\kappa (\partial_{t}^{2}\xi)(\frac{\pi}{2}\pm \frac{\pi}{2})-{F}^{7},
    \end{cases}
\end{equation}
\begin{align}
    I_{1}
    = \mathcal{R}_{1a}\bigl(\rho_{0},\partial_{\theta}\xi(0),\xi_{0}\bigr)\,\partial_{\theta}\partial_{t}\xi(0), \qquad
    I_{2}
    = \frac{1}{\rho}\bigl(u\cdot \partial_{t}\mathcal{N}\bigr)(0).
\end{align}
Moreover, note that $\mathcal{R}_{1}$ and $\mathcal{R}_{2}$ depend on $\rho_{0}$, $\partial_{\theta}\xi$, and $\xi$. We write $\mathcal{R}_{ij}(c,a,b)$ to denote the partial derivative of $\mathcal{R}_{i}$ with respect to its $j$-th argument, where $j=c,a,b$ correspond to $\rho_{0}$, $\partial_{\theta}\eta$, and $\eta$, respectively.

\noindent$\bullet$ \underline{\textsl{Introducing the linear problem}:}

Compared with the well-posedness theory developed in \cite{YXD2}, the formulation of the linear system in the present work involves an additional parameter associated with the moving pole. Recall that the purpose of introducing $\mathfrak n(t)$ is to enforce the orthogonality of the surface perturbation $\xi$ to the translational mode $\xi_s$ in the moving polar coordinate system. We introduce an auxiliary unknown parameter $\mathfrak m(t)$ in the linear system to compensate for the loss of information caused by separating the prescribed functions from the unknown variables. The parameter is chosen precisely so as to preserve the orthogonality condition.

\begin{equation}\label{eq:quasi_linear_0}
\begin{cases}
         \partial_{t}D_{t}v+\operatorname{div}_{\mathcal{A}}S_{\mathcal{A}}(D_{t}v,\p_{t}q)+\p_{t}(Rv)+\dive_{\mathcal{A}}\nabla_{\mathcal{A}}(Rv)=F^{1}(u,p,\eta)~~~&\operatorname{in}~~\Omega,\\
    \operatorname{div}_{\mathcal{A}}D_{t}v=0~~~&\operatorname{in}~~\Omega,\\
    S_{\mathcal{A}}(\p_{t}q,D_{t}v)\mathcal{N}+\nabla_{\mathcal{A}}(Rv)\mathcal{N}=(g\p_{t}\xi\sin\theta+\sigma(\mathcal{P}_1(\rho_0,\rho_0')\p_{t}\xi+\mathcal{P}_{2}(\rho_{0},\rho_{0}')\p_{t}\xi'\\
    \quad\quad\quad\quad\quad\quad\quad\quad-\frac{1}{\rho_{0}}\partial_{\theta}(\frac{\rho_{0}^{2}\p_{t}\xi'}{(\rho_{0}^{2}+\rho_{0}'^{2})^{\frac{3}{2}}}-\frac{\rho_{0}'\rho_{0}\p_{t}\xi}{(\rho_{0}^{2}+\rho_{0}'^{2})^{\frac{3}{2}}}))\mathcal{N}+\p_{\theta}I_{1}\mathcal{N}+\sigma\int_{0}^{t}\mathcal{R}_{1aa}(\rho_{0},\p_{\theta}\eta,\eta)(\p_{\theta}\p_{t}\xi)\p_{\theta}\p_{t}\eta ds\mathcal{N}\\
    \quad\quad\quad\quad\quad\quad\quad\quad+\sigma\p_{\theta}(\int_{0}^{t}\mathcal{R}_{1a}(\rho_{0},\partial_{\theta}\eta,\eta)\partial_{\theta}\partial_{t}^{2}\xi)\mathcal{N}+\sigma\int_{0}^{t}\mathcal{R}_{1b}(\rho_{0},\p_{\theta}\eta,\eta)(\p_{\theta}\p_{t}\xi)\p_{t}\eta ds\mathcal{N}+F^{4}(u,p,\eta)~~~&\operatorname{on}~~\Sigma,\\
    (S_{\mathcal{A}}(\p_{t}q,D_{t}v)\nu+\nabla_{\mathcal{A}}(Rv)\nu-\beta D_{t}v)\cdot \tau=F^{5}(u,\eta,p)~~~&\operatorname{on}~~\Sigma_{s},\\
    D_{t}v\cdot \nu=0~~~&\operatorname{on}~~\Sigma_{s},\\
    \partial_{t}^{2}\xi=\frac{1}{\rho}D_{t}v\cdot \mathcal{N}+\frac{1}{\rho}(Rv)\cdot \mathcal{N}+\int_{0}^{t}\frac{1}{\rho}(\partial_{t}u\cdot\p_{t}\mathcal{N}(\xi)+u\cdot \partial_{t}^{2}\mathcal{N}(\xi))-\int_{0}^{t}\frac{\p_{t}\eta}{\rho^{2}}u\cdot \p_{t}\mathcal{N}(\xi)+I_{2}\\
    \quad\quad\quad\quad-\mathfrak{m}(t)\xi_{s}+\int_{0}^{t}(\mathfrak{n}^{\prime\prime}(t)(\frac{\p_{t}\p_{\theta}\xi}{\rho})\sin\theta-\mathfrak{n}^{\prime}(t)\p_{t}\eta\frac{\p_{t}\p_{\theta}\xi}{\rho^{2}}\sin\theta+\mathfrak{n}^{\prime}(t)\frac{\p_{t}^{2}\p_{\theta}\xi}{\rho}\sin\theta)+I_{3}+F^{6}(u,p)~~~&\operatorname{on}~~\Sigma,\\
    \sigma\big(\p_{t}(\mp \frac{\rho_{0}^{2}\xi'}{(\rho_{0}^{2}+\rho_{0}'^{2})^{\frac{3}{2}}}\pm\frac{\rho_{0}'\rho_{0}\xi}{(\rho_{0}^{2}+\rho_{0}'^{2})^{\frac{3}{2}}})\pm \int_{0}^{t}\mathcal{R}_{1a}(\rho_{0},\partial_{\theta}\eta,\eta)\partial_{t}^{2}\partial_{\theta}\xi)ds\pm\int_{0}^{t}\mathcal{R}_{1ab}(\rho_{0},\p_{\theta}\eta,\eta)\p_{t}\p_{\theta}\xi\p_{t}\eta ds\\\pm\int_{0}^{t}\mathcal{R}_{1aa}(\rho_{0},\partial_{\theta}\eta,\eta)(\partial_{t}\partial_{\theta}\xi)\p_{t}\p_{\theta}\eta ds\pm I_{1}\big)(\frac{\pi}{2}\pm \frac{\pi}{2})
    =\kappa (\partial_{t}^{2}\xi)(\frac{\pi}{2}\pm \frac{\pi}{2})-{F}^{7},
    \end{cases}
\end{equation}
\noindent where $\mathcal{N}$, $R$, $\mathcal{A}$ $D_t$, and $\rho$ are determined by $\eta$, and $N(\xi):=-(\rho_{0}'(\theta)+\xi'(\theta))\hat{e}_{\phi}+(\rho_{0}(\theta)+\xi(\theta))\hat{e}_{r}.$
Moreover, $\mathfrak{n}^{\prime}(t)$ is given by equation \eqref{equ:ODE} also determined by given functions. $I_{3}$ is determined by initial condition
\begin{align}
    I_{3}=\mathfrak{n}'(0)\frac{\partial_{t}\partial_{\theta}\xi(0)}{\rho(0)}.
\end{align}

In the system \eqref{eq:quasi_linear_0}, the function $\mathfrak{m}(t)$ is chosen so that
\begin{align}{\label{equ:vanish}}
    \int_{0}^{\pi}\xi_{s}\partial_{t}^{2}\xi \, d\theta=0,
\end{align}
Consequently, $(\xi,\xi)_{1,\Sigma}\ge C\|\xi\|_{H^{1}}^{2}$
for some constant $C>0$. This coercivity estimate is the key ingredient in closing the energy estimates for the linear problem.
Moreover, when $(u,p,\eta)=(v,q,\xi)$, one has $\mathfrak{m}(t)=\mathfrak{n}''(t),$
so that \eqref{eq:quasi_linear_0} is equivalent to \eqref{equ:fix_2}.

  Substituting $(v_{d},q_{d},\xi_{d})$ for $(D_{t}v,\p_{t}q,\p_{t}\xi)$ in equation \eqref{eq:quasi_linear_0}, we rewrite the equation as follows
\begin{equation}\label{eq:quasi_linear}
\begin{cases}
         \partial_{t}v_{d}+\operatorname{div}_{\mathcal{A}}S_{\mathcal{A}}(v_{d},q_{d})+\p_{t}(Rv)+\dive_{\mathcal{A}}\nabla_{\mathcal{A}}(Rv)=F^{1}(u,p,\eta)~~~&\operatorname{in}~~\Omega\\
    \operatorname{div}_{\mathcal{A}}v_{d}=0~~~&\operatorname{in}~~\Omega\\
    S_{\mathcal{A}}(q_{d},v_{d})\mathcal{N}+\nabla_{\mathcal{A}}(Rv)\mathcal{N}=(g\xi_{d}\sin\theta+\sigma(\mathcal{P}_1(\rho_0,\rho_0')\xi_{d}+\mathcal{P}_{2}(\rho_{0},\rho_{0}')\xi_{d}'-\frac{1}{\rho_{0}}\partial_{\theta}(\frac{\rho_{0}^{2}\xi_{d}'}{(\rho_{0}^{2}+\rho_{0}'^{2})^{\frac{3}{2}}}-\frac{\rho_{0}'\rho_{0}\xi_{d}}{(\rho_{0}^{2}+\rho_{0}'^{2})^{\frac{3}{2}}}))\mathcal{N}\\
    \quad\quad\quad\quad\quad\quad\quad\quad+\sigma\int_{0}^{t}\mathcal{R}_{1aa}(\rho_{0},\p_{\theta}\eta,\eta)(\p_{\theta}\p_{t}\xi)\p_{\theta}\p_{t}\eta ds\mathcal{N}+\p_{\theta}I_{1}\mathcal{N}+F^{4}(u,p,\eta)\\
    \quad\quad\quad\quad\quad\quad\quad\quad+\sigma\p_{\theta}(\int_{0}^{t}\mathcal{R}_{1a}(\rho_{0},\partial_{\theta}\eta,\eta)\partial_{\theta}\partial_{t}\xi_{d})\mathcal{N}+\sigma\int_{0}^{t}\mathcal{R}_{1ab}(\rho_{0},\p_{\theta}\eta,\eta)(\p_{\theta}\p_{t}\xi)\p_{t}\eta ds\mathcal{N}~~~&\operatorname{on}~~\Sigma\\
    (S_{\mathcal{A}}(q_{d},v_{d})\nu+\nabla_{\mathcal{A}}(Rv)\nu-\beta v_{d})\cdot \tau=F^{5}(u,\eta,p)~~~&\operatorname{on}~~\Sigma_{s}\\
    v_{d}\cdot \nu=0~~~&\operatorname{on}~~\Sigma_{s}\\
    \partial_{t}\xi_{d}=\frac{1}{\rho}v_{d}\cdot \mathcal{N}+\frac{1}{\rho}(Rv)\cdot \mathcal{N}+\int_{0}^{t}\frac{1}{\rho}(\partial_{t}u\cdot\p_{t}\mathcal{N}(\xi)+u\cdot \partial_{t}\mathcal{N}(\xi_{d}))-\int_{0}^{t}\frac{\p_{t}\eta}{\rho^{2}}u\cdot \p_{t}\mathcal{N}(\xi)+I_{2}\\
    \quad\quad\quad\quad-\mathfrak{m}(t)\xi_{s}+\int_{0}^{t}(\mathfrak{n}^{\prime\prime}(t)(\frac{\p_{\theta}\p_{t}\xi}{\rho})\sin\theta-\mathfrak{n}^{\prime}(t)\p_{t}\eta\frac{\p_{\theta}\p_{t}\xi}{\rho^{2}}\sin\theta+\mathfrak{n}^{\prime}(t)\frac{\p_{t}\p_{\theta}\xi_{d}}{\rho}\sin\theta)+I_{3}+F^{6}(u,p)~~~&\operatorname{on}~~\Sigma\\
    \sigma(\mp (\frac{\rho_{0}^{2}\xi_{d}'}{(\rho_{0}^{2}+\rho_{0}'^{2})^{\frac{3}{2}}}-\frac{\rho_{0}'\rho_{0}\xi_{d}}{(\rho_{0}^{2}+\rho_{0}'^{2})^{\frac{3}{2}}})\pm \int_{0}^{t}\mathcal{R}_{1a}(\rho_{0},\partial_{\theta}\eta,\eta)\partial_{t}\partial_{\theta}\xi_{d})ds\pm\int_{0}^{t}\mathcal{R}_{1ab}(\rho_{0},\p_{\theta}\eta,\eta)\p_{\theta}\p_{t}\xi\p_{t}\eta ds\\\pm\int_{0}^{t}\mathcal{R}_{1aa}(\rho_{0},\partial_{\theta}\eta,\eta)(\partial_{\theta}\p_{t}\xi)\p_{t}\p_{\theta}\eta ds\pm I_{1}\big)(\frac{\pi}{2}\pm \frac{\pi}{2})
    =\kappa (\partial_{t}\xi_{d})(\frac{\pi}{2}\pm \frac{\pi}{2})-{F}^{7}
    \end{cases}
\end{equation}
\noindent where $v$ solves the ODE $\p_{t}v=(v_{d})+Rv$ with initial data given in Appendix \ref{sec:initial_l}. $\xi,q$ solves the ODEs $\p_{t}\xi=\xi_{d}, ~\p_{t}q=q_{d}$ with initial data given in Appendix \ref{sec:initial_l}. 

Compared to \cite{YXD2}, the new linear system contains not only the additional unknown $\mathfrak{m}(t)$, but also several extra integral terms that are technically delicate to control. To handle these contributions, we derive an explicit formula for $\mathfrak{m}(t)$ and establish the associated energy estimates for the resulting system, as demonstrated in the proof of Theorem \ref{thm:linear_low}.

To apply the Galerkin method, we write down the weak form of the system \eqref{eq:quasi_linear}
\begin{definition} \label{def:weak}  Suppose that $\eta$ is given as well as $\mathcal{A}$, $J$, $\mathcal{N}$, etc. that are determined in terms of $\eta$ as above. We say that a pair $(v_{d}, \xi_{d}) \in \left(L^\infty([0,T]; H^0)\cap L^2([0,T]; H^1) \right) \times L^\infty([0,T];H^1(0,\pi))$ is a pressureless weak solution to \eqref{eq:quasi_linear}, if $(\p_tv_{d},v_{d}) \in (L^2([0,T];\mathcal{W}^{*}))\times (L^2([0,T];\mathcal{W}))$, and that $(v_{d}, \xi_{d})$ satisfies the weak formulation
  \begin{equation}\label{eq:weak_limit_1}
  \begin{aligned}
  & (\p_tv_{d}, w)_{\mathcal{H}^0}+((v_{d},w))+ (\xi_{d}, w\cdot\mathcal{N})_{1,\Sigma_{0}}+(\mathcal{R}_{1a}(\rho_{0},\partial_{\theta}\eta,\eta)\partial_{\theta}\xi_{d},\partial_{\theta}(w\cdot\mathcal{N}))_{L^{2}}+ [v_{d}\cdot\mathcal{N},w\cdot\mathcal{N}]_\pi\\ &
  =\int_\Om F^1\cdot wJ-\int_{0}^{\pi} F^4\cdot w-\int_{\Sigma_s}F^5(w\cdot\tau)J -[F^7,w\cdot\mathcal{N}]_\pi+(\p_{t}(Rv),w)_{\mathcal{H}^{0}}+((Rv,w))
  \end{aligned}
  \end{equation}
  for each $w\in \mathcal{W}_{\sigma}(t)$ and a.e. $t\in [0, T]$, provided that $F^1-F^4-F^5\in L^\infty (\mathcal{H}^1)^\ast$, $[F^7]_\pi\in L^2_t$, $F^1\in L^2L^{q_-}$, $F^4\in L^2W^{1-1/q_-,q_-}$, and $F^5\in L^2W^{1-1/q_-,q_-}$ for some $q_-\in (1, 2)$.  All notations here can be found in Section \ref{appendix_spaces}. Also, to derive a solution, equation \eqref{eq:weak_limit_1} should be coupled with kinematic relation
  \begin{align} \label{eq:kinematic}
  \begin{aligned}
  \partial_{t}\xi_{d}=&\frac{1}{\rho}v_{d}\cdot \mathcal{N}+\frac{1}{\rho}(Rv)\cdot \mathcal{N}+\int_{0}^{t}\frac{1}{\rho}(\partial_{t}u\cdot\p_{t}\mathcal{N}(\xi)+u\cdot \partial_{t}\mathcal{N}(\xi_{d}))-\int_{0}^{t}\frac{\p_{t}\eta}{\rho^{2}}u\cdot \p_{t}\mathcal{N}(\xi)+I_{2}\\
    &-\mathfrak{m}(t)\xi_{s}+\int_{0}^{t}(\mathfrak{n}^{\prime\prime}(t)(\frac{\p_{t}\p_{\theta}\xi}{\rho})-\mathfrak{n}^{\prime}(t)\p_{t}\eta\frac{\p_{t}\p_{\theta}\xi}{\rho^{2}}+\mathfrak{n}^{\prime}(t)\frac{\p_{t}\p_{\theta}\xi_{d}}{\rho})+I_{3}+F^{6},
    \end{aligned}
  \end{align}
  \end{definition}

\noindent$\bullet$ \underline{\textsl{Galerkin method and the system with smooth prescribed data}:}

To close the Galerkin estimate, we introduce the following system with smooth prescribed data 
\begin{equation}{\label{eq:quasi_linear_{s}}}
    \begin{cases}
         \partial_{t}v_{d}+\operatorname{div}_{\mathcal{A}^{n}}S_{\mathcal{A}^{n}}(v_{d},q_{d})+\p_{t}(R^{n}v_{l})+\dive_{\mathcal{A}^{n}}\nabla_{\mathcal{A}^{n}}(R^{n}v_{l})=F^{1}(u,p,\eta)~~~&\operatorname{in}~~\Omega\\
    \operatorname{div}_{\mathcal{A}^{n}}v_{d}=0~~~&\operatorname{in}~~\Omega\\
    S_{\mathcal{A}^{n}}(q_{d},v_{d})\mathcal{N}^{n}+\nabla_{\mathcal{A}}(R^{n}v_{l})\mathcal{N}^{n}=(g\xi_{d}\sin\theta+\sigma(\mathcal{P}_1(\rho_0,\rho_0')\xi_{d}+\mathcal{P}_{2}(\rho_{0},\rho_{0}')\xi_{d}'-\frac{1}{\rho_{0}}\partial_{\theta}(\frac{\rho_{0}^{2}\xi_{d}'}{(\rho_{0}^{2}+\rho_{0}'^{2})^{\frac{3}{2}}}\\\quad\quad\quad\quad\quad\quad\quad\quad-\frac{\rho_{0}'\rho_{0}\xi_{d}}{(\rho_{0}^{2}+\rho_{0}'^{2})^{\frac{3}{2}}}))\mathcal{N}^{n}+\p_{\theta}I_{1}\mathcal{N}^{n}+\sigma\int_{0}^{t}\mathcal{R}_{1aa}(\rho_{0},\p_{\theta}\eta^{k},\eta^{k})(\p_{\theta}\p_{t}\xi_{l}^{k})\p_{\theta}\p_{t}\eta^{k} ds\mathcal{N}+F^{4}(u,p,\eta)\\
    \quad\quad\quad\quad\quad\quad\quad\quad+\sigma\p_{\theta}(\int_{0}^{t}\mathcal{R}_{1a}(\rho_{0},\partial_{\theta}\eta^{k},\eta^{k})\partial_{\theta}\partial_{t}\xi_{d})\mathcal{N}+\sigma\int_{0}^{t}\mathcal{R}_{1ab}(\rho_{0},\p_{\theta}\eta^{k},\eta^{k})(\p_{\theta}\p_{t}\xi_{l}^{k})\p_{t}\eta^{k} ds\mathcal{N}~~~&\operatorname{on}~~\Sigma\\
    (S_{\mathcal{A}}(q_{d},v_{d})\nu+\nabla_{\mathcal{A}}(R^{n}v_{l})\nu-\beta v_{d})\cdot \tau=F^{5}(u,\eta,p)~~~&\operatorname{on}~~\Sigma_{s}\\
    v_{d}\cdot \nu=0~~~&\operatorname{on}~~\Sigma_{s}\\
    \partial_{t}\xi_{d}=\frac{1}{\rho}v_{d}\cdot \mathcal{N}^{n}+\frac{1}{\rho}(R^{n}v_{l}^{k})\cdot \mathcal{N}^{n}+\int_{0}^{t}\frac{1}{\rho}(\partial_{t}u^{k}\cdot\p_{t}\mathcal{N}(\xi_{l}^{k})+u_{r}^{k}\p_{t}^{2}\xi_{l}^{k}+u_{\theta}^{k}\p_{t}\p_{\theta}\xi_{d})-\int_{0}^{t}\frac{\p_{t}\eta}{\rho^{2}}u^{k}\cdot \p_{t}\mathcal{N}(\xi_{l}^{k})+I_{2}^{n,k}\\
    \quad\quad\quad\quad-\mathfrak{m}(t)\xi_{s}+\int_{0}^{t}(\mathfrak{n}^{\prime\prime}(t)(\frac{\p_{t}\p_{\theta}\xi_{l}^{k}}{\rho})\sin\theta-\mathfrak{n}^{\prime}(t)\p_{t}\eta\frac{\p_{t}\p_{\theta}\xi_{l}^{k}}{\rho^{2}}\sin\theta+\mathfrak{n}^{\prime}(t)\frac{\p_{t}\p_{\theta}\xi_{d}}{\rho}\sin\theta)+I_{3}^{n,k}+F^{6}(u^{k},\eta^{k})~~~&\operatorname{on}~~\Sigma\\
    \sigma(\mp (\frac{\rho_{0}^{2}\xi_{d}'}{(\rho_{0}^{2}+\rho_{0}'^{2})^{\frac{3}{2}}}-\frac{\rho_{0}'\rho_{0}\xi_{d}}{(\rho_{0}^{2}+\rho_{0}'^{2})^{\frac{3}{2}}})\pm \int_{0}^{t}\mathcal{R}_{1a}(\rho_{0},\partial_{\theta}\eta^{k},\eta^{k})\partial_{t}\partial_{\theta}\xi_{d})ds\pm\int_{0}^{t}\mathcal{R}_{1ab}(\rho_{0},\p_{\theta}\eta^{k},\eta^{k})\p_{\theta}\p_{t}\xi_{l}^{k}\p_{t}\eta^{k} ds\\\pm\int_{0}^{t}\mathcal{R}_{1aa}(\rho_{0},\partial_{\theta}\eta^{k},\eta^{k})(\partial_{\theta}\p_{t}\xi_{l}^{k})\p_{t}\p_{\theta}\eta^{k} ds\pm I_{1}\big)(\frac{\pi}{2}\pm \frac{\pi}{2})
    =\kappa (\partial_{t}\xi_{d})(\frac{\pi}{2}\pm \frac{\pi}{2})-{F}^{7}
    \end{cases}
\end{equation}
where, for simplicity, we abuse the notation $(v_{d}, q_{d}, \xi_{d}, \mathfrak{m}(t))$ for the solution of \eqref{eq:quasi_linear_{s}} corresponding to each pair $(k,n)$, with $\mathfrak{m}(t)$ defined so that $\int_{0}^{\pi}\partial_{t}\xi_{d}\,\xi_{s}=0.$ This will be our starting point to construct the solution.

As in \cite{YXD2}, the system \eqref{eq:quasi_linear_{s}} involves two smoothing sequences, $(u^{k},\eta^{k})$ and $(u^{n},\eta^{n},p^{n})$, together with two families of prescribed functions, $(v_{l},\xi_{l})$ and $(u,\eta,p)$. These functions satisfy the following convergence properties:
\begin{align}
\eta^{k}\to \eta \quad \text{in } L_{t}^{\infty}W^{3-\frac{1}{q_{-}},q_{-}},
\qquad
\partial_{t}\eta^{k}\to \partial_{t}\eta \quad \text{in } L_{t}^{2}W^{3-\frac{1}{q_{-}},q_{-}}
\quad \text{as } k\to+\infty \label{temp 2},\\\
u^{k}\to u \quad \text{in } L_{t}^{\infty}W^{2-\frac{1}{q_{-}},q_{-}}(\Sigma),
\qquad
\partial_{t}u^{k}\to \partial_{t}u \quad \text{in } L_{t}^{2}W^{2-\frac{1}{q_{-}},q_{-}}(\Sigma)
\quad \text{as } k\to+\infty\label{temp 3},\\\
\eta^{n}\to \eta \quad \text{in } L_{t}^{\infty}W^{3-\frac{1}{q_{+}},q_{+}},
\qquad
\partial_{t}\eta^{n}\to \partial_{t}\eta \quad \text{in } L_{t}^{2}W^{3-\frac{1}{q_{-}},q_{-}}
\quad \text{as } n\to+\infty\label{convergence_n}.
\end{align}

Moreover, $\mathcal{N}^{n}
=
-(\rho_{0}'(\theta)+\partial_{\theta}\eta^{n}(\theta))\hat{e}_{\phi}
+(\rho_{0}(\theta)+\eta^{n}(\theta))\hat{e}_{r}.$
The quantities $\mathcal{A}^{n}$ and $R^{n}$ are defined by replacing $\eta$ with $\eta^{n}$ in the corresponding definitions. The terms $I_{1}^{n,k}$, $I_{2}^{n,k}$, and $I_{3}^{n,k}$ are given by
\begin{align}
\begin{aligned}
    I_{1}^{n,k}&=(\mathcal{R}_{1a}(\rho_{0},\partial_{\theta}\eta^{k}(0),\eta^{k}(0)))(\partial_{\theta}\partial_{t}\xi^{n,k}(0)),\quad
    I_{2}^{n,k}=\left(\frac{1}{\rho}u^{k}\partial_{t}\partial_{\theta}\xi^{n,k}\right)(0),\quad
    I_{3}^{n,k}=\mathfrak{n}'(0)\left(\frac{\partial_{t}\partial_{\theta}\xi^{n,k}}{\rho}\right)(0).
\end{aligned}
\end{align}

The functions $v_{l}$ and $\xi_{l}$ are prescribed functions satisfying the following regularity condition and the initial condition
\begin{align}
    v_{l}\in L_{t}^{\infty}W^{2,q_{-}},
    \qquad
    \partial_{t}v_{l}\in L_{t}^{2}W^{2,q_{-}},
    \qquad
    \xi_{l}\in L_{t}^{\infty}W^{3-\frac{1}{q_{-}},q_{-}},
    \qquad
    \partial_{t}\xi_{l}\in L_{t}^{2}W^{3-\frac{1}{q_{-}},q_{-}},\\
    v_{l}(0)=v_{0}^{n},
    \qquad
    D_{t}v_{l}(0)=D_{t}v_{0},
    \qquad
    \xi_{l}(0)=\xi_{0}^{n},
    \qquad
    \partial_{t}\xi_{l}(0)=\partial_{t}\xi_{0}^{n},
\end{align}
for each prescribed $n$. The definition of the initial data can be found in Appendix \ref{sec:initial_l}.

Furthermore, $v_{l}^{k}$ and $\xi_{l}^{k}$ are smooth with respect to the spatial variable $x$ and satisfy
\begin{align}
\begin{aligned}
v_{l}^{k} &\to v_{l}
&&\text{in } L_{t}^{\infty}W^{2-\frac{1}{q_{-}},q_{-}}(\Sigma),
&\qquad
\partial_{t}v_{l}^{k} &\to \partial_{t}v_{l}
&&\text{in } L_{t}^{2}W^{2-\frac{1}{q_{-}},q_{-}}(\Sigma),\\
\xi_{l}^{k} &\to \xi_{l}
&&\text{in } L_{t}^{\infty}W^{3-\frac{1}{q_{-}},q_{-}},
&\qquad
\partial_{t}\xi_{l}^{k} &\to \partial_{t}\xi_{l}
&&\text{in } L_{t}^{2}W^{3-\frac{1}{q_{-}},q_{-}},
\end{aligned}
\end{align}
as $k\to\infty$.

The detailed construction of these smooth approximating sequences can be found in the appendix of \cite{YXD2}. We also refer the reader to the introduction of \cite{YXD2} for the derivation of the system \eqref{eq:quasi_linear_{s}}. As will be shown in Section~4, the introduction of these smooth prescribed functions is crucial for closing the Galerkin estimates.

Moreover, system \eqref{eq:quasi_linear_{s}} has the following weak form.

\begin{definition} \label{def:weak1}  Suppose that $\eta^{n}$ is given as well as $\mathcal{A}^{n}$, $J^{n}$, $\mathcal{N}^{n}$, etc. that are determined in terms of $\eta^{n}$. We say that a pair $(v_{d}, \xi_{d}) \in \left(L^\infty([0,T]; H^0)\cap L^2([0,T]; H^1) \right) \times L^\infty([0,T];H^1(0,\pi))$ is a pressureless weak solution to \eqref{eq:quasi_linear_{s}}, if $(\p_tv_{d},v_{d}) \in L^2([0,T];\mathcal{W}^{*})\times L^2([0,T];\mathcal{W})$, and that $(v_{d}, \xi_{d})$ satisfies the weak formulation
  \begin{equation}\label{eq:weak_limit_0}
  \begin{aligned}
  & (\p_tv_{d}, w)_{\mathcal{H}^0}+((v_{d},w))+ (\xi_{d}, w\cdot\mathcal{N}^{n})_{1,\Sigma_{0}}+(\int_{0}^{t}\mathcal{R}_{1a}(\rho_{0},\partial_{\theta}\eta^{k},\eta^{k})\partial_{\theta}\p_{t}\xi_{d},\partial_{\theta}(w\cdot\mathcal{N}^{n}))_{L^{2}}\\ &+ [v_{d}\cdot\mathcal{N}^{n},w\cdot\mathcal{N}^{n}]_\pi
  =\int_\Om F^1\cdot wJ^{n}-\int_{0}^{\pi} F^4\cdot w-\int_{\Sigma_s}F^5(w\cdot\tau)J^{n} -[F^7,w\cdot\mathcal{N}^{n}]_\pi\\
  &\quad\quad\quad\quad\quad\quad\quad\quad+(\p_{t}(R^{n}v_{l}^{k}),w)_{\mathcal{H}^{0}}+((R^{n}v_{l}^{k},w))-(I^{n,k}_{1},\p_{\theta}(w\cdot \mathcal{N}^{n}))_{L^{2}(0,\pi)}\\
  &\quad\quad\quad\quad\quad\quad\quad\quad+(\int_{0}^{t}\mathcal{R}_{1aa}(\rho_{0},\partial_{\theta}\eta^{k},\eta^{k})\partial_{\theta}\p_{t}\xi_{l}^{k}\p_{t}\p_{\theta}\eta^{k},\partial_{\theta}(w\cdot\mathcal{N}^{n}))_{L^{2}}\\
  &\quad\quad\quad\quad\quad\quad\quad\quad+(\int_{0}^{t}\mathcal{R}_{1ab}(\rho_{0},\partial_{\theta}\eta^{k},\eta^{k})\partial_{\theta}\p_{t}\xi_{l}^{k}\p_{t}\eta^{k},\partial_{\theta}(w\cdot\mathcal{N}^{n}))_{L^{2}}
  \end{aligned}
  \end{equation}
  for each $w\in \mathcal{W}_{\sigma}(t)$ and a.e. $t\in [0, T]$, provided that $F^1-F^4-F^5\in L^\infty (\mathcal{H}^1)^\ast$, $[F^7]_\pi\in L^2_t$, $F^1\in L^2L^{q_-}$, $F^4\in L^2W^{1-1/q_-,q_-}$, and $F^5\in L^2W^{1-1/q_-,q_-}$ for some $q_-\in (1, 2)$.  All notations here can be found in Section \ref{appendix_spaces}. The $(1,\Sigma_{0})$-norm is defined by \eqref{equ:product}. Also, to derive a solution, equation \eqref{eq:weak_limit_0} should be coupled with kinematic relation
  \begin{align} \label{eq:kinematic_0}
  \begin{aligned}
  \partial_{t}\xi_{d}=&\frac{1}{\rho}v_{d}\cdot \mathcal{N}^{n}+\frac{1}{\rho}(R^{n}v_{l}^{k})\cdot \mathcal{N}^{n}+\int_{0}^{t}\frac{1}{\rho}(\partial_{t}u^{k}\cdot\p_{t}\mathcal{N}(\xi_{l}^{k})+u_{r}^{k}\p_{t}^{2}\xi_{l}^{k}+u_{\theta}^{k}\p_{t}\p_{\theta}\xi_{d})-\int_{0}^{t}\frac{\p_{t}\eta}{\rho^{2}}u^{k}\cdot \p_{t}\mathcal{N}(\xi_{l}^{k})+I_{2}^{n,k}\\
    &-\mathfrak{m}(t)\xi_{s}+\int_{0}^{t}(\mathfrak{n}^{\prime\prime}(t)(\frac{\p_{t}\p_{\theta}\xi_{l}^{k}}{\rho})\sin\theta-\mathfrak{n}^{\prime}(t)\p_{t}\eta\frac{\p_{t}\p_{\theta}\xi_{l}^{k}}{\rho^{2}}\sin\theta+\mathfrak{n}^{\prime}(t)\frac{\p_{t}\p_{\theta}\xi_{d}}{\rho}\sin\theta)+I_{3}^{n,k}+F^{6}(u^{k},\eta^{k}).
    \end{aligned}
  \end{align}
  \end{definition}

\noindent$\bullet$ \underline{\textsl{Strong solution and the construction of the pressure}:}

 Once the weak solution $(v_{d},\xi_{d})$ to \eqref{eq:weak_limit_0} has been derived, we construct the pressure $\overset{\circ}q_{d}:=q_{d}-\bar{q}_{d}$ from the pressureless $(v_{d},\xi_{d})$. Furthermore, $(v_{d}, \overset{\circ}q_{d}
, \xi_{d})$ is a strong solution to \eqref{eq:quasi_linear_{s}} and the difference between the real
pressure $q$ and $\overset{\circ}q$
is a constant depending only on time. We next fix and estimate the constant to satisfy
the capillary equations for the pressureless weak solution we have constructed. Due to the structure of the
problem, we may choose such a constant as the average of the final pressure, solving the original full set
$(v_{d},q_{d},\xi_{d})$ of equations.

\noindent $\bullet$ \underline{\textsl{Reconstruction of $(v,q,\xi)$}:}

 After deriving the regularity for $(v_{d},q_{d},\xi_{d},\p_{t}v_{d},\p_{t}\xi_{d})$, which provides the first- and second-order regularity for $(v,\xi,q)$. It remains to show the zeroth- and first-order regularity for $(q,v,\xi)$. 
{We recover $v,q,\xi$ by solving the following ODE system:

\begin{align}
\begin{aligned}
    \p_{t}v=&(D_{t}v)+Rv,\qquad
    \xi(t)=\xi(0)+\int_{0}^{t}\p_{t}\xi(s)ds,\qquad
    q(t)=q(0)+\int_{0}^{t}\p_{t}q(s)ds
    \end{aligned}
\end{align}

\noindent with the initial condition:

\begin{align}
    D_{t}v(0)=v_{d0}(0)\quad v(0)=v_{0}^{n}\quad\xi_{d}(0)=\xi_{d0}\quad\xi(0)=\xi_{0}^{n}\quad q_{d}(0)=q_{d}^{n,k}\quad q(0)=q_{0}^{n},
\end{align}
\noindent where the definition of $v_{d0}(0),v_{0}^{n},\xi_{d0},\xi_{0}^{n},q_{d}^{n,k},q_{0}^{n}$ can all be found in Appendix \ref{sec:initial_l}.

\noindent $\bullet$ \underline{\textsl{Passing to the limit  $k,n \to \infty$} and identifying a contractive mapping:} 

The construction proceeds through a two-level limiting argument. For fixed $n$ and prescribed data $(v_l,\xi_l)$, we first derive estimates for the regularized system that are uniform in $k$ and then pass to the limit as $k\to\infty$. The limiting system defines a linear solution map, which is shown to be a contraction from $(v_l,\xi_l)$ to the reconstructed pair $(v,\xi)$. This yields a solution to the $n$-dependent system. We then establish estimates uniform in $n$ and pass to the limit as $n\to\infty$, thereby obtaining a solution to the original linear problem \eqref{eq:quasi_linear}. We refer the reader to the introduction of \cite{YXD2} for further details.

\noindent $\bullet$ \underline{\textsl{The construction of the initial data}:}

After establishing the existence of solutions to \eqref{equ:fix_2}, it remains to show that there exist initial data
\[
\bigl(u_0, p_0, \xi_0, \partial_tu(0), \partial_tp(0), \partial_t\xi(0), \partial_t^2u(0), \partial_t^2\xi(0)\bigr)
\]
satisfying the compatibility conditions \eqref{compat_C2} and \eqref{cond:zero}.

Following \cite{YXD2}, we construct the initial data by solving \eqref{equ:fix_2} at $t=0$. In that setting, there is a constant degree of freedom in the surface functions $\xi$ and $\partial_t\xi$ when the initial data are constructed from the system. This freedom allows one to choose suitable initial data $(\xi_0,\partial_t\xi(0))$ satisfying the compatibility condition
\begin{align}
    \int_{-\ell}^{\ell}\partial_t^k\xi(0)\,dx=0.
\end{align}

However, for the droplet problem, the situation is different. If $(u_0,\xi_0,p_0)$ solves \eqref{equ:fix_2} at $t=0$, then $(u_0,\xi_0+C,p_0)$ is generally no longer a solution to \eqref{equ:fix_2} for an arbitrary constant $C$. This makes the construction substantially more delicate and constitutes another major difference between the present work and \cite{YXD2}.

Fortunately, by introducing an auxiliary function $\xi_{5}$, we shall show in Appendix \ref{sec:initial} that if $(u_0,\xi_0,p_0)$ solves \eqref{equ:fix_2} at $t=0$, then $(u_0,\xi_0+C\xi_5,p_0)$ is still a solution to \eqref{equ:fix_2} for any constant $C$.
This property enables us to construct the desired initial data satisfying condition \eqref{cond:zero}
\begin{align}
    \int_{0}^{\pi}\rho(0)\xi(0)d\theta=0.
\end{align}
The detailed argument will be given in Appendix \ref{sec:initial}.
%%%%%%%%%%%%%%%%%%%%%%%%%%%%%%%%%%%%%%%%%%%%%%
\section{Functional Setting and Basic Estimates} \label{appendix_spaces}
%%%%%%%%%%%%%%%%%%%%%%%%%%%%%%%%%%%%%%%%%%%%%%

{In this section, we introduce the functional framework needed for the implementation of the Galerkin scheme. For simplicity, all definitions are formulated with respect to the surface function 
$\eta$. The corresponding functional spaces associated with the smooth surface function 
$\eta^{n}$
 are defined analogously by replacing $\eta$ by $\eta^{n}$.} Except for the definition of the $(1,\Sigma)$-inner product, most of the definitions in the functional framework are the same as those given in \cite{YXD2}.
%%%%%%%%%%%%%%%%%%%%%%%%%%%%%%%%%%%%%%%%%%%%%%
\subsection{Constants and Sobolev Norms}\label{sec:notation}
%%%%%%%%%%%%%%%%%%%%%%%%%%%%%%%%%%%%%%%%%%%%%%

% Now, we mention some definitions, notation and conventions that we will use throughout this paper.

Let $C>0$ denote a universal constant that depends only on the parameters of the problem, $N$ and $\Om$, but does not depend on the data, etc. It may change from line to line and  we write $C=C(z)$ to indicate that the constant $C$ depends on $z$. We will write $a\lesssim b$ if $a\le C b$ for a universal constant $C>0$.

We will write $H^k$ for $H^k(\Om)$ for $k\ge0$, and $H^s(\Sigma)$ with $s\in\mathbb{R}$ for usual Sobolev spaces. Typically, we will write $H^0=L^2$, with the exception that we use $L^2([0,T];H^k)$ (or $L^2([0,T];H^s(\Sigma))$) to denote the space of temporal square--integrable functions with values in $H^k$ (or $H^s(\Sigma)$).

Sometimes we will write $\|\cdot\|_k$ instead of $\|\cdot\|_{H^k(\Om)}$ or $\|\cdot\|_{H^k(\Sigma)}$. We assume that norms are taken over the natural domains of the functions. For example, the functions $u$, $p$ and $\bar{\eta}$ live on $\Om$, while $\eta$ lives on $\Sigma$. Thus, we may write $\|\cdot\|_{H^k}$ for the norms of $u$, $p$ and $\bar{\eta}$ in $\Om$, and $\|\cdot\|_{H^s}$ for norms of $\eta$ on $\Sigma$.

%%%%%%%%%%%%%%%%%%%%%%%%%%%%%%%%%%%%%%%%%%%%%%
\subsection{Time-Independent Inner Products and Function Spaces}
%%%%%%%%%%%%%%%%%%%%%%%%%%%%%%%%%%%%%%%%%%%%%%

Define time-independent spaces
\begin{equation}
{}_0H^1(\Om):=\Big\{u\in H^1(\Om)\Big|u\cdot\nu=0\ \text{on}\ \Sigma_s\Big\},
\end{equation}
endowed with norm $\|u\|_{{}_0H^1}:=((u,u))+[u\cdot \mathcal{N}_{0}]_{\pi}$ and 
\begin{equation}
  \mathring{H}^k(U):=\left\{f\in H^k(U)\Big| \int_Uf=0\right\},
\end{equation}
where $U=\Om$ or $(0, \pi)$ and $k\in\mathbb{N}$,
\begin{equation}
W:=\Big\{u\in{}_0H^1(\Om)\Big|u\cdot\mathcal{N}_0\in \mathring{H}^1(0, \pi)\Big\},\quad \operatorname{and}\quad W_\sigma:=\Big\{u\in W\Big| \dive u=0\Big\}.
\end{equation}

Throughout the paper we will often utilize the following Korn-type inequality.
\begin{lemma}\label{lem:korn}
  For any $u\in{}_0H^1(\Om)$, it holds that $
  \|u\|_1^2\lesssim \|\mathbb{D}u\|_0^2+\|u\|_{0}^{2}$.
\end{lemma}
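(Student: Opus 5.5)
The plan is to argue by contradiction, using the compactness of the embedding $H^1(\Omega)\hookrightarrow L^2(\Omega)$ together with the structure of ${}_0H^1(\Omega)$. An alternative route would go through the standard Korn inequality on Lipschitz domains, $\|u\|_1^2\lesssim \|\mathbb{D}u\|_0^2+\|u\|_0^2$, which holds for all $u\in H^1(\Omega)$ with no boundary condition. That inequality already implies the lemma, since ${}_0H^1(\Omega)\subset H^1(\Omega)$. The equilibrium domain $\Omega$ is bounded and Lipschitz: its boundary consists of the smooth curve $\Sigma$ and the flat segment $\Sigma_s$, meeting at corners of angle $\omega_{eq}\in(0,\pi)$. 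So I would first state that the second Korn inequality applies directly.

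For a self-contained argument, I would first prove the second Korn inequality via Nečas' inequality. This inequality says that $\|f\|_{0}\lesssim \|f\|_{H^{-1}}+\|\nabla f\|_{H^{-1}}$ on bounded Lipschitz domains. Apply it to $f=\partial_j u_i$ and use the identity
\[
\partial_j\partial_k u_i=\partial_j(\mathbb{D}u)_{ik}/2+\partial_k(\mathbb{D}u)_{ij}/2-\partial_i(\mathbb{D}u)_{jk}/2 .
\]
This identity expresses every second derivative of $u$ as first derivatives of components of $\mathbb{D}u$. Hence $\|\nabla\partial_j u_i\|_{H^{-1}}\lesssim \|\mathbb{D}u\|_0$. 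We also have $\|\partial_j u_i\|_{H^{-1}}\lesssim\|u\|_0$. Combining these two bounds gives
\[
\|\nabla u\|_0\lesssim \|\mathbb{D}u\|_0+\|u\|_0,
\]
which is the claim.

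The main obstacle is the Nečas inequality itself, i.e.\ the Lions lemma on Lipschitz domains. I would not reprove it; I would quote it, noting that the corner angles lie in $(0,\pi)$, so $\Omega$ is a Lipschitz (indeed piecewise smooth, locally star-shaped) domain. A remark worth including is that the constant depends only on $\Omega$, which is fixed, so the constant is universal in the sense of Section~\ref{sec:notation}.

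Finally, the stated form of the lemma (with $\|u\|_0^2$ on the right) does not need the condition $u\cdot\nu=0$ on $\Sigma_s$. That condition would only be needed to drop $\|u\|_0$, via a compactness argument. Indeed, if $\mathbb{D}u=0$, then $u$ is a rigid motion $a+b(-y,x)$. The condition $u\cdot\nu=0$ on the flat bottom forces $a_2+bx=0$ along an interval, so $a_2=b=0$. Horizontal translations $a_1e_1$ survive, however, which is why the $\|u\|_0^2$ term (or the boundary term $[u\cdot\mathcal{N}_0]_\pi$ in the norm) must be retained. I would close with this remark.
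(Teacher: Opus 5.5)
Your argument is correct and follows the route the paper indicates: the paper's proof only cites a direct proof and notes that the lemma follows from the Ne\v{c}as inequality, and you carry out that derivation via the identity writing $\partial_j\partial_k u_i$ as first derivatives of $\mathbb{D}u$. The contradiction/compactness argument announced in your first sentence is never used and should be dropped, while your closing observation is correct: $u=a_1e_1$ lies in ${}_0H^1(\Omega)$ with $\mathbb{D}u=0$, which shows the $\|u\|_0^2$ term cannot be removed without some boundary term.
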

\begin{proof}
This result may be proved in various ways.  See \cite{N} for a direct proof.  It can  also be derived from the Ne\^{c}as inequality  \cite[Lemma IV.7.6 ]{boyer_fabrie}.
\end{proof}
{Using this lemma, we establish the following Poincar\'e-type inequality
\begin{theorem}{\label{thm:poin}}
For any $u\in{}_0H^1(\Om)$, it holds that
\begin{align}
\|u\|_{1}^{2}\lesssim \|\mathbb{D}u\|_{0}^{2}.
\end{align}

\end{theorem}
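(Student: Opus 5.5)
The plan is a compactness–contradiction argument built on Lemma~\ref{lem:korn}, followed by an identification of the kernel of $\mathbb{D}$ in ${}_0H^1(\Om)$. The last step is where the statement, read literally, breaks down, so I flag it from the start.

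\emph{Step 1 (contradiction and compactness).} Suppose the inequality fails. Then there are $u_n\in{}_0H^1(\Om)$ with $\|u_n\|_1=1$ and $\|\mathbb{D}u_n\|_0\to0$. Since $\Om$ is a bounded Lipschitz domain (the equilibrium droplet, with corners at the two contact points), Rellich's theorem lets us pass to a subsequence with $u_n\rightharpoonup u$ in $H^1$ and $u_n\to u$ in $L^2$. Apply Lemma~\ref{lem:korn} to $u_n-u_m$:
\begin{align*}
\|u_n-u_m\|_1^2\lesssim\|\mathbb{D}u_n-\mathbb{D}u_m\|_0^2+\|u_n-u_m\|_0^2\to0 .
\end{align*}
Hence $u_n\to u$ strongly in $H^1$, with $\|u\|_1=1$ and $\mathbb{D}u=0$. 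The trace condition $u\cdot\nu=0$ on $\Sigma_s$ is preserved under this limit, so $u\in{}_0H^1(\Om)$.

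\emph{Step 2 (kernel of $\mathbb{D}$).} On the connected set $\Om$, $\mathbb{D}u=0$ forces $u$ to be a rigid motion, $u(x,y)=(a_1-by,\ a_2+bx)$. On $\Sigma_s$ (the segment $y=0$, $\theta\in\{0,\pi\}$) we have $\nu=(0,-1)$, so $u\cdot\nu=0$ reads $a_2+bx=0$ on a segment of positive length. This gives $a_2=b=0$. We would then need to exclude the remaining mode $u=(a_1,0)$ to reach the contradiction $u=0$ against $\|u\|_1=1$.

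\emph{Step 3 (the expected obstacle).} Step 3 is where the argument must close, and it is the main difficulty. The horizontal translation $(a_1,0)$ satisfies $\mathbb{D}u=0$ and $u\cdot\nu=0$ on the flat bottom. So the constraint defining ${}_0H^1(\Om)$ does \emph{not} remove it, and as literally worded the inequality cannot hold on all of ${}_0H^1(\Om)$. This mode is exactly the translational degeneracy discussed in the introduction.

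To kill it, I would use the structure the paper attaches to ${}_0H^1(\Om)$, namely its norm $((u,u))+[u\cdot\mathcal{N}_0]_\pi$. For $u=(a_1,0)$ we have $u\cdot\mathcal{N}_0=\mp a_1\rho_0\neq0$ at $\theta=0,\pi$, because $\rho_0>0$ at the contact points. Since the boundary trace at the contact points is continuous in the topology used there, the term $[u_n\cdot\mathcal{N}_0]_\pi$ passes to the limit. Keeping this term alongside $\|\mathbb{D}u\|_0^2$ (it is the same contact-point dissipation that appears in $\mathcal{D}$ via $[u\cdot\mathcal{N}]_\theta$) forces $a_1=0$, which gives the contradiction.

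The precise form I would therefore prove is
\begin{align*}
\|u\|_1^2\lesssim\|\mathbb{D}u\|_0^2+[u\cdot\mathcal{N}_0]_\pi ,
\end{align*}
which is how the theorem is used in the energy estimates. I do not see a way to obtain the pure bound $\|u\|_1^2\lesssim\|\mathbb{D}u\|_0^2$ for $u\in{}_0H^1(\Om)$ without some such extra term or an additional orthogonality condition removing $(1,0)$.
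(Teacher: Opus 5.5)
Your Steps 1--2 are sound, and your diagnosis is correct. The paper's proof only defers to the vessel case of \cite{YXD2}, which amounts to the same compactness argument via Lemma~\ref{lem:korn}. Here, however, $\Sigma_s$ is the flat segment $\{y=0\}$, so $u=(1,0)$ lies in ${}_0H^1(\Om)$ with $\mathbb{D}u=0$ and $\|u\|_1=|\Om|^{1/2}$. The theorem as stated is false, and nothing in the cited argument removes this mode.

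The gap is in your repair. Point values of $u\cdot\mathcal{N}_0$ at the contact points are neither defined nor continuous on $H^1(\Om)$. The trace lies only in $H^{1/2}(\Sigma)$, which in one dimension does not embed into $C^0$, so $[u_n\cdot\mathcal{N}_0]_\pi$ need not converge when $u_n\to u$ in $H^1$.

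In fact your modified inequality, with $[u\cdot\mathcal{N}_0]_\pi$ or its square, is itself false. Let $d$ be the distance to the nearest contact point. Let $\phi_\epsilon$ be the logarithmic cutoff equal to $1$ for $d\le\epsilon^2$, equal to $0$ for $d\ge\epsilon$, and equal to $\log(d/\epsilon)/\log\epsilon$ in between, so that $\|\nabla\phi_\epsilon\|_0^2\lesssim|\log\epsilon|^{-1}$ and $\|\phi_\epsilon\|_0\to0$. Set $u_\epsilon=(1-\phi_\epsilon)(1,0)$. It is horizontal, hence lies in ${}_0H^1(\Om)$, and it vanishes near both contact points. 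Moreover $\|\mathbb{D}u_\epsilon\|_0\le 2\|\nabla\phi_\epsilon\|_0\to0$, while $\|u_\epsilon\|_1\to|\Om|^{1/2}$.

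What does close your Step 3 is the Navier-slip term already contained in $((u,u))$. The map $u\mapsto\|u\cdot\tau\|_{L^2(\Sigma_s)}$ is continuous on $H^1(\Om)$ by the trace theorem, and for $u=(a_1,0)$ it equals $|a_1|\,|\Sigma_s|^{1/2}$. Your argument then yields, verbatim,
\begin{align*}
\|u\|_1^2\lesssim\|\mathbb{D}u\|_0^2+\beta\|u\cdot\tau\|_{L^2(\Sigma_s)}^2 ,
\end{align*}
i.e.\ $\|u\|_1^2\lesssim((u,u))$ for $\mathcal{A}=I$. This is the natural replacement, since $((\cdot,\cdot))$, slip term included, is what the energy estimates dissipate. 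Alternatively, impose an orthogonality condition removing the translation, such as $\int_\Om u_1=0$.
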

\begin{proof}
 The proof proceeds in the same manner as that of Theorem 2.2 in \cite{YXD2}.
\end{proof}
}
%%%%%%%%%%%%%%%%%%%%%%%%%%%%%%%%%%%%%%%%%%%%%%
\subsection{Time-Dependent Inner Products and Function Spaces}
%%%%%%%%%%%%%%%%%%%%%%%%%%%%%%%%%%%%%%%%%%%%%%

Suppose that $\eta$ is given and that $\mathcal{A}$, $J$ and $\mathcal{N}$, etc are determined in terms of $\eta$. Let us define
\begin{equation}
((u,v)):=\int_{\Om}\frac{\mu}{2}\mathbb{D}_{\mathcal{A}}u:\mathbb{D}_{\mathcal{A}}vJ+\int_{\Sigma_s}\beta(u\cdot\tau)(v\cdot\tau)J.
\end{equation}
\begin{align}{\label{equ:product}}(\rho_1,\rho_2)_{1,\Sigma_{0}}=&g\int_{0}^{\pi}\rho_{0}\rho_1\rho_2\sin\theta d\theta+\sigma\int_{0}^{\pi}\frac{\rho_{0}\rho_1'\rho_2'}{(\rho_{0}^{2}+\rho_{0}'^{2})^{\frac{3}{2}}}d\theta-\sigma\int_{0}^{\pi}\frac{\rho_{0}'\rho_1'\rho_2}{(\rho_{0}^{2}+\rho_{0}'^{2})^{\frac{3}{2}}}d\theta \notag\\
    &-\sigma \int_{0}^{\pi}\frac{\rho_{0}'\rho_1\rho_2'}{(\rho_{0}^{2}+\rho_{0}'^{2})^{\frac{3}{2}}}d\theta+\sigma\int_{0}^{\pi}\frac{\rho_{0}''\rho_{0}-\rho_{0}'^{2}-\rho_{0}^{2}}{(\rho_{0}^{2}+\rho_{0}'^{2})^{\frac{3}{2}}}\rho_1\rho_2d\theta,
\end{align}
\begin{equation}{\label{eq:inner_product}}
(\rho_1,\rho_2)_{1,\Sigma}:=(\rho_{1},\rho_{2})_{1,\Sigma_{0}}+\int_{0}^{\pi}\mathcal{R}_{1a} (\rho_{0},\p_{\theta}\eta,\eta)(\p_{\theta}\rho_{1})\p_{\theta}\rho_{2},
\end{equation}
\begin{equation}{\label{eq:inner_product——1}}
(\rho_1,\rho_2)_{1,\Sigma_{k}}:=(\rho_{1},\rho_{2})_{1,\Sigma_{0}}+\int_{0}^{\pi}\mathcal{R}_{1a} (\rho_{0},\p_{\theta}\eta^{k},\eta^{k})(\p_{\theta}\rho_{1})\p_{\theta}\rho_{2},
\end{equation}
\begin{equation}\label{sum_point}
[a,b]_\pi:=\kappa\Big(a(\pi)b(\pi)+a(0)b(0)\Big).
\end{equation}
We denote $\|\xi\|_{1,\Sigma}:=\sqrt{(\xi,\xi)_{1,\Sigma}}$ and $[\phi]_\pi :=\sqrt{[\phi,\phi]_\pi}$.

For convenience, let us define some time-dependent spaces
\begin{equation}
\mathcal{H}^0(\Om):=\Big\{u:\Om\rightarrow\mathbb{R}^2\Big|\sqrt{J}u\in H^0(\Om)\Big\},
\end{equation}
\begin{equation}
\mathcal{H}^1(\Om):=\Big\{u:\Om\rightarrow\mathbb{R}^2\Big|((u,u))<\infty, u
\cdot\nu=0\ \text{on}\ \Sigma_s\Big\}.
\end{equation}
\begin{equation}
{}_0\mathcal{H}^1(\Om):=\Big\{u\in H^1(\Om)\Big|u\cdot\nu=0\ \text{on}\ \Sigma_s\Big\},
\end{equation}
endowed with the norm $\|u\|_{{}_0\mathcal{H}^1}^{2}:=((u,u))+[u\cdot \mathcal{N}_{0}]_{\pi}^{2}$ ,and 
\begin{equation}
  \mathring{\mathcal{H}}^k(U):=\left\{f\in \mathcal{H}^k(U)\Big| \int_Uf=0\right\},
\end{equation}
where $U=\Om$ or $(0, \pi)$ and $k\in\mathbb{N}$

Finally, we define the inner products on $L^2([0, T]; H^k(\Om))$ for $k=0,1$ as
$
(u,v)_{\mathcal{H}^1_T}=\int_0^T\Big(u(t),v(t)\Big)_{\mathcal{H}^1}\,\mathrm{d}t$,
and write $\mathcal{H}^1_T$ as the corresponding spaces with the corresponding norms $\|u\|_{\mathcal{H}^1_T}$.

For some forces $\mathcal{F}^j$ and $\psi \in \mathcal{H}^1$, denote
\begin{align}
    \left<\mathcal{F}^j,\psi\right>_{(\mathcal{H}^1_T)^\ast} :=\int_0^T\left<\mathcal{F}^j,\psi\right>_{(\mathcal{H}^1)^\ast},\quad \left<\mathcal{F}^j,\psi\right>_{(\mathcal{H}^1)^\ast}: =\int_{\Om}F^{1,j}\cdot \psi J-\int_{0}^{\pi}F^{4,j}\cdot \psi-\int_{\Sigma_s}F^{5,j}(\psi\cdot\tau)J.
\end{align}

The following lemma implies that $\mathcal{H}^1(\Om)$ is equivalent to ${}_0H^1(\Om)$.
\begin{lemma}\label{lem:equivalence_norm}
There exists a small universal $\delta_0>0$ such that if $\sup_{0\le t\le T}\|\eta(t)\|_{W^{3-1/q_-, q_-}}<\delta_0$,
then
$
\frac{1}{\sqrt{2}}\|u\|_k\le\|u\|_{\mathcal{H}^k}\le\sqrt{2}\|u\|_k$
for $k=0, 1$ and for all $t\in[0, T]$. As a consequence, for $k=0, 1$,
  $
  \|u\|_{L^2H^k(\Om)}\lesssim \|u\|_{\mathcal{H}^k_T(\Om)}\lesssim \|u\|_{L^2H^k(\Om)}$.
\end{lemma}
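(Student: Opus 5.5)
The plan is to compare the weighted quantities pointwise in terms of the Jacobian $J$ and the matrix $\mathcal{A}$, both of which are determined by $\eta$. The key tool is the property $|\nabla\Phi - I|\lesssim |\nabla \bar\eta|$, where $\bar\eta$ is the extension of $\eta$ into $\Omega$ used to build $\Phi$. Combined with Sobolev embedding, this gives $\|J-1\|_{L^\infty(\Omega)}+\|\mathcal{A}-I\|_{L^\infty(\Omega)}\lesssim \|\eta\|_{W^{3-1/q_-,q_-}}$. The embedding applies because $W^{3-1/q_-,q_-}(0,\pi)$ embeds into $C^{1,\gamma}$ in one dimension, and the harmonic-type extension $\bar\eta$ gains $1/q_-$ derivatives. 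As a result $\nabla\bar\eta$, and hence $\nabla\Phi$, is continuous and bounded up to the boundary by $\|\eta\|_{W^{3-1/q_-,q_-}}$. Choosing $\delta_0$ small therefore makes $J$ and $\mathcal{A}$ uniformly close to $1$ and $I$ for all $t\in[0,T]$.

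For $k=0$ the argument is immediate. Since $\|u\|_{\mathcal{H}^0}^2=\int_\Omega |u|^2 J$, the bound $|J-1|\le 1/2$ gives $\tfrac12\|u\|_0^2\le \|u\|_{\mathcal{H}^0}^2\le \tfrac32\|u\|_0^2$. This is stronger than the stated $1/\sqrt2$, $\sqrt2$ bounds.

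For $k=1$ I will first compare $((u,u))$ with its flat counterpart $((u,u))_0:=\int_\Omega \frac{\mu}{2}|\mathbb{D}u|^2+\int_{\Sigma_s}\beta |u\cdot\tau|^2$. Writing $\mathbb{D}_{\mathcal{A}}u-\mathbb{D}u=(\mathcal{A}-I)\nabla u+\nabla u^T(\mathcal{A}-I)^T$ gives
\[
\bigl|((u,u))-((u,u))_0\bigr|\lesssim \bigl(\|\mathcal{A}-I\|_{L^\infty}+\|J-1\|_{L^\infty}\bigr)\|u\|_1^2\lesssim \delta_0\|u\|_1^2 .
\]
The boundary term on $\Sigma_s$ is handled through the trace of $J-1$. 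Next, Theorem \ref{thm:poin} (the Poincar\'e–Korn bound) gives $\|u\|_1^2\lesssim \|\mathbb{D}u\|_0^2\lesssim ((u,u))_0$, and trivially $((u,u))_0\lesssim\|u\|_1^2$ by the trace theorem. The endpoint term $[u\cdot\mathcal{N}_0]_\pi^2$ in the norm is also bounded by $\|u\|_1^2$: the trace of $u$ on $\Sigma$ lies in $H^{1/2}$, which does not embed into $L^\infty$ in one dimension. Near the contact points, however, one can instead use the trace on $\Sigma_s$ together with the fact that $u\cdot\mathcal{N}_0$ at the corner is controlled through the $H^1$ norm in the presence of the corner geometry. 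Alternatively, if this fails, I will interpret $\|u\|_1$ as the ${}_0H^1$ norm, which contains the same endpoint term, so that the term cancels in the comparison.

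With these bounds in hand, absorbing the $\delta_0\|u\|_1^2$ error for $\delta_0$ small yields $\tfrac12\|u\|_1^2\le\|u\|_{\mathcal{H}^1}^2\le 2\|u\|_1^2$, up to normalizing the constants in the definition of $\|\cdot\|_1$. The main obstacle is matching the exact constants $1/\sqrt2$ and $\sqrt2$. Because the Korn constant is not $1$, the claim as written only holds if $\|\cdot\|_1$ denotes the corresponding flat norm $((\cdot,\cdot))_0+[\,\cdot\,]_\pi^2$. I will adopt that convention, as in \cite{YXD2}, so that only the perturbative $\delta_0$ error needs absorbing. The time-integrated consequence then follows by squaring, integrating over $[0,T]$, and using that the pointwise bounds are uniform in $t$.
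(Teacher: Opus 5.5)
Your route is essentially the paper's: the Guo--Tice perturbative comparison (smallness of $J-1$ and $\mathcal{A}-I$ in $L^\infty$ from $\|\eta\|_{W^{3-1/q_-,q_-}}$), combined with Korn and trace theory. Your $k=0$ argument and your reading of $\|\cdot\|_1$ as the flat norm are correct. Two steps, however, need repair.

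\textbf{The endpoint term.} Drop the attempt to bound $[u\cdot\mathcal{N}_0]_\pi$ by $\|u\|_{H^1}$; it is false. In two dimensions an $H^1(\Omega)$ function has no point values at the contact points, whatever the corner geometry. For example, take $\log|\log r|\,(1,0)$, cut off near a contact point, with $r$ the distance to that point. It lies in ${}_0H^1$, but $u\cdot\mathcal{N}_0$ is unbounded there. The bound is also unnecessary. Indeed, $\|u\|_{\mathcal{H}^1}^2=((u,u))$ contains no endpoint term. Where one does appear (the ${}_0H^1$ and ${}_0\mathcal{H}^1$ norms), it is $[u\cdot\mathcal{N}_0]_\pi^2$ with the fixed normal $\mathcal{N}_0$ on both sides, so it cancels exactly. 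Your fallback is the right reading.

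\textbf{The Korn--Poincar\'e step.} The inequality $\|u\|_1^2\lesssim\|\mathbb{D}u\|_0^2$ that you take from Theorem~\ref{thm:poin} is false in the droplet geometry. The field $u=(1,0)$ satisfies $u\cdot\nu=0$ on the flat $\Sigma_s$, $\operatorname{div}u=0$ and $\mathbb{D}u=0$. It even has $\int_0^\pi u\cdot\mathcal{N}_0\,d\theta=[\rho_0\sin\theta]_0^\pi=0$. Unlike the vessel case, there are no side walls to exclude horizontal translations.

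What your absorption step actually needs is $\|u\|_1^2\lesssim((u,u))_0$, and this does hold thanks to the friction term. Suppose $\|u_n\|_1=1$ and $((u_n,u_n))_0\to0$. Rellich, together with Lemma~\ref{lem:korn} applied to $u_n-u_m$, gives $u_n\to u$ in $H^1$ with $\|u\|_1=1$, $\mathbb{D}u=0$ and $u\cdot\nu=0$ on $\Sigma_s$. Hence $u=(a,0)$, and $\beta\int_{\Sigma_s}|u\cdot\tau|^2=0$ forces $a=0$, a contradiction.

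With this replacement, your bound $|((u,u))-((u,u))_0|\lesssim\delta_0((u,u))_0$ and the absorption yield the stated constants. The time-integrated consequence then follows as you describe.
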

\begin{proof}
  For the case of $k=0$, it is directly the result in \cite{GT1}. When $k=1$, we might use the similar calculation as the proof of Lemma 2.1 in \cite{GT1} together with the Korn-type inequality in Lemma \ref{lem:korn} and trace theory to derive the results.
\end{proof}

Let $\mathcal{A} : \Omega(t) \to \Omega_{0}$ be the transformation map, and let $\nabla \Phi$ denote the associated transformation matrix. Set $J = \det(\nabla \Phi)$ and $K=J^{-1}$. We next establish the relationship between the time-independent function spaces and their time-dependent counterparts.
We consider the matrix
\begin{equation}\label{def:M}
M:=M(t)=K\nabla\Phi=(J\mathcal{A}^T)^{-1},
\end{equation}
which induces a linear operator $\mathcal{M}_t : u\mapsto M(t)u$.

\begin{proposition}\label{prop:isomorphism}
Assume that $\eta\in W^{3-1/q_+, q_+}(0, \pi)$. We have:
 $(i)$. For each $t\in [0,T]$, $\mathcal{M}_t $ is a bounded isomorphism from $W^{k,q_+}(\Om)$ to $W^{k,q_+}(\Om)$ for $k=0, 1,2$; from $W^{k,q_-}(\Om)$ to $W^{k,q_-}(\Om)$ for $k=0, 1,2$.
$(ii)$. For each $t\in [0,T]$, $\mathcal{M}_t $ is a bounded isomorphism from ${}_0H^1(\Om)$ to $\mathcal{H}^1(\Om)$. Moreover,
  $
  \|Mu\|_{\mathcal{H}^1}\lesssim (1+\|\eta\|_{W^{3-1/q_-, q_-}})\|u\|_1$.
  $(iii)$. For each $t
  \in [0,T]$, $\mathcal{M}_t $ is a bounded isomorphism from $H^{1+\varepsilon_-/2}(\Om)$ to $H^{1+\varepsilon_-/2}(\Om)$. $(iv)$. Let $u\in H^1(\Om)$. Then $\dive u=p$ if and only if $\dive_{\mathcal{A}}(Mu)=Kp$.
  $(v)$ $ M^\top\mathcal{N}=\mathcal{N}_0\ \text{on}\ \Sigma$.
\end{proposition}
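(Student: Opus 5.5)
The plan is to work pointwise in time, since all claims in Proposition~\ref{prop:isomorphism} are statements for fixed $t$. First I would record the basic bounds on $M=K\nabla\Phi$. By construction $\nabla\Phi-I$ is controlled by $\nabla\bar\eta$, where $\bar\eta$ is the harmonic-type extension of $\eta$ into $\Omega$. For $\eta\in W^{3-1/q_+,q_+}(0,\pi)$, the extension lies in $W^{3,q_+}(\Omega)$, so $\nabla\Phi\in W^{2,q_+}(\Omega)$. Since $q_+>1$ in two dimensions, $W^{2,q_+}\hookrightarrow C^{0}$, and $W^{2,q_+}$ is a Banach algebra. With $J=\det\nabla\Phi$ bounded below (by smallness of $\eta$, as in Lemma~\ref{lem:equivalence_norm}), it follows that $K=J^{-1}$ and $M^{-1}=J\mathcal{A}^T=J^{-1}\cdot(\text{cofactor})$ belong to $W^{2,q_+}$ as well. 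Hence $M$ and $M^{-1}$ are both $I+O(\|\eta\|_{W^{3-1/q_+,q_+}})$ in $W^{2,q_+}\cap L^\infty$.

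For (i), I would show that multiplication by a $W^{2,q_\pm}\cap L^\infty$ matrix preserves $W^{k,q}$ for $k\le2$. Expand $\partial^2(Mu)$ by the Leibniz rule.
\begin{itemize}
\item The term $M\partial^2u$ is handled by $M\in L^\infty$.
\item The term $\partial M\,\partial u$ is handled by $\partial M\in W^{1,q_+}\hookrightarrow L^{r}$ for all $r<\infty$ (since $q_+>1$), combined with $\partial u\in W^{1,q}\hookrightarrow L^{s}$ and H\"older.
\item The term $(\partial^2M)u$ is handled by $u\in L^\infty$ and $\partial^2 M\in L^{q_+}\subset L^{q_-}$ on the bounded domain. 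For $q=q_+$ this is immediate; for $q=q_-$ use $q_-<q_+$.
\end{itemize}
The inverse is multiplication by $M^{-1}$, which has the same regularity, so $\mathcal{M}_t$ is an isomorphism.

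Item (iii) follows by interpolating (i) between $H^1$ and $H^2$: it suffices that multiplication by $M$ is bounded on $H^1$ and $H^{2-\delta}$, which follows from $M\in W^{1,r}$ for all $r<\infty$ together with $M\in L^\infty$. Alternatively, one can use fractional Leibniz estimates with $M\in W^{2,q_+}\hookrightarrow H^{1+\varepsilon_+/2}$-type multipliers. This step is where I expect the main care to be needed: the multiplier must have slightly more than $H^{1+\varepsilon_-/2}$ regularity plus $L^\infty$, and the gap $\varepsilon_-<\varepsilon_+$ in \eqref{parameters} is exactly what should provide it.

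For (iv), I would use the Piola identity $\partial_j(J\mathcal{A}_{ij})=0$. Since $M=(J\mathcal{A}^T)^{-1}$, we have $J\mathcal{A}^TM=I$, and $\dive_{\mathcal{A}}(Mu)=\mathcal{A}_{ij}\partial_j(M_{ik}u_k)$. Multiplying by $J$ and using Piola gives $J\dive_{\mathcal{A}}(Mu)=\partial_j(J\mathcal{A}_{ij}M_{ik}u_k)=\partial_j u_j=\dive u$. Therefore $\dive u=p$ if and only if $\dive_{\mathcal{A}}(Mu)=Kp$.

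For (v), note that $\mathcal{N}=J\mathcal{A}\mathcal{N}_0$ on $\Sigma$ (the cofactor transformation of normals), so $M^T\mathcal{N}=(J\mathcal{A})^{-1}J\mathcal{A}\mathcal{N}_0=\mathcal{N}_0$.

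For (ii), the boundary condition $u\cdot\nu=0$ on $\Sigma_s$ must be preserved. Since $\Phi$ fixes $\Sigma_s$ and maps it tangentially, $\nabla\Phi$ preserves $\nu$ up to scaling there, so $Mu\cdot\nu=0$ whenever $u\cdot\nu=0$. The $H^1$ bound follows from (i) with $k=1$, using $\partial M\in L^r$ and $u\in L^{r'}$. The stated dependence on $\|\eta\|_{W^{3-1/q_-,q_-}}$ uses only $W^{2,q_-}\hookrightarrow L^\infty$ and $W^{1,q_-}\hookrightarrow L^{r}$ for $r<2q_-/(2-q_-)$, which still exceeds $2$. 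Combining this with Lemma~\ref{lem:equivalence_norm} to pass from $H^1$ to $\mathcal{H}^1$ yields $\|Mu\|_{\mathcal{H}^1}\lesssim(1+\|\eta\|_{W^{3-1/q_-,q_-}})\|u\|_1$, and the inverse is handled symmetrically.
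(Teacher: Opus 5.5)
Your route differs from the paper's. The paper proves only (v) directly, exactly as you do from $J\mathcal{A}\mathcal{N}_0=\mathcal{N}$, and obtains (i)--(iv) by citing \cite{GT2020}. You instead derive everything from three structural facts: $\nabla\Phi-I\in W^{2,q_+}(\Omega)$ controlled by $\eta$ with $J$ bounded below; the Piola identity $\partial_j(J\mathcal{A}_{ij})=0$; and $\Phi(\Sigma_s)\subset\Sigma_s$. This buys transparency. The polar, moving-pole flattening used here is not literally the one in \cite{GT2020}, and your argument isolates what has to be checked for it. Your (iv) and (v) are correct, and the plan for (i) and (ii) works. However, several steps are misstated and one justification fails.

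\emph{Embedding exponent.} In two dimensions $W^{1,q_+}\hookrightarrow L^{2q_+/(2-q_+)}=L^{2/(1-\varepsilon_+)}$ only. Embedding into every $L^r$, $r<\infty$, needs $q\ge 2$. So $\nabla M\in L^{2/(1-\varepsilon_+)}$, and $M$ is not in $W^{1,r}$ for all $r$. Item (i) still closes with the correct exponent. For $u\in W^{2,q_\pm}$ you have $\partial u\in L^{2/(1-\varepsilon_\pm)}$, so $\partial M\,\partial u\in L^s$ with $1/s=(1-\varepsilon_+)/2+(1-\varepsilon_\pm)/2\le 1-\varepsilon_\pm/2=1/q_\pm$. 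The same works for $(\partial M)u$ at $k=1$.

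\emph{Item (iii).} Your first argument does not work. Item (i) concerns $W^{k,q_\pm}$ with $q_\pm<2$, not $H^k$. Multiplication by $M$ is not bounded on $H^2$: testing with $u\equiv 1$ gives $Mu=M$, which lies in $W^{2,q_+}$ but not necessarily in $H^2$. By the same test, membership of $M$ in $W^{1,r}\cap L^\infty$ cannot by itself give boundedness on $H^s$ for $s>1$, since that would force $M\in H^s$. What works is your alternative, made precise. One has $W^{2,q_+}(\Omega)\hookrightarrow H^{3-2/q_+}(\Omega)=H^{1+\varepsilon_+}(\Omega)$. Since $1+\varepsilon_+>1=d/2$, the estimate $\|fg\|_{H^s}\lesssim\|f\|_{H^{1+\varepsilon_+}}\|g\|_{H^s}$ holds for $0\le s\le 1+\varepsilon_+$, by extension from the Lipschitz domain. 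Apply it with $s=1+\varepsilon_-/2$ to both $M$ and $M^{-1}$. The gap $\varepsilon_-<\varepsilon_+$ plays no role here; what matters is that the multiplier's regularity exceeds $d/2$ and the target index.

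\emph{Boundary condition in (ii).} It is $(\nabla\Phi)^T$, not $\nabla\Phi$, that maps $\nu$ to a multiple of itself; $\nabla\Phi\,\nu$ generally has a tangential component. From $\Phi(\Sigma_s)\subset\Sigma_s$ you get $\nabla\Phi\,\tau\parallel\tau$, hence $(\nabla\Phi)^T\nu=\lambda\nu$ with $\lambda\neq 0$. Then $Mu\cdot\nu=K\,u\cdot(\nabla\Phi)^T\nu=K\lambda\,u\cdot\nu$.

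\emph{Minor slip.} $M^{-1}=J\mathcal{A}^T=\operatorname{adj}(\nabla\Phi)$ carries no factor $J^{-1}$. This is harmless for the regularity claims.
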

\begin{proof}
These results follow from \cite{GT2020}. The equality $J\mathcal{A}\mathcal{N}_0=\mathcal{N}$
  gives $M^\top\cdot\mathcal{N}= K\nabla \Phi^\top \cdot \mathcal{N}=\mathcal{N}_0$.
\end{proof}
The following proposition is also useful.
\begin{proposition}\label{prop:solid_boundary}
  If $u\cdot\nu=0$ on $\Sigma_s$, then $Ru\cdot\nu=0$ on $\Sigma_s$, where $R:=\p_tMM^{-1}$.
\end{proposition}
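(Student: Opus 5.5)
The plan is to reduce the statement to the definitions $M=K\nabla\Phi$ and $R=\partial_t M M^{-1}$, and to use the one structural property of the diffeomorphism that is really needed: it maps the solid boundary $\Sigma_s$ into itself for every $t$. Since $u\cdot\nu=0$ on $\Sigma_s$ and the condition is pointwise, it suffices to show that on $\Sigma_s$ the matrix $R$ maps vectors tangent to $\Sigma_s$ to vectors tangent to $\Sigma_s$. Equivalently, we show $\nu^{T}R\tau=0$ on $\Sigma_s$, with $\tau$ the unit tangent to $\Sigma_s$. Indeed, $u\cdot\nu=0$ means $u=(u\cdot\tau)\tau$, so $Ru\cdot\nu=(u\cdot\tau)\,\nu^{T}R\tau$.

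First I would record that $\Phi(t,\cdot)$, in the construction of \cite{YXD1}, fixes the flat bottom $\{y=0\}$ as a set: $\Phi(t,x,0)=(\phi(t,x),0)$ for some scalar $\phi$. The polar extension built from $\bar\xi$ only rescales the radial variable, and the rays $\theta=0,\pi$ stay on the bottom. Differentiating along the boundary gives $\nabla\Phi\,\tau\parallel\tau$ on $\Sigma_s$; here $\tau=e_x$ and $\nu=-e_y$. Hence $\nabla\Phi$ restricted to $\Sigma_s$ has the structure
\begin{equation*}
\nabla\Phi=\begin{pmatrix} a & b\\ 0 & d\end{pmatrix},
\end{equation*}
which is upper triangular in the $(\tau,\nu)$ frame. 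Multiplying by the scalar $K$ keeps this structure, so $M$ is upper triangular on $\Sigma_s$ for every $t$.

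Next I use that upper-triangular matrices form an algebra that is closed under inversion and under differentiation in $t$. The boundary $\Sigma_s$ is time independent, so $\partial_t$ commutes with the restriction to $\Sigma_s$. Thus $\partial_t M$ and $M^{-1}$ are upper triangular there, and so is their product $R$. In particular its $(\nu,\tau)$ entry vanishes, i.e.\ $\nu^{T}R\tau=0$, and this gives $Ru\cdot\nu=0$ on $\Sigma_s$.

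The main obstacle is the first step: justifying, from the explicit construction of $\Phi$ in \cite{YXD1}, that $\Sigma_s$ is preserved with $\nabla\Phi\,e_x\parallel e_x$ there for all $t$, including the moving-pole shift $\mathfrak n(t)$. The pole shift is a horizontal translation, which preserves $\{y=0\}$, but one has to check that the extension operator does not introduce a vertical component of the boundary map near the contact points. I would verify this directly from the polar form $\Phi(\theta,r)=(\theta, r(1+\bar\xi/\rho_0))$, composed with the translation, where $\theta\in\{0,\pi\}$ is clearly preserved. Invertibility of $M$ follows from the smallness of $\eta$, as in Lemma \ref{lem:equivalence_norm}.
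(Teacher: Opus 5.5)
Your argument is correct and is essentially the paper's proof. The paper uses that $M$, hence $M^{-1}$, preserves the condition $\cdot\,\nu=0$ on $\Sigma_s$, differentiates $M^{-1}u\cdot\nu=0$ in $t$, and writes $R=-M\,\partial_t(M^{-1})$; this is exactly your closure of the upper-triangular algebra under inversion, $\partial_t$, and products. The only differences are that you derive the tangency-preservation of $M$ from $\Phi(\Sigma_s)\subset\Sigma_s$ rather than citing \cite{GT18}, and that your pointwise-in-$t$ matrix formulation never has to differentiate $u$ itself.
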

\begin{proof}
  According to \cite{GT18}, we have known that $Mu\cdot\nu=0\Leftrightarrow u\cdot\nu=0$ on $\Sigma_s$, which implies that
  $M^{-1}u\cdot\nu=0\Leftrightarrow u\cdot\nu=0$ on $\Sigma_s$.
  Then by definition of $R$,
  \begin{equation}
  Ru\cdot\nu=\p_tMM^{-1}u\cdot\nu=-M\p_t(M^{-1}u)\cdot\nu=0,
  \end{equation}
  since $\p_t(M^{-1}u)\cdot\nu=\p_t(M^{-1}u\cdot\nu)=0$.
\end{proof}

Define the time-dependent spaces:
\begin{align}\label{def:w}
\mathcal{W}(t):=\{u\in{}_0\mathcal{H}^1(t): \quad u\cdot\mathcal{N}\in \mathring{\mathcal{H}}^1(0, \pi)\},
 \end{align}
endowed with the norm $\|u\|_{\mathcal{W}}=\|u\|_{{}_{0}\mathcal{H}^{1}}+(u\cdot \mathcal{N},u\cdot \mathcal{N})_{H^{1}}$ and the subspace $\mathcal{W}_\sigma(t)$
 \begin{align}\label{def:w-sigma}
\mathcal{W}_\sigma(t):=\{u\in \mathcal{W}(t): \dive_{\mathcal{A}}u=0\}.
 \end{align}

Define the operator $D_t$ as
\begin{equation}\label{def:Dt_u}
D_tu:=\p_tu-Ru\quad\text{for}\quad R:=\p_tMM^{-1},
\end{equation}
with $M=K\nabla\Phi$. It is easy to see that $D_t$ preserves the $\dive_{\mathcal{A}}$--free condition, since
\begin{equation}\label{eq:dive_dt}
  J\dive_{\mathcal{A}}(D_tv)=J\dive_{\mathcal{A}}(M\p_t(M^{-1}v))=\dive(\p_t(M^{-1}v))=\p_t\dive(M^{-1}v)=\p_t(J\dive_{\mathcal{A}}v),
\end{equation}
where in the second and last equality, we used the equality $J\dive_{\mathcal{A}}v=\dive(M^{-1}v)$, which is proved, according to \cite[Lemma A]{ZhT17} and the definition \eqref{def:M} of $M$, as
\begin{equation}\label{eq:div-a-div}
J\dive_{\mathcal{A}}v=J\mathcal{A}_{ij}\p_jv_i=\p_j(J\mathcal{A}_{ij}v_i)=\dive(J\mathcal{A}^\top v)=\dive(M^{-1}v).
\end{equation}

%%%%%%%%%%%%%%%%%%%%%%%%%%%%%%%%%%%%%%%%%%%%%%
\section{From weak solutions to Strong solutions}\label{sec:linear}
%%%%%%%%%%%%%%%%%%%%%%%%%%%%%%%%%%%%%%%%%%%%%%

In this section, we show that the weak solution can  be used to recover the pressure and further be upgraded to strong solution under proper regularity assumptions.

%%%%%%%%%%%%%%%%%%%%%%%%%%%%%%%%%%%%%%%%%%%%%%
\subsection{ Derivation of $\mathfrak{m}(t)$}
%%%%%%%%%%%%%%%%%%%%%%%%%%%%%%%%%%%%%%%%%%%%%%

In the system \eqref{eq:quasi_linear}, there exists an unknown parameter $\mathfrak{m}(t)$. We emphasize that the notation $\mathfrak m(t)$ is used with a mild abuse. In both the original system \eqref{eq:quasi_linear} and the smoothed given-data system \eqref{eq:quasi_linear_{s}}, $\mathfrak m(t)$ is determined by imposing the orthogonality condition $\int_{0}^{\pi}\partial_t \xi_d\xi_s=0$.
However, the definition of $\xi_d$ differs from one system to the other. Thus, the same notation $\mathfrak m(t)$ refers to different quantities in the two settings. We will make clear, at each step, which parameters are fixed and how the relevant quantities are defined. The following two theorems provide the corresponding explicit expressions for $\mathfrak m(t)$ and the estimates needed later for different systems.

\begin{theorem}\label{thm:m}
Suppose that $\mathfrak{m}(t)$ is the function of $t$ appearing in \eqref{eq:quasi_linear}, chosen so that $ \int_{0}^{\pi}\partial_{t}\xi_{d}\,\xi_{s}=0.$
Then $\mathfrak{m}(t)$ obeys the following estimate
\begin{align}
\begin{aligned}
\|\mathfrak{m}(t)\|_{L_{t}^{\infty}}
\lesssim \frac{1}{\|\xi_{s}\|_{L^{2}}}\Big(
&\|v_{d}\|_{L_{t}^{\infty}H^{1}}
+\|v\|_{L_{t}^{\infty}H^{1}}+(\|u\|_{L_{t}^{\infty}H^{1}}+\|\partial_{t}u\|_{L_{t}^{\infty}H^{1}})
  (\|\xi_{d}\|_{L_{t}^{\infty}H^{1}}+\|\xi\|_{L_{t}^{\infty}H^{1}}) \\
&+(\|\mathfrak{n}^{\prime\prime}(t)\|_{L_{t}^{\infty}}+\|\mathfrak{n}^{\prime}(t)\|_{L_{t}^{\infty}})
  (\|\xi_{d}\|_{L_{t}^{\infty}H^{1}}+\|\xi\|_{L_{t}^{\infty}H^{1}})+\sup_{0\le t\le T}\mathcal{E}(u,p,\eta)
\Big).
\end{aligned}
\end{align}
\end{theorem}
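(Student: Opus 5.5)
The plan is to derive an explicit formula for $\mathfrak{m}(t)$ by testing the kinematic equation of \eqref{eq:quasi_linear} against $\xi_s$. We multiply the $\partial_t\xi_d$ equation by $\xi_s$, integrate over $(0,\pi)$, and impose the orthogonality condition $\int_0^\pi \partial_t\xi_d\,\xi_s=0$. Since $\mathfrak{m}(t)$ enters only through the term $-\mathfrak{m}(t)\xi_s$, this gives
\begin{align*}
\mathfrak{m}(t)\|\xi_s\|_{L^2}^2=\int_0^\pi \Big(\frac{1}{\rho}v_d\cdot\mathcal{N}+\frac{1}{\rho}(Rv)\cdot\mathcal{N}+\mathcal{I}(t)+I_2+I_3+F^6\Big)\xi_s\,d\theta ,
\end{align*}
where $\mathcal{I}(t)$ collects the time-integral terms in the kinematic equation. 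Dividing by $\|\xi_s\|_{L^2}^2$ and using Cauchy--Schwarz yields $|\mathfrak{m}(t)|\le \|\xi_s\|_{L^2}^{-1}\|\text{RHS}\|_{L^2(0,\pi)}$. So it suffices to bound each term in $L^2(0,\pi)$, uniformly in $t$, by the quantities in the statement.

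We estimate the terms one at a time.
\begin{itemize}
\item[(1)] For $\frac1\rho v_d\cdot\mathcal{N}$, we use $\rho\ge \rho_0/2$ (by smallness of $\eta$), boundedness of $\mathcal{N}$ in $L^\infty$ (from $\eta\in W^{3-1/q_+,q_+}\hookrightarrow C^1$), and the trace theorem. This gives the bound $\|v_d\|_{H^1}$.
\item[(2)] For $(Rv)\cdot\mathcal{N}$, recall $R=\partial_tMM^{-1}$ with $|\partial_t\nabla\Phi|\lesssim|\nabla\partial_t\eta|$. We use Proposition \ref{prop:isomorphism} together with the trace theorem and Sobolev embedding to get the bound $\|\partial_t\eta\|_{W^{3-1/q_+}}\|v\|_{H^1}\lesssim\|v\|_{H^1}$, absorbing the energy factor of $\eta$ into the constant.
\item[(3)] The time integrals have the form $\int_0^t(\text{traces of }u,\partial_t u)\cdot\partial_t\mathcal{N}(\xi)$ and $\int_0^t u_\theta\,\partial_t\partial_\theta\xi_d/\rho$. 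These are bilinear in a trace of $u$ or $\partial_t u$ and a first $\theta$-derivative of $\xi$ or $\xi_d$. We bound the $u$-factor in $L^\infty(0,\pi)$ via $H^1(\Omega)\to H^{1/2}(\Sigma)$, using the higher norm $W^{2,q_+}$ where an $L^\infty$ bound is needed, and the $\xi$-factor in $L^2$. After integrating over the short time interval (with $T\le1$), this gives $(\|u\|_{L^\infty_tH^1}+\|\partial_tu\|_{L^\infty_tH^1})(\|\xi_d\|_{L^\infty_tH^1}+\|\xi\|_{L^\infty_tH^1})$.
\item[(4)] The $\mathfrak{n}$-terms are handled the same way, with $\mathfrak{n}',\mathfrak{n}''$ pulled out in $L^\infty_t$ and $\sin\theta/\rho$ bounded. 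This produces the $\mathfrak{n}$-product term.
\item[(5)] The initial terms $I_2,I_3$ and the source $F^6(u,p)$ are nonlinear in the given data. We bound them by $\sup_t\mathcal{E}(u,p,\eta)$, using the structure of $F^6$ from \cite{YXD1} and $|\mathfrak{n}'(0)|^2\le\mathcal{E}(0)$.
\end{itemize}

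The main obstacle is that several integrands in (3)--(4) contain $\partial_t\partial_\theta\xi=\partial_\theta\xi_d$ (and even $\partial_t\partial_\theta\xi_d$ in the $\mathfrak{n}'$-term) inside a time integral. Pointwise in $\theta$ these are not controlled by $\|\xi_d\|_{H^1}$ alone. I would first handle these by testing against the smooth $\xi_s$ rather than estimating in $L^2$ before pairing. For terms of the form $\int_0^t a\,\partial_t\partial_\theta\xi_d$, we integrate by parts in time, so that $\int_0^t a\,\partial_t\partial_\theta\xi_d=[a\,\partial_\theta\xi_d]_0^t-\int_0^t\partial_ta\,\partial_\theta\xi_d$. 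If needed, we also integrate by parts in $\theta$ against $\xi_s$, with boundary contributions controlled by traces of $\xi_d$ in $H^1\hookrightarrow C^0$. This reduces everything to $\|\xi_d\|_{L^\infty_tH^1}$ times $L^\infty_t$ norms of $u$, $\partial_tu$, and $\mathfrak{n}',\mathfrak{n}''$, which is the claimed estimate.
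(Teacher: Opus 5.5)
Your proposal follows the paper's argument: test the kinematic condition of \eqref{eq:quasi_linear} against $\xi_s$, use $\int_0^\pi\partial_t\xi_d\,\xi_s=0$ to isolate $\mathfrak{m}(t)\|\xi_s\|_{L^2}^2$, and bound the remaining pairings by H\"older and trace estimates. Your time integration by parts is the mechanism behind the closed formula \eqref{eq:m}: the $\int_0^t$ terms together with $I_2$ and $I_3$ are integrals of the exact derivatives $\partial_t\bigl(\frac{1}{\rho}u\cdot\partial_t\mathcal{N}(\xi)\bigr)$ and $\partial_t\bigl(\mathfrak{n}'\frac{\partial_\theta\xi_d}{\rho}\sin\theta\bigr)$, so they telescope to pointwise-in-time quantities and the restriction $T\le 1$ is not needed.
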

\noindent Moreover, $\mathfrak{m}^{\prime}(t)$ obeys the following estimate
\begin{align}{\label{eq:m'}}
\begin{aligned}
    \|\mathfrak{m}^{\prime}(t)\|_{L_{t}^2}\lesssim\frac{1}{\|\xi_{s}\|_{L^{2}}}\bigg(&\|D_{t}v_{d}\|_{L_{t}^{2}H^{1}}+\|v_{d}\|_{L_{t}^{2}H^{1}}+\|v\|_{L_{t}^{2}H^{1}}+(\|u\|_{L^{\infty}_{t}H^{1}}+\|\p_{t}u\|_{L^{\infty}_{t}H^{1}})\|\p_{t}\xi_{d}\|_{L_{t}^{\infty}H^{1}}\\
        &+(\|u\|_{L^{\infty}_{t}H^{1}}+\|\p_{t}u\|_{L^{\infty}_{t}H^{1}})(\|\p_{t}\xi_{d}\|_{L_{t}^{\infty}H^{1}}+\|\xi_{d}\|_{L_{t}^{\infty}H^{1}}+\|\xi\|_{L_{t}^{\infty}H^{1}})\\
        &+(\|\mathfrak{n}^{\prime\prime}(t)\|_{L_{t}^{\infty}}+\|\mathfrak{n}^{\prime}(t)\|_{L_{t}^{\infty}})(\|\p_{t}\xi_{d}\|_{L_{t}^{\infty}H^{1}}+\|\xi_{d}\|_{L_{t}^{\infty}H^{1}}+\|\xi\|_{L_{t}^{\infty}H^{1}})\\
        &+\big(\int_{0}^{t}\mathcal{E}(u,p,\eta)\mathcal{D}(u,p,\eta)ds\big)^{\frac{1}{2}}\bigg).
        \end{aligned}
\end{align}

\begin{proof}
    Multiplying both sides of the kinematic boundary condition by $\xi_{s}$ and integrating over $(0,\pi)$, we obtain
\begin{align*}
\begin{aligned}
\int_{0}^{\pi}\partial_{t}\xi_{d}\,\xi_{s}
={}& \int_{0}^{\pi}\frac{1}{\rho}v_{d}\cdot \mathcal{N}\,\xi_{s}
+\int_{0}^{\pi}\frac{1}{\rho}(Rv)\cdot \mathcal{N}\,\xi_{s}
+\int_{0}^{\pi}\frac{1}{\rho}(\partial_{t}u\cdot\mathcal{N}(\xi))\,\xi_{s}
-\int_{0}^{\pi}\mathfrak{m}(t)\xi_{s}^{2} \\
&+\int_{0}^{\pi}\mathfrak{n}'(t)\Big(\frac{\partial_{t}\partial_{\theta}\xi}{\rho}\Big)\xi_{s}\sin\theta
-\int_{0}^{\pi}\frac{\partial_{t}\eta}{\rho^{2}}\,u\cdot \mathcal{N}\,\xi_{s}.
\end{aligned}
\end{align*}
Using the fact that $\int_{0}^{\pi}\xi_{s}\,\partial_{t}\xi_{d}=0,$
the identity above implies the following equation for $\mathfrak{m}(t)$
\begin{align}\label{eq:m}
\begin{aligned}
\mathfrak{m}(t)=\frac{1}{\|\xi_{s}\|_{L^{2}}^{2}}\Bigg(
&\int_{0}^{\pi}\frac{1}{\rho}v_{d}\cdot \mathcal{N}\,\xi_{s}
+\int_{0}^{\pi}\frac{1}{\rho}(Rv)\cdot \mathcal{N}\,\xi_{s}
+\int_{0}^{\pi}\frac{1}{\rho}(\partial_{t}u\cdot\mathcal{N}(\xi))\,\xi_{s} \\
&+\int_{0}^{\pi}\mathfrak{n}'(t)\Big(\frac{\partial_{t}\partial_{\theta}\xi}{\rho}\Big)\xi_{s}\sin\theta
-\int_{0}^{\pi}\frac{\partial_{t}\eta}{\rho^{2}}\,u\cdot \mathcal{N}\,\xi_{s}
\Bigg).
\end{aligned}
\end{align}
By H\"older's inequality, we derive the following estimate for $\mathfrak{m}(t)$ from the identity above
\begin{align}
\begin{aligned}
\|\mathfrak{m}(t)\|_{L_{t}^{\infty}}
\lesssim \frac{1}{\|\xi_{s}\|_{L^{2}}}\Big(
&\|v_{d}\|_{L_{t}^{\infty}H^{1}}
+(\|u\|_{L_{t}^{\infty}H^{1}}+\|\partial_{t}u\|_{L_{t}^{\infty}H^{1}})\|\xi_{d}\|_{L_{t}^{\infty}H^{1}} \\
&+(\|\mathfrak{n}''(t)\|_{L_{t}^{\infty}}+\|\mathfrak{n}'(t)\|_{L_{t}^{\infty}})\|\xi_{d}\|_{L_{t}^{\infty}H^{1}}
+\sup_{0\le t\le T}\mathcal{E}(u,\eta,p)
\Big).
\end{aligned}
\end{align}

We next derive an estimate for $\mathfrak{m}'(t)$. Differentiating \eqref{eq:m} with respect to time, we obtain
\begin{align}\label{eq:m_1}
\begin{aligned}
\mathfrak{m}'(t)=\frac{1}{\|\xi_{s}\|_{L^{2}}^{2}}\Bigg(
&-\int_{0}^{\pi}\frac{\partial_{t}\eta}{\rho^{2}}\,v_{d}\cdot \mathcal{N}\,\xi_{s}
+\int_{0}^{\pi}\frac{1}{\rho}D_{t}v_{d}\cdot \mathcal{N}\,\xi_{s}
-\int_{0}^{\pi}\frac{\partial_{t}\eta}{\rho^{2}}(Rv)\cdot \mathcal{N}\,\xi_{s}+\int_{0}^{\pi}\frac{1}{\rho}\partial_{t}(Rv)\cdot \mathcal{N}\,\xi_{s} \\
&
+\int_{0}^{\pi}\frac{1}{\rho}(Rv)\cdot \partial_{t}\mathcal{N}\,\xi_{s}
-\int_{0}^{\pi}\frac{\partial_{t}\eta}{\rho^{2}}(\partial_{t}u\cdot\mathcal{N}(\xi))\,\xi_{s}+\int_{0}^{\pi}\frac{1}{\rho}(\partial_{t}^{2}u\cdot\mathcal{N}(\xi))\,\xi_{s} \\
&
+\int_{0}^{\pi}\frac{1}{\rho}(\partial_{t}u\cdot\partial_{t}\mathcal{N}(\xi))\,\xi_{s}+\int_{0}^{\pi}\mathfrak{n}''(t)\Big(\frac{\partial_{t}\partial_{\theta}\xi}{\rho}\Big)\xi_{s}\sin\theta
+\int_{0}^{\pi}\mathfrak{n}'(t)\Big(\frac{\partial_{t}^{2}\partial_{\theta}\xi}{\rho}\Big)\xi_{s}\sin\theta \\
&-\int_{0}^{\pi}\mathfrak{n}'(t)\Big(\frac{\partial_{t}\partial_{\theta}\xi}{\rho^{2}}\Big)\partial_{t}\eta\,\xi_{s}\sin\theta
-\int_{0}^{\pi}\frac{\partial_{t}^{2}\eta}{\rho^{2}}\,u\cdot \mathcal{N}\,\xi_{s}+2\int_{0}^{\pi}\frac{(\partial_{t}\eta)^{2}}{\rho^{3}}\,u\cdot \mathcal{N}\,\xi_{s}
+\int_{0}^{\pi}\frac{\partial_{t}\eta}{\rho^{2}}D_{t}u\cdot \mathcal{N}\,\xi_{s}
\Bigg).
\end{aligned}
\end{align}
Using H\"older's inequality and the smallness of $\mathcal{E}(u,\eta,p)$, we obtain
\begin{align}
\begin{aligned}
\|\mathfrak{m}'(t)\|_{L_{t}^{2}}
\lesssim \frac{1}{\|\xi_{s}\|_{L^{2}}}\Big(
&\|D_{t}v_{d}\|_{L_{t}^{2}H^{1}}
+\|v_{d}\|_{L_{t}^{2}H^{1}}
+\|v\|_{L_{t}^{2}H^{1}} +(\|u\|_{L_{t}^{\infty}H^{1}}+\|\partial_{t}u\|_{L_{t}^{\infty}H^{1}})
  \|\partial_{t}\xi_{d}\|_{L_{t}^{\infty}H^{1}} \\
&+(\|u\|_{L_{t}^{\infty}H^{1}}+\|\partial_{t}u\|_{L_{t}^{\infty}H^{1}})
  (\|\xi_{d}\|_{L_{t}^{\infty}H^{1}}+\|\xi\|_{L_{t}^{\infty}H^{1}}) \\
&+(\|\mathfrak{n}''(t)\|_{L_{t}^{\infty}}+\|\mathfrak{n}'(t)\|_{L_{t}^{\infty}})
  (\|\partial_{t}\xi_{d}\|_{L_{t}^{\infty}H^{1}}+\|\xi_{d}\|_{L_{t}^{\infty}H^{1}}+\|\xi\|_{L_{t}^{\infty}H^{1}}) \\
&+\|u\|_{L_{t}^{2}H^{1}}\|\partial_{t}^{2}\eta\|_{L_{t}^{\infty}H^{1}}
+\|u\|_{L_{t}^{2}H^{1}}\|\partial_{t}\eta\|_{L_{t}^{\infty}H^{1}}^{2}
+\|D_{t}u\|_{L_{t}^{2}H^{1}}\|\partial_{t}\eta\|_{L_{t}^{\infty}H^{1}}
\Big).
\end{aligned}
\end{align}
By the definition of $\mathcal{E}$ and $\mathcal{D}$, the estimate above yields the estimate \eqref{eq:m'}.
\end{proof}

Similarly, when $\mathfrak{m}(t)$ is defined by \eqref{eq:quasi_linear_{s}}, it satisfies the following estimate. 

\begin{theorem}\label{thm:m_s}
Suppose that $\mathfrak{m}(t)$ is the function of $t$ determined by system \eqref{eq:quasi_linear_{s}}, chosen such that
\begin{align}
    \int_{0}^{\pi}\partial_{t}\xi_{d}\,\xi_{s}=0.
\end{align}
Then $\mathfrak{m}(t)$ satisfies the following estimate
\begin{align}
\begin{aligned}
\|\mathfrak{m}(t)\|_{L_{t}^{\infty}}
\lesssim \frac{1}{\|\xi_{s}\|_{L^{2}}}\Big(
&\|v_{d}\|_{L_{t}^{\infty}H^{1}}
+\|v_{l}^{k}\|_{L_{t}^{\infty}H^{1}} +\sup_{0\le t\le T}\mathcal{E}(u,p,\eta)\\
&+(\|u^{k}\|_{L_{t}^{\infty}H^{1}}+\|\partial_{t}u^{k}\|_{L_{t}^{\infty}H^{1}})
  (\|\xi_{d}\|_{L_{t}^{\infty}H^{1}}+\|\xi_{l}^{k}\|_{L_{t}^{\infty}H^{1}}) \\
&+(\|\mathfrak{n}''(t)\|_{L_{t}^{\infty}}+\|\mathfrak{n}'(t)\|_{L_{t}^{\infty}})
  (\|\xi_{d}\|_{L_{t}^{\infty}H^{1}}+\|\xi_{l}^{k}\|_{L_{t}^{\infty}H^{1}})
\Big).
\end{aligned}
\end{align}
Moreover, $\mathfrak{m}'(t)$ satisfies the following estimate
\begin{align}
\begin{aligned}
\|\mathfrak{m}'(t)\|_{L_{t}^{2}}
\lesssim \frac{1}{\|\xi_{s}\|_{L^{2}}}\Big(
&\|D_{t}v_{d}\|_{L_{t}^{2}H^{1}}
+\|\partial_{t}v_{l}^{k}\|_{L_{t}^{2}H^{1}}
+\|v_{l}^{k}\|_{L_{t}^{2}H^{1}} \\
&+(\|u^{k}\|_{L_{t}^{\infty}H^{1}}+\|\partial_{t}u^{k}\|_{L_{t}^{\infty}H^{1}})
  \|\partial_{t}\xi_{d}\|_{L_{t}^{\infty}H^{1}} \\
&+(\|u^{k}\|_{L_{t}^{\infty}H^{1}}+\|\partial_{t}u^{k}\|_{L_{t}^{\infty}H^{1}})
  (\|\partial_{t}\xi_{d}\|_{L_{t}^{\infty}H^{1}}+\|\partial_{t}\xi_{l}^{k}\|_{L_{t}^{\infty}H^{1}}+\|\xi_{l}^{k}\|_{L_{t}^{\infty}H^{1}}) \\
&+(\|\mathfrak{n}''(t)\|_{L_{t}^{\infty}}+\|\mathfrak{n}'(t)\|_{L_{t}^{\infty}})
  (\|\partial_{t}\xi_{d}\|_{L_{t}^{\infty}H^{1}}+\|\partial_{t}\xi_{l}^{k}\|_{L_{t}^{\infty}H^{1}}+\|\xi_{l}^{k}\|_{L_{t}^{\infty}H^{1}}) \\
&+\Big(\int_{0}^{t}\mathcal{E}(u,p,\eta)\mathcal{D}(u,p,\eta)\,ds\Big)^{1/2}
\Big).
\end{aligned}
\end{align}
\end{theorem}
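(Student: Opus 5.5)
We follow the same route as in the proof of Theorem \ref{thm:m}, now applied to the kinematic relation \eqref{eq:kinematic_0} of the smoothed system \eqref{eq:quasi_linear_{s}}. The idea is to test this relation against the translational mode $\xi_s$, integrate over $(0,\pi)$, and use the orthogonality $\int_0^\pi \partial_t\xi_d\,\xi_s=0$ to remove the left-hand side. The coefficient of $\mathfrak m(t)$ is then $\|\xi_s\|_{L^2}^2$, which is nonzero, so $\mathfrak m(t)$ is given explicitly as $\|\xi_s\|_{L^2}^{-2}$ times a sum of integrals. These integrals have the same structure as in \eqref{eq:m}, with the following substitutions:
\begin{itemize}
\item $v_l^k$ in place of $v$;
\item $\mathcal N^n$ and $R^n$ in place of $\mathcal N$ and $R$;
\item $u^k,\xi_l^k$ in place of $u,\xi$.
\end{itemize}
The time-integral terms $\int_0^t(\cdots)$ and the initial-data terms $I_2^{n,k}, I_3^{n,k}, F^6(u^k,\eta^k)$ are also present. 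They are controlled using the fact that $\mathcal E(u,p,\eta)$ bounds the initial data and the forcing.

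For the $L^\infty_t$ bound on $\mathfrak m$, we estimate each integral by H\"older's inequality together with the trace estimate $\|f\|_{L^2(\Sigma)}\lesssim\|f\|_{H^1(\Omega)}$ and the embedding $H^1(0,\pi)\hookrightarrow L^\infty$.
\begin{itemize}
\item $\rho$ and $\mathcal N^n$ are bounded above and below uniformly, because $\eta^n$ is small in $W^{3-1/q_+,q_+}$ (cf. \eqref{convergence_n} and Lemma \ref{lem:equivalence_norm}).
\item $R^n$ is bounded by $\partial_t\eta^n$, hence by $\sqrt{\mathcal E}$.
\end{itemize}
This gives the terms $\|v_d\|_{H^1}+\|v_l^k\|_{H^1}$. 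Products such as $u^k\cdot\partial_t\mathcal N(\xi_l^k)$ and $u^k_\theta\partial_t\partial_\theta\xi_d$ yield $(\|u^k\|_{H^1}+\|\partial_tu^k\|_{H^1})(\|\xi_d\|_{H^1}+\|\xi_l^k\|_{H^1})$. The $\mathfrak n',\mathfrak n''$ terms yield $(|\mathfrak n'|+|\mathfrak n''|)(\|\xi_d\|_{H^1}+\|\xi_l^k\|_{H^1})$. The time integrals over $[0,t]$ with $t\le T$ are bounded by $T$ times suprema, with $T\le1$ absorbed. The remaining quadratic terms in $(u,p,\eta)$ are bounded by $\sup_t\mathcal E(u,p,\eta)$.

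For $\mathfrak m'$, we differentiate the explicit formula in time, as in \eqref{eq:m_1}. Differentiating $v_d$ produces $\partial_t v_d=D_tv_d+R^nv_d$, differentiating $R^n v_l^k$ produces $\partial_t v_l^k$ terms, and the time integrals produce their integrands. We then take $L^2_t$ norms. Terms linear in the unknowns give $\|D_tv_d\|_{L^2_tH^1}$, $\|\partial_tv_l^k\|_{L^2_tH^1}$ and $\|v_l^k\|_{L^2_tH^1}$. Each mixed product puts the prescribed factor ($u^k$, $\partial_tu^k$, $\mathfrak n'$, $\mathfrak n''$) in $L^\infty_t$ and the $\xi$-type factor in $L^\infty_tH^1$, bounding the time-$L^2$ norm on the finite interval. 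Purely given-data quadratic terms, for example $\partial_t^2\eta\,u\cdot\mathcal N$ or $\partial_t\eta\, D_tu\cdot\mathcal N$, are bounded by $(\int_0^t\mathcal E\mathcal D)^{1/2}$ by putting the energy-type factor in $L^\infty_t$ and the dissipation-type factor in $L^2_t$.

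The main obstacle is bookkeeping rather than analysis. We must make sure each term differentiated in time involves only quantities that $\mathcal E(u,p,\eta)$ and $\mathcal D(u,p,\eta)$ control. In particular, $\partial_t^2\eta^k$ and $\partial_t u^k$ must be controlled uniformly in $k,n$; this follows from the convergences \eqref{temp 2}--\eqref{temp 3}. We must also check that $\partial_t\mathcal N^n$ is bounded by $\partial_t\eta^n$ in $H^1$, which is bounded by $\sqrt{\mathcal E}$.
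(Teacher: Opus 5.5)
Your route is the paper's. Its proof just reruns the argument of Theorem~\ref{thm:m} on \eqref{eq:kinematic_0}: test against $\xi_s$, use $\int_0^\pi\partial_t\xi_d\,\xi_s=0$ to isolate $\mathfrak m(t)$ with coefficient $\|\xi_s\|_{L^2}^2$, apply H\"older, then differentiate in time for $\mathfrak m'$.

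However, one step does not give the stated right-hand side. In \eqref{eq:kinematic_0} the unknown $\xi_d$ enters only through $\partial_t\partial_\theta\xi_d$ inside two time integrals, $\int_0^t\frac{1}{\rho}u^k_\theta\,\partial_t\partial_\theta\xi_d$ and $\int_0^t\mathfrak n'\frac{\partial_t\partial_\theta\xi_d}{\rho}\sin\theta$. If you bound these by $T$ times suprema, as you propose, you get $\|u^k\|_{L^\infty_tH^1}\|\partial_t\xi_d\|_{L^\infty_tH^1}$ and $\|\mathfrak n'\|_{L^\infty_t}\|\partial_t\xi_d\|_{L^\infty_tH^1}$. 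But $\partial_t\xi_d$ does not appear on the right-hand side of the $L^\infty_t$ bound for $\mathfrak m$; it appears only in the $\mathfrak m'$ bound. Nor does any H\"older argument turn $u^k_\theta\,\partial_t\partial_\theta\xi_d$ into $(\|u^k\|_{H^1}+\|\partial_tu^k\|_{H^1})\|\xi_d\|_{H^1}$, as you assert.

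The missing step is an integration by parts in time,
\begin{align*}
\int_0^t\frac{u^k_\theta}{\rho}\,\partial_t\partial_\theta\xi_d\,ds
=\Bigl[\frac{u^k_\theta}{\rho}\,\partial_\theta\xi_d\Bigr]_{s=0}^{s=t}
-\int_0^t\partial_t\Bigl(\frac{u^k_\theta}{\rho}\Bigr)\partial_\theta\xi_d\,ds .
\end{align*}
Do the same for the $\mathfrak n'$ integral, which produces $\mathfrak n''/\rho$ and $\mathfrak n'\partial_t\eta/\rho^2$. The $s=0$ terms are initial data and are handled together with $I_2^{n,k}$ and $I_3^{n,k}$. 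This is exactly where the factors $\|\partial_tu^k\|_{L^\infty_tH^1}$ and $\|\mathfrak n''\|_{L^\infty_t}$ multiplying $\|\xi_d\|_{L^\infty_tH^1}$ in the statement come from.

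It is also the smoothed counterpart of how the time integrals were collapsed into pointwise terms in \eqref{eq:m}. There the collapse is exact, because the integrands are total time derivatives. In \eqref{eq:kinematic_0} the integrands mix $\xi_d$, $\xi_l^k$ and $u^k$, so each term has to be integrated by parts separately.

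For $\mathfrak m'$ no such step is needed: differentiating the formula returns the integrands themselves. That is why $\|\partial_t\xi_d\|_{L^\infty_tH^1}$ legitimately appears in that bound.
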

\begin{proof}
    The proof proceeds in the same manner as that of Theorem \ref{thm:m}.
\end{proof}
%%%%%%%%%%%%%%%%%%%%%%%%%%%%%%%%%%%%%%%%%%%%%%
\subsection{ weak Solutions}
%%%%%%%%%%%%%%%%%%%%%%%%%%%%%%%%%%%%%%%%%%%%%%

In this section, we first introduce the following definition 
\begin{equation}\label{def:dissipation}
\begin{aligned}
\mathscr{D}(u,p,\eta)&:=(\|u\|_{L^{2}W^{2,q_{+}}}^2+\|p\|_{L^{2}W^{1,q_{+}}}^{2}+\|\eta\|_{L^{2}W^{3-1/q_+, q_+}}^2)+\sum_{j=0}^2\Big(\|\p_t^ju\|_{L^{2}H^{1}}^2+\|\p_t^ju\|_{L^{2}L^2(\Sigma_s)}^2\Big)\\&\quad
+\sum_{j=0}^2\Big(\|\p_t^j\eta\|_{L^{2}H^{3/2-\alpha}}^2+\|[\p_t^{j}u\cdot \mathcal{N}]_\pi^2\|_{L_{t}^{2}}\Big)+\|\p_t^3\eta\|_{L^{2}H^{1/2-\alpha}}^2\\
&\quad+(\|\p_tu\|_{L^{2}W^{2,q_{-}}}^2+\|\p_{t}p\|_{L^{2}W^{1,q_{-}}}^{2}+\|\p_{t}\eta\|_{L^{2}W^{3-1/q_-, q_-}}^2),
\end{aligned}
\end{equation}
\begin{equation}\label{def:energy}
\begin{aligned}
\mathscr{E}(u,p,\eta)&: =\|u\|_{L^{\infty}W^{2,q_+}}^2+\sum_{i=0}^{1}\|\p_t^{i}u\|_{L_{t}^{\infty}H^{1+\varepsilon_-/2}}^2+\sum_{k=0}^2\|\p_t^ku\|_{L_{t}^{\infty}L^{2}}^2+\|p\|_{L_{t}^{\infty}W^{1,q_-}}^2+\sum_{i=0}^{1}\|\p_t^{i}p\|_{L_{t}^{\infty}H^{0}}^2\\&\quad+\|\p_{t}^{i}\eta\|_{L_{t}^{\infty}W^{3-1/q_-, q_-}}^2+\sum_{i=0}^{1}\|\p_t^{i}\eta\|_{L_{t}^{\infty}H^{3/2+(\varepsilon_--\alpha)/2}}^2+\sum_{j=0}^2(\|\p_t^j\eta\|_{L_{t}^{\infty}H^{1}}^2+\|[\p_{t}^{j}u\cdot \mathcal{N}]_{\pi}\|_{L_{t}^{\infty}}^{2}),
\end{aligned}
\end{equation}
\begin{equation}\label{def:DEK_0}
\mathfrak{K}(u,p,\eta):=\mathscr{E}(u,p,\eta)+\mathscr{D}(u,p,\eta).
\end{equation}
\noindent Relative to the energy and dissipation quantities introduced in \eqref{energy} and \eqref{dissipation}, the definitions above contain additional control of temporal derivatives.

Our definition for weak solutions has been given in Definition \ref{def:weak}.
In Section \ref{sec:app_initial}, we will see that weak solutions to \eqref{eq:weak_limit_1} will arise  as a byproduct of the construction of strong solutions to \eqref{eq:weak_limit_1}. Hence, we now ignore the existence of weak solutions and record the uniqueness, with some integral equalities and bounds satisfied by weak solutions. 

\begin{lemma}{\label{elemma}}
  Suppose that $(v_{d}, \xi_{d})$ is a weak solution to \eqref{eq:weak_limit_1}. Then for almost every $t\in [0,T]$,
  \begin{equation}{\label{bound}}
  \begin{aligned}
&\sup_{0\le t\le T}(\|v_{d}\|_{0}^2+\|\xi_{d}\|_1^2)+\|v_{d}\|_{L^\infty H^1}^2+\|v_{d}\|_{L^\infty H^0(\Sigma_s)}^2+\|[v_{d}\cdot\mathcal{N}^{n}]_\pi\|_{L^\infty([0,T])}^2
 +\|\xi_{d}\|^{2}_{L_{t}^{\infty}H^{\frac{3}{2}-\alpha}}\lesssim\exp\{C_{0}T\}\\&\quad\bigg( \mathcal{E}(u_{0},p_{0},\xi_{0}) +\|(F^1-F^4-F^5)(0)\|^{2}_{(\mathcal{H}^1)^\ast}+\mathfrak{F}+\mathcal{Z}+\mathfrak{K}(u,p,\eta)\bigg) ,
\end{aligned}
  \end{equation}
  where $C_0(\eta):=\sup_{0\le t\le T}\|\p_tJK\|_{L^\infty}+\sup_{0\le t\le T}\|u\|_{W^{2,q_{+}}}+\sup_{0\le t\le T}\|\eta\|_{W^{3-\frac{1}{q_{+}},q_{+}}}$, and
  where
  \begin{align*}
  \begin{aligned}
      \mathfrak{F}=&\big(1+\|\p_t\eta\|_{L^\infty H^{3/2+\varepsilon_-/2}}^2\big)\Big(\|F^1\|_{L^2L^{q_-}}^2+\|F^4\|_{L^2W^{1-1/q_-,q_-}}^2
  +\|F^5\|_{L^2W^{1-1/q_-,q_-}}^2\Big)+\|F^{6}\|_{L_{t}^{2}W^{2-\frac{1}{q_{-}},q_{-}}}^{2} \\
    &+\big(1+\|\p_t\eta\|_{L^\infty H^{3/2+\varepsilon_-/2}}^2\big)\|\p_t(F^1-F^4-F^5)\|_{(\mathcal{H}^1_T)^{\ast}}^2
  +\sum_{j=0}^1\|[F^{7,j}]_{\pi}\|_{L^2}^2+\|\p_{t}F^{6}\|_{L_{t}^{2}W^{1,\frac{1}{1-\alpha}}},\\
   \mathcal{Z}=&\|\int_{0}^{t}F^1\|_{L^\infty 
  L^{q_+}}^2 + \|\int_{0}^{t}F^4\|_{L^\infty W^{1-1/q_+,q_+}}^2 + \|\int_{0}^{t}F^5\|_{L^\infty W^{1-1/q_+,q_+}}^2 + \|\int_{0}^{t}[F^7]_{\pi}\|_{L_t^{\infty}}^2.
  \end{aligned}
  \end{align*}
\end{lemma}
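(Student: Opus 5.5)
The estimate \eqref{bound} will come from a standard energy identity: test \eqref{eq:weak_limit_1} with $w=v_d$ itself, couple it with the kinematic relation \eqref{eq:kinematic}, and then close with Gronwall.

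\textbf{Energy identity.} Since $v_d\in L^2\mathcal{W}_\sigma$ and $\partial_t v_d\in L^2\mathcal{W}^*$, the choice $w=v_d$ is admissible. The time-dependent Lions--Magenes lemma gives
\[
(\partial_t v_d,v_d)_{\mathcal{H}^0}=\tfrac12\tfrac{d}{dt}\int_\Omega|v_d|^2J-\tfrac12\int_\Omega|v_d|^2\partial_tJ,
\]
and the error term is bounded by $\|\partial_t JK\|_{L^\infty}\|v_d\|_{\mathcal{H}^0}^2$. This is where the first part of $C_0$ comes from.

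\textbf{Surface term.} To treat $(\xi_d,v_d\cdot\mathcal{N})_{1,\Sigma_0}$ together with the $\mathcal{R}_{1a}$ term, I substitute $v_d\cdot\mathcal{N}=\rho\,\partial_t\xi_d-\rho(\text{rest of }\eqref{eq:kinematic})$. The principal part becomes $\tfrac12\tfrac{d}{dt}\|\xi_d\|_{1,\Sigma}^2$, up to commutators with the weight $\rho$ and with $\partial_t\mathcal{R}_{1a}(\rho_0,\partial_\theta\eta,\eta)$, which are controlled by $\|\eta\|_{W^{3-1/q_+,q_+}}$ and $\|\partial_t\eta\|$. The $\mathfrak{m}(t)\xi_s$ contribution vanishes by orthogonality, because $\mathfrak{m}$ is chosen so that $\int\partial_t\xi_d\,\xi_s=0$. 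The only change I need is to organize the pairing so that $\xi_s$ is tested against $\partial_t\xi_d$; any remaining piece is estimated with Theorem~\ref{thm:m}.

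The remaining kinematic terms are of three kinds:
\begin{itemize}
\item the $Rv$ terms;
\item the $\int_0^t$ memory integrals involving $u$, $\mathfrak{n}'$ and $\mathfrak{n}''$;
\item $I_2$, $I_3$ and $F^6$.
\end{itemize}
I pair each of them with $(\xi_d,\cdot)_{1,\Sigma}$ and use Cauchy--Schwarz. The terms containing $\partial_\theta\partial_t\xi_d$ inside $\int_0^t$ must be integrated by parts in time before Cauchy--Schwarz can be applied, which produces the $\mathcal{Z}$ contributions.

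\textbf{Coercivity and the lower-order quantities.} Coercivity \eqref{equ:pos_0} then gives $\|\xi_d\|_{1,\Sigma}^2\gtrsim\|\xi_d\|_{H^1}^2$. For this I need the zero average of $\rho\xi_d$ and the orthogonality to $\xi_s$, both propagated from the initial data, plus smallness of $\mathcal{R}_{1a}$. The viscous form $((v_d,v_d))$ together with $[v_d\cdot\mathcal{N}]_\pi^2$ controls $\|v_d\|_{H^1}^2$ through Theorem~\ref{thm:poin} and Lemma~\ref{lem:equivalence_norm}.

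The forcing pairing $\langle F^1-F^4-F^5,v_d\rangle$ is absorbed by $\epsilon\|v_d\|_1^2+C\|F\|_{(\mathcal{H}^1)^*}^2$. The $Rv$ forcing terms are bounded in terms of $\mathfrak{K}(u,p,\eta)$. Integrating in time and applying Gronwall then yields $\sup(\|v_d\|_0^2+\|\xi_d\|_1^2)+\int\|v_d\|_1^2$, with the factor $\exp\{C_0T\}$.

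\textbf{$L^\infty$ norms in time.} To reach $\|v_d\|_{L^\infty H^1}$, $[v_d\cdot\mathcal{N}]_\pi$ and $\|\xi_d\|_{L^\infty H^{3/2-\alpha}}$, I will differentiate the weak form in time, which amounts to testing the time-differentiated equation with $\partial_t v_d$. This brings in $\partial_t(F^1-F^4-F^5)$, $F^7$, $\partial_tF^6$ and the initial quantity $\|(F^1-F^4-F^5)(0)\|_{(\mathcal{H}^1)^*}$. Bounding these by $\mathfrak{F}$ is exactly why the weights $1+\|\partial_t\eta\|_{H^{3/2+\varepsilon_-/2}}$ appear there.

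The $H^{3/2-\alpha}$ bound on $\xi_d$ will follow from the elliptic capillary boundary problem for $\xi_d$ with contact-point conditions, as in \cite{GT2020}, once $v_d\in L^\infty H^1$ is known.

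\textbf{Main obstacle.} I expect the main difficulty to be the memory terms $\int_0^t\mathcal{R}_{1a}\,\partial_\theta\partial_t\xi_d$ together with the $\mathfrak{n}'$ transport term $\mathfrak{n}'\,\partial_t\partial_\theta\xi_d\sin\theta/\rho$. Both carry one extra derivative. My first attempt is to integrate by parts in $\theta$ using $\sin\theta=0$ at the contact points, and to exploit the symmetry of the $\mathcal{R}_{1a}$ form. The goal is to turn them into exact time derivatives plus errors of size $\|\partial_t\eta\|\,\|\xi_d\|_1^2$, which Gronwall can handle.
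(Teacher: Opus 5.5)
The paper gives no argument here; it quotes Theorem 7.2 of \cite{YXD1}. Your architecture is the standard one: an energy identity with $w=v_d$, a second identity one time derivative higher, $L^\infty_t$ norms recovered by integrating in time, and a functional-calculus step for $H^{3/2-\alpha}$. It is also the scheme the paper itself runs for the Galerkin approximants in Steps 3--6 of Theorem~\ref{thm:linear_low}.

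\textbf{The higher-order step fails as you state it.} In the pressureless formulation \eqref{eq:weak_limit_1} the test function must lie in $\mathcal{W}_\sigma(t)$, and $\p_t v_d$ does not. Indeed, $\dive_{\mathcal{A}}\p_t v_d=-\dive_{\p_t\mathcal{A}}v_d$, and $\int_0^\pi \p_t v_d\cdot\mathcal{N}=-\int_0^\pi v_d\cdot\p_t\mathcal{N}$; both are nonzero in general. Testing with $\p_t v_d$ therefore brings in the pressure, which you cannot control at this stage: it is constructed only afterwards, in Theorem~\ref{thm:pressure}, and that construction uses this very bound.

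The missing device is $D_t=\p_t-R$ with $R=\p_tMM^{-1}$. By \eqref{eq:dive_dt}, Proposition~\ref{prop:solid_boundary} and $R^T\mathcal{N}=-\p_t\mathcal{N}$, you have $D_tv_d\in\mathcal{W}_\sigma(t)$ and $D_tv_d\cdot\mathcal{N}=\p_t(v_d\cdot\mathcal{N})$. The correct procedure is:
\begin{enumerate}
\item differentiate the weak form tested with a time-dependent $w(t)\in\mathcal{W}_\sigma(t)$;
\item subtract the weak form tested with $D_tw$, so that every $\p_t w$ term cancels;
\item take $w=D_tv_d$;
\item return to $\p_t v_d$ via Lemma~\ref{lem:difference_u_Dt_u}.
\end{enumerate}
This computation also needs $\p_t v_d\in L^2\mathcal{W}$, which a weak solution in the sense of Definition~\ref{def:weak} does not have. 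It is legitimate for Galerkin approximants. It reaches an arbitrary weak solution only through lower semicontinuity combined with uniqueness, and uniqueness follows from your basic estimate, which is justified at the weak level.

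\textbf{Three smaller points also need repair.}

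First, only the orthogonality $\int_0^\pi\xi_d\xi_s=0$ is propagated, by the choice of $\mathfrak{m}$; the mean is not. Multiplying \eqref{eq:kinematic} by $\rho_0$ and integrating leaves $\int_0^\pi\frac{\rho_0}{\rho}v_d\cdot\mathcal{N}$, plus the $Rv$, memory, $I_2$, $I_3$ and $F^6$ contributions, none of which vanish. This is exactly why the paper introduces $a_0(t)$ in Step 6 of Theorem~\ref{thm:linear_low}. You have two ways out:
\begin{itemize}
\item estimate $a(t)=\int_0^\pi\rho_0\xi_d$ from this ODE and apply \eqref{equ:pos_0} to $\xi_d-a\rho_0/\|\rho_0\|_{L^2}^2$, which is still orthogonal to $\xi_s$ because translations preserve volume, so $\int_0^\pi\rho_0\xi_s=0$;
\item on a bounded time interval, use the G\aa{}rding bound $\|\xi\|_{1,\Sigma}^2+C\|\xi\|_0^2\gtrsim\|\xi\|_1^2$ together with Gronwall.
\end{itemize}

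Second, $L^2$-orthogonality does not make $(\xi_d,\mathfrak{m}\xi_s)_{1,\Sigma}$ vanish. What saves this term is either that $\xi_s$ is a null direction of $(\cdot,\cdot)_{1,\Sigma_0}$, which is the identity the paper invokes in Step 3, or your fallback. The fallback works: the pointwise bound $|\mathfrak{m}(t)|\lesssim\|v_d(t)\|_1+\cdots$ from \eqref{eq:m}, combined with Cauchy--Schwarz, is enough for Gronwall.

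Third, the $v$ in the $Rv$ terms is not given data. It is the primitive of $v_d$, with $\p_t v=v_d+Rv$. Those terms must therefore be closed through that ODE and Gronwall; they are not bounded by $\mathfrak{K}(u,p,\eta)$ alone.
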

\begin{proof}
  The estimate \eqref{bound} follows directly from Theorem 7.2 of \cite{YXD1}.
\end{proof}
\begin{proposition}\label{prop:unique}
   Weak solutions to \eqref{eq:weak_limit_1} are unique.
\end{proposition}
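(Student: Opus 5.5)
Since the system \eqref{eq:quasi_linear} is linear in the unknowns $(v_d,\xi_d,\mathfrak m)$ once $(u,p,\eta)$, $v$, $\xi$ and the integral terms with prescribed data are fixed, uniqueness will follow from an energy argument on the difference of two solutions. Let $(v_d^1,\xi_d^1)$ and $(v_d^2,\xi_d^2)$ be two weak solutions with the same data, and set $w_d=v_d^1-v_d^2$, $\zeta_d=\xi_d^1-\xi_d^2$. Every forcing term $F^1,F^4,F^5,F^6,F^7$, every $I_j$, and every term built only from $(u,p,\eta)$ cancels. For the remaining terms I will use the convention that $v,\xi$ are recovered from $v_d,\xi_d$ through the ODEs $\partial_t v=v_d+Rv$, $\partial_t\xi=\xi_d$ with the same initial data. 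Then the differences $v^1-v^2$ and $\xi^1-\xi^2$ vanish at $t=0$ and are controlled by $\int_0^t(\|w_d\|_1+\|\zeta_d\|_1)$ via Gronwall for the linear ODE.

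The difference $\mathfrak m^1-\mathfrak m^2$ is given by formula \eqref{eq:m} applied to the differences. By the argument of Theorem~\ref{thm:m}, it is bounded by $\|w_d\|_{1}+\|\zeta_d\|_1+\int_0^t(\|w_d\|_1+\|\zeta_d\|_1)$, with small constants in front of the terms multiplied by $u$ or $\mathfrak n'$. Thus $(w_d,\zeta_d)$ is a weak solution of \eqref{eq:weak_limit_1}--\eqref{eq:kinematic} with zero initial data, zero forcing, and lower-order memory and $\mathfrak m$ terms.

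I will test \eqref{eq:weak_limit_1} with $w=w_d(t)$. This is admissible since $w_d\in\mathcal W_\sigma$, because the zero-average property of $w_d\cdot\mathcal N$ is preserved by linearity. The test gives
\[
\tfrac12\tfrac{d}{dt}\|w_d\|_{\mathcal H^0}^2+((w_d,w_d))+[w_d\cdot\mathcal N]_\pi^2+(\zeta_d,w_d\cdot\mathcal N)_{1,\Sigma_0}+(\mathcal R_{1a}\partial_\theta\zeta_d,\partial_\theta(w_d\cdot\mathcal N))=\text{(difference terms)}.
\]
Next I will substitute $w_d\cdot\mathcal N=\rho\,\partial_t\zeta_d+\rho\,\mathfrak m\,\xi_s-(\text{memory terms})$ from the kinematic relation. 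This turns the surface pairing into
\[
\tfrac12\tfrac{d}{dt}\|\zeta_d\|_{1,\Sigma}^2,
\]
up to commutators involving $\partial_t\eta$, $\rho-\rho_0$ and $\partial_t\mathcal R_{1a}$. These are bounded by $C_0(\eta)\|\zeta_d\|_1^2$, exactly as in the energy identity behind Lemma~\ref{elemma}, i.e.\ Theorem 7.2 of \cite{YXD1}.

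Two ingredients close the estimate.
\begin{enumerate}
\item[(i)] The $\mathfrak m\,\xi_s$ contribution pairs into $(\zeta_d,\xi_s)_{1,\Sigma}$-type terms, which are harmless since $\mathfrak m$ is already controlled.
\item[(ii)] Because both solutions satisfy $\int_0^\pi\partial_t\xi_d\,\xi_s=0$ and have the same initial data, $\zeta_d$ is orthogonal to $\xi_s$. The zero-average condition also holds, so the coercivity \eqref{equ:pos_0}, perturbed by the small $\mathcal R_{1a}$, gives $\|\zeta_d\|_{1,\Sigma}^2\gtrsim\|\zeta_d\|_1^2$.
\end{enumerate}
Korn's inequality (Theorem~\ref{thm:poin}) gives $((w_d,w_d))\gtrsim\|w_d\|_1^2$. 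The remaining terms ($\partial_t(R\,\delta v)$, $((R\,\delta v,w_d))$, and the time integrals of $\zeta_d$ and $w_d$ from the memory terms) are absorbed by Cauchy--Schwarz and Young. This yields
\[
X(t):=\|w_d\|_0^2+\|\zeta_d\|_1^2,\qquad X(t)+\int_0^t\|w_d\|_1^2\le C\int_0^t\Bigl(X+\int_0^sX\Bigr)\,ds,
\]
with $X(0)=0$. Gronwall then gives $X\equiv0$, so $\mathfrak m^1=\mathfrak m^2$ as well.

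The main obstacle is the term $\mathfrak n'(t)\int_0^t\rho^{-1}\sin\theta\,\partial_t\partial_\theta\xi_d$ in the kinematic relation. It contains a derivative of the unknown that appears nowhere else in the energy. I would handle it by integrating by parts in time, so it becomes $\partial_\theta\zeta_d$ plus terms with $\partial_t(\mathfrak n'/\rho)$. Those terms are controlled by $\|\zeta_d\|_1$ using the smallness of $\mathfrak n'$ and $\mathfrak n''$, so they can be absorbed. A secondary difficulty is justifying the time derivative of $\|\zeta_d\|_{1,\Sigma}^2$ at the weak level. The regularity in Definition~\ref{def:weak}, namely $\partial_t v_d\in L^2\mathcal W^*$ and $\xi_d\in L^\infty H^1$, together with the kinematic relation giving $\partial_t\zeta_d\in L^2H^{1/2}$ through the trace, should suffice via a standard Lions--Magenes argument.
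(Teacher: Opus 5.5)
Your overall strategy is the paper's: the difference of two weak solutions solves the problem with zero forcing and zero data, and an energy bound forces it to vanish. The paper does not rebuild that bound; its proof just applies \eqref{bound} of Lemma~\ref{elemma} to the difference. The energy argument you put in its place fails at the step you flag as the main obstacle.

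Integrating by parts in time (with $\zeta_d(0)=0$) turns $\int_0^t\mathfrak n'\rho^{-1}\sin\theta\,\partial_s\partial_\theta\zeta_d\,ds$ into $\mathfrak n'\rho^{-1}\sin\theta\,\partial_\theta\zeta_d(t)-\int_0^t\partial_s(\mathfrak n'\rho^{-1})\sin\theta\,\partial_\theta\zeta_d\,ds$. Neither piece is then controlled by $\|\zeta_d\|_1$. You substitute $w_d\cdot\mathcal N=\rho\,\partial_t\zeta_d+\cdots$ into $(\zeta_d,w_d\cdot\mathcal N)_{1,\Sigma_0}$, whose principal part is $\sigma\int_0^\pi\rho_0(\rho_{0}^{2}+\rho_{0}'^{2})^{-3/2}\,\partial_\theta\zeta_d\,\partial_\theta(\cdot)$. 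After this, both pieces carry a second $\theta$-derivative of $\zeta_d$. That is one derivative more than your energy controls, and your inequality has no surface dissipation, so smallness of $\mathfrak n',\mathfrak n''$ cannot absorb them.

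The instantaneous piece can be saved by its transport structure. Integrating by parts in $\theta$ and using $\sin\theta=0$ at $\theta=0,\pi$ gives a bound $\lesssim|\mathfrak n'|\,\|\zeta_d\|_1^2$, as in \eqref{eq:n0}. The memory piece pairs $\zeta_d(t)$ with $\zeta_d(s)$, $s<t$, and has no such cancellation.

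The same difficulty, with a worse coefficient, comes from $\int_0^t\rho^{-1}u\cdot\partial_t\mathcal N(\xi_d)$. You filed it among the harmless memory terms, although it contains $\rho^{-1}u_\theta\,\partial_t\partial_\theta\xi_d$ (compare \eqref{eq:kinematic_0}). Here even the transport trick leaves $\int_0^\pi\partial_\theta(u_\theta/\rho)\,|\partial_\theta\zeta_d|^2$. Since $u\in W^{2,q_+}$ with $q_+<2$, the coefficient $\partial_\theta u_\theta|_\Sigma\in W^{1-1/q_+,q_+}(\Sigma)$ is not bounded, so this term needs more than $H^1$ control of $\zeta_d$. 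In \eqref{eq:n4} the corresponding term is bounded by $\|\partial_t\xi_d\|_{H^{3/2-\alpha}}^2\|u\|_{W^{2,q_-}}$.

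This is why the paper's estimates of the form \eqref{bound} are not run for $(v_d,\xi_d)$ with the integrated kinematic law. In Steps 3--6 of the proof of Theorem~\ref{thm:linear_low}, the weak form is differentiated in time and tested with $D_tv_d$, and the differentiated kinematic relation \eqref{eq:kin_m} is used. There the memory integrals are gone and the transport term is instantaneous, $\rho^{-1}(\mathfrak n'\sin\theta-u_\theta)\,\partial_\theta\partial_t\xi_d$. It is handled by the $\theta$-integration by parts together with the $L^2H^{3/2-\alpha}$ estimate from the functional-calculus step, and absorbed using smallness of $\|u\|_{W^{2,q_-}}$. The bounds for $(v_d,\xi_d)$ then come from integrating in time.

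To repair your proof, you have two options:
\begin{enumerate}
\item Apply \eqref{bound} to the difference as the paper does, reading its additive term $\mathfrak K(u,p,\eta)$ as absent for the homogeneous problem.
\item Run your argument for $(D_tw_d,\partial_t\zeta_d)$ and include the $H^{3/2-\alpha}$ step.
\end{enumerate}
A Gronwall inequality for $X=\|w_d\|_0^2+\|\zeta_d\|_1^2$ alone does not close.

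A smaller point: the linear kinematic relation does not preserve $\int_0^\pi\rho_0\zeta_d=0$ exactly (compare \eqref{eq:mean} and the correction $a_0(t)\rho_0$ in Step 6). The coercivity claimed in your (ii) therefore also requires controlling that scalar mode separately.
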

\begin{proof}
  If $(v_{d}^1,\xi_{d}^1)$ and $(v_{d}^2,\xi_{d}^2)$ are both weak solutions to \eqref{eq:weak_limit_1}, then $(v=v^1-v^2,\xi=\xi^1-\xi^2)$ is a weak solution to \eqref{eq:weak_limit_1} with $F^1=F^2=F^4=F^5=F^{6}=F^7=0$ and the initial data $v_{d}(0)=\xi_{d}(0)=D_{t}u(0)=\p_{t}\xi(0)=0$. The bound \eqref{bound} implies that $v_{d}=0$, $\xi_{d}=0$. 
\end{proof}

\subsection{Elliptic Estimates}
Suppose that we have established the weak solution $(v_{d}, \xi_{d}) \in L^\infty H^0 \times L^\infty H^1$ such that $(\p_tv_{d}, \p_t\xi_{d}) \in L^\infty H^0 \times \left( L^\infty H^1 \times L^2H^{3/2-\alpha} \right)$ in Theorem \ref{thm:linear_low}. We now introduce the pressure by treating it as a Lagrangian multiplier and simultaneously get higher regularity of $(v_{d}, \xi_{d})$ to show that the weak solution coupled with the pressure is a strong solution.

Before showing the main theorem, we establish a lemma discussing our $1,\Sigma$ norm defined by \eqref{eq:inner_product}. We have:

\begin{lemma}{\label{lem:lemma1}}
    Supposing that $\|\eta^{k}\|\lesssim\vert \vert \eta\vert \vert_{L^{\infty}W^{3-\frac{1}{q_{+}},q_{+}}}\leq \delta\ll1$. Then $\|\p_{\theta}\mathcal{R}_{1a}(\rho_{0},\p_{\theta}\eta^{k},\eta^{k})\|_{L_{t}^{\infty}W^{1-\frac{1}{q_{+}},q_{+}}}\lesssim\vert \vert \partial_{\theta}\mathcal{R}_{1a}(\rho_{0},\partial_{\theta}\eta,\eta)\vert \vert_{L_{t}^{\infty}W^{1-\frac{1}{q_{+}},q_{+}}}\lesssim \delta$.
\end{lemma}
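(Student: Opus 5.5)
The plan is to treat $\mathcal{R}_{1a}(\rho_0,a,b)$ as a smooth function of its three arguments near $(\rho_0,\rho_0',0)$. It vanishes when $(a,b)=(0,0)$, because $\mathcal{R}_1$ collects the quadratic and higher-order remainder of the curvature operator after the linear part has been split off. Hence $\mathcal{R}_{1a}$ is at least linear in $(\partial_\theta\eta,\eta)$. We then estimate its $\theta$-derivative by the chain rule:
\begin{align*}
\partial_\theta \mathcal{R}_{1a}(\rho_0,\partial_\theta\eta,\eta)
={}&\mathcal{R}_{1ac}\,\rho_0'+\mathcal{R}_{1aa}\,\partial_\theta^2\eta+\mathcal{R}_{1ab}\,\partial_\theta\eta .
\end{align*}
The first and third terms involve only $\rho_0$ (smooth) and $\partial_\theta\eta$. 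Since $q_+<2$ gives $3-\frac1{q_+}>\frac52-\frac{\varepsilon_+}{2}>2$, we have the embedding $W^{3-\frac1{q_+},q_+}(0,\pi)\hookrightarrow C^{1,\gamma}$ for some $\gamma>0$. Thus $\eta,\partial_\theta\eta$ are small in $L^\infty$ and in $W^{1,\infty}$ (up to the H\"older scale). Consequently every coefficient $\mathcal{R}_{1a\ast}(\rho_0,\partial_\theta\eta,\eta)$ lies in $W^{1,\infty}$ with bounded norm. Moreover, those coefficients that come without a factor of $\eta$ (such as $\mathcal{R}_{1ac}$) are themselves $O(\delta)$ in $W^{1,\infty}$, by the vanishing of $\mathcal{R}_{1a}$ at zero perturbation and the mean value theorem.

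Next, we bound each product in $W^{1-\frac1{q_+},q_+}$. For this we use the standard multiplier estimate $\|fg\|_{W^{s,q}}\lesssim\|f\|_{W^{1,\infty}}\|g\|_{W^{s,q}}$ for $0<s<1$ on the interval. The key term is $\mathcal{R}_{1aa}\,\partial_\theta^2\eta$, which gives
\[
\|\mathcal{R}_{1aa}\,\partial_\theta^2\eta\|_{W^{1-\frac1{q_+},q_+}}\lesssim \|\mathcal{R}_{1aa}\|_{W^{1,\infty}}\|\eta\|_{W^{3-\frac1{q_+},q_+}}\lesssim\delta .
\]
The lower-order terms are bounded by $\|\eta\|_{W^{2,\infty}\cap W^{3-1/q_+,q_+}}\lesssim\delta$ in the same way, and the $\mathcal{R}_{1ac}\rho_0'$ term by $O(\delta)$ as noted above. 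Taking the supremum in $t$ gives the second inequality, with the implicit constant uniform because the smallness keeps the arguments in a fixed compact neighbourhood where $\mathcal{R}_1$ is smooth.

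For the first inequality, concerning $\eta^k$, we rerun the same computation with $\eta^k$ in place of $\eta$. We then use that the mollified sequence satisfies $\|\eta^k\|_{L^\infty_tW^{3-\frac1{q_+},q_+}}\lesssim\|\eta\|_{L^\infty_tW^{3-\frac1{q_+},q_+}}$, which is exactly the hypothesis and the property of the approximating sequence from the appendix of \cite{YXD2}. The bound for $\eta^k$ is therefore controlled by the same $\delta$-quantity that controls $\partial_\theta\mathcal{R}_{1a}(\rho_0,\partial_\theta\eta,\eta)$.

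The main obstacle is the structural fact that $\mathcal{R}_{1a}$, and therefore the $\rho_0'$-term $\mathcal{R}_{1ac}$, vanishes at zero perturbation. Without it the bound would only be $O(1)$, not $O(\delta)$. I would verify this directly from the definition of $\mathcal{R}_1$ in \cite{YXD1} as the Taylor remainder of $\rho^2\rho'(\rho^2+\rho'^2)^{-3/2}$-type expressions beyond first order in $(\xi,\xi')$. Differentiating that remainder in $a$ leaves a function that is $O(|a|+|b|)$, and so is its derivative in $c$.
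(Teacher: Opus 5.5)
Your strategy is the paper's. It also differentiates by the chain rule and invokes a product estimate (Theorem A.1 of \cite{GT2020}). It ends with a $\rho_0$-derivative term carrying a factor of $\partial_\theta\eta$, written there as $\partial_\theta^2\rho_0\,\partial_\theta\eta$ because the first slot of $\mathcal{R}_1$ effectively carries $(\rho_0,\rho_0')$, plus the top-order term $\partial_\theta^2\eta$. Your observation that $\mathcal{R}_{1a}$ and $\mathcal{R}_{1ac}$ vanish at zero perturbation is exactly what justifies that factor.

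The multiplier step, however, fails as written. Because $(3-\frac{1}{q_+})-\frac{1}{q_+}=1+\varepsilon_+$, the space $W^{3-\frac{1}{q_+},q_+}(0,\pi)$ embeds into $C^{1,\varepsilon_+}$ but not into $W^{2,\infty}$. (Your inequality $3-\frac1{q_+}>\frac52-\frac{\varepsilon_+}{2}$ is equivalent to $\varepsilon_+>\frac12$, which is not assumed.) Hence $\partial_\theta\eta$ is only H\"older continuous, and $\mathcal{R}_{1aa}(\rho_0,\partial_\theta\eta,\eta)$ is \emph{not} in $W^{1,\infty}$. Its derivative contains $\mathcal{R}_{1aaa}\,\partial_\theta^2\eta$, and $\partial_\theta^2\eta\in W^{\varepsilon_+/2,q_+}\hookrightarrow L^{1/(1-\varepsilon_+)}$ only. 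So the bound $\|\mathcal{R}_{1aa}\partial_\theta^2\eta\|_{W^{1-\frac1{q_+},q_+}}\lesssim\|\mathcal{R}_{1aa}\|_{W^{1,\infty}}\|\eta\|_{W^{3-\frac1{q_+},q_+}}$ is unavailable. So is the appeal to $\|\eta\|_{W^{2,\infty}}$ for the lower-order terms.

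The repair is to run the same argument in H\"older spaces. Since $\partial_\theta\eta\in W^{1,1/(1-\varepsilon_+)}\hookrightarrow C^{0,\varepsilon_+}$ with norm $\lesssim\delta$, each coefficient $\mathcal{R}_{1a\ast}(\rho_0,\partial_\theta\eta,\eta)$ is bounded in $C^{0,\varepsilon_+}$. Moreover, $\mathcal{R}_{1ac}$ is $O(\delta)$ there by your mean-value argument. Then use the multiplier bound $\|fg\|_{W^{s,q}}\lesssim\|f\|_{C^{0,\gamma}}\|g\|_{W^{s,q}}$, valid whenever $\gamma>s$, with $s=1-\frac1{q_+}=\frac{\varepsilon_+}{2}<\gamma=\varepsilon_+$. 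To prove it, split $f(x)g(x)-f(y)g(y)=f(x)(g(x)-g(y))+g(y)(f(x)-f(y))$ in the Gagliardo seminorm; the resulting kernel $|x-y|^{(\gamma-s)q-1}$ is integrable on $(0,\pi)$. For the lower-order terms, products of $C^{0,\varepsilon_+}$ functions lie in $C^{0,\varepsilon_+}([0,\pi])\hookrightarrow W^{\varepsilon_+/2,q_+}(0,\pi)$. With these replacements the proof closes.

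Finally, like the paper, you prove only that each of the two quantities is $O(\delta)$, not the literal middle comparison. That $O(\delta)$ bound is what is used afterwards.
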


\begin{proof}

    Applying Theorem A.1 of \cite{GT2020}, we have

    \begin{align}
        \vert \vert \partial_{\theta}\mathcal{R}_{1a}(\rho_{0},\partial_{\theta}\eta,\eta)\vert \vert_{L_{t}^{\infty}W^{1-\frac{1}{q_{+}},q_{+}}}\lesssim \vert\vert \partial_{\theta}^{2}\rho_{0}\partial_{\theta}\eta\vert\vert_{L_{t}^{\infty}W^{1-\frac{1}{q_{+}},q_{+}}}+\vert \vert \partial_{\theta}^{2}\eta\vert \vert_{W^{1-\frac{1}{q_{+}},q_{+}}}\notag\\\lesssim \vert \vert \rho_{0}\vert \vert_{W^{3,q_{+}}}\vert \vert \eta\vert \vert_{W^{2-\frac{1}{q_{+}},q_{+}}}+\vert \vert \eta \vert \vert_{W^{3-\frac{1}{q_{+}},q_{+}}}\lesssim \delta,
    \end{align}

    \noindent This completes the proof.
\end{proof}

We next state the following lemma, which is an immediate consequence of Lemma~\ref{lem:lemma1}.

\begin{lemma}{\label{cor:cor1}}
    Suppose that $\vert \vert \eta\vert \vert_{L^{\infty}W^{3-\frac{1}{q_{+}},q_{+}}}\leq \delta\ll1$ for some small $\delta$ to be determined,  and that $\phi$ is a $H^{1}(\Sigma)$ function satisfying the following two properties.
    \begin{align}
    \int_{0}^{\pi}\rho_{0}\phi=0~~\operatorname{and}~~\int_{0}^{\pi}\xi_{s}\phi=0.
    \end{align}
    
    \noindent Then $\phi$ satisfies the following positivity inequality

    \begin{align}
        (\phi,\phi)_{1,\Sigma}\geq C(\delta)\vert \vert \phi\vert \vert^{2}_{H^{1}}.
    \end{align}
\end{lemma}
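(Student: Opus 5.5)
The plan is to treat $(\cdot,\cdot)_{1,\Sigma}$ as a small perturbation of the equilibrium form $(\cdot,\cdot)_{1,\Sigma_{0}}$ and absorb the correction term. By \eqref{eq:inner_product},
\begin{align*}
(\phi,\phi)_{1,\Sigma}=(\phi,\phi)_{1,\Sigma_{0}}+\int_{0}^{\pi}\mathcal{R}_{1a}(\rho_{0},\p_{\theta}\eta,\eta)(\p_{\theta}\phi)^{2}\,d\theta .
\end{align*}
Since $\phi$ satisfies exactly the two orthogonality conditions $\int_{0}^{\pi}\rho_{0}\phi=0$ and $\int_{0}^{\pi}\xi_{s}\phi=0$, the coercivity estimate \eqref{equ:pos_0} (established in \cite{YXD1}) gives a constant $c_{0}>0$, depending only on the equilibrium, with $(\phi,\phi)_{1,\Sigma_{0}}\ge c_{0}\|\phi\|_{H^{1}}^{2}$.

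For the correction term I would bound
\begin{align*}
\Big|\int_{0}^{\pi}\mathcal{R}_{1a}(\rho_{0},\p_{\theta}\eta,\eta)(\p_{\theta}\phi)^{2}\Big|\le \|\mathcal{R}_{1a}(\rho_{0},\p_{\theta}\eta,\eta)\|_{L^{\infty}(0,\pi)}\|\phi\|_{H^{1}}^{2}.
\end{align*}
The task is then to show that this $L^\infty$ norm is $\lesssim\delta$. Here I would use that $\mathcal{R}_{1}$ is the nonlinear remainder of the curvature flux after linearization about $\rho_{0}$. Hence $\mathcal{R}_{1a}$ vanishes at $(\p_{\theta}\eta,\eta)=(0,0)$ and is a smooth function of its arguments on a neighborhood of zero, so $|\mathcal{R}_{1a}|\lesssim|\p_{\theta}\eta|+|\eta|$ pointwise.

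Because $q_{+}>1$, the embedding $W^{3-1/q_{+},q_{+}}(0,\pi)\hookrightarrow C^{1}([0,\pi])$ holds, which gives $\|\mathcal{R}_{1a}\|_{L^{\infty}}\lesssim\|\eta\|_{W^{3-1/q_{+},q_{+}}}\le\delta$. Alternatively, one can integrate Lemma~\ref{lem:lemma1}: it controls $\p_{\theta}\mathcal{R}_{1a}$ in $W^{1-1/q_{+},q_{+}}\subset L^{1}$ by $\delta$, and adding the endpoint value, which is also $O(\delta)$, yields the same bound. The same reasoning applies to the $\eta^{k}$ version $(\cdot,\cdot)_{1,\Sigma_{k}}$.

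Combining the two bounds gives $(\phi,\phi)_{1,\Sigma}\ge (c_{0}-C\delta)\|\phi\|_{H^{1}}^{2}$. Choosing $\delta$ small enough that $C\delta\le c_{0}/2$ yields the claim with $C(\delta)=c_{0}-C\delta\ge c_{0}/2$.

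The main obstacle is the base coercivity \eqref{equ:pos_0} itself, namely positivity of the second variation modulo the volume and translation modes. I take this as given from \cite{YXD1}. The only genuinely new step is the smallness of $\mathcal{R}_{1a}$ in $L^\infty$, which depends on the vanishing of $\mathcal{R}_{1a}$ at zero. If that vanishing were not evident from the structure of $\mathcal{R}_{1}$, I would verify it by differentiating the explicit curvature expression $\rho^{2}\rho'/(\rho^{2}+\rho'^{2})^{3/2}$ minus its linearization.
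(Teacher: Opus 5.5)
Your argument is correct and matches the paper's: split off the $\mathcal{R}_{1a}$ term, apply the equilibrium coercivity of $(\cdot,\cdot)_{1,\Sigma_0}$ under the two orthogonality constraints, and absorb the correction using $\|\mathcal{R}_{1a}(\rho_0,\p_\theta\eta,\eta)\|_{L^\infty}\lesssim\delta$. The paper obtains that bound from Lemma~\ref{lem:lemma1} plus Sobolev embedding, which is essentially your alternative route; your primary route via $|\mathcal{R}_{1a}|\lesssim|\eta|+|\p_\theta\eta|$ and $W^{3-1/q_+,q_+}\hookrightarrow C^1$ simply makes explicit the vanishing of $\mathcal{R}_{1a}$ at zero that the paper uses implicitly. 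One small correction to your fallback remark, which does not affect the proof: the flux whose linearization generates $\mathcal{R}_1$ is $\rho'/(\rho^2+\rho'^2)^{1/2}$, not $\rho^2\rho'/(\rho^2+\rho'^2)^{3/2}$, since its $\rho'$- and $\rho$-derivatives at $\rho_0$ are exactly $\rho_0^2/(\rho_0^2+\rho_0'^2)^{3/2}$ and $-\rho_0'\rho_0/(\rho_0^2+\rho_0'^2)^{3/2}$.
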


\begin{proof}

    Using the definition of $1,\Sigma$ norm, we have the following computation:

    \begin{align*}
       (\phi,\phi)_{1,\Sigma}=(\phi,\phi)_{1,\Sigma_{0}}+\sigma\int_{0}^{\pi}\mathcal{R}_{1a}(\rho_{0},\partial_{\theta}\eta,\eta)\partial_{\theta}\phi\partial_{\theta}\phi\label{eq:cor11}.
    \end{align*}
    \noindent Applying Theorem 5.16 of \cite{YXD}, there exists a constant $C_{0}$ depending only on $\rho_{0}$ such that
    \begin{align}
        (\phi,\phi)_{1,\Sigma_{0}}\geq C_{0}\|\phi\|^{2}_{H^{1}(\Sigma)}.
    \end{align}
    
    Moreover, by Lemma~\ref{lem:lemma1} and the Sobolev embedding, we have

    \begin{align*}
        \vert \vert \mathcal{R}_{1a}(\rho_{0},\partial_{\theta}\eta,\eta)\vert \vert_{L^{\infty}}\lesssim \vert \vert \mathcal{R}_{1a}(\rho_{0},\partial_{\theta}\eta,\eta)\vert \vert_{W^{2-\frac{1}{q_{-}},q_{-}}}\lesssim \vert \vert \eta\vert \vert_{W^{3-\frac{1}{q_{-}},q_{-}}}\lesssim \delta.
    \end{align*}

    \noindent Therefore, we can choose a small $\delta$ such that $
        \vert \vert \mathcal{R}_{1a}(\rho_{0},\partial_{\theta}\eta,\eta)\vert \vert_{L^{\infty}}\lesssim \frac{1}{2}C_{0}$. Then we use this relation in the equation \eqref{eq:cor11} to show the following result:

        \begin{align*}
       (\phi,\phi)_{1,\Sigma}\geq (\phi,\phi)_{1,\Sigma_{0}}-\frac{1}{2}\inf_{\theta\in (0,\pi)}\frac{1}{(1+\vert \p_{\theta}\rho_{0}\vert^{2})^{\frac{3}{2}}}\|\phi\|_{H^{1}}^{2}\gtrsim \vert \vert \phi\vert \vert^{2}_{H^{1}}.
        \end{align*}

        \noindent This completes the proof.
\end{proof}

  \begin{theorem}\label{thm:pressure}
Suppose that there exists a weak solution $( v_{d},  \xi_{d}) \in L^\infty H^0 \times \left( L^2 H^1 \cap L^2H^{3/2-\alpha} \right)$ to the system \eqref{eq:weak_limit_1} on the finite time interval $[0, T]$, $j=0, 1$. Then there exists a unique pressure $q_{d} \in L^\infty ([0, T]; H^0(\Om)) \cap L^2 ([0, T]; W^{1, q_-})$, such that
\begin{equation}\label{est:avg_q1}
\begin{aligned}
  \|q_{d}\|_{L^2 H^0}^2 \lesssim&\|\p_tv_{d}\|_{L_{t}^2H^0}^2  +\|\xi_{d}\|_{L_{t}^2 H^1}^2 + \|v_{d}\|_{L_{t}^2H^1}^2 + \|[v_{d}\cdot \mathcal{N}]_\pi\|_{L^2_t}^2\\
  &+\|[F^7]_\pi\|_{L^2_t}^2 + \|F^1\|_{L^2L^{q_-}}^2 + \|F^4\|_{L^2W^{1-1/q_-,q_-}}^2 + \|F^5\|_{L^2W^{1-1/q_-,q_-}}^2,
\end{aligned}
 \end{equation}
\begin{equation}\label{est:avg_q2}
\begin{aligned}
 \|q_{d}\|_{L^\infty H^0}^2
 &\lesssim\|\p_tv_{d}\|_{L^\infty H^0}^2 +\|\xi_{d}\|_{L^\infty H^1}^2 + \|v_{d}\|_{L^\infty H^1}^2 +\|[v_{d}\cdot \mathcal{N}]_\pi\|_{L^\infty_t}^2+\|[F^7]_\pi\|_{L^\infty_t}^2 + \|F^1-F^4-F^5\|_{L^\infty (\mathcal{H}^1)^\ast}^2.
 \end{aligned}
\end{equation}
  \noindent Moreover, after deriving $q_{d}$, we recover $(v,q,\xi)$ by integration, together with the following elliptic estimate.
  \begin{equation}\label{est:diss_1}
  \begin{aligned}
  &\| v_{d}\|_{L^2W^{2,q_-}}^2 + \|q_{d}\|_{L^2W^{1,q_-}}^2 + \|\xi_{d}\|_{L^2W^{3-1/q_-,q_-}}^2 \\
  &\lesssim  \|\p_tv_{d}\|_{L^2H^0}^2 +\|\xi_{d} \|_{L^2H^1}^2 + \|v_{d}\|_{L^2H^1}^2 + \|\xi_{d}\|_{L^2H^{3/2-\alpha}}^2 + \|[v_{d}\cdot \mathcal{N}]_\pi\|_{L^2_t}^2+\|F^{6}\|_{L_{t}^{2}W^{2-\frac{1}{q_{-}},q_{-}}}^{2}\\
  &\quad + \|F^1\|_{L^2L^{q_-}}^2 + \|F^4\|_{L^2W^{1-1/q_-,q_-}}^2 + \|F^5\|_{L^2W^{1-1/q_-,q_-}}^2 + \|[F^7]_\pi\|_{L_t^{2}}^2+\int_{0}^{t}\mathcal{E}(u,p,\eta)\mathcal{D}(u,p,\eta)ds).
  \end{aligned}
  \end{equation}
\end{theorem}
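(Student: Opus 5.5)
The plan follows the standard Lagrange-multiplier route of \cite{GT2020} and \cite{YXD2}, adapted to the extra terms involving $\mathfrak{m}(t)$ and the $\mathcal{R}_{1a}$ integrals.

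\textbf{Step 1: recovering the pressure.} For a.e.\ $t$, the weak formulation \eqref{eq:weak_limit_1} defines a bounded linear functional $\Lambda(t)$ on $\mathcal{W}(t)$. Its value on $w$ is the left side minus the right side of \eqref{eq:weak_limit_1}, and $\Lambda(t)$ vanishes on $\mathcal{W}_\sigma(t)$.

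The quotient is handled with the isomorphism $\mathcal{M}_t$ of Proposition~\ref{prop:isomorphism}. By items (iv)--(v), $\mathcal{M}_t$ maps $W_\sigma$ onto $\mathcal{W}_\sigma(t)$ and intertwines $\dive$ with $\dive_{\mathcal{A}}$. So it suffices to know that $\dive:W\to \mathring{H}^0(\Om)$ is surjective with a bounded right inverse, as in \cite{GT2020}. The constraint $u\cdot\mathcal{N}_0\in\mathring H^1$ is compatible with mean-zero divergence by the divergence theorem.

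A closed-range argument then gives a unique $\overset{\circ}{q}_d(t)\in\mathring{H}^0$ with $\Lambda(t)w=(\overset{\circ}{q}_d,\dive_{\mathcal{A}}w)_{\mathcal{H}^0}$ for all $w\in\mathcal{W}(t)$. Its norm is bounded by $\|\Lambda(t)\|_{\mathcal{W}^*}$. Estimating every term of \eqref{eq:weak_limit_1} by Cauchy--Schwarz, with Lemma~\ref{lem:lemma1} controlling $\mathcal{R}_{1a}$ in $L^\infty$, yields \eqref{est:avg_q1} after squaring and integrating in time, and \eqref{est:avg_q2} after taking the supremum. Measurability in $t$ follows from the linear dependence on $\Lambda$.

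\textbf{Step 2: fixing the constant.} The full pressure is $q_d=\overset{\circ}{q}_d+\bar q_d(t)$. We test against $w\in\mathcal{W}$ whose normal trace has nonzero mean; this is where the restriction $u\cdot \mathcal{N}\in\mathring{H}^1$ must be removed. The constant $\bar q_d$ is chosen so that the dynamic boundary condition on $\Sigma$ holds in the distributional sense. This gives an explicit formula for $\bar q_d$ as an average of the $\Sigma$ data: $\xi_d$ tested against $1$ in the $(1,\Sigma)$ form, plus $[F^7]_\pi$, $F^4$, and the contact-point terms. Hence $|\bar q_d|$ obeys the same bounds, and uniqueness follows.

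\textbf{Step 3: strong solution and elliptic regularity.} With $q_d$ in hand, $(v_d,q_d,\xi_d)$ solves, a.e.\ in $t$, a stationary $\mathcal{A}$-Stokes problem. The bulk forcing is $F^1-\partial_tv_d-\partial_t(Rv)$ plus the $Rv$ terms, with Navier-slip on $\Sigma_s$. The stress condition on $\Sigma$ is coupled to the capillary operator in $\xi_d$, and the contact-point conditions are given by $\kappa\partial_t\xi_d-F^7$.

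We freeze $\xi_d$ on the right-hand side and apply the $W^{2,q_-}\times W^{1,q_-}\times W^{3-1/q_-,q_-}$ elliptic theory for the capillary Stokes problem in corner domains (\cite{GT2020}, Theorem~5 there, as used in \cite{YXD2}). This is valid because $q_-<q_{max}$ keeps the corner singularity admissible. The kinematic relation \eqref{eq:kinematic} enters only through $\partial_t\xi_d$, whose contact values and $H^{3/2-\alpha}$ norm are already controlled. The terms $\mathfrak m(t)\xi_s$ and the $\mathfrak{n}$-integrals are bounded via Theorem~\ref{thm:m}; these, together with $F^6$ and the products of given functions, produce the contributions $\|F^6\|^2_{L^2W^{2-1/q_-,q_-}}$ and $\int_0^t\mathcal{E}\mathcal{D}$.

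The term $\sigma\partial_\theta(\int_0^t\mathcal{R}_{1a}\partial_\theta\partial_t\xi_d)$ is a perturbation of the principal surface operator. It has coefficient $\lesssim\delta$ by Lemma~\ref{lem:lemma1}, so it is absorbed into the left side. The coercivity of the principal surface operator uses Lemma~\ref{cor:cor1}, which applies because $\xi_d$ satisfies the two orthogonality conditions enforced by $\mathfrak m$ and \eqref{cond:zero}.

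\textbf{Main obstacle.} I expect the hardest point to be the absorption in Step 3. The quasilinear $\mathcal{R}_{1a}$ term sits at top order in $W^{3-1/q_-,q_-}$. The $\int_0^t$ structure means its smallness must come from $\delta$ together with an $L^2_t$ bound over $[0,T]$. I would try Minkowski in time followed by a short-$T$ Gronwall-type absorption, with $(v,q,\xi)$ then recovered by integrating $\partial_tv=v_d+Rv$ and $\partial_t\xi=\xi_d$.
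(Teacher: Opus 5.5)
Your overall architecture is the paper's. Your Step 1 is its Step 1: the functional $\Lambda_t$ vanishing on $\mathcal W_\sigma(t)$, a closed-range/de Rham argument, and Ne\v{c}as. Your Step 3 corresponds to its Steps 2 and 4. Your Step 2 is its Step 5 moved earlier: the paper fixes $\bar q_d$ only after regularity is known, by testing with $\psi=M\nabla\tilde\phi$ ($\tilde\phi$ from \eqref{equ:ellip_d}) and integrating by parts. Imposing the weak identity for one extra test field $w_*\in{}_0\mathcal H^1(\Om)\setminus\mathcal W(t)$ is equally valid. It cannot be a $w\in\mathcal W(t)$ as you wrote, since those have zero-mean normal traces by definition. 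The resulting expression for $\bar q_d$ is not an average of $\Sigma$ data alone: it also contains $(\p_tv_d,w_*)_{\mathcal H^0}$, $((v_d,w_*))$, $\int_\Om F^1\cdot w_*J$ and $(\overset{\circ}{q}_d,\dive_{\mathcal A}w_*)_{\mathcal H^0}$. The bounds you state survive this.

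Three points in Step 3 need correcting.

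(i) Read literally, ``freezing $\xi_d$'' means putting $\mathcal K\xi_d$ into the normal-stress data of a Stokes problem, and that is circular. Indeed $\mathcal K\xi_d\in W^{1-1/q_-,q_-}$ is equivalent to the conclusion $\xi_d\in W^{3-1/q_-,q_-}$, whereas only $\xi_d\in L^2H^{3/2-\alpha}$ is known. The paper's argument works because of the decoupling in \eqref{eq:e-1}. There the kinematic relation prescribes the normal trace $v_d\cdot\mathcal N=\rho\p_t\xi_d+\rho\mathfrak m\xi_s+\cdots$, whose principal part lies in $L^2W^{2-1/q_-,q_-}$ because $\p_t\xi_d\in L^2H^{3/2-\alpha}$. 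Only the tangential stress is imposed on $\Sigma$, using \cite[Theorem 4.7]{GT2020} plus a bounded-inverse perturbation for the $R$, $\p_tu_1\p_\theta\xi_d$, $\mathfrak m$, $\mathfrak n'$ terms. Only afterwards is the normal-stress balance read as the capillary ODE \eqref{eq:sgc} for $\xi_d$, with the normal stress of $(\overset{\circ}{q}_d,v_d)$ as data.

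(ii) Lemma \ref{cor:cor1} is neither needed nor applicable. The condition \eqref{cond:zero} constrains the nonlinear initial data, and the linear kinematic law does not preserve $\int_0^\pi\rho_0\xi_d=0$; this is why the paper must subtract $a_0(t)\rho_0$ in \eqref{def:a}. The ODE is uniformly elliptic on $[0,\pi]$, and the term $\|\xi_d\|_{L^2H^1}$ on the right of \eqref{est:diss_1} covers any kernel.

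(iii) Your anticipated main obstacle does not arise. In \eqref{eq:weak_limit_1} the quasilinear term is $(\mathcal R_{1a}(\rho_0,\p_\theta\eta,\eta)\p_\theta\xi_d,\p_\theta(w\cdot\mathcal N))_{L^2}$ at fixed $t$. It is therefore absorbed pointwise in time using $\|\eta\|_{L^\infty W^{3-1/q_-,q_-}}\le\delta$ and Lemma \ref{lem:lemma1}, as in the paper's Step 4, with no Gronwall argument. Your remedy would also fail on the $\int_0^t$ form as stated. Bounding $\int_0^t\mathcal R_{1a}\p_\theta\p_s\xi_d\,ds$ in $W^{2-1/q_-,q_-}$ by Minkowski requires $\p_t\xi_d\in L^2W^{3-1/q_-,q_-}$, which nothing controls. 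You would first have to integrate by parts in time:
$\int_0^t\mathcal R_{1a}\p_\theta\p_s\xi_d\,ds=\bigl[\mathcal R_{1a}\p_\theta\xi_d\bigr]_{s=0}^{s=t}-\int_0^t\p_s\mathcal R_{1a}\,\p_\theta\xi_d\,ds$.
Then the $s=t$ term is absorbed pointwise, the $s=0$ term is initial data, and only the last integral is a small-$T$ error.
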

\begin{proof}
\
\paragraph{\underline{Step 1 -- Construction of Pressure $\overset{\circ}q_{d}$}}

 We write down the weak form equation as follows

\begin{equation}{\label{eq:less_weak}}
\begin{aligned}
  & (\p_tv_{d}, w)_{\mathcal{H}^0}+((v_{d},w))+ (\xi_{d}, w\cdot\mathcal{N})_{1,\Sigma}+((Rv,w))+ [v_{d}\cdot\mathcal{N},w\cdot\mathcal{N}]_\pi\\ &
  =\int_\Om F^1\cdot wJ-\int_{0}^{\pi} F^4\cdot w-\int_{\Sigma_s}F^5(w\cdot\tau)J -[F^7,w\cdot\mathcal{N}]_\pi+(\p_{t}(Rv),w)_{\mathcal{H}^{0}}
  \end{aligned}
\end{equation}

\noindent for any $w\in\mathcal{W}_{\sigma}$. Define the functional $\Lambda_t$ so that $\Lambda_t(w)$ equals the difference of the right-hand side of \eqref{eq:less_weak} subtracted from the left-hand side of \eqref{eq:less_weak} with $w\in\mathcal{W}_{\sigma}(t)$.
By the energy estimate in lemma \ref{elemma}, $\Lambda_{t}\in \mathcal{W}^{\ast}$ and $\Lambda_t(w)=0$ for all $w\in\mathcal{W}_\sigma(t)$.  By Theorem 4.6 in \cite{GT18}, there exists a unique $J\overset{\circ}q_{d}(t)\in\mathring{H}^0(t)$ such that $(\overset{\circ}q_{d}(t),\dive_{\mathcal{A}}w)_{\mathcal{H}^0}=\Lambda_t(w)$ for all $w\in\mathcal{W}(t)$, which is equivalent to
  \begin{equation}\label{eq:weak_pressure_1}
  \begin{aligned}
  &(\p_tv_{d}, w)_{\mathcal{H}^0} + ((v_{d},w)) + (\xi_{d}, w\cdot\mathcal{N})_{1,\Sigma_{0}}+(\mathcal{R}_{1a}(\rho_{0},\partial_{\theta}\eta,\eta)\partial_{\theta}\partial_{t}\xi_{d},\partial_{\theta}(w\cdot\mathcal{N}))_{L^{2}}-(\overset{\circ}q_{d},\dive_{\mathcal{A}}w)_{\mathcal{H}^0}
 +[v_{d}\cdot \mathcal{N},w\cdot\mathcal{N}]_\pi\\
 &=\int_\Om F^1\cdot wJ-\int_{0}^{\pi} F^4\cdot w-\int_{\Sigma_s}F^5(w\cdot\tau)J -[F^7,w\cdot\mathcal{N}]_\pi+(\p_{t}(Rv)),w)_{\mathcal{H}^{0}}+((Rv,w)).
  \end{aligned}
  \end{equation}
  Moreover, by Ne$\check{\text{c}}$as inequality (for instance, see \cite{boyer_fabrie}), we have
  \begin{equation}\label{est:pressure_3}
  \|\overset{\circ}q_{d}\|_0^2\lesssim \|v_{d}\|_1^2+\|\p_tv_{d}\|^{2}_0 +\|\xi_{_{d}}\|_{1}^2 + \|F^1-F^{4}-F^{5} \|^{2}_{(\mathcal{H}^1)^\ast}+\sup_{0\leq s\leq t}\|\xi_{d}(s)\|_{H^{1}}^{2}+|[F^{7}]_{\pi}|^{2}.
  \end{equation}

\paragraph{\underline{Step 2 -- Elliptic Estimates for $(v,q,\xi)$}}
 For a.e. $t\in[0,T]$, $v_{d}(t),\overset{\circ}{q}_{d}(t),\xi_{d}(t)$ is the unique weak solution to the elliptic problem.
\begin{equation}{\label{eq:e-1}}
  \left\{
  \begin{aligned}
    &-\mu \Delta_{\mathcal{A}} v_{d} + \nabla_{\mathcal{A}} \overset{\circ}{q}_{d} = -\p_t v_{d}-\p_{t}(Rv)-\operatorname{div}_{\mathcal{A}}\nabla_{\mathcal{A}}(Rv) + F^1 \quad &\text{in}& \ \Om,\\
    & \dive_{\mathcal{A}} v_{d} =0 \quad &\text{in}& \ \Om,\\
    & S_{\mathcal{A}}(\overset{\circ}{q}_{d},v_{d})\mathcal{N}\cdot \mathcal{T}=-\nabla_{\mathcal{A}}(Rv)\mathcal{N}\cdot \mathcal{T}+F^{4}(u,p,\eta)\cdot \mathcal{T} \quad &\text{on}&~  \Sigma,\\
    & v_{d}\cdot \mathcal{N} = \rho\p_t\xi_{d}-(Rv)\cdot \mathcal{N}-\p_{t}u_{1}\p_{\theta}\xi_{d}+\frac{\p_{t}\eta}{\rho}u\cdot \mathcal{N}+\rho\mathfrak{m}(t)\xi_{s}-\mathfrak{n}^{\prime}(t)\p_{t}\p_{\theta}\xi\sin\theta+\rho F^{6}(u,\eta) \quad &\text{on}& \ \Sigma,\\
    &(S_{\mathcal{A}}(\overset{\circ}{q}_{d},v_{d})\nu+\nabla_{\mathcal{A}}(Rv)-\beta v_{d})\cdot \tau=F^{5}(u,\eta,p) \quad &\text{on}& \ \Sigma_s,\\
    & v_{d} \cdot \nu =0 \quad &\text{on}& \ \Sigma_s,
  \end{aligned}
  \right.
\end{equation}
  where $\mathcal{T} = (1, \p_{\theta}(\zeta_0 + \eta))$ is the tangential vector of free surface.

  Denoting $F^{4_{+}}=\frac{\p_{t}\eta}{\rho}u\cdot \mathcal{N}+\rho F^{6}$, we have the following estimate using the fact that $F^{6}(t)\in L_{t}^{2}W^{2-\frac{1}{q_{-}},q_{-}}(\Sigma) $

  \begin{align}
  \begin{aligned}
       \|F^{4_{+}}\|_{L^{2}W^{2-\frac{1}{q_{-}},q_{-}}(\Sigma)}\lesssim& \|\frac{\p_{t}\eta}{\rho}\|_{L_{t}^{2}W^{3-\frac{1}{q_{-}},q_{-}}}\|u\|_{L_{t}^{\infty}W^{2,q_{+}}}+\|\rho F^{6}\|_{L_{t}^{2}W^{2-\frac{1}{q_{-}},q_{-}}(\Sigma)}\\
      \lesssim& (\sup_{0\leq t\leq T}\mathcal{E}(u,\eta,p)^{\frac{1}{2}}(\int_{0}^{T}\mathcal{D}(u,\eta,p)ds)^{\frac{1}{2}}+\|F^{6}\|_{L_{t}^{2}W^{2-\frac{1}{q_{-}},q_{-}}(\Sigma)}.
      \end{aligned}
  \end{align}

Since $F^1(t)\in L_{t}^{2}L^{q_-}(\Om)$, $F^{4}(t)\in L_{t}^{2}W^{1-1/q_-, q_-}(\Sigma)$ , $F^5(t)\in L_{t}^{2}W^{1-1/q_-, q_-}(\Sigma_{s})$, $ F^{4_{+}}\in L^{2}W^{2-\frac{1}{q_{-}},q_{-}}(\Sigma)$, and according to the elliptic theory of \cite[Theorem 4.7]{GT2020}, the following elliptic problem admits a unique strong solution $(v_{d}, q_{d},\xi_{d})$ obeying estimate \eqref{est:bound_vq}
\begin{equation}{\label{eq:e2}}
  \left\{
  \begin{aligned}
    &-\mu \Delta_{\mathcal{A}} v_{d} + \nabla_{\mathcal{A}} \overset{\circ}{q}_{d} = -\p_t v_{d}+ F^1 \quad &\text{in}& \ \Om,\\
    & \dive_{\mathcal{A}} v_{d} = 0 \quad &\text{in}& \ \Om,\\
    & -\mu \mathbb{D}_{\mathcal{A}} v_{d}\mathcal{N} \cdot \mathcal{T} = F^{4} \cdot \mathcal{T} \quad &\text{on}& \ \Sigma,\\
    & v_{d}\cdot \mathcal{N} = \rho\p_t\xi_{d}+ F^{4_{+}} \quad &\text{on}& \ \Sigma,\\
    &(-\mu \mathbb{D}_{\mathcal{A}} v_{d} \nu - \beta v_{d} ) \cdot \tau =F^{5} \quad &\text{on}& \ \Sigma_s,\\
    & v_{d} \cdot \nu =0 \quad &\text{on}& \ \Sigma_s,
  \end{aligned}
  \right.
\end{equation}
  \begin{equation}\label{est:bound_vq}
  \begin{aligned}
  &\|v_{d}(t)\|_{L_{t}^{2}W^{2,q_-}}^2+\|\nabla \overset{\circ}{q}_{d}(t)\|_{L_{t}^{2}L^{q_-}}^2 + \|v_{d}(t)\|_{L_{t}^{2}H^{1+\varepsilon_-/2}}^2 + \|\overset{\circ}q_{d}(t)\|_{L_{t}^{2}H^{\varepsilon_-/2}}^2\\
  &\lesssim \|\p_tv_{d}(t)\|_{L_{t}^{2}L^{q_-}}^2 +\|\p_t\xi_{d}(t)\|_{L_{t}^{2}W^{2-1/q_-,q_-}}^2+\|F^1\|_{L_{t}^{2}L^{q_-}}^2+\|F^{4}\|_{L_{t}^{2}W^{1-1/q_-,q_-}}^2 +\|F^5\|_{L_{t}^{2}W^{1-1/q_-,q_-}}^2+\|F^{4_{+}}\|_{L_{t}^{2}W^{2-\frac{1}{q_{-}},q_{-}}}.
  \end{aligned}
  \end{equation}
  \noindent \noindent {In other words, the linear map $\Phi$ induced by the elliptic system 
  \begin{equation*}
      \Phi: X\ni(v_d,\overset{\circ}q_d)\rightarrow (F^{1},F^{4},F^{4_{+}},F^{5})\in Y,
  \end{equation*}
  has a bounded inverse (The definition of $X$ and $Y$ are given in the appendix \eqref{eq:def1} and \eqref{eq:def2}). }
  (The definition of $X$ and $Y$ are given in the appendix \eqref{eq:def1} and \eqref{eq:def2}). Therefore, the system \eqref{eq:e-1} can be considered as a perturbation of \eqref{eq:e2}. 
  
  Choosing $F^{1}$ to be the same in \eqref{eq:e-1} and \eqref{eq:e2} and  for $i=1,4,4_{+},5$, we denote the resulting perturbation by $\Phi_{\varepsilon}$, which has the following form:

  \begin{align*}
  \begin{aligned}
      \Phi_{\varepsilon}(v_{d},\overset{\circ}{q}_{d})=&\Phi(v_{d},\overset{\circ}{q}_{d})+(L_{1},L_{2},L_{3},L_{4}),\\
      L_{1}=&\p_{t}(Rv)-\operatorname{div}_{\mathcal{A}}\nabla_{\mathcal{A}}(Rv),\quad\operatorname{and}\quad
      L_{2}=-\nabla_{\mathcal{A}}(Rv)\mathcal{N}\cdot \mathcal{T},\\
      L_{3}=&-\rho(Rv)\cdot \mathcal{N}-\p_{t}u_{1}\p_{\theta}\xi_{d}+\rho\mathfrak{m}^{\prime}(t)\xi_{s}-\mathfrak{n}^{\prime}(t)\p_{t}\p_{\theta}\xi \sin\theta\quad\operatorname{and}\quad
      L_{4}=(\nabla_{\mathcal{A}}(Rv)\nu)\cdot \tau.
      \end{aligned}
  \end{align*}
  \noindent We note that by H\"older's inequality, trace theorem, and Theorem \ref{thm:m},we have the following estimates
  \begin{align*}
     \|L_{1}\|_{L_{t}^{2}L^{q_{-}}}\lesssim& T\|\p_{t}{\eta}\|_{L_{t}^{2}W^{3-\frac{1}{q_{-}},q_{-}}}\|v_{d}\|_{L_{t}^{2}W^{2,q_{-}}}+T\|\p_{t}^{2}{\eta}\|_{L_{t}^{2}H^{\frac{3}{2}-\alpha}}\|v_{d}\|_{L_{t}^{2}W^{2,q_{-}}}+\|\p_{t}\eta\|_{L_{t}^{\infty}H^{\frac{3}{2}-\alpha}}\|v_{d}\|_{L_{t}^{2}W^{2,q_{-}}},
  \end{align*}
  \begin{align*}
      \begin{aligned}
          \|L_{2}\|_{L_{t}^{2}W^{1-\frac{1}{q_{-}},q_{-}}(\Sigma)}\lesssim &T\|\p_{t}\eta\|_{L_{t}^{\infty}W^{3-\frac{1}{q_{-}},q_{-}}}\|v_{d}\|_{L_{t}^{2}W^{2,q_{-}}},\\
          \|L_{3}\|_{L_{t}^{2}W^{2-\frac{1}{q_{-}},q_{-}}(\Sigma)}\lesssim& T\|\p_{t}\eta\|_{L_{t}^{2}W^{3-\frac{1}{q_{-}},q_{-}}}\|v_{d}\|_{L_{t}^{2}W^{2,q_{-}}}+\|\p_{t}u\|_{L_{t}^{\infty}H^{1+\frac{\varepsilon_{-}}{2}}}\|\xi_{d}\|_{L_{t}^{2}W^{3-\frac{1}{q_{-}},q_{-}}}\\
          &\quad+\|\mathfrak{m}(t)\|_{L_{t}^{\infty}}+\|\mathfrak{n}^{\prime}(t)\|_{L_{t}^{\infty}}\|\xi_{d}\|_{L_{t}^{2}W^{3-\frac{1}{q_{-}},q_{-}}}\\
          \lesssim& T\|\p_{t}\eta\|_{L_{t}^{2}W^{3-\frac{1}{q_{-}},q_{-}}}\|v_{d}\|_{L_{t}^{2}W^{2,q_{-}}}+\|\p_{t}u\|_{L_{t}^{\infty}H^{1+\frac{\varepsilon_{-}}{2}}}\|\xi_{d}\|_{L_{t}^{2}W^{3-\frac{1}{q_{-}},q_{-}}}+\|\mathfrak{n}^{\prime}(t)\|_{L_{t}^{\infty}}\|\xi_{d}\|_{L_{t}^{2}W^{3-\frac{1}{q_{-}},q_{-}}}\\
          &\quad+\|v_{d}\|_{L_{t}^{\infty}H^{1}}+\|\xi_{d}\|_{L_{t}^{\infty}H^{1}}+(\sup_{0\leq t\leq T}\mathcal{E}(u,\eta,p))^{\frac{1}{2}}(\int_{0}^{T}\mathcal{D}(u,\eta,p)dt)^{\frac{1}{2}},\\
          \|L_{4}\|_{L_{t}^{2}W^{1-\frac{1}{q_{-}},q_{-}}(\Sigma)}\lesssim& T\|\p_{t}\eta\|_{L_{t}^{\infty}W^{3-\frac{1}{q_{-}},q_{-}}}\|v_{d}\|_{L_{t}^{2}W^{2,q_{-}}}.
      \end{aligned}
  \end{align*}
  \noindent
  Using the estimates above, there is an $\varepsilon_{0}$ such that $\Phi_{\varepsilon}$ also has a bounded inverse when $\varepsilon\leq \varepsilon_{0}$, given that $T\vert \vert \eta\vert \vert_{L_{t}^{\infty}W^{3-\frac{1}{q_{-}},q_{-}}}+T\|\p_{t}\eta\|_{L_{t}^{2}W^{3-\frac{1}{q_{-}},q_{-}}}+\|\p_{t}\eta\|_{L_{t}^{\infty}H^{\frac{3}{2}-\alpha}}+\|\mathfrak{n}^{\prime}(t)\|_{L_{t}^{\infty}}+\|\p_{t}u\|_{L_{t}^{\infty}H^{1+\frac{\varepsilon_{-}}{2}}}\leq \delta \leq\varepsilon_{0}$. Therefore, the solution to \eqref{eq:e-1} also obeys the same elliptic estimate as the \eqref{eq:e2}:

  \begin{equation}\label{est:bound_vq1}
  \begin{aligned}
  &\|v_{d}(t)\|_{L_{t}^{2}W^{2,q_-}}^2+\|\nabla \overset{\circ}{q}_{d}(t)\|_{L_{t}^{2}L^{q_-}}^2 + \|v_{d}(t)\|_{L_{t}^{2}H^{1+\varepsilon_-/2}}^2 + \|\overset{\circ}q_{d}(t)\|_{L_{t}^{2}H^{\varepsilon_-/2}}^2\\
  &\lesssim \|\p_tv_{d}(t)\|_{L_{t}^{2}L^{q_-}}^2 +\|\p_t\xi_{d}(t)\|_{L_{t}^{2}W^{2-\frac{1}{q_{-}},q_{-}}}^2+\|v_{d}\|_{L_{t}^{\infty}H^{1}}^{2}+\|\xi_{d}\|_{L_{t}^{\infty}H^{1}}^{2}+\|F^1\|_{L_{t}^{2}L^{q_-}}^2\\
  &\quad+\|F^{4}\|_{L_{t}^{2}W^{1-1/q_-,q_-}}^2 +\|F^5\|_{L_{t}^{2}W^{1-1/q_-,q_-}}^2+\|F^{6}\|_{L_{t}^{2}W^{2-\frac{1}{q_{-}},q_{-}}}^{2}+\int_{0}^{T}\mathcal{E}(u,\eta,p)\mathcal{D}(u,\eta,p)dt.
  \end{aligned}
  \end{equation}
  {Then using Sobolev embedding $W^{2-1/q_a,q_a}((0, \pi)) \hookrightarrow H^{3/2-\alpha}((0, \pi))$ and Poincar\'e inequality, the elliptic estimate \eqref{est:bound_vq1} can  be rewritten as
 
\begin{align}
     \begin{aligned}
  &\|v_{d}(t)\|_{L_{t}^{2}W^{2,q_-}}^2+\|\nabla  \overset{\circ}{q}_{d}(t)\|_{L_{t}^{2}L^{q_-}}^2 + \|v_{d}(t)\|_{L_{t}^{2}H^{1+\varepsilon_-/2}}^2 + \|\overset{\circ}q_{d}(t)\|_{L_{t}^{2}H^{\varepsilon_-/2}}^2\\
  &\lesssim \|\p_tv_{d}(t)\|_{L_{t}^{2}L^{q_-}}^2 +\|\p_t\xi_{d}(t)\|_{L_{t}^{2}H^{\frac{3}{2}-\alpha}}^2+\|v_{d}\|_{L_{t}^{\infty}H^{1}}^{2}+\|\xi_{d}\|_{L_{t}^{\infty}H^{1}}^{2}+\|F^1\|_{L_{t}^{2}L^{q_-}}^2\\
  &\quad+\|F^{4}\|_{L_{t}^{2}W^{1-1/q_-,q_-}}^2 +\|F^5\|_{L_{t}^{2}W^{1-1/q_-,q_-}}^2+\|F^{6}\|_{L_{t}^{2}W^{2-\frac{1}{q_{-}},q_{-}}}^{2}+\int_{0}^{T}\mathcal{E}(u,\eta,p)\mathcal{D}(u,\eta,p)dt.
  \end{aligned}
  \end{align}}
  
\paragraph{\underline{Step 4 -- Elliptic estimates for $\xi_{d}$}}

We next establish the elliptic estimate for $\xi_{d}$ via discussing the following system
 \begin{align}\label{eq:sgc}
  \left\{
  \begin{aligned}
     &(g\xi_{d}\sin\theta+\sigma(\mathcal{P}_1(\rho_0,\rho_0')\xi_{d}+\mathcal{P}_{2}(\rho_{0},\rho_{0}')\xi_{d}'-\frac{1}{\rho_{0}}\partial_{\theta}(\frac{\rho_{0}^{2}\xi_{d}'}{(\rho_{0}^{2}+\rho_{0}'^{2})^{\frac{3}{2}}}-\frac{\rho_{0}'\rho_{0}\xi_{d}}{(\rho_{0}^{2}+\rho_{0}'^{2})^{\frac{3}{2}}}+\mathcal{R}_{1a}(\rho_{0},\p_{\theta}\eta,\eta)(\p_{\theta}\p_{t}\xi)))\\
    &=F^{4}(u,p,\eta)\cdot\frac{\mathcal{N}}{|\mathcal{N}|^{2}}+(S_{\mathcal{A}}(\overset{\circ}{q}_{d},v_{d})+\nabla_{\mathcal{A}}(Rv))\mathcal{N}\cdot \frac{\mathcal{N}}{|\mathcal{N}|^{2}} \quad \text{in} \ (0, \pi),\\
     &\mp\sigma(\frac{\rho_{0}^{2}\xi_{d}'}{(\rho_{0}^{2}+\rho_{0}'^{2})^{\frac{3}{2}}}-\frac{\rho_{0}'\rho_{0}\xi_{d}}{(\rho_{0}^{2}+\rho_{0}'^{2})^{\frac{3}{2}}})(\frac{\pi}{2}\pm\frac{\pi}{2})\pm\sigma \mathcal{R}_{1a}(\rho_{0},\partial_{\theta}\eta,\eta)\partial_{\theta}\xi_{d}(\frac{\pi}{2}\pm\frac{\pi}{2})=\kappa v_{d}\cdot \mathcal{N}(\frac{\pi}{2}\pm\frac{\pi}{2})-F^7(\frac{\pi}{2}\pm\frac{\pi}{2}),
  \end{aligned}
  \right.
\end{align}
  As in step 3, system \eqref{eq:sgc} can be viewed as a linear perturbation of the elliptic system:

\begin{align}\label{eq:sgc1}
  \left\{
  \begin{aligned}
    &(g\xi_{d}\sin\theta+\sigma(\mathcal{P}_1(\rho_0,\rho_0')\xi_{d}+\mathcal{P}_{2}(\rho_{0},\rho_{0}')\xi_{d}'-\frac{1}{\rho_{0}}\partial_{\theta}(\frac{\rho_{0}^{2}\xi_{d}'}{(\rho_{0}^{2}+\rho_{0}'^{2})^{\frac{3}{2}}}-\frac{\rho_{0}'\rho_{0}\xi_{d}}{(\rho_{0}^{2}+\rho_{0}'^{2})^{\frac{3}{2}}}+\mathcal{R}_{1a}(\rho_{0},\p_{\theta}\eta,\eta)(\p_{\theta}\p_{t}\xi)))\\&= \left( S_{\mathcal{A}} (\overset{\circ}{q}_{d}, v_{d})\mathcal{N}\right)\cdot \frac{\mathcal{N}} {|\mathcal{N}|^2}+F^{4}\cdot \frac{\mathcal{N}}{|\mathcal{N}|^{2}}
    \quad \text{in} \ (0, \pi),\\
    & (\frac{\rho_{0}\xi_{d}'}{(\rho_{0}^{2}+\rho_{0}'^{2})^{\frac{3}{2}}}-\frac{\rho_{0}'\xi_{d}}{(\rho_{0}^{2}+\rho_{0}'^{2})^{\frac{3}{2}}})(\frac{\pi}{2}\pm\frac{\pi}{2})=\kappa v_{d}\cdot \mathcal{N}(\frac{\pi}{2}\pm\frac{\pi}{2})-F^7,
  \end{aligned}
  \right.
\end{align}

 By the elliptic estimates of \eqref{eq:sgc1},
%\begin{align}
%  \left\{
%  \begin{aligned}
%    & g(\xi + \varepsilon \xi_t) - \sigma \p_{\theta} \left( \frac {\p_{\theta}(\xi + \varepsilon \xi_t)} {(1+|\p_{\theta}\zeta_0|^2)^{3/2}}\right) = \left( S_{\mathcal{A}} (q, v)\mathcal{N} - F^4 \right)\cdot \frac{\mathcal{N}} {|\mathcal{N}|^2} + \sigma \p_{\theta}F^3 \quad &\text{in}& \ (0, \pi),\\
%    & \mp\sigma\frac{\p_{\theta}(\xi + \varepsilon \xi_t)}{(1+|\p_{\theta}\zeta_0|^2)^{3/2}}(\pm\pi)=\kappa \p_t\theta(\pm\pi)\pm\sigma F^3(\pm\pi)-F^7(\pm\pi),
%  \end{aligned}
%  \right.
%\end{align}
we have
\begin{align}\label{est:bound_theta1}
\begin{aligned}
  \|\xi_{d}(t)\|_{L^{2}W^{3-1/q_-,q_-}}^2 & \lesssim \|q_{d}\|_{L^{2}W^{1-1/q_-, q_-}(0, \pi)}^2 + \|v_{d}\|_{L^{2}W^{2-1/q_-, q_-}(0, \pi)}^2  + \vert \vert[F^7]_\pi\vert \vert_{L_{t}^{2}}^{2}+\|F^{4}\|_{L_{t}^{2}W^{1-\frac{1}{q_{-}},q_{-}}}^{2}.
\end{aligned}
\end{align}

\noindent For the perturbation terms, by H\"older's inequality, we have

\begin{align*}
   \|\p_{\theta}(\mathcal{R}_{1a}(\rho_{0},\p_{\theta}\eta,\eta))\p_{\theta}\xi_{d}\|_{L_{t}^{2}W^{3-\frac{1}{q_{-}},q_{-}}}\lesssim \|\eta\|_{L_{t}^{\infty}W^{3-\frac{1}{q_{-}},q_{-}}}\|\xi_{d}\|_{L_{t}^{2}W^{3-\frac{1}{q_{-}},q_{-}}},\\
   [\mathcal{R}_{1a}(\rho_{0},\p_{\theta}\eta,\eta)\p_{\theta}\xi_{d}]_{\pi}\lesssim \|\eta\|_{L_{t}^{\infty}W^{3-\frac{1}{q_{-}},q_{-}}}\|\xi_{d}\|_{L_{t}^{2}W^{3-\frac{1}{q_{-}},q_{-}}}.
\end{align*}

\noindent Therefore, using a similar argument as in Step 3 by regarding \eqref{eq:sgc} as a perturbation of \eqref{eq:sgc1}, and the smallness of $\vert \vert \eta\vert \vert_{L^{2}W^{3-\frac{1}{q_{-}},q_{-}}}$, we derive that $\xi$ is a strong solution to \eqref{eq:sgc} which obeys the estimate \eqref{est:bound_theta1}.

  %\begin{equation}\label{est:diss_3}
%  \begin{aligned}
%  &\|v\|_{L^2W^{2,q_a}}^2+\|q\|_{L^2W^{q_a}}^2+\|\theta\|_{L^2W^{3-1/q_a,q_a}}^2 + \|v\|_{L^2H^{1+\varepsilon_a/2}}^2 + \|q\|_{L^2H^{\varepsilon_a/2}}^2\\
%  &\lesssim \varepsilon\|\theta(0)\|_{W^{3-1/q_a,q_a}}^2+\|\p_tv\|_{L^2H^0}^2 +\|\xi + \varepsilon \xi_t\|_{L^2H^1}^2+\|v\|_{L^2H^1}^2+\|\p_t\theta\|_{L^2H^{3/2-\alpha}}^2+\|[\p_t\theta]_\pi\|_{L^2_t}^2 \\
%  &\quad + \|F^1\|_{L^2L^{q_a}}^2+\|F^4\|_{L^2W^{1-1/q_a,q_a}}^2 +\|F^5\|_{L^2W^{1-1/q_a,q_a}}^2+\|\p_{\theta}F^3\|_{L^2W^{1-1/q_a,q_a}}^2+ \|[F^7]_\pi\|_{L_t}^2).
%  \end{aligned}
%  \end{equation}
\paragraph{\underline{Step 5 --The estimate of $\bar{q}_{d}$}}

From the previous two steps, it remains to estimate the mean value $\bar q_d$ of $q_d$ in order to complete the elliptic estimate. To this end, we derive a bound for $\bar q_d$ from the capillary equation.

\begin{align*}
\begin{aligned}
    S_{\mathcal{A}}(q_{d},v_{d})\mathcal{N}=&\mathcal{K}(\xi)\mathcal{N}+(\p_{\theta}(\mathcal{R}_{1a}(\rho_{0},\p_{\theta}\eta,\eta)\p_{\theta}\xi_{d}))\mathcal{N}+F^{4}+\nabla_{\mathcal{A}}(Rv)\mathcal{N}.
    \end{aligned}
\end{align*}
 Testing this equation with function $\psi=M\nabla\tilde{\phi}$ with $\tilde{\phi}$ solving the following system
\begin{align}{\label{equ:ellip_d}}
    \Delta\tilde{\phi}=g\frac{2\pi}{|\Omega|}(\bar{q}_{d})~\operatorname{in}~\Omega,~~~~~\tilde{\phi}\cdot\nu=\frac{g\bar{q}_{d}}{|\mathcal{N}_{0}|}~\operatorname{on}~\Sigma,~~~~~\tilde{\phi}\cdot \nu=0~\operatorname{on}~\Sigma_{s},
\end{align}
 and using integration by parts, we derive the following equation for $\bar{q}_{d}$
\begin{align}{\label{equ:ODE_1}}
    2g\pi|\bar{q}_{d}|^{2}=(\xi_{d},\psi\cdot \mathcal{N}^{n})_{1,\Sigma}-(S_{\mathcal{A}}(q_{d}-\bar{q}_{d},v_{d}+Rv)\mathcal{N},\psi)_{L^{2}}+\kappa[v\cdot \mathcal{N},\psi\cdot \mathcal{N}]_{\pi}-[F^{7},\psi\cdot \mathcal{N}]_{\pi}.
\end{align}
For the term $(S_{\mathcal{A}}(q_{d}-\bar{q}_{d},v_{d}+Rv)\mathcal{N},\psi)_{L^{2}}$, we have the following computation from equation $\p_{t}v_{d}+\operatorname{div}_{\mathcal{A}}(S_{\mathcal{A}}  (q_{d},v_{d}+Rv))=F^{1}$ and the fact that $v_{d}+Rv=\p_{t}v$
\begin{align}{\label{equ:ODE_3}}
\begin{aligned}
    (S_{\mathcal{A}}(q_{d}-\bar{q}_{d},\p_{t}v)\mathcal{N},\psi)_{L^{2}}=-\int_{\Omega}(\p_{t}^{2}v)\psi J+\int_{\Omega} JF^{1}\psi+\int_{\Omega} S_{\mathcal{A}}(q_{d}-\bar{q}_{d},\p_{t}v):D_{\mathcal{A}^{n}}\psi J\\
    -\int_{\Sigma_{s}}(S_{\mathcal{A}^{n}}(q_{d}-\bar{q}_{d},\p_{t}v)\nu\cdot \tau)\psi\cdot \tau J.
\end{aligned}
\end{align}

The elliptic estimate for \eqref{equ:ellip_d} implies that $|\psi|_{H^{1}(\Omega)} \lesssim |\bar q_d|.$
Substituting this estimate, together with \eqref{equ:ODE_3}, into \eqref{equ:ODE_1}, and applying the trace theorem, we obtain

\begin{align}{\label{equ:ODE_2}}
     2g\pi|\bar{q}_{d}|\lesssim g^{2}\int_{0}^{\pi}|\xi_{d}|d\theta+(\|v_{d}\|_{H^{1}}+\|\p_{t}v_{d}\|_{L^{2}}+\|q_{d}-\bar{q}_{d}\|_{W^{1,q_{-}}}+\|F^{1}\|_{L^{q_-}})
     &+|[v_{d}\cdot \mathcal{N}^{n}]_{\pi}|+|[F^{7}]_{\pi}|.
\end{align}
Therefore, $\bar{q}_{d}$ obeys the following estimates
\begin{align*}
    |\bar{q}_{d}|_{L_{t}^{2}}\lesssim& \|\xi_{d}\|_{L^{2}H^{1}}+(\|v_{d}\|_{L_{t}^{2}H^{1}}+\|\p_{t}v_{d}\|_{L_{t}^{2}L^{2}}+\|q_{d}-\bar{q}_{d}\|_{L_{t}^{2}L^{2}}+\|F^{1}\|_{L_{t}^{2}L^{q_-}})
     \notag\\&
     +\|[v_{d}\cdot \mathcal{N}^{n}]_{\pi}\|_{L_{t}^{2}}+\||[F^{7}]_{\pi}|\|_{L_{t}^{2}},\\
     |\bar{q}_{d}|_{L_{t}^{\infty}}\lesssim& \|\xi_{d}\|_{L^{\infty}H^{1}}+(\|v_{d}\|_{L_{t}^{\infty}H^{1}}+\|\p_{t}v_{d}\|_{L_{t}^{\infty}L^{2}}+\|q_{d}-\bar{q}_{d}\|_{L_{t}^{\infty}L^{2}}+\|F^{1}\|_{L_{t}^{\infty}(H^{1})^{*}})\notag\\
     &+\|[v_{d}\cdot \mathcal{N}]_{\pi}\|_{L_{t}^{\infty}}+\||[F^{7}]_{\pi}|\|_{L_{t}^{\infty}}.
\end{align*}

Finally, combining Steps 3- 5, we obtain the elliptic estimate \eqref{est:diss_1}, and the estimates for $q_{d}$ which are \eqref{est:avg_q1} and \eqref{est:avg_q2}.

\paragraph{\underline{Step 6 -- Strong Solution to \eqref{eq:quasi_linear_{s}}}}
We now derive the equations in \eqref{eq:quasi_linear} from the weak formulation with pressure. Since estimates for $(v_d,q_d,\xi_d)$ have already been established, we recover $(v,q,\xi)$ by integrating in time and identifying $(v_d,\xi_d,q_d)=(D_t v,\partial_t\xi,\partial_t q).$
To this end, we first integrate the terms $((\partial_t v,\psi))-
(\partial_t q,\operatorname{div}_{\mathcal A}\psi)_{\mathcal H^0}$
by parts in space, which yields

  \begin{align}\label{eq:w_ibp1}
\begin{aligned}
  &(\p_t^{2}v,\psi)_{\mathcal{H}^0}-\mu(\Delta_{\mathcal{A}}\p_{t}v,\psi)_{\mathcal{H}^0}+(\nabla_{\mathcal{A}}\p_{t}q, \psi)_{\mathcal{H}^0}+(\p_{t}\xi,\psi\cdot\mathcal{N})_{1,\Sigma_{0}}+(\mathcal{R}_{1a}(\rho_{0},\p_{\theta}\eta,\eta)\p_{\theta}\partial_{t}\xi,\p_{\theta}(\psi\cdot\mathcal{N}))_{L^{2}}\\&+ [\p_{t}v\cdot \mathcal{N}^{n},\psi\cdot\mathcal{N}]_{\pi}
  =\int_{\Om}F^1\cdot \psi J^{n}+\int_{0}^\pi S_{\mathcal{A}}(\p_{t}q, \p_{t}v)\mathcal{N} \cdot \psi\\
  &\quad+\int_{\Sigma_s}\left(S_{\mathcal{A}}(\p_{t}q, \p_{t}v)\nu\cdot\tau\right) (\psi\cdot\tau)J-\beta\int_{\Sigma_s}(\p_{t}v\cdot\tau)(\psi\cdot\tau)J^{n}-[\psi\cdot\mathcal{N},F^7]_\pi.
\end{aligned}
  \end{align}
If we restrict $\psi \in C_c^1(\Om) \subset \mathcal{W}(t)$, all boundary terms in \eqref{eq:w_ibp1} vanish, so that $\p_t^{2}v + \nabla_{\mathcal{A}}\p_{t}q - \Delta_{\mathcal{A}}\p_{t}v= F^1$ in $\Om$. By plugging this into \eqref{eq:w_ibp1} and integrating by parts over $(0, \pi)$, we have
\begin{align}\label{eq:w_ibp2}
 \begin{aligned}
&\bigg((g\xi_{d}\sin\theta+\sigma(\mathcal{P}_1(\rho_0,\rho_0')\xi_{d}+\mathcal{P}_{2}(\rho_{0},\rho_{0}')\xi_{d}'-\frac{1}{\rho_{0}}\partial_{\theta}(\frac{\rho_{0}^{2}\xi_{d}'}{(\rho_{0}^{2}+\rho_{0}'^{2})^{\frac{3}{2}}}-\frac{\rho_{0}'\rho_{0}\xi_{d}}{(\rho_{0}^{2}+\rho_{0}'^{2})^{\frac{3}{2}}}+\mathcal{R}_{1a}(\rho_{0},\p_{\theta}\eta,\eta)(\p_{\theta}\p_{t}\xi))),\psi\cdot\mathcal{N} \bigg)_{L^{2}} \\&+ \sigma(\frac{\rho_{0}^{2}\xi_{d}'}{(\rho_{0}^{2}+\rho_{0}'^{2})^{\frac{3}{2}}}-\frac{\rho_{0}'\rho_{0}\xi_{d}}{(\rho_{0}^{2}+\rho_{0}'^{2})^{\frac{3}{2}}})(\psi\cdot\mathcal{N})\Big|_{0}^\pi +[\p_{t}v\cdot \mathcal{N},\psi\cdot\mathcal{N}]_{\pi}-[\mathcal{R}_{1a}(\rho_{0},\p_{\theta}\eta,\eta)\p_{\theta}\partial_{t}\xi,\psi\cdot\mathcal{N}]_{\pi}
  =- F^4\cdot \psi \\
  &\quad+\int_{0}^\pi S_{\mathcal{A}}(\p_{t}q, \p_{t}v)\mathcal{N} \cdot \psi\int_{\Sigma_s}\left(S_{\mathcal{A}}(\p_{t}q, \p_{t}v)\nu\cdot\tau\right) (\psi\cdot\tau)J-\int_{\Sigma_s}F^5(\psi\cdot\tau)J-\beta\int_{\Sigma_s}(\p_{t}v\cdot\tau)(\psi\cdot\tau)J-[\psi\cdot\mathcal{N},F^7]_\pi.
 \end{aligned}
\end{align}
Restricting $\psi \in \mathcal{W}_{\sigma}(t) \cap \{\psi| \psi \cdot\mathcal{N} \in C^1_c(0, \pi), \psi =0 \ \text{on}\ \Sigma_s\}$, we deduce stress tensor boundary condition 
\begin{align}
\begin{aligned}
S_{\mathcal{A}}(\p_{t}q,\p_{t}v)\mathcal{N}=&\bigg((g\xi_{d}\sin\theta+\sigma(\mathcal{P}_1(\rho_0,\rho_0')\xi_{d}+\mathcal{P}_{2}(\rho_{0},\rho_{0}')\xi_{d}'\\
&-\frac{1}{\rho_{0}}\partial_{\theta}(\frac{\rho_{0}^{2}\xi_{d}'}{(\rho_{0}^{2}+\rho_{0}'^{2})^{\frac{3}{2}}}-\frac{\rho_{0}'\rho_{0}\xi_{d}}{(\rho_{0}^{2}+\rho_{0}'^{2})^{\frac{3}{2}}}+\mathcal{R}_{1a}(\rho_{0},\p_{\theta}\eta,\eta)(\p_{\theta}\p_{t}\xi)))\bigg)\mathcal{N}.
\end{aligned}
\end{align}
Here $\psi$ obeys the zero average constraint $\int_{0}^\pi (\psi\cdot\mathcal{N})=0$, which results in a constant coming from the term $(\p_{t}\xi,\psi\cdot\mathcal{N})_{1,\Sigma_{0}}$ after integration by parts on the interval $(0, \pi)$. Similarly, we can restrict $\psi \in \mathcal{W}(t) \cap \{\psi | \psi|_{\Sigma_s} \in C^1_c(\Sigma_s)\}$ to deduce the boundary condition $S_{\mathcal{A}}(\p_{t}q, \p_{t}v)\nu\cdot\tau -\beta(\p_{t}v\cdot\tau) =F^{5}$ on $\Sigma_s$.

Finally, the remaining boundary conditions at contact points in \eqref{eq:w_ibp2} are
\begin{align}\label{eq:w_ibp3}
 \sigma(\frac{\rho_{0}^{2}\xi_{d}'}{(\rho_{0}^{2}+\rho_{0}'^{2})^{\frac{3}{2}}}-\frac{\rho_{0}'\rho_{0}\xi_{d}}{(\rho_{0}^{2}+\rho_{0}'^{2})^{\frac{3}{2}}})(\psi\cdot\mathcal{N})\Big|_{0}^\pi +[\p_{t}v\cdot\mathcal{N},\psi\cdot\mathcal{N}]_{\pi}-[\mathcal{R}_{1a}(\rho_{0},\p_{\theta}\eta,\eta)\p_{\theta}\partial_{t}\xi,\psi\cdot\mathcal{N}]_{\pi} = -[\psi\cdot\mathcal{N},F^7]_\pi.
\end{align}
If we restrict $\psi \in \mathcal{W}_{\sigma}(t)$ and $\psi \cdot\mathcal{N} $ supported on $0$, \eqref{eq:w_ibp3} gives the contact point condition on $0$. Moreover, \eqref{eq:w_ibp3} with restriction of $\psi \in \mathcal{W}(t)$ and $\psi \cdot\mathcal{N} $ supported on $\pi$ gives the contact point condition on $\pi$.

Finally, using the definition of $v$, we derive the strong form of system \eqref{eq:quasi_linear_{s}} for $(v_{d},q_{d},\xi_{d})$. This finishes the proof. 

\end{proof}

From Theorem \ref{thm:pressure}, we have the following corollary
\begin{theorem}{\label{thm:pressure_s}}
    Suppose that there exists a weak solution $( v_{d},  \xi_{d}) \in L^\infty H^0 \times \left( L^2 H^1 \cap L^2H^{3/2-\alpha} \right)$ to the system with smooth prescribed data system \eqref{eq:weak_limit_0} on the finite time interval $[0, T]$, $j=0, 1$ .  Then there exists a unique pressure $q_{d} \in L^\infty ([0, T]; H^0(\Om)) \cap L^2 ([0, T]; W^{1, q_-})$ satisfying the estimate \eqref{est:avg_q1} and \eqref{est:avg_q2}. Moreover, $(v_{d},q_{d},\xi_{d})$ is the strong solution to \eqref{eq:quasi_linear_{s}}, and obeys the following estimate \eqref{est:diss_1}.
   \begin{equation}\label{est:diss_100}
  \begin{aligned}
  &\| v_{d}\|_{L^2W^{2,q_-}}^2 + \|q_{d}\|_{L^2W^{1,q_-}}^2 + \|\xi_{d}\|_{L^2W^{3-1/q_-,q_-}}^2 \\
  &\lesssim  \|\p_tv_{d}\|_{L^2H^0}^2 +\|\xi_{d} \|_{L^2H^1}^2 + \|v_{d}\|_{L^2H^1}^2 + \|\xi_{d}\|_{L^2H^{3/2-\alpha}}^2 + \|[v_{d}\cdot \mathcal{N}]_\pi\|_{L^2_t}^2+\|F^{6}(u^{k},\eta^{k})\|_{L_{t}^{2}W^{2-\frac{1}{q_{-}},q_{-}}}\\
  &\quad + \|F^1\|_{L^2L^{q_-}}^2 + \|F^4\|_{L^2W^{1-1/q_-,q_-}}^2 + \|F^5\|_{L^2W^{1-1/q_-,q_-}}^2 + \|[F^7]_\pi\|_{L_t^{2}}^2+\int_{0}^{t}\mathcal{E}(u,\eta,p)\mathcal{D}(u,\eta,p)ds\\
  &\quad+\|\p_{t}\eta^{n}\|^{2}_{L_{t}^{\infty}W^{3-\frac{1}{q_{-}},q_{-}}}\|v_{l}^{k}\|_{L_{t}^{2}W^{2,q_{-}}}^{2}+\|\p_{t}u^{k}\|_{L_{t}^{2}W^{2,q_{-}}}^{2}\|\p_{t}\xi_{l}^{k}\|_{L_{t}^{2}W^{3-\frac{1}{q_{-}},q_{-}}}^{2}+\|\p_{t}\eta^{n}\|_{L_{t}^{2}W^{3-\frac{1}{q_{-}},q_{-}}}^{2}\|\p_{t}\xi_{l}^{k}\|_{L_{t}^{2}W^{3-\frac{1}{q_{-}},q_{-}}}^{2}.
  \end{aligned}
  \end{equation}
\end{theorem}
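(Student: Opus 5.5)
The plan is to repeat the proof of Theorem \ref{thm:pressure} step by step for the smoothed system \eqref{eq:weak_limit_0}/\eqref{eq:kinematic_0}. The only changes are that the geometric coefficients $\mathcal{A},\mathcal{N},J,R$ are replaced by $\mathcal{A}^{n},\mathcal{N}^{n},J^{n},R^{n}$, and that several terms involving the prescribed smooth data $(v_l^k,\xi_l^k,u^k,\eta^k)$ are now treated as forcing rather than as unknowns.

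\textbf{Pressure.} For a.e. $t$, define $\Lambda_t(w)$ as the left side of \eqref{eq:weak_limit_0} minus its right side. The energy bound of Lemma \ref{elemma} (in its smoothed version) shows that $\Lambda_t\in\mathcal{W}^{*}$, and $\Lambda_t$ vanishes on $\mathcal{W}_\sigma(t)$. Theorem 4.6 of \cite{GT18} then gives a unique $\overset{\circ}{q}_d(t)$ with zero $J^{n}$-weighted average. The Ne\v{c}as inequality gives the $L^2$ bound for $\overset{\circ}{q}_d$ exactly as in \eqref{est:pressure_3}. We then fix the mean $\bar q_d$ as in Step 5 of Theorem \ref{thm:pressure}: test the normal stress balance with $\psi=M\nabla\tilde\phi$, where $\tilde\phi$ solves \eqref{equ:ellip_d}. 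This yields \eqref{est:avg_q1} and \eqref{est:avg_q2}; the extra forcing terms built from $\eta^k,\xi_l^k$ enter only in the $(\mathcal{H}^1)^*$ norm.

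\textbf{Elliptic regularity.} We write the Stokes problem \eqref{eq:e-1} with $\mathcal{A}^{n}$ in place of $\mathcal{A}$. The normal boundary datum is read off from \eqref{eq:kinematic_0}, so it now contains the extra terms $\rho^{-1}(R^{n}v_l^{k})\cdot\mathcal{N}^{n}$, $\int_0^t u_r^k\partial_t^2\xi_l^k$, and $\int_0^t \partial_t u^k\cdot\partial_t\mathcal{N}(\xi_l^k)$. Likewise, the bulk and stress equations contain $\partial_t(R^{n}v_l^k)$ and $\dive_{\mathcal{A}^{n}}\nabla_{\mathcal{A}^{n}}(R^{n}v_l^k)$. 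There is one key difference from Theorem \ref{thm:pressure}. There, the $Rv$ terms were absorbed as a perturbation $L_i$ of the invertible map $\Phi$ from \cite[Theorem 4.7]{GT2020}. Here, $v_l^k$ and $\xi_l^k$ are prescribed, so these terms are pure forcing and no smallness is required of them. I will estimate them by H\"older's inequality and trace theory, using $R^{n}\sim\partial_t\nabla\Phi^{n}\sim\nabla\partial_t\eta^{n}$. This produces exactly the new terms $\|\partial_t\eta^{n}\|_{L^\infty W^{3-1/q_-,q_-}}^2\|v_l^k\|_{L^2W^{2,q_-}}^2$, $\|\partial_tu^k\|^2\|\partial_t\xi_l^k\|^2$, and $\|\partial_t\eta^{n}\|^2\|\partial_t\xi_l^k\|^2$ in \eqref{est:diss_100}.

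The remaining genuine perturbations must still be absorbed into the left side via the smallness $\le\varepsilon_0$ in the Neumann-series argument. These are the terms $\mathfrak{m}(t)\xi_s$, bounded by Theorem \ref{thm:m_s} instead of Theorem \ref{thm:m}, together with $\mathfrak{n}'(t)\partial_t\partial_\theta\xi_d\sin\theta/\rho$ and $u_\theta^k\partial_t\partial_\theta\xi_d$. For $\xi_d$, we rerun Step 4: the capillary problem \eqref{eq:sgc} with $\eta^k$ in $\mathcal{R}_{1a}$ is a small perturbation of \eqref{eq:sgc1}, by Lemma \ref{lem:lemma1} applied to $\eta^k$. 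The new $\int_0^t\mathcal{R}_{1aa}$, $\int_0^t\mathcal{R}_{1ab}$ and $I_1^{n,k}$ terms are forcing controlled by the same products above.

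\textbf{Strong form and main obstacle.} The integration-by-parts identification of Step 6 goes through verbatim with $\mathcal{N}^{n}$, recovering the bulk equation, the dynamic and Navier-slip conditions, and the contact-point laws of \eqref{eq:quasi_linear_{s}}. The main obstacle I anticipate is the bookkeeping of the time integrals $\int_0^t(\cdots)\partial_t\partial_\theta\xi_d$ in the kinematic condition. These involve the unknown itself, so they must be bounded in $L^2_tW^{2-1/q_-,q_-}$ by $T\|\partial_t\xi_d\|_{L^2W^{3-1/q_-,q_-}}$ times small norms of $u^k$ and $\mathfrak{n}'$, and then absorbed. I would handle this with a Minkowski inequality in time, followed by choosing $T$ or $\delta$ small, exactly as was done for $L_3$.
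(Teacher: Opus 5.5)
Your overall route matches the paper's. The paper's proof of Theorem~\ref{thm:pressure_s} is just a rerun of Theorem~\ref{thm:pressure} with $\mathcal{A}^n,\mathcal{N}^n,J^n,R^n$. Your observation that $R^nv_l^k$ now enters as forcing is also correct: in Theorem~\ref{thm:pressure} the analogous terms were the perturbations $L_1$--$L_4$, absorbed as small multiples of $\|v_d\|_{L^2W^{2,q_-}}$, and here they are the source of the product terms in \eqref{est:diss_100}. They do not produce \emph{exactly} those terms, though. In $L^{q_-}$, $\p_t(R^nv_l^k)$ also involves $\p_t^2\eta^n$ and $\p_tv_l^k$, and $\int_0^tu_r^k\p_t^2\xi_l^k$ involves $\p_t^2\xi_l^k$. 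These are products of type $\mathfrak{K}(u,p,\eta)\mathfrak{K}_{-}(v_l,\xi_l)$, which is how the paper carries them in \eqref{est:uniform_n1}.

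The genuine gap is in the step you flag as the main obstacle. The terms $\int_0^t\rho^{-1}u_\theta^k\,\p_s\p_\theta\xi_d\,ds$ and $\int_0^t\rho^{-1}\mathfrak{n}'\sin\theta\,\p_s\p_\theta\xi_d\,ds$ in the normal datum cannot be handled by Minkowski in time. Putting the integrand in $W^{2-1/q_-,q_-}(\Sigma)$ requires $\p_t\xi_d\in L^2W^{3-1/q_-,q_-}$. But the left side of \eqref{est:diss_100} contains only $\|\xi_d\|_{L^2W^{3-1/q_-,q_-}}$, and $\p_t\xi_d$ is known at best in $L^2H^{3/2-\alpha}\hookrightarrow L^2W^{2-1/q_-,q_-}$, one derivative short. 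So $T\|\p_t\xi_d\|_{L^2W^{3-1/q_-,q_-}}$ can neither be absorbed nor is it known to be finite. In $L_3$ the factor $T$ multiplies only the $Rv$ term, where $v$ is a time integral of $v_d$ and $\|v_d\|_{L^2W^{2,q_-}}$ is on the left. The transport entries $\p_tu_1\p_\theta\xi_d$ and $\mathfrak{n}'\p_t\p_\theta\xi\sin\theta=\mathfrak{n}'\p_\theta\xi_d\sin\theta$ are already at the current time.

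You must integrate by parts in time first, as in the proof of Theorem~\ref{thm:pressure_+}. For $a=\rho^{-1}u_\theta^k$ or $a=\rho^{-1}\mathfrak{n}'\sin\theta$,
\[
\int_0^t a\,\p_s\p_\theta\xi_d\,ds=a(t)\,\p_\theta\xi_d(t)-a(0)\,\p_\theta\xi_d(0)-\int_0^t(\p_s a)\,\p_\theta\xi_d\,ds .
\]
Each piece is then controlled as follows:
\begin{itemize}
\item[(i)] The first term is absorbed into $\|\xi_d\|_{L^2W^{3-1/q_-,q_-}}$, using the smallness of the trace of $u^k$ in $L^\infty_tW^{2-1/q_-,q_-}(\Sigma)$ and of $\|\mathfrak{n}'\|_{L^\infty}$; recall that $W^{2-1/q_-,q_-}(0,\pi)$ is an algebra.
\item[(ii)] The $s=0$ term is initial data of the same type as $I_2^{n,k}$ and $I_3^{n,k}$.
\item[(iii)] The last term is a Volterra term bounded by $T^{1/2}$ times small norms of $\p_tu^k$, $\mathfrak{n}''$ and $\p_t\rho$, times $\|\xi_d\|_{L^2W^{3-1/q_-,q_-}}$.
\end{itemize}

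The same integration by parts is what justifies your capillary step. Replacing $\int_0^t\mathcal{R}_{1a}(\rho_0,\p_\theta\eta^k,\eta^k)\p_\theta\p_s\xi_d\,ds$ by the current-time coefficient in \eqref{eq:sgc} leaves two extra pieces. One is an $s=0$ term of the type $I_1^{n,k}$. The other is a remainder $\int_0^t\p_s[\mathcal{R}_{1a}(\rho_0,\p_\theta\eta^k,\eta^k)]\,\p_\theta\xi_d\,ds$ in the unknown. That remainder is a small Volterra term; it is not part of the prescribed $\mathcal{R}_{1aa}$, $\mathcal{R}_{1ab}$ forcing, which is built from $\xi_l^k$.
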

\begin{proof}
    The proof  follows the same manner as that of Theorem \ref{thm:pressure}. We omit the details here.
\end{proof}

In order to pass to the limit as $n \to +\infty$ and recover the original system \eqref{eq:quasi_linear}, we need to establish $q_{+}$-elliptic estimates for the zero-order terms $(v,q,\xi)$. This is achieved in the following theorem.

\begin{theorem}{\label{thm:pressure_+}}
    Suppose that $(\p_{t}v,\p_{t}q,\p_{t}\xi)\in L_{t}^{2}W^{2,q_{-}}(\Omega)\times L_{t}^{2}W^{1,q_{-}}(\Omega)\times L_{t}^{2}W^{3-\frac{1}{q_{-}},q_{-}}$, and $(v,q,\xi)$ is the strong solution to the following equation
    \begin{equation}\label{eq:quasi_linear_2}
\begin{cases}
         \partial_{t}^{2}v+\operatorname{div}_{\mathcal{A}^{n}}S_{\mathcal{A}^{n}}(\p_{t}v,\p_{t}q)=F^{1}(u,p,\eta)~~~&\operatorname{in}~~\Omega,\\
    \operatorname{div}_{\mathcal{A}^{n}}D_{t}v=0~~~&\operatorname{in}~~\Omega,\\
    S_{\mathcal{A}^{n}}(\p_{t}q,\p_{t}v)\mathcal{N}^{n}=(g\xi_{d}\sin\theta+\sigma(\mathcal{P}_1(\rho_0,\rho_0')\xi_{d}+\mathcal{P}_{2}(\rho_{0},\rho_{0}')\xi_{d}'-\frac{1}{\rho_{0}}\partial_{\theta}(\frac{\rho_{0}^{2}\xi_{d}'}{(\rho_{0}^{2}+\rho_{0}'^{2})^{\frac{3}{2}}}-\frac{\rho_{0}'\rho_{0}\xi_{d}}{(\rho_{0}^{2}+\rho_{0}'^{2})^{\frac{3}{2}}}))\mathcal{N}^{n}\\
    \quad\quad\quad\quad\quad\quad\quad\quad\quad+\sigma\p_{\theta}(\mathcal{R}_{1a}(\rho_{0},\p_{\theta}\eta,\eta)\p_{\theta}\partial_{t}\xi)\mathcal{N}^{n}+F^{4}(u,p,\eta)~~~&\operatorname{on}~~\Sigma,\\
    (S_{\mathcal{A}^{n}}(\p_{t}q,\p_{t}v)\nu-\beta \p_{t}v)\cdot \tau=F^{5}(u,\eta,p)~~~&\operatorname{on}~~\Sigma_{s},\\
    \p_{t}v\cdot \nu=0~~~&\operatorname{on}~~\Sigma_{s},\\
    \partial_{t}^{2}\xi=\frac{1}{\rho}\p_{t}v\cdot \mathcal{N}^{n}+\frac{1}{\rho}u_{1}\p_{\theta}\partial_{t}\xi+F^{6}(u,\eta)-\mathfrak{m}(t)\xi_{s}-\mathfrak{n}^{\prime}(t)\frac{\p_{t}\p_{\theta}\xi}{\rho}\sin\theta~~~&\operatorname{on}~~\Sigma,\\
    \sigma(\mp (\frac{\rho_{0}^{2}\xi_{d}'}{(\rho_{0}^{2}+\rho_{0}'^{2})^{\frac{3}{2}}}-\frac{\rho_{0}'\rho_{0}\xi_{d}}{(\rho_{0}^{2}+\rho_{0}'^{2})^{\frac{3}{2}}})\pm \mathcal{R}_{1a}(\rho_{0},\p_{\theta}\eta,\eta)\partial_{t}\p_{\theta}\xi)(\frac{\pi}{2}\pm \frac{\pi}{2}))=\kappa (\p_{t}v\cdot \mathcal{N}^{n})(\frac{\pi}{2}\pm \frac{\pi}{2})-{F}^{7}.
    \end{cases}
\end{equation}
Given that $\int_{0}^{t}F^{1}(u,p,\eta)\in L_{t}^{\infty}{L^{q_{+}}},\int_{0}^{t}F^{4}(u,p,\eta)\in L_{t}^{\infty}W^{1-\frac{1}{q_{+}},q_{+}},\int_{0}^{t}F^{5}(u,p,\eta)\in L_{t}^{\infty}W^{1-\frac{1}{q_{+}},q_{+}},\int_{0}^{t}[F^{7}(u,\eta,p)]_{\pi}\in L_{t}^{\infty}$, $(\p_{t}v,\p_{t}q,\p_{t}\xi)$ obey the following estimate
\begin{equation}\label{est:diss_10}
  \begin{aligned}
  &\| v\|_{L^\infty W^{2,q_+}}^2 + \|q\|_{L^\infty W^{1,q_+}}^2 + \|\xi\|_{L^\infty W^{3-1/q_+,q_+}}^2 \\
  &\lesssim  \|\p_tv\|_{L^\infty H^0}^2 +\|\p_{t}\xi \|_{L^\infty H^1}^2 + \|v\|_{L^\infty H^1}^2 + \|\xi\|_{L^\infty H^{3/2-\alpha}}^2 + \|[v\cdot \mathcal{N}]_\pi\|_{L^\infty_t}^2+\|\int_{0}^{t}F^{6}\|_{L_{t}^{\infty}W^{2-\frac{1}{q_{+}},q_+}}^{2}\\
  &\quad + \|\int_{0}^{t}F^1\|_{L^\infty 
  L^{q_+}}^2 + \|\int_{0}^{t}F^4\|_{L^\infty W^{1-1/q_+,q_+}}^2 + \|\int_{0}^{t}F^5\|_{L^\infty W^{1-1/q_+,q_+}}^2 + \|\int_{0}^{t}[F^7]_\pi\|_{L_t^{\infty}}^2)\\
  &\quad+\|F^{1,0}\|_{L^{q_{+}}}^{2}+\|\rho_{0}\|_{W^{3-\frac{1}{q_{+}},q_{+}}}^{4}+(\sup_{0\leq t\leq T}\mathcal{E}(u,\eta,p))^{2}.
  \end{aligned}
  \end{equation}
\end{theorem}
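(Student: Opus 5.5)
The plan is to integrate the system \eqref{eq:quasi_linear_2} in time from $0$ to $t$. This turns it into a stationary elliptic problem for the zeroth-order unknowns $(v,q,\xi)$ at each fixed time, to which the $q_+$-theory of \cite[Theorem 4.7]{GT2020} applies. This works because the $W^{2,q_+}$ estimate for the Stokes problem with contact points is available for the undifferentiated unknowns, where the forcing lies in $L^{q_+}$.

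\textbf{Integration in time.} Integrating the momentum equation gives
\[
\partial_t v(t)-\partial_t v(0)+\operatorname{div}_{\mathcal{A}^n}S_{\mathcal{A}^n}(q,v)(t)=\operatorname{div}_{\mathcal{A}^n}S_{\mathcal{A}^n}(q,v)(0)+\int_0^t F^1+\mathcal{C}_1 .
\]
Here $\mathcal{C}_1$ collects the commutators $\int_0^t \partial_t\mathcal{A}^n\,\nabla(\cdot)$ arising because $\mathcal{A}^n$ depends on time. The value $\operatorname{div}_{\mathcal{A}^n}S(q,v)(0)$ is expressed through the initial equation, which is where $F^{1,0}$ enters. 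The same is done for:
\begin{itemize}
\item the divergence condition, using \eqref{eq:dive_dt} so that $J\operatorname{div}_{\mathcal{A}^n}v$ is constant in time;
\item the dynamic boundary condition on $\Sigma$;
\item the Navier-slip condition on $\Sigma_s$;
\item the kinematic condition;
\item the contact-point conditions.
\end{itemize}
The result, at each fixed $t$, is a Stokes system for $(v,q)$ with the stress boundary condition carrying the linearized curvature operator of $\xi$. The data of this system are $-\partial_t v$, $\int_0^t F^i$, and commutator integrals like $\int_0^t \partial_t\mathcal{N}^n\,(\cdots)$. The kinematic relation becomes
\[
v\cdot\mathcal{N}^n=\rho\,\partial_t\xi-\int_0^t(\cdots)+\rho\int_0^tF^6+\ldots,
\]
and the contact-point law becomes a relation between the boundary values of $\xi'$ and $\kappa\,v\cdot\mathcal{N}^n$.

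\textbf{Elliptic estimates.} Next I apply the same two-stage argument as Steps~2--5 of Theorem~\ref{thm:pressure}, now with exponent $q_+$.
\begin{enumerate}
\item Treat the $(v,\mathring q)$ problem, with normal velocity prescribed, as a small perturbation of \eqref{eq:e2}. Invert it via the bounded-inverse perturbation argument, using smallness of $\|\eta^n\|_{W^{3-1/q_+,q_+}}$. This bounds $\|v\|_{W^{2,q_+}}+\|\nabla q\|_{L^{q_+}}$ by:
   \begin{itemize}
   \item $\|\partial_t v\|_{L^{q_+}}\lesssim\|\partial_t v\|_{H^0}$;
   \item $\|\partial_t\xi\|_{W^{2-1/q_+,q_+}}$, controlled by $\|\partial_t\xi\|_{H^1}$ through the one-dimensional embedding $H^1\hookrightarrow W^{2-1/q_+,q_+}$ (valid since $q_+<2$);
   \item the time-integrated forcings;
   \item the $\mathfrak m,\mathfrak n'$ terms, via Theorem~\ref{thm:m}.
   \end{itemize}
\item Use the scalar ODE \eqref{eq:sgc1} for $\xi$, with the $\mathcal{R}_{1a}$ term absorbed by Lemma~\ref{lem:lemma1}, to bound $\|\xi\|_{W^{3-1/q_+,q_+}}$ by the traces of $q,v$ and $\|\int_0^t[F^7]_\pi\|$.
\item Recover the mean $\bar q$ exactly as in Step~5, by testing against $M\nabla\tilde\phi$. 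This produces the $\|\xi\|_{H^{3/2-\alpha}}$, $\|v\|_{H^1}$, $[v\cdot\mathcal{N}]_\pi$ terms. Take $\sup_t$ at the end.
\end{enumerate}

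\textbf{Main obstacle.} The hardest step is the commutator integrals produced by time-integrating coefficients that depend on $\eta^n$. These are, for example,
\[
\int_0^t \partial_t\mathcal{A}^n\nabla\partial_t v,\qquad \int_0^t \partial_t\mathcal{N}^n\cdot\partial_t v,\qquad \int_0^t\partial_t\mathcal{R}_{1a}\,\partial_\theta\partial_t\xi .
\]
Each must be bounded in $L^\infty_t$ of a $q_+$-space, although $\partial_t v$ is only in $L^2_tW^{2,q_-}$. I would first try a direct estimate: Hölder in time together with Sobolev product estimates gives a bound of the form $\sup\mathcal{E}(u,\eta,p)^{1/2}\bigl(\int\mathcal{D}\bigr)^{1/2}$ times the $L^2_t$ norms of $(\partial_t v,\partial_t\xi)$. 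If the mismatch between $q_-$ and $q_+$ prevents this, I would instead integrate by parts in time to move $\partial_t$ onto the coefficients. That leaves products of the form $(\mathcal{A}^n(t)-\mathcal{A}^n(0))\nabla v$, which are small in $W^{1,q_+}$ and can be absorbed into the left side, while the initial-time terms are bounded by $\|\rho_0\|^4_{W^{3-1/q_+,q_+}}+(\sup\mathcal{E})^2$. These are exactly the extra terms in \eqref{est:diss_10}.
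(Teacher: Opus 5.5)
Your overall route matches the paper's. You integrate \eqref{eq:quasi_linear_2} in time and apply the Guo--Tice $q_+$ elliptic theory, perturbatively, to the resulting stationary problem for $(v,q,\xi)$. The scalar equation for $\xi$ and the mean of $q$ are handled as in Theorem~\ref{thm:pressure}, and the extra time-integrated terms are bounded separately.

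One step, however, fails as written. You control the normal-trace datum through the embedding $H^1\hookrightarrow W^{2-1/q_+,q_+}(0,\pi)$, and this embedding is false. Since $1/q_+=1-\varepsilon_+/2$, the target space is $W^{1+\varepsilon_+/2,\,q_+}$, which carries more than one derivative. Lowering the integrability exponent on a bounded interval never raises the smoothness index.

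This is not cosmetic. Because $v\in W^{2,q_+}$ forces $v\cdot\mathcal{N}^n\in W^{2-1/q_+,q_+}(\Sigma)$, and the time-integrated kinematic condition reads $v\cdot\mathcal{N}^n=\rho\,\partial_t\xi+\cdots$, no elliptic argument can bound $\|v\|_{W^{2,q_+}}$ with only $\|\partial_t\xi\|_{H^1}$ on the right.

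The correct step is $H^{3/2-\alpha}\hookrightarrow W^{2-1/q_+,q_+}$. It is valid because $\alpha+\varepsilon_+/2<\varepsilon_+-\varepsilon_-/2\le 1/2$ by \eqref{parameters}. So what you need on the right is $\|\partial_t\xi\|_{L^\infty H^{3/2-\alpha}}=\|\xi_d\|_{L^\infty H^{3/2-\alpha}}$, which is available from \eqref{bound} and \eqref{est:uniform_n1}. The term $\|\xi\|_{L^\infty H^{3/2-\alpha}}$ in \eqref{est:diss_10} has to be read this way. This mirrors Step 2 of the proof of Theorem~\ref{thm:pressure}, where the norm $\|\partial_t\xi_d\|_{L^2W^{2-1/q_-,q_-}}$ from \eqref{est:bound_vq1} is traded for $\|\partial_t\xi_d\|_{L^2H^{3/2-\alpha}}$.

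Second, you assign the $\mathfrak n'$ term of the kinematic condition to Theorem~\ref{thm:m}. That theorem only bounds $\mathfrak m$ and $\mathfrak m'$ in $L^\infty_t$ and $L^2_t$, and size is not the difficulty here. The difficulty is regularity: $\partial_\theta\partial_t\xi$ lies only in $L^2_tW^{2-1/q_-,q_-}$, while $\int_0^t\mathfrak n'\rho^{-1}\partial_t\partial_\theta\xi\,\sin\theta$ must be placed in $L^\infty_tW^{2-1/q_+,q_+}$.

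This is precisely the one term the paper treats explicitly, by integrating by parts in time. It produces three pieces:
\begin{itemize}
\item the boundary terms $\mathfrak n'\rho^{-1}\partial_\theta\xi\sin\theta$ at times $t$ and $0$;
\item minus $\int_0^t\mathfrak n''\rho^{-1}\partial_\theta\xi\sin\theta$;
\item plus $\int_0^t\mathfrak n'\rho^{-2}\partial_t\eta\,\partial_\theta\xi\sin\theta$.
\end{itemize}
Each is a small multiple of $\|\xi\|_{L^\infty W^{3-1/q_+,q_+}}$ and can be absorbed. The transport term $\int_0^t\rho^{-1}u_1\partial_\theta\partial_t\xi$ is handled the same way.

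Your fallback of moving $\partial_t$ onto the coefficients is exactly this device, but it should be aimed at these kinematic terms. By contrast, the $\mathcal{A}^n$ commutators produced by time-integrating $\operatorname{div}_{\mathcal{A}^n}S_{\mathcal{A}^n}(\partial_tq,\partial_tv)$ involve only the undifferentiated $(v,q)$. They are directly small, with a factor of $T$.
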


\begin{proof}
 We apply the elliptic theorem established in \cite{GT2020}. It then suffices to integrate \eqref{eq:quasi_linear_2} from $0$ to $T$ and estimate the additional terms appearing on the right-hand side of \eqref{eq:quasi_linear_2}. Relative to \cite{YXD2}, the only remaining task is to estimate the extra terms arising on the right-hand side of the kinematic boundary condition.

From the first term to the fourth term, we refer to Theorem \ref{thm:m} for their estimate. Finally, for the last term, by integration by parts, we have
\begin{align*}
    \|\int_{0}^{t}\mathfrak{n}^{\prime}(t)\frac{\p_{t}\p_{\theta}\xi}{\rho}\|_{L_{t}^{\infty}W^{2-\frac{1}{q_{+}},q_{+}}}\lesssim &\|\mathfrak{n}^{\prime}\frac{\p_{\theta}\xi}{\rho}\|_{W^{2-\frac{1}{q_{+}},q_{+}}}(T)+\|\mathfrak{n}^{\prime}\frac{\p_{\theta}\xi}{\rho}\|_{W^{2-\frac{1}{q_{+}},q_{+}}}(0)\\
    &+\|\int_{0}^{t}\mathfrak{n}^{\prime\prime}\frac{\p_{\theta}\xi}{\rho}\|_{L_{t}^{\infty}W^{2-\frac{1}{q_{+}},q_{+}}}+\|\int_{0}^{t}\mathfrak{n}^{\prime}\frac{\p_{t}\eta\p_{\theta}\xi}{\rho^{2}}\|_{L_{t}^{\infty}W^{2-\frac{1}{q_{+}},q_{+}}}\\
    \lesssim& \|u\|_{L_{t}^{\infty}H^{1}}\|\xi\|_{L_{t}^{\infty}W^{3-\frac{1}{q_{+}},q_{+}}}+\|\p_{t}u\|_{L_{t}^{\infty}H^{1}}\|\xi\|_{L_{t}^{\infty}W^{3-\frac{1}{q_{+}},q_{+}}}\\
    \lesssim& \sup_{0\leq t\leq T}\mathcal{E}(u,\eta,p).
\end{align*}

 Therefore, suppose that
\begin{align*}
    \|\partial_t \eta\|_{L_t^2 W^{3-\frac{1}{q_-},q_-}}
    +\|\eta\|_{L_t^\infty W^{3-\frac{1}{q_+},q_+}}
    +\|u\|_{L_t^\infty W^{2,q_+}}
    +\|\partial_t u\|_{L_t^2 W^{2,q_-}}
    \le \delta \ll 1,
\end{align*}
an argument identical to that used in Theorem~3.7 yields the estimate \eqref{est:diss_10}.
\end{proof}

%%%%%%%%%%%%%%%%%%%%%%%%%%%%%%%%%%%%%%%%%%%%%%
\section{Construction of Strong Solutions to Linear System}
%%%%%%%%%%%%%%%%%%%%%%%%%%%%%%%%%%%%%%%%%%%%%%

In this section, we construct a solution to \eqref{eq:quasi_linear}. We first solve the regularized system \eqref{eq:quasi_linear_{s}} and then obtain a solution of \eqref{eq:quasi_linear} by passing to the limit. To solve \eqref{eq:quasi_linear_{s}}, we fix the functions $(u,\eta,p,v_l,\xi_l)$, the parameters $n$ and $k$, and the associated quantities $\mathcal A(\eta^n)$, $J(\eta^n)$, $\mathcal N(\eta^n)$, etc. We begin by constructing the initial data for the Galerkin approximation.

%%%%%%%%%%%%%%%%%%%%%%%%%%%%%%%%%%%%%%%%%%%%%%
\subsection{Galerkin setup}
%%%%%%%%%%%%%%%%%%%%%%%%%%%%%%%%%%%%%%%%%%%%%%

Our goal is to use the Galerkin method to establish the existence and uniqueness of solutions to \eqref{eq:quasi_linear_{s}}, in which the unknowns are $(v_{d}, q_{d}, \xi_{d})$ as well as their derivatives. Notice that we will not write $(k,n)$ explicitly when there is little chance of confusion. In order to utilize the Galerkin method, we must first construct a countable basis of $\mathcal{W}_{\sigma}(t)$ for each $t\in[0, T]$. It is evident that $\mathcal{W}_{\sigma}(t)\hookrightarrow \mathcal{H}_{\sigma}^0(t)\hookrightarrow (\mathcal{W}_{\sigma}(t))^\ast$.
Due to the time-dependent incompressible condition $u\cdot \mathcal{N}^{n}\in H^{1}(\Sigma)$, the basis should also be time-dependent.

    Another important issue is to ensure that the initial data for the sequence of approximate solutions $(v_{d}^{m}(0),D_{t}v_{d}^{m}(0))$ converge to the initial data of the PDE \eqref{eq:quasi_linear}. The initial data
  \begin{align}
      (v_{d}(0),D_{t}v_{d}(0))=( v_{d0}^{n,k}, D_t^2u(0))\in W^{2,q_{+}}(\Omega)\times H^{1}(\Omega),
  \end{align}
   and are compatible with the dynamical energy. The definition of the initial function $v_{d0}^{n,k}, D_t^2u(0)$ are given in Appendix \ref{sec:initial} and \ref{sec:initial_l}.  

    However, the regularity $v_{d}(0)\in W^{2,q_{+}}(\Omega)$ is not strong enough to ensure the  property $v_{d}(0)\cdot \mathcal{N}^{n}(0)\in H^{1}(\Sigma),$
  which is required in the definition of space $\mathcal{W}(0)$ as \eqref{def:w}. We apply a new scheme to overcome this difficulty and construct the initial data for the Galerkin discrete solution.
  
 We first construct a basis of $\mathcal{H}_{\sigma}^0(0)$ and $\mathcal{W}_{\sigma}(0)$, then we give the time-dependent basis. For this purpose,
 we define the map $\mathcal{J}:\mathcal{W}_{\sigma}(0)\rightarrow \mathcal{W}^{\ast}_{\sigma}(0)$ via
  $\left<\mathcal{J}v, w\right>_\ast=(v, w)_{\mathcal{W}},\quad \forall v, w\in \mathcal{W}(0)$.
  By Riesz representation theorem, $\mathcal{J}$ is an isometric isomorphism. This enables us to define $A=\mathcal{J}^{-1}: \mathcal{W}_{\sigma}(0)^\ast\rightarrow \mathcal{W}_{\sigma}(0)$ so that for any $f\in \mathcal{W}_{\sigma}(0)^\ast$, $Af\in \mathcal{W}_{\sigma}(0)$ is uniquely determined via
  $\left<f, w\right>_\ast=(Af, w)_{\mathcal{W}(0)},\quad \forall w\in \mathcal{W}_{\sigma}(0)$,
  that is equivalent to say that $Af=v$ is a weak solution to $\mathcal{J}v=f$. The following lemma constructs a basis for $\mathcal{W}_{\sigma}(0)$. 

  \begin{lemma}\label{lem:basis_initial}
The operator $A^{-1}$ restricted on $\mathcal{H}_{\sigma}^0(0)$ is a compact, positive and self-adjoint operator. So the eigenvalues of $A$ are $\{\lambda_i\}_{i=1}^\infty$ such that $
0<\lambda_1\le\lambda_2\le\cdots$
and
$\lambda_i\to\infty$, as $ i\to\infty$.
Finally, there exists an orthonormal basis $\{\psi_i\}_{i=1}^\infty$ of $\mathcal{H}_{\sigma}^0(0)$, where $\psi_i\in \mathcal{W}_{\sigma}(0)$ is an eigenfunction of $\mathcal{J}$  corresponding to $\lambda_i$:
$
\mathcal{J}\psi_i=\lambda_i\psi_i.
$
Moreover, $\left\{\frac{\psi_i}{\sqrt{\lambda_i}}\right\}_{i=1}^\infty$ is an orthonormal basis of $\mathcal{W}_{\sigma}(0)$.
  \end{lemma}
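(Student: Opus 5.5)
This is the standard spectral-theorem argument for the Riesz operator of the Gelfand triple $\mathcal{W}_{\sigma}(0)\hookrightarrow \mathcal{H}_{\sigma}^0(0)\hookrightarrow \mathcal{W}_{\sigma}(0)^\ast$. I would define $K:=A|_{\mathcal{H}^0_\sigma(0)}$, which is what the statement calls $A^{-1}$ restricted, that is, the solution operator $f\mapsto v$ with $(v,w)_{\mathcal{W}}=(f,w)_{\mathcal{H}^0}$ for all $w\in\mathcal{W}_\sigma(0)$. I would view $K$ as an operator from $\mathcal{H}^0_\sigma(0)$ into itself. The proof then establishes four properties of $K$ in turn and applies the spectral theorem for compact self-adjoint operators.

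\emph{Boundedness and compactness.} Each $f\in\mathcal{H}^0_\sigma(0)$ defines an element of $\mathcal{W}_\sigma(0)^\ast$ through $w\mapsto (f,w)_{\mathcal{H}^0}$. Its dual norm is bounded by $\|f\|_{\mathcal{H}^0}$, because $\|w\|_{\mathcal{H}^0}\lesssim\|w\|_{\mathcal{W}}$. This last inequality follows from Lemma \ref{lem:equivalence_norm}, Theorem \ref{thm:poin} and the definition of the $\mathcal{W}$-norm. Hence $K:\mathcal{H}^0_\sigma(0)\to\mathcal{W}_\sigma(0)$ is bounded. Compactness follows by composing with the embedding $\mathcal{W}_\sigma(0)\hookrightarrow\mathcal{H}^0_\sigma(0)$, which is compact by Rellich on the bounded Lipschitz domain $\Omega$. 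The weight $J(0)$ is bounded above and below, so the $\mathcal{H}^0$ and $L^2$ norms are equivalent.

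\emph{Self-adjointness.} For $f,g\in\mathcal{H}^0_\sigma(0)$, testing the definition of $Kf$ with $w=Kg$ and the definition of $Kg$ with $w=Kf$ gives
\[
(f,Kg)_{\mathcal{H}^0}=(Kf,Kg)_{\mathcal{W}}=(Kg,Kf)_{\mathcal{W}}=(g,Kf)_{\mathcal{H}^0}.
\]

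\emph{Positivity.} The same identity gives $(f,Kf)_{\mathcal{H}^0}=\|Kf\|_{\mathcal{W}}^2\ge0$. If this vanishes, then $(f,w)_{\mathcal{H}^0}=0$ for all $w\in\mathcal{W}_\sigma(0)$. So positivity reduces to showing that $\mathcal{W}_\sigma(0)$ is dense in $\mathcal{H}^0_\sigma(0)$, which would give $f=0$.

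\emph{Spectral conclusion.} The spectral theorem then yields an orthonormal basis $\{\psi_i\}$ of $\mathcal{H}^0_\sigma(0)$ consisting of eigenvectors of $K$, with eigenvalues $\mu_i>0$ decreasing to $0$. Each $\psi_i=\mu_i^{-1}K\psi_i$ lies in $\mathcal{W}_\sigma(0)$, and setting $\lambda_i=\mu_i^{-1}$ gives $0<\lambda_1\le\lambda_2\le\cdots\to\infty$ and $\mathcal{J}\psi_i=\lambda_i\psi_i$. Moreover
\[
(\psi_i,\psi_j)_{\mathcal{W}}=\lambda_i(\psi_i,\psi_j)_{\mathcal{H}^0}=\lambda_i\delta_{ij},
\]
so $\{\psi_i/\sqrt{\lambda_i}\}$ is orthonormal in $\mathcal{W}_\sigma(0)$. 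For completeness, suppose $w\in\mathcal{W}_\sigma(0)$ is $\mathcal{W}$-orthogonal to every $\psi_i$. Then $0=(w,\psi_i)_{\mathcal{W}}=\lambda_i(w,\psi_i)_{\mathcal{H}^0}$ for every $i$, so $w=0$ because $\{\psi_i\}$ is a basis of $\mathcal{H}^0_\sigma(0)$.

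The main obstacle is the density of $\mathcal{W}_\sigma(0)$ in $\mathcal{H}^0_\sigma(0)$. This is where the constraints $\dive_{\mathcal{A}}w=0$, $w\cdot\nu=0$ on $\Sigma_s$, and $w\cdot\mathcal{N}\in\mathring{H}^1(0,\pi)$ interact. I would first use Proposition \ref{prop:isomorphism}(iv) to transfer the problem through $M(0)$ to the flat divergence-free setting. There I would take $\mathcal{H}^0_\sigma(0)$ to mean the $\mathcal{H}^0$-closure of divergence-free fields with $u\cdot\nu=0$ on $\Sigma_s$. Divergence-free fields in $C^\infty_c(\Omega)$ are dense in that closure, and they trivially satisfy all the boundary constraints, including $w\cdot\mathcal{N}=0\in\mathring{H}^1$. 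If instead the definition of $\mathcal{H}^0_\sigma(0)$ in the paper is the $\mathcal{H}^0$-closure of $\mathcal{W}_\sigma(0)$ itself, then density is automatic.
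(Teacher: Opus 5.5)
The paper states this lemma without proof, and your Riesz-map/spectral-theorem structure is the expected one. Boundedness, compactness, self-adjointness, and the passage from eigenvectors of $K$ to both orthonormal bases are correct.

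\textbf{The gap: density of $\mathcal{W}_\sigma(0)$ in $\mathcal{H}^0_\sigma(0)$.} You rightly single this out as the crux, but then settle it with a false claim. Solenoidal fields in $C^\infty_c(\Om)$ are \emph{not} dense in the $\mathcal{H}^0$-closure of divergence-free fields with $u\cdot\nu=0$ on $\Sigma_s$. For $u\in L^2$ with $\dive u=0$, the normal trace in $H^{-1/2}(\p\Om)$ is controlled by $\|u\|_{L^2}$ alone. Hence $L^2$-limits of compactly supported solenoidal fields have vanishing normal trace on all of $\p\Om$, including $\Sigma$.

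Here is an explicit counterexample. Take a nonzero, mean-zero $g$ smooth and compactly supported in the interior of $\Sigma$. Let $\phi$ be harmonic with $\p_\nu\phi=0$ on $\Sigma_s$ and $\p_\nu\phi=g$ on $\Sigma$. Then $\nabla\phi$ is an $H^1$ divergence-free field with $\nabla\phi\cdot\nu=0$ on $\Sigma_s$. Yet one integration by parts gives $\int_\Om\nabla\phi\cdot w=0$ for every solenoidal $w\in C^\infty_c(\Om)$. These fields carry exactly the flux through the free surface that the kinematic condition couples to $\xi$. So your argument does not establish density, and with it the injectivity of $K$ on which the basis property rests.

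\textbf{How to repair it.} Treat the harmonic-gradient component separately.
\begin{enumerate}
\item Transfer by $M(0)$ using Proposition \ref{prop:isomorphism}(iv),(v), which preserve the divergence constraint and the normal traces.
\item Decompose $f=f_0+\nabla\phi$. Here $f_0$ lies in the $L^2$-closure of compactly supported solenoidal fields, and $\phi$ is harmonic with Neumann data equal to the normal trace of $f$ (zero on $\Sigma_s$, mean zero on $\Sigma$).
\item Approximate that trace, in the dual of $H^{1/2}(\Sigma)$, by smooth mean-zero $g_k$ supported away from the contact points.
\item The corresponding $\phi_k$ lie in $H^2$: at a Neumann corner of opening $\om_{eq}<\pi$ the leading singular exponent $\pi/\om_{eq}$ exceeds $1$. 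Hence $\nabla\phi_k\in W_\sigma$, and $\nabla\phi_k\to\nabla\phi$ in $L^2$ by $H^1$-stability of the Neumann problem.
\end{enumerate}

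\textbf{Your fallback does not remove the issue.} Defining $\mathcal{H}^0_\sigma(0)$ as the closure of $\mathcal{W}_\sigma(0)$ makes the lemma formally true, but it only relocates the problem. The Galerkin scheme then projects $D_t^2u(0)$ onto $\mathcal{W}^m_\sigma(0)$. Its normal trace on $\Sigma$ is generally nonzero, so one still needs it to lie in that closure.

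\textbf{A smaller point.} Do not derive $\|w\|_{\mathcal{H}^0}\lesssim\|w\|_{\mathcal{W}}$ from Theorem \ref{thm:poin}. In this geometry $\Sigma_s$ is the flat bottom, so the horizontal translation $(1,0)$ lies in ${}_0H^1(\Om)$ with $\mathbb{D}u=0$. The bound is still true: combine Lemma \ref{lem:korn} with the slip term $\beta\int_{\Sigma_s}|u\cdot\tau|^2J$ in $((\cdot,\cdot))$, which detects that translation, and a standard compactness argument.
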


  Let $\phi^j=M(0)^{-1}\psi_j$. From Lemma \ref{lem:equivalence_norm} and \eqref{eq:div-a-div}, we know that $L^2(\Om)$ is isomorphic to $\mathcal{H}_{\sigma}^0(0)$, so $\{\phi^j\}_{j=1}^\infty$ is a basis of $L^2(\Om)$. Moreover, ${W}_{\sigma}(0)$ is isomorphic to $\mathcal{W}_{\sigma}(t)$ and $\{\frac{\phi^j}{\sqrt{\lambda_j}}\}_{j=1}^\infty$ is a basis of ${W}_{\sigma}(t)$. With the above preparations, we construct the time-dependent basis in the next proposition.
  \begin{proposition}\label{prop:basis_galerkin}
 For each $i>0$, $\phi^{i}$ is defined above. Then for each $t\ge0$, $\{\omega^j(t)=M(t)\frac{\phi^j}{\sqrt{\lambda_j}}\}_{j=1}^\infty$ is a basis of $\mathcal{W}_{\sigma}(t)$ (defined by \eqref{def:w}) and $\sqrt{\lambda_j}\omega^j(t)$ is a basis of $\mathcal{H}_{\sigma}^0(t)$.
 \end{proposition}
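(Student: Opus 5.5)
The plan is to transport the basis of $\mathcal{W}_{\sigma}(0)$ furnished by Lemma \ref{lem:basis_initial} to time $t$ through the isomorphism $\mathcal{M}_t$ of Proposition \ref{prop:isomorphism}. The key observation is that $u\mapsto M(t)u$ carries the time-independent constraints defining ${W}_\sigma$ (divergence free, $u\cdot\nu=0$ on $\Sigma_s$, zero-average trace of $u\cdot\mathcal{N}_0$) onto the time-dependent constraints defining $\mathcal{W}_\sigma(t)$. Once this is established, the image of a basis under a bounded isomorphism is again a basis.

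I would proceed in four steps.

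\textbf{Step 1: identify $\{\phi^j/\sqrt{\lambda_j}\}$ as a basis of ${W}_\sigma$.} Since $\psi_j\in\mathcal{W}_\sigma(0)$ and $\phi^j=M(0)^{-1}\psi_j$, I would use Proposition \ref{prop:isomorphism}(iv) with $p=0$. This gives $\dive\phi^j=0$ if and only if $\dive_{\mathcal{A}(0)}\psi_j=0$.

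Proposition \ref{prop:isomorphism}(v), $M^\top\mathcal{N}=\mathcal{N}_0$, gives $\phi^j\cdot\mathcal{N}_0=\phi^j\cdot M(0)^\top\mathcal{N}(0)=\psi_j\cdot\mathcal{N}(0)$. So the trace belongs to $\mathring{H}^1(0,\pi)$ exactly when $\psi_j\cdot\mathcal{N}(0)$ does.

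The solid-boundary condition is preserved by the fact quoted in the proof of Proposition \ref{prop:solid_boundary} ($Mu\cdot\nu=0$ iff $u\cdot\nu=0$ on $\Sigma_s$). Together with Proposition \ref{prop:isomorphism}(ii), this shows $M(0)^{-1}$ is an isomorphism $\mathcal{W}_\sigma(0)\to W_\sigma$. Hence the image of the orthonormal basis $\{\psi_j/\sqrt{\lambda_j}\}$ is a (Riesz) basis of $W_\sigma$, and likewise $\{\phi^j\}$ is a basis of $H^0_\sigma$.

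\textbf{Step 2: show $M(t)$ maps $W_\sigma$ bijectively onto $\mathcal{W}_\sigma(t)$.} I would repeat the same three checks at time $t$:
\begin{itemize}
\item divergence via (iv);
\item the normal trace identity $\omega\cdot\mathcal{N}(t)=\phi\cdot M(t)^\top\mathcal{N}(t)=\phi\cdot\mathcal{N}_0$ via (v), which also shows the zero-average and $H^1(0,\pi)$ trace conditions are preserved with equivalent norms;
\item the $\Sigma_s$ condition as above.
\end{itemize}
Boundedness of $M(t)$ and $M(t)^{-1}$ in ${}_0H^1\to\mathcal{H}^1$ follows from (ii) and the smallness assumption on $\eta$ via Lemma \ref{lem:equivalence_norm}. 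This gives $\|M(t)\phi\|_{\mathcal{W}(t)}\simeq\|\phi\|_{W}$ uniformly in $t$.

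\textbf{Step 3: conclude that $\{\omega^j(t)\}$ is a basis of $\mathcal{W}_\sigma(t)$.} Any $w\in\mathcal{W}_\sigma(t)$ satisfies $M(t)^{-1}w\in W_\sigma$. I would expand $M(t)^{-1}w=\sum_j c_j\phi^j/\sqrt{\lambda_j}$ with convergence in $W$, then apply the bounded operator $M(t)$ to obtain $w=\sum_j c_j\omega^j(t)$ in $\mathcal{W}(t)$. Uniqueness of the coefficients follows from the injectivity of $M(t)$.

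\textbf{Step 4: the $\mathcal{H}^0_\sigma(t)$ statement.} The same argument applies using Proposition \ref{prop:isomorphism}(i) with $k=0$ and the $L^2$ equivalence in Lemma \ref{lem:equivalence_norm}.

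The main obstacle I expect is the trace condition in Step 2. One must make sure the $H^1(0,\pi)$ regularity and the zero-average of $u\cdot\mathcal{N}$ genuinely transfer, and this hinges on the exact identity $M^\top\mathcal{N}=\mathcal{N}_0$ on $\Sigma$ rather than on an approximate one. Some care is also needed because the basis is only a Riesz basis, not an orthonormal one, in the time-$t$ inner product.
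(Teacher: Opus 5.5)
Your proposal is correct and is essentially the paper's argument: $\omega^j(t)=M(t)M(0)^{-1}(\psi_j/\sqrt{\lambda_j})$ is the image of the orthonormal basis from Lemma \ref{lem:basis_initial} under a bounded isomorphism $\mathcal{W}_\sigma(0)\to\mathcal{W}_\sigma(t)$, which the paper justifies in one line via Lemma \ref{lem:equivalence_norm} and the identity \eqref{eq:div-a-div}. Your explicit checks of the trace identity $M^\top\mathcal{N}=\mathcal{N}_0$ and of the $\Sigma_s$ condition supply details the paper leaves implicit; for Step 4, note that the $L^2$-boundedness of $M(t)^{\pm1}$ comes from $M,M^{-1}\in L^\infty$, not from item (i), which is stated for $L^{q_\pm}$.
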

  \begin{proof}
By the definition $\omega^j(t)=M(t)\frac{\phi^j}{\sqrt{\lambda_j}}$, $\omega^j(0)=M(0)\frac{\phi^j(0)}{\sqrt{\lambda_j}}=\frac{\psi_j}{\sqrt{\lambda_j}}$ is an orthonormal basis of $\mathcal{W}_{\sigma}(0)$. By Lemma \ref{lem:equivalence_norm} and \eqref{eq:div-a-div},
  $\{\omega^j(t)\}_{j=1}^\infty$ is a basis of $\mathcal{W}_{\sigma}(t)$ and $\sqrt{\lambda_j}\omega^j(t)$ is a basis of $\mathcal{H}_{\sigma}^0(t)$.
  \end{proof}

%%%%%%%%%%%%%%%%%%%%%%%%%%%%%%%%%%%%%%%%%%%%%%
\subsection{Galerkin Approximation for Initial Data}\label{sec:app_initial}
%%%%%%%%%%%%%%%%%%%%%%%%%%%%%%%%%%%%%%%%%%%%%%

 We note that $\xi_{d}(0)$ can be represented by the normal trace of $v_{d0}$ due to the kinematic boundary condition. In the Galerkin method for $t>0$,  we construct the approximated solutions $v_{d}^m$ and $D_tv_{d}^m$ and then pass to the limit to find $v_{d}$ and $D_tv_{d}$, where we need the initial data $(v_{d}^m(0), D_tv_{d}^m(0))$ as well as the convergence to $(v_{d0}^{n,k}(0), D_t^{2}v(0))$. Specifically, we can prove that
\begin{theorem}\label{thm:initial_convergence}
  Suppose that $\{\omega^j(t)\}_{j=1}^\infty$ is the basis defined as in \ref{prop:basis_galerkin} for the Hilbert space $\mathcal{W}_{\sigma}(t)$ and each $t\ge0$. When $t=0$, $\mathcal{W}_{\sigma}^m(0):=\operatorname{span} \{\omega^1(0), \ldots, \omega^m(0)\}$, the Galerkin approximated initial data $(v_{d}^{m}(0), D_tv_{d}^m(0))\in \mathcal{W}_{\sigma}^m(0)$ obeys the following estimate for any $n,k>0$
  \begin{align}
  \begin{aligned}
\|D_{t}v_{d}^m(0)\|_{\mathcal{H}^0}+\|v_{d}^m(0)\|_{{}_{0}\mathcal{H}^1} & \lesssim \|D_{t}^{2}u(0)\|_{\mathcal{H}^0}+\|v_{d0}^{n,k}(0)\|_{\mathcal{H}^1},
  \end{aligned}
  \end{align}
  Moreover, $(v_{d}^m(0), D_tv_{d}^m(0))$ converge to $(v_{d0}^{n,k}(0), D_t^{2}u(0))$ weakly in $\mathcal{W}\times\mathcal{H}^0$ as $m\rightarrow+\infty$.
\end{theorem}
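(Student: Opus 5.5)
The plan is to define the Galerkin initial data by orthogonal projection onto $\mathcal{W}_{\sigma}^m(0)$ in the natural inner products, and then to read off both the uniform bound and the weak convergence from Bessel's inequality together with completeness of the basis from Proposition~\ref{prop:basis_galerkin}. We take
\[
D_tv_{d}^m(0):=\sum_{j=1}^m \big(D_t^2u(0),\sqrt{\lambda_j}\,\omega^j(0)\big)_{\mathcal{H}^0}\sqrt{\lambda_j}\,\omega^j(0),
\]
which is the $\mathcal{H}^0(0)$-orthogonal projection. This uses the fact that $\{\sqrt{\lambda_j}\omega^j(0)\}=\{\psi_j\}$ is orthonormal in $\mathcal{H}^0_\sigma(0)$ by Lemma~\ref{lem:basis_initial}. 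Since $D_t^2u(0)$ is $\dive_{\mathcal{A}_0}$-free by the compatibility conditions \eqref{compat_C2}, it lies in $\mathcal{H}^0_\sigma(0)$. Bessel's inequality then gives $\|D_tv_d^m(0)\|_{\mathcal{H}^0}\le\|D_t^2u(0)\|_{\mathcal{H}^0}$. Because the projections converge strongly in $\mathcal{H}^0_\sigma(0)$, they converge weakly as well.

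For the velocity we cannot directly project $v_{d0}^{n,k}$ in $\mathcal{W}_\sigma(0)$, since $W^{2,q_+}$ regularity does not guarantee $v_{d0}^{n,k}\cdot\mathcal{N}^n(0)\in H^1(\Sigma)$. We therefore project in the weaker ${}_0\mathcal{H}^1$-type structure. Concretely, we set
\[
v_d^m(0):=\sum_{j=1}^m c_j\,\omega^j(0),
\]
with $c_j$ chosen so that $v_d^m(0)$ is the orthogonal projection of $v_{d0}^{n,k}$ onto $\mathcal{W}^m_\sigma(0)$ with respect to the inner product $((\cdot,\cdot))+[\cdot\cdot\mathcal{N}_0,\cdot\cdot\mathcal{N}_0]_\pi$. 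This inner product is equivalent to the $H^1$ norm by Theorem~\ref{thm:poin} and Lemma~\ref{lem:equivalence_norm}. The trace $v_{d0}^{n,k}\cdot\mathcal{N}(0)$ at the contact points is well defined because $W^{2,q_+}\hookrightarrow C^0(\bar\Omega)$. The projection bound then yields $\|v_d^m(0)\|_{{}_0\mathcal{H}^1}\lesssim\|v_{d0}^{n,k}\|_{\mathcal{H}^1}$, uniformly in $m$, $n$ and $k$.

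It remains to prove convergence. We argue that $\bigcup_m\mathcal{W}^m_\sigma(0)$ is dense in the ${}_0\mathcal{H}^1$-closure of $\mathcal{W}_\sigma(0)$. This follows because $\{\omega^j(0)\}$ is a basis of $\mathcal{W}_\sigma(0)$, and $\mathcal{W}_\sigma(0)$ is dense in $\{w\in{}_0\mathcal{H}^1:\dive_{\mathcal{A}_0}w=0,\ \int_0^\pi w\cdot\mathcal{N}_0=0\}$. Note that $v_{d0}^{n,k}$ satisfies the zero-average condition by \eqref{cond:zero} and the kinematic relation. The projections are bounded, so extracting a subsequence gives a weak limit $\bar v$. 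Testing against $\omega^j(0)$ for fixed $j$ shows that $\bar v-v_{d0}^{n,k}$ is orthogonal to every basis element. Density then forces $\bar v=v_{d0}^{n,k}$. Uniqueness of the limit upgrades this to convergence of the whole sequence.

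The main obstacle is the density step. We must show that divergence-free fields with $H^1$ normal trace on $\Sigma$ are dense, in the $H^1$ topology, among divergence-free fields with merely $H^{1/2}$ normal trace. The density must also preserve the zero-average and $\Sigma_s$ constraints. I would first try to mollify the normal trace $v\cdot\mathcal{N}_0$ on $(0,\pi)$ while preserving its mean. Then I would lift the correction with the divergence-free extension operator from the Stokes theory of \cite{GT2020} (a Bogovskii-type correction), which is continuous from $H^{1/2}(\Sigma)$ mean-zero data into $H^1$. Finally I would subtract the lift.
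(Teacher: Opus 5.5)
\paragraph{The gap: $v_d^m(0)$ and $D_tv_d^m(0)$ are not independent.} Your choice of $D_tv_d^m(0)$, the $\mathcal{H}^0$-projection of $D_t^2u(0)$ onto $\mathcal{W}^m_\sigma(0)$, is the same as the paper's. The gap is that you then choose $v_d^m(0)$ independently. The approximate problem \eqref{eq:galerkin} (equivalently \eqref{eq:integral}) is first order in time for the coefficients $d^m$. Since $\partial_tw^j=R^nw^j$, one has $D_tv_d^m=\dot d^m_jw^j$. So once $v_d^m(0)$ is fixed, $D_tv_d^m(0)$ is \emph{forced} by evaluating \eqref{eq:galerkin} at $t=0$, and that evaluation is exactly \eqref{disc_1}.

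The theorem is about this forced value. Step 3 of Theorem \ref{thm:linear_low} integrates the energy identity for the actual $D_tv_d^m$ starting from $t=0$, so an arbitrary approximation of $D_t^2u(0)$ is of no use there. Subtract the weak formulation \eqref{eq:weak_limit_0} at $t=0$, which the compatible data $(v_{d0}^{n,k},D_t^2u(0))$ of Appendix \ref{sec:initial_l} satisfy, and set $e^m:=v_{d0}^{n,k}-v_d^m(0)$. With your $v_d^m(0)$, the forced $D_tv_d^m(0)$ differs from your $\mathcal{H}^0$-projection by the $\mathcal{H}^0$-representative on $\mathcal{W}^m_\sigma(0)$ of the functional $\psi\mapsto(R^n(0)e^m,\psi)_{\mathcal{H}^0}+[e^m\cdot\mathcal{N}^n,\psi\cdot\mathcal{N}^n]_\pi-[e^m\cdot\mathcal{N}_0,\psi\cdot\mathcal{N}_0]_\pi$. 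These extra terms appear because your projection inner product uses $\mathcal{N}_0$ instead of $\mathcal{N}^n(0)$ and omits the $R^n(0)$-term. The contact-point part involves point evaluations of $\psi\cdot\mathcal{N}_0$, which are not $\mathcal{H}^0$-bounded uniformly in $m$. Consequently your argument gives neither the uniform bound nor the convergence for the $D_tv_d^m(0)$ that the Galerkin solution actually has.

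\paragraph{How the paper avoids this.} The paper reverses the logic. It fixes $D_tv_d^m(0)$ as the $\mathcal{H}^0$-projection and then \emph{defines} $v_d^m(0)$ as the solution of the $m\times m$ system \eqref{disc_1}. The pair is therefore consistent with the Galerkin dynamics by construction.

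When you subtract \eqref{eq:weak_limit_0} at $t=0$, the $D_t$-terms cancel against test functions in $\mathcal{W}^m_\sigma(0)$. Hence $v_d^m(0)$ is exactly the Galerkin projection of $v_{d0}^{n,k}$ for the non-symmetric form $((v,\psi))+[v\cdot\mathcal{N}^n,\psi\cdot\mathcal{N}^n]_\pi+(R^n(0)v,\psi)_{\mathcal{H}^0}$. This form is coercive by Theorem \ref{thm:poin} and the smallness of $R^n(0)$, which gives \eqref{bound_dtu_0}. Passing to the limit in \eqref{disc_1} then identifies the weak limit.

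\paragraph{What is salvageable.} Your idea of projecting $v_{d0}^{n,k}$ in an ${}_0\mathcal{H}^1$-type structure correctly sidesteps the missing $H^1$ normal trace. It can be repaired with two changes. First, take the projection, via Lax--Milgram, with respect to the form appearing in \eqref{disc_1}; at minimum its contact-point part must use $\mathcal{N}^n(0)$, since the $R^n(0)$-term is only an $\mathcal{H}^0$-bounded perturbation. Second, read $D_tv_d^m(0)$ off the equation instead of prescribing it. The density issue you single out is needed in either version to identify the weak limit, so it is not where your argument fails.
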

This theorem follows directly from Proposition \ref{prop:basis_galerkin} and the results established in Section \ref{sec:galerkin-dtu0}.

%%%%%%%%%%%%%%%%%%%%%%%%%%%%%%%%%%%%%%%%%%%%%%
\subsubsection{Galerkin Method for $v_d(0)$ and $D_tv_{d}(0)$} \label{sec:galerkin-dtu0}
%%%%%%%%%%%%%%%%%%%%%%%%%%%%%%%%%%%%%%%%%%%%%%

With the basis constructed in Proposition \ref{prop:basis_galerkin}, we use the Galerkin method to construct the approximate solutions $D_tv_{d}^m(t)$ and $v_{d}^{m}(t)$, under the space $\mathcal{W}_{\sigma}^m(t)=\text{span}\{\omega^j(t)\}_{j=1}^m$ with $t\ge0$. In order to do that, we need to construct the initial data $(D_tv_{d}^m(0),v_{d}^{m}(0))$, and prove that they are uniformly bounded.

We first construct the initial data for the approximated solution $v_{d}^m(0)$. \textbf{{Instead of projecting $v_{d}(0)$ into space $\mathcal{W}_{\sigma}^{m}(0)$, we introduce a new scheme here}}. Suppose that $D_{t}v_{d}^{m}(0)$ is the $L^{2}$ projection of $D_{t}^{2}u(0)$ onto the space $\mathcal{W}_{\sigma}^{m}(0)$. This implies that $D_{t}v_{d}^{m}(0)\rightarrow D_{t}v_{d}(0)$ strongly in $H^{0}$. Then we construct $v_{d}^{m}(0)$ via the following equation

 \begin{align}{\label{disc_1}}
 \begin{aligned}
      (D_{t}v_{d}^{m}(0),\psi)_{\mathcal{H}^{0}}+((v_{d}^m(0),\psi))
+[v_{d}^{m}(0)\cdot \mathcal{N}^{n}(0),\psi\cdot\mathcal{N}^{n}]_\pi
+(\xi_{d}(0),\psi\cdot\mathcal{N}^{n})_{1,\Sigma_{k}}=\mathscr{F}_1(\psi)-(R^{n}(0)v_{d}^{m}(0),\psi)_{\mathcal{H}^{0}}
\end{aligned}
  \end{align}
for any $\psi\in \mathcal{W}^{m}$, where
\begin{align}{\label{def:F_1}}
    \begin{aligned}
       \mathscr{F}_1(\psi)=\int_{\Omega}F^{1}\cdot \psi J^{n}-\int_{0}^{\pi}F^{4}\cdot \psi-\int_{\Sigma_{s}}F^{5}(\psi\cdot \tau)J^{n}-[F^{7},\psi\cdot \mathcal{N}^{n}]_{\pi}+((R^{n}(v_{0}^{n}),\psi))+(\p_{t}(R^{n}v_{0}^{n}),\psi)_{\mathcal{H}^{0}(0)}.
    \end{aligned}
\end{align}
\noindent Subtracting this equation from the original weak formulation \eqref{eq:weak_limit_0} evaluated at $t=0$, we obtain the estimate
\begin{align}\label{bound_dtu_0}
((v_{d}^m(0),v_{d}^{m}(0)))
+[v_{d}^{m}(0)\cdot \mathcal{N}^{n},v_{d}^{m}(0)\cdot\mathcal{N}^{n}]_\pi
\lesssim \|\partial_{t}v_{d}(0)\|^{2}_{\mathcal{H}^{0}}
+\|v_{d}(0)\|^{2}_{{}_0\mathcal{H}^{1}}.
\end{align}
Consequently, we obtain the following weak convergence result.

\begin{proposition}\label{prop}
The sequence $v_d^m(0)$ converges weakly to $v_d(0)$ in ${}_0\mathcal{H}^1$ as $m\to\infty$.
\end{proposition}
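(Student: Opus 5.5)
The plan is the standard weak-compactness argument combined with identification of the limit through the defining equation \eqref{disc_1}.

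\textbf{Uniform bound.} The estimate \eqref{bound_dtu_0}, together with Lemma \ref{lem:equivalence_norm} and Theorem \ref{thm:poin}, gives a bound on $\|v_d^m(0)\|_{{}_0\mathcal{H}^1}$ that is independent of $m$. Strictly speaking, \eqref{bound_dtu_0} comes from testing \eqref{disc_1} with $\psi=v_d^m(0)$ and controlling $(\xi_d(0),\psi\cdot\mathcal{N}^n)_{1,\Sigma_k}$ and $\mathscr{F}_1$. These pairings only involve $\psi\cdot\mathcal{N}^n$ through its $H^{1/2}$ trace after integrating $\partial_\theta$ onto $\xi_d(0)$, and $\xi_d(0)$ is smooth for fixed $n,k$. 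The term $(R^n(0)v_d^m(0),v_d^m(0))_{\mathcal{H}^0}$ is absorbed using the smallness of $\partial_t\eta^n$. By reflexivity of the Hilbert space ${}_0\mathcal{H}^1$, every subsequence therefore has a further subsequence converging weakly in ${}_0\mathcal{H}^1$ to some limit $\bar v$. By Rellich compactness, the convergence is strong in $\mathcal{H}^0$, and the traces converge strongly in $L^2(\Sigma)$ and pointwise at the contact points.

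\textbf{Identification of the limit.} Fix $j$ and a test function $\psi=\omega^j(0)\in\mathcal{W}^m_\sigma(0)$ for all $m\ge j$, and pass to the limit in \eqref{disc_1}.
\begin{itemize}
\item The term $(D_tv_d^m(0),\psi)$ converges, since $D_tv_d^m(0)\to D_t^2u(0)$ strongly in $H^0$ (it is the $L^2$ projection).
\item The terms $((v_d^m(0),\psi))$, $[v_d^m(0)\cdot\mathcal{N}^n,\psi\cdot\mathcal{N}^n]_\pi$ and $(R^n(0)v_d^m(0),\psi)$ converge because each is a bounded linear functional of $v_d^m(0)$ in ${}_0\mathcal{H}^1$.
\item The terms $(\xi_d(0),\psi\cdot\mathcal{N}^n)_{1,\Sigma_k}$ and $\mathscr{F}_1(\psi)$ do not depend on $m$.
\end{itemize}
Hence $\bar v$ satisfies \eqref{disc_1}, with $D_t^2u(0)$ in place of $D_tv_d^m(0)$, for every $\psi=\omega^j(0)$. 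By linearity and density of $\bigcup_m\mathcal{W}^m_\sigma(0)$ in $\mathcal{W}_\sigma(0)$ (Proposition \ref{prop:basis_galerkin}), it satisfies this identity for all $\psi\in\mathcal{W}_\sigma(0)$.

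\textbf{Uniqueness and conclusion.} The weak formulation \eqref{eq:weak_limit_0} at $t=0$ shows that $v_d(0)$ satisfies the same identity. Subtracting the two identities, $z=\bar v-v_d(0)$ satisfies
\[
((z,\psi))+[z\cdot\mathcal{N}^n,\psi\cdot\mathcal{N}^n]_\pi+(R^n(0)z,\psi)_{\mathcal{H}^0}=0\quad\text{for all }\psi\in\mathcal{W}_\sigma(0).
\]
The left-hand side is coercive on ${}_0\mathcal{H}^1$ by Theorem \ref{thm:poin}, with the $R^n$ term again absorbed by smallness. Taking $\psi=z$ therefore gives $z=0$.

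There is one subtlety: $z$ must be admissible as a test function. This holds because $\bar v$ lies in the weak closure of $\mathcal{W}_\sigma$-valued elements, since the divergence constraint and the normal-trace condition are weakly closed. Since every subsequence has a further subsequence converging weakly to the same limit $v_d(0)$, the whole sequence converges weakly.

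\textbf{Main obstacle.} The delicate point is whether $v_d(0)$, which is only known to lie in $W^{2,q_+}$, actually belongs to $\mathcal{W}_\sigma(0)$, where the normal trace must be in $\mathring{H}^1$; this was the reason for the new scheme. If it does not, I would instead carry out the uniqueness argument in ${}_0\mathcal{H}^1$ with divergence-free test functions, using the closure of $\mathcal{W}_\sigma(0)$ in the ${}_0\mathcal{H}^1$ norm. The point is that the bilinear form above is continuous in the ${}_0\mathcal{H}^1$ norm alone, so the identity extends by density to that closure, which contains both $\bar v$ and $v_d(0)$.
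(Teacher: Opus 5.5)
Your argument is essentially the one the paper has in mind when it defers to Proposition 4.4 of its vessel-problem predecessor together with \eqref{bound_dtu_0}: the uniform bound, weak compactness in ${}_0\mathcal{H}^1$, passage to the limit in \eqref{disc_1} against fixed basis elements followed by density, and identification of the limit with $v_d(0)$ through the weak formulation at $t=0$ and coercivity. Two points need careful statement. First, weak limits of elements of $\mathcal{W}_\sigma(0)$ need not have $H^1$ normal trace, so your weak-closedness remark does not place $\bar v$ (let alone $z$) in $\mathcal{W}_\sigma(0)$; your fallback on the ${}_0\mathcal{H}^1$-closure of $\mathcal{W}_\sigma(0)$ is the correct version, and it requires $v_d(0)$ to lie in that closure, which the proposition itself needs anyway since every $v_d^m(0)$ lies in that weakly closed subspace. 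Second, the absorption of $(R^n(0)z,z)_{\mathcal{H}^0}$ should rest on the boundary terms in $((z,z))+[z\cdot\mathcal{N}^n]_\pi^2$ rather than on Theorem \ref{thm:poin} as literally stated, because horizontal translations $z=(a,0)$ satisfy $\mathbb{D}z=0$ and $z\cdot\nu=0$ on the flat $\Sigma_s$.
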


\begin{proof}
The proof follows the same argument as that of Proposition 4.4 in \cite{YXD2}. In view of the uniform bounds established above, the conclusion follows directly.
\end{proof}
%%%%%%%%%%%%%%%%%%%%%%%%%%%%%%%%%%%%%%%%%%%%%%
\subsection{Construction of Strong Solutions}
%%%%%%%%%%%%%%%%%%%%%%%%%%%%%%%%%%%%%%%%%%%%%%

  %Suppose that the forcing functions $F^j$ are in the same spaces of $F^j$ as \eqref{eq:weak_limit_1}.
%  We also assume that the initial data is the same as Appendix \ref{sec:initial_linear}. If there exists a pair $(u,p,\xi)$ achieving the initial data and satisfying $
%u\in L^2([0, T];W^{2,q_+}(\Om))\cap L^2([0, T];\mathcal{W}_\sigma)$, $p\in L^2([0, T];W^{1,q_+}(\Om))$, $\xi\in L^2([0, T];W^{3-\f1{q_+},q_+}(\Sigma))$,
%  $\p_t^j u\in L^2([0, T];H^1(\Om)),\ \p_t^j u\in L^2([0, T];H^0(\Sigma_s))$, $[\p_t^ju\cdot\mathcal{N}]_\pi\in L^2([0, T])$,
%$p\in L^2([0, T];H^0(\Om))$, $\p_t^j\xi\in L^2([0, T]; H^{3/2-\alpha}(\Sigma))$, $\p_t^j\xi\in C^0([0, T]; H^1(\Sigma))$,
%  for $j=0, 1$, such that they solve the \eqref{eq:modified_linear}, we call it a strong solution.

Suppose that all the forcing terms $F^{i}$ are in the spaces of estimate \eqref{est:bound_linear}. We say that
$(v,q,\xi)\in L^{2}W^{2,q_{+}}\times L^{2}W^{1,q_{+}}\times L^{2}W^{3-1/q_{+},\,q_{+}}$
is a strong solution to \eqref{eq:quasi_linear} if, for each \(j=0,1,2\),
\begin{equation}
\partial_t^j v \in L^\infty H^0 \cap L^2 H^1
\quad\text{and}\quad
\partial_t^j \xi \in L^\infty H^1 \cap L^2 H^{3/2-\alpha},
\end{equation}
and if \((v,q,\xi)\) satisfies \eqref{eq:quasi_linear}.

In the following theorem, we construct strong solutions to \eqref{eq:quasi_linear} by first solving \eqref{eq:quasi_linear_{s}} via the Galerkin method.
Once the solution to \eqref{eq:quasi_linear_{s}} and the corresponding $k$-independent uniform estimates are established, we pass to the limit as $k \to +\infty$. We then apply the contraction mapping theorem to derive a fixed point satisfying $(v_{l}, \xi_{l}) = (v, \xi)$, which yields a solution to \eqref{eq:quasi_linear_n}.

It then remains to pass to the limit $\eta^{n} \to \eta$. To this end, we first invoke Theorem~\ref{thm:pressure_+} to establish a $q_{+}$-elliptic estimate for the zeroth-order quantities $(v, q, \xi)$. We then apply the a priori estimates developed in \cite{YXD1} to derive uniform bounds independent of $n$, which enable us to pass to the limit $n \to \infty$ and thereby complete the proof.

 \begin{theorem}\label{thm:linear_low}
Assume the initial data are constructed in Section \ref{sec:initial_l}.
Suppose that $\mathfrak{K}(u,\eta,p)\le\delta$ (Definition given by \eqref{def:DEK_0}) is smaller than $\delta_0$ in Lemma \ref{lem:lemma1}, Lemma \ref{lem:equivalence_norm} and Theorem 4.7 in \cite{GT2020}.
Then there exists a unique strong solution $(v_{d},q_{d},\xi_{d})$ solving \eqref{eq:quasi_linear}. Moreover, the solution obeys the estimate
\begin{align}\label{est:bound_linear}
  \begin{aligned}
  \mathfrak{K}(v,q,\xi)&\lesssim\exp\big\{T(\|\p_t\eta\|_{L^\infty H^{3/2+(\varepsilon_--\alpha)/2}}+\|\eta\|_{L^{\infty}W^{3-\frac{1}{q_{-}},q_{-}}})\big\}\\
  &\quad\quad\bigg\{ \mathcal{E}(u_{0},p_{0},\xi_{0}) +\|(F^1-F^4-F^5)(0)\|^{2}_{(\mathcal{H}^1)^\ast}+\mathfrak{F}+\mathcal{Z}+\mathfrak{K}(u,p,\eta)\bigg\},
\end{aligned}
  \end{align}
  \end{theorem}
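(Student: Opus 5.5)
The plan is a three-layer approximation: a Galerkin scheme for the smoothed system \eqref{eq:quasi_linear_{s}} with $(n,k)$ fixed, the limit $k\to\infty$ combined with a contraction in the prescribed pair $(v_l,\xi_l)$, and finally the limit $n\to\infty$.

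\emph{Galerkin step.} Fix $n,k$ and the prescribed data $(u,p,\eta,v_l,\xi_l)$. I would look for
\begin{equation*}
v_d^m(t)=\sum_{j\le m}a_j(t)\,\omega^j(t)
\end{equation*}
in $\mathcal{W}_\sigma^m(t)$, using the basis of Proposition \ref{prop:basis_galerkin} and the initial data of Theorem \ref{thm:initial_convergence}. The approximate surface $\xi_d^m$ is defined through the kinematic relation \eqref{eq:kinematic_0}. There $\mathfrak{m}(t)$ is fixed by $\int_0^\pi \partial_t\xi_d^m\,\xi_s=0$, which gives the explicit formula of Theorem \ref{thm:m_s}. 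Because the data are smooth in $k$, the terms $\int_0^t \mathcal{R}_{1a}\,\partial_\theta\partial_t\xi_d$ and $\mathfrak{n}'\,\partial_t\partial_\theta\xi_d/\rho$ are lower order in the Galerkin ODE. The resulting system of ODEs for $(a_j)$ is therefore linear with integrable coefficients and has a unique solution on $[0,T]$.

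\emph{Uniform bounds.} Test the Galerkin equation with $v_d^m$, and test its time-differentiated version with $D_tv_d^m$. The orthogonality $\int\xi_s\partial_t\xi_d=0$ propagates to $\int \xi_s \xi_d=0$, since it holds at $t=0$ by the initial data construction. Together with the volume condition \eqref{cond:zero}, Lemma \ref{cor:cor1} then gives coercivity $(\xi_d,\xi_d)_{1,\Sigma_k}\gtrsim\|\xi_d\|_{H^1}^2$. The boundary term $[v_d\cdot\mathcal{N},\cdot]_\pi$ supplies dissipation at the contact points. I would control the contributions of $\mathfrak{m}$, $\mathfrak{m}'$ by Theorem \ref{thm:m_s}, and the integral-in-time terms by Cauchy--Schwarz combined with Gronwall. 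This reproduces the bound \eqref{bound} of Lemma \ref{elemma} uniformly in $m$.

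\emph{Main obstacle.} The hardest point is that $\mathfrak{m}$ depends on $v_d$ itself. Its bound contains $\|v_d\|_{L^\infty H^1}$ and $\|D_tv_d\|_{L^2H^1}$ with an $O(1)$ coefficient $1/\|\xi_s\|_{L^2}$. However, when $\mathfrak{m}\xi_s$ is paired against $\xi_d$ in the $(1,\Sigma)$ form, only its projection onto $\xi_s$ enters. I would therefore try first to show that the orthogonality $\int\xi_s\partial_t\xi_d=0$ makes this pairing cancel, or at least reduce it to a term absorbable via Young's inequality plus Gronwall's factor $\exp(C_0T)$.

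\emph{Limits and regularity.} Pass to weak limits as $m\to\infty$ to obtain a pressureless weak solution. Theorem \ref{thm:pressure_s} then produces $q_d$ and the $q_-$ elliptic estimate \eqref{est:diss_100}. Next recover $(v,q,\xi)$ from the ODEs $\partial_tv=v_d+Rv$, $\partial_t\xi=\xi_d$, $\partial_t q=q_d$. The $q_+$ bounds for this zeroth-order triple come from Theorem \ref{thm:pressure_+}. These bounds are uniform in $k$ thanks to the stated convergences of $\eta^k,u^k,v_l^k,\xi_l^k$, so we can let $k\to\infty$. The map $(v_l,\xi_l)\mapsto(v,\xi)$ is then a contraction for small $T$, or small $\delta$, because $v_l,\xi_l$ enter only through terms multiplied by $\partial_t\eta^n$, $\partial_tu$, $\mathfrak{n}'$. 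This yields a fixed point solving the $n$-system. The estimates are $n$-independent by \eqref{convergence_n} and the a priori bounds of \cite{YXD1}, so letting $n\to\infty$ gives a solution of \eqref{eq:quasi_linear} obeying \eqref{est:bound_linear}. Uniqueness follows from Proposition \ref{prop:unique}.
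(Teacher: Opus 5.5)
The $(n,k)$ Galerkin, $k\to\infty$, contraction in $(v_l,\xi_l)$, $n\to\infty$ structure matches the paper. But there is a genuine gap: your scheme never produces the enhanced bound $\partial_t\xi_d\in L^2H^{3/2-\alpha}$.

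Testing the Galerkin equation and its time derivative only controls $\xi_d$ and $\partial_t\xi_d$ in $L^\infty H^1$, through the $(1,\Sigma_k)$ energy. That does not give the $H^{3/2-\alpha}$ norms in \eqref{bound}, and it does not give what the rest of your argument needs, for two reasons.

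\textbf{First}, you cannot invoke Theorem \ref{thm:pressure_s} without it. The elliptic problem carries the normal-trace condition $v_d\cdot\mathcal{N}=\rho\,\partial_t\xi_d+\cdots$. So $v_d\in L^2W^{2,q_-}$ requires $\partial_t\xi_d\in L^2W^{2-1/q_-,q_-}$, which the paper obtains from $H^{3/2-\alpha}\hookrightarrow W^{2-1/q_-,q_-}$.

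\textbf{Second}, your claim that the bounds are uniform in $k$ ``thanks to the stated convergences'' fails. At the $H^1$ level, the transport terms $(\partial_t\xi_d,\rho^{-1}u^k_\theta\,\partial_\theta\partial_t\xi_d)_{1,\Sigma_k}$ and $(\partial_t\xi_d,\mathfrak{n}'\rho^{-1}\sin\theta\,\partial_\theta\partial_t\xi_d)_{1,\Sigma_k}$, together with the $\mathcal{R}_{1a}(\rho_0,\partial_\theta\eta^k,\eta^k)$ terms, can only be integrated by parts at the cost of $\|\partial_\theta u^k\|_{L^\infty(\Sigma)}$ and $\|\partial_\theta^2\eta^k\|_{L^\infty}$. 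This is where the paper's $k$-dependent factors $\|u^k\|_{H^{5/2}}$ and $\|\eta^k\|_{H^3}$ come from. Since $q_-<2$, convergence in $W^{2-1/q_-,q_-}(\Sigma)$ and $W^{3-1/q_-,q_-}$ bounds neither quantity.

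The missing ingredient is the paper's Step 6. One tests the differentiated weak form with $w=M^n\nabla\psi$, where $\psi$ is harmonic with $\partial_\nu\psi=D_j^s(\partial_t\xi_d-a_0(t)\rho_0)/|\mathcal{N}_0|$. One then uses the functional calculus of the gravity--capillary operator from \cite{GT2020}, the mean correction $a_0$, and the bootstrap $\tilde\alpha\mapsto\tilde\alpha/2$ to absorb the term $\|\partial_t\xi_d\|_{H^{3/2-2\tilde\alpha}}$. Only with $\|\partial_t\xi_d\|_{L^2H^{3/2-\alpha}}$ in hand can you re-estimate the transport terms by H\"older against $\|u\|_{W^{2,q_-}}$, as in \eqref{eq:n3}--\eqref{eq:n5}, and obtain $k$-independent bounds.

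On $\mathfrak{m}$, your proposed mechanism is misattributed. The $L^2$ orthogonality $\int\xi_s\partial_t\xi_d=0$ says nothing about $(\partial_t\xi_d,\xi_s)_{1,\Sigma}$, which is not an $L^2$ pairing. The paper uses instead $(\partial_t\xi_d,\xi_s)_{1,\Sigma_0}=0$. This holds because the translational mode $\xi_s$ is a null direction of the second variation, which is exactly the degeneracy that forces the condition $\int\xi\xi_s=0$ in \eqref{equ:pos_0}. Only $\mathfrak{m}'\int_0^\pi\mathcal{R}_{1a}\,\partial_\theta\partial_t\xi_d\,\partial_\theta\xi_s$ survives. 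It is handled by the pointwise bound $|\mathfrak{m}'|\lesssim\|D_tv_d^m\|_{H^1}+\|v_d^m\|_{H^1}+\cdots$, obtained by testing the kinematic equation with $\xi_s$, together with the smallness of $\eta^k$.

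Your fallback, Young plus Gronwall with that same pointwise bound, would also close. However, it gives an $O(1)$ Gronwall rate, i.e.\ a factor $e^{CT}$ rather than the small exponent in \eqref{est:bound_linear}. That is acceptable only for $T\lesssim1$.

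Finally, $\xi_d^m$ is not finite-dimensional, and the terms $u^k_\theta\partial_\theta$ and $\mathfrak{n}'\sin\theta\,\partial_\theta$ are first order, not lower order. They must be solved by characteristics, which fix $\theta=0,\pi$ because $u_\theta=\sin\theta=0$ there. The coefficients $d^m$ then satisfy a Volterra integro-differential system like \eqref{eq:integral}, not an ODE. That system is solvable, but your justification skips this step.
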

  
\begin{proof}

We begin by constructing the solution of \eqref{eq:quasi_linear_{s}} for any given $n,k$ and prescribed functions $(u,\eta,p,\eta^{k},u^{k})$, $(\eta^{n},v_{l}^{k},\xi_{l}^{k})$.

\paragraph{\underline{Step 1 -- Galerkin Setup}}
In order to utilize the Galerkin method, we choose the same basis $\{w^j(t)\}_{j=1}^\infty$ of ${}\mathcal{W}_{\sigma}$ constructed in Proposition \ref{prop:basis_galerkin} for each $t\in[0, T]$, that satisfies the time-dependent requirement of the condition $u\cdot \mathcal{N}^{n}\in H^{1}(0,\pi)$. Furthermore, the time derivatives of these basis functions are expressed by a finite number of these basis functions. In particular, $\p_tw^j=R^{n}w^j$,
 where $R(t)$ is defined by $R^{n}(t):=\p_tM^{n}(t)(M^{n})^{-1}(t)$.
For any integer $m\ge1$, we define the finite-dimensional space
$
\mathcal{W}_{\sigma}^m(t):=\text{span}\{w^1(t), \cdots, w^m(t)\}\subseteq \mathcal{W}_{\sigma}(t)$,
 and we write
$
\mathcal{P}^m_t: \mathcal{W}_{\sigma}(t)\rightarrow\mathcal{W}_{\sigma}^m(t)
$
for the $\mathcal{W}$ orthogonal projection onto $\mathcal{W}^m(t)$. Clearly, for each $w\in \mathcal{W}_{\sigma}(t)$, $\mathcal{P}^m_tw\rightarrow w$ as $m\rightarrow\infty$.

\paragraph{\underline{Step 2 -- Solving the Approximate Problem}} From Step 2 to Step 7, we keep $n,k$ and $(v_{l},\xi_{l})$ fixed. For each $m\ge1$, we define an approximate solution $
v_{d}^m(t):=d^m_j(t)w^j(t)$, with $d^m_j: [0, T]\rightarrow\mathbb{R}\ \text{for}\ j=1, \dots, m$,
where as usual we use the Einstein convention of summation over the repeated index $j$. We similarly define $\xi_{d}^{m}$ via the following transport equation:

\begin{align}{\label{eq:kin_m}}
   \partial_{t}^{2}\xi_{d}^{m}=\frac{1}{\rho}D_{t}v_{d}^{m}\cdot \mathcal{N}^{n}-\frac{\p_{t}\eta}{\rho^{2}}v_{d}^{m}\cdot \mathcal{N}^{n}-\frac{1}{\rho}u_{\theta}^{k}\p_{t}\p_{\theta}\xi_{d}^{m}
    -\mathfrak{m}^{\prime}(t)\xi_{s}+\mathfrak{n}^{\prime}(t)\frac{\p_{t}\p_{\theta}\xi_{d}^{m}}{\rho}\sin\theta+F^{6_{+}},
\end{align}
\noindent where $F^{6}$ is defined as follows 
\begin{align}{\label{eq:force_6}}
    F^{6_{+}}=&\p_{t}(\frac{1}{\rho}(R^{n}v_{l}^{k})\cdot \mathcal{N}^{n})+\frac{1}{\rho}\partial_{t}u^{k}\cdot\p_{t}\mathcal{N}(\xi_{l}^{k})-\frac{\p_{t}\eta}{\rho^{2}}u^{k}\cdot \p_{t}\mathcal{N}(\xi_{l}^{k})\notag\\
    &-\mathfrak{n}^{\prime}(t)\p_{t}\eta\frac{\p_{t}\p_{\theta}\xi_{l}^{k}}{\rho^{2}}+\mathfrak{n}^{\prime\prime}(t)(\frac{\p_{t}\p_{\theta}\xi_{l}^{k}}{\rho})+\frac{1}{\rho}u_{r}^{k}\p_{t}^{2}\xi_{l}^{k}+\p_{t}F^{6}(u^{k},\eta^{k}),
\end{align}
\noindent and $\mathfrak{m}(t)$ is given such that $\int_{0}^{\pi}\xi_{s}\p_{t}^{2}\xi^{m}_{d}=0$.
\noindent This system admits the following solution:

\begin{align}
\begin{aligned}
    \frac{d}{dt}(\partial_{t}\xi_{d}^{m}(s,X(s,x)))=&(\frac{1}{\rho}(D_{t}v_{d}^{m})\cdot \mathcal{N}^{n}-\frac{\p_{t}\eta}{\rho^{2}}v_{d}^{m}\cdot \mathcal{N}^{n}) (s,X(s,x))\\
    &+F^{6_+}(s,X(s,x))-\mathfrak{m}^{\prime}(s)\xi_{s}(X(s,x)), \label{eq:c1}
\end{aligned}
\end{align}

\noindent where $X$ is the characteristic satisfying the following ODE:

\begin{align}
    \frac{d}{dt}X(t,x)=-\frac{1}{\rho}u_{\theta}^{k}(t,x)+\frac{\mathfrak{n}^{\prime}}{\rho}\sin\theta~~~\operatorname{and}~~~X(0,x)=x.
\end{align}
\noindent Since $u_{\theta}(0)=u_{\theta}(\pi)=\sin(0)=\sin(\pi)=0,$
it follows that
\begin{align}
\frac{d}{dt}X(t,0)=\frac{d}{dt}X(t,\pi)=0.
\end{align}
Therefore, the characteristic flow leaves the boundary points fixed.

 Integrating \eqref{eq:c1} along the characteristic curves from $0$ to $s$, we obtain
\begin{align}\label{eq:trans}
\begin{aligned}
\partial_t\xi_d^m(s,x)
=&\partial_t\xi_d^m(0)\bigl(X(-s,x)\bigr)+\int_0^s
\Bigl(
D_t v_d^m
-\frac{\partial_t\eta}{\rho^2}(v_d^m\cdot\mathcal N^n)
\Bigr)\cdot\mathcal N^n
\bigl(y,X(y,X(-s,x))\bigr)dy
\\&+\int_0^s
F^{6_+}\bigl(y,X(y,X(-s,x))\bigr)dy
-\int_0^s
\mathfrak m'(y)
\xi_s\bigl(X(y,X(-s,x))\bigr)dy.
\end{aligned}
\end{align}

Moreover, $\frac{d}{dt}f\bigl(t,X(t,X(-s,x))\bigr)
=\partial_t f
+\Bigl(
-\frac{1}{\rho}u_\theta^k
+\frac{\mathfrak n'}{\rho}
\Bigr)
\sin\theta\partial_\theta f.$ Integrating \eqref{eq:trans} with respect to time once more over $[0,t]$ and applying integration by parts, we deduce the following representation formula for $\xi_d^m$:
\begin{align}\label{eq:xi_d}
\begin{aligned}
\xi_d^m(t,x)
=&\int_0^t
\partial_t\xi_0\bigl(X(-s,x)\bigr),ds
+\int_0^t\int_0^s
\Bigl(
F^{6_+}
-\frac{\partial_t\eta}{\rho^2},
v_d^m\cdot\mathcal N^n
\Bigr)
\bigl(y,X(y,X(-s,x))\bigr)dyds
\\
&-\int_0^t
(v_d^m\cdot\mathcal N^n)
\bigl(0,X(-s,x)\bigr)ds
+\int_0^t
(v_d^m\cdot\mathcal N^n)(s,x)ds
+\int_0^t\int_0^s
\mathfrak m'(y),
\xi_s\bigl(X(y,X(-s,x))\bigr)dyds\\
&-\int_0^t\int_0^s
\Bigl(
-\frac{1}{\rho}u_\theta^k
+\frac{\mathfrak n'}{\rho}
\Bigr)
\partial_\theta
(v_d^m\cdot\mathcal N^n)
\bigl(y,X(y,X(-s,x))\bigr),dy,ds.
\end{aligned}
\end{align}

Next, we derive $\mathfrak{m}(t)$ for each fixed constant $m$. Multiplying both sides of \eqref{eq:trans} by $\xi_s(x)$, integrating over $[0,\pi]$, and then differentiating with respect to time, we obtain the following ODE for $\mathfrak{m}'(t)$:
\begin{align}\label{ODD}
\mathfrak{m}'(s)
=\frac{1}{\int_{0}^{\pi}\xi_s^{2}\,d\theta}
\left(
-\int_{0}^{\pi}\int_{0}^{s}\mathfrak{m}'(y)
\left(-\frac{1}{\rho}u_{\theta}^{k}+\frac{\mathfrak{n}'}{\rho}\right)
\partial_{\theta}\xi_s\bigl(X(y,X(-s,x))\bigr)\,dy\,\xi_s\,d\theta
+\mathfrak{F}^{6}
+\mathfrak{I}(v_d^{m})
\right).
\end{align}

\noindent Here, $\mathfrak{F}^{6}$ is defined by
\begin{align}
\mathfrak{F}^{6}
=\frac{d}{ds}\int_{0}^{\pi}\int_{0}^{s}
F^{6}_{+}\bigl(y,X(y,X(-s,x))\bigr)\,dy\,\xi_s\,d\theta,
\end{align}
and $\mathfrak{I}(v_d^{m})$ denotes an integral term depending linearly on $v_d^{m}$, determined by the second term on the right-hand side of \eqref{eq:trans}. Therefore, equation \eqref{ODD} admits a solution $\mathfrak{m}(t)$ that depends linearly on $v_d^{m}$.

We choose the coefficients $d_j^m(t)$ so that
\begin{equation}\label{eq:galerkin}
\begin{aligned}
&(\partial_t v_d^m,w)_{\mathcal{H}^0}
+((v_d^m,w))
+(\xi_d^m,w\cdot\mathcal{N}^n)_{1,\Sigma_0}
+\left(\int_0^t \mathcal{R}_{1a}(\rho_0,\partial_\theta\eta^k,\eta^k)\partial_\theta\partial_t\xi_d^m,\,
\partial_\theta(w\cdot\mathcal{N}^n)\right)_{L^2}
+[v_d^m\cdot\mathcal{N}^n,w\cdot\mathcal{N}^n]_\pi
\\
&\qquad
=\int_{\Omega} F^1\cdot w\,J^n
-\int_0^\pi F^4\cdot w
-\int_{\Sigma_s} F^5 (w\cdot\tau) J^n
-[F^7,w\cdot\mathcal{N}^n]_\pi
+((R^n D_t v_l,w))
+(\partial_t(R^n v_l),w)_{\mathcal{H}^0}
\\
&\qquad\quad
+\left(\int_0^t \partial_\theta\bigl(\mathcal{R}_{1aa}(\rho_0,\partial_\theta\eta^k,\eta^k)
\partial_\theta\partial_t\xi_l^k\,\partial_\theta\partial_t\eta^k\bigr),\,
w\cdot \mathcal{N}^n\right)_{L^2}
+\left[\int_0^t \mathcal{R}_{1aa}(\rho_0,\partial_\theta\eta^k,\eta^k)
\partial_\theta\partial_t\xi_l^k\,\partial_\theta\partial_t\eta^k,\,
w\cdot \mathcal{N}^n\right]_\pi
\\
&\qquad\quad
+\left(\int_0^t \partial_\theta\bigl(\mathcal{R}_{1ab}(\rho_0,\partial_\theta\eta^k,\eta^k)
\partial_\theta\partial_t\xi_l^k\,\partial_t\eta^k\bigr),\,
w\cdot \mathcal{N}^n\right)_{L^2}
+\left[\int_0^t \mathcal{R}_{1ab}(\rho_0,\partial_\theta\eta^k,\eta^k)
\partial_\theta\partial_t\xi_l^k\,\partial_t\eta^k,\,
w\cdot \mathcal{N}^n\right]_\pi,
\end{aligned}
\end{equation}
for each $w\in \mathcal{W}^m(t)$. Arguing as in the proof of Theorem 4.5 in \cite{YXD2}, and using \eqref{eq:galerkin}, \eqref{eq:xi_d}, and \eqref{ODD}, we show that $d_j^m(s)$ satisfies the following integro-differential system:
\begin{equation}\label{eq:integral}
\dot d^m(t)+\mathfrak{B}(t)d^m(t)
+\int_0^t \mathfrak{C}(t,s)d^m(s)\,\mathrm{d}s
+\int_0^t\int_0^s \mathfrak{J}(t,s,y)d^m(y)\,\mathrm{d}y\,\mathrm{d}s
=\mathfrak{F}(t).
\end{equation}
Here, the $m\times m$ matrix $\mathfrak{B}$ belongs to $C^1([0,T])$, $\mathfrak{C}(t,s)\in C^1(D)$, $\mathfrak{J}\in L^2(Q)$ with
\[
D=\{(t,s)\mid 0\le s\le t\le T\}, \qquad Q=\{(t,s,y)\mid 0\le y\le s\le t\le T\},
\]

\noindent and the forcing term satisfies $\mathfrak{F}\in H^1([0,T])$.

\paragraph{\underline{Step 3 -- Energy Estimate for $D_tv_{d}^m$}}

   We first derive the energy estimates for $D_t v_d^m$ and $\partial_t \xi_d^m$. To this end, we use $w$ as the test function in \eqref{eq:galerkin}, differentiate the resulting equation with respect to time, and then subtract from it equation \eqref{eq:galerkin} tested against $D_t w=\partial_t w-R^n w,$
where $R^n$ is defined in Proposition \ref{prop:solid_boundary}. In this way, the terms involving $\partial_t w$ cancel, and we obtain the following identity.
  \begin{equation}\label{eq:pa_tu_m_1}
  \begin{aligned}
&\left(\p_tD_{t}v_{d}^m,w\right)_{\mathcal{H}^0}+(\p_{t}(R^{n}v_{d}^{m}),w)_{\mathcal{H}^{0}}+(D_tv_{d}^m+R^{n}v_{d}^{m},R^{n}w)_{\mathcal{H}^0}+((D_{t}v_{d}^m+R^{n}v_{d}^{m},w))+((v_{d}^{m},R^{n}w))\\
&+(\partial_{t}\xi_{d}^{m},w\cdot\mathcal{N}^{n})_{1,\Sigma_{k}}+(\xi_{d}^m, R^{n}w\cdot\mathcal{N}^{n})_{1,\Sigma_{k}}+(\xi_{d}^m,w\cdot\p_t\mathcal{N}^{n})_{1,\Sigma_{k}}+[\p_tv_{d}^{m}\cdot \mathcal{N}^{n},w\cdot\mathcal{N}^{n}]_\pi+[v_{d}^{m}\cdot \partial_{t}\mathcal{N}^{n},w\cdot \mathcal{N}^{n}]_{\pi}\\
&+[v_{d}^{m}\cdot \mathcal{N}^{n},R^{n} w\cdot\mathcal{N}^{n}]_\pi+[v_{d}^{m},w\cdot\p_t\mathcal{N}^{n}]_\pi+(\p_tv_{d}^m, w\p_tJ^{n})_{L^2(\Om)}+\beta(D_{t}v_{d}^m\cdot\tau,(w\cdot\tau)\p_tJ^{n})_{L^2(\Sigma_s)}\\
&+(\int_{0}^{t}\mathcal{R}_{1a}(\rho_{0},\p_{\theta}\eta^{k},\eta^{k})\p_{\theta}\partial_{t}\xi_{d},\p_{\theta}(w\cdot \partial_{t}\mathcal{N}^{n}))_{L^{2}}+(\int_{0}^{t}\mathcal{R}_{1a}(\rho_{0},\p_{\theta}\eta^{k},\eta^{k})\p_{\theta}\partial_{t}\xi_{d},\p_{\theta}(R^{n}w\cdot \mathcal{N}^{n}))_{L^{2}}((R^{n}D_{t}v_{l}),R^{n}w)_{\mathcal{H}^{0}}\\
&+(\p_{\theta}(\mathcal{R}_{1aa}(\rho_{0},\p_{\theta}\eta^{k},\eta^{k})\p_{\theta}\p_{t}\xi_{l}^{k}\p_{\theta}\p_{t}\eta^{k}),(w\cdot \mathcal{N}^{n}))_{L^{2}}+[\mathcal{R}_{1aa}(\rho_{0},\p_{\theta}\eta^{k},\eta^{k})\p_{\theta}\p_{t}\xi_{l}^{k}\p_{\theta}\p_{t}\eta^{k},w\cdot \mathcal{N}^{n}]_{\pi}+(\p_{t}(R^{n}v_{l}),w)_{\mathcal{H}^{0}}\\
&+(\p_{\theta}(\mathcal{R}_{1ab}(\rho_{0},\p_{\theta}\eta^{k},\eta^{k})\p_{\theta}\p_{t}\xi_{l}^{k}\p_{t}\eta^{k}),(w\cdot \mathcal{N}^{n}))_{L^{2}}+[\mathcal{R}_{1ab}(\rho_{0},\p_{\theta}\eta^{k},\eta^{k})\p_{\theta}\p_{t}\xi_{l}^{k}\p_{t}\eta^{k},w\cdot \mathcal{N}^{n}]_{\pi}+((R^{n}v_{l}),w\p_{t}J^{n}(J^{n})^{-1})_{\mathcal{H}^{0}}\\
&=\p_t\mathcal{F}(w)-\mathcal{F}(\p_tw)+\mathcal{F}(R^{n}w)+((\p_{t}(R^{n}v_{l}),w))+((R^{n}D_{t}v_{l},R^{n}w))\\
&\quad-\int_\Om\frac{\mu}{2}(\mathbb{D}_{\p_t\mathcal{A}^{n}}v_{d}^m:\mathbb{D}_{\mathcal{A}^{n}}(w)J^{n}
  -\int_\Om\frac{\mu}{2}(J^{n}\mathbb{D}_{\mathcal{A}^{n}}v_{d}^m:\mathbb{D}_{\p_t\mathcal{A}^{n}}(w)+\p_tJ^{n}K^{n}\mathbb{D}_{\mathcal{A}^{n}}v_{d}^m:\mathbb{D}_{\mathcal{A}^{n}}(w)),
  \end{aligned}
  \end{equation}
  where for brevity we have written
  \begin{equation}\label{eq:cal_f}
  \begin{aligned}
\mathcal{F}(w)&=\int_{\Om}F^1\cdot wJ^{n}+\int_{0}^{\pi}F^4\cdot w-\int_{\Sigma_s}F^5(w\cdot\tau)J^{n}-[F^{7},w\cdot \mathcal{N}^{n}]_{\pi}.
  \end{aligned}
  \end{equation}
  
 We first consider the terms on the boundary $\Sigma$ and the contact points. According to \cite[Lemma A.1]{ZhT17}, we know that $R^{nT}\mathcal{N}^{n}=-\p_t\mathcal{N}^{n}$. Then
  \[
  \begin{aligned}
  (\partial_{t}\xi^m, R^{n}w\cdot\mathcal{N}^{n})_{1,\Sigma_{k}}+(\partial_{t}\xi^m,w\cdot\p_t\mathcal{N}^{n})_{1,\Sigma_{k}}+[v_{d}^{m}\cdot \mathcal{N}^{n},R^{n} w\cdot\mathcal{N}^{n}]_\pi+[v_{d}^{m}\cdot \mathcal{N}^{n},w\cdot\p_t\mathcal{N}^{n}]_\pi=0,\\
  (\int_{0}^{t}\mathcal{R}_{1a}(\rho_{0},\p_{\theta}\eta^{k},\eta^{k})\p_{\theta}\partial_{t}\xi_{d},\p_{\theta}(w\cdot \partial_{t}\mathcal{N}^{n}))_{L^{2}}+(\int_{0}^{t}\mathcal{R}_{1a}(\rho_{0},\p_{\theta}\eta^{k},\eta^{k})\p_{\theta}\partial_{t}\xi_{d},\p_{\theta}(R^{n}w\cdot \mathcal{N}^{n}))_{L^{2}}=0.
    \end{aligned}
  \]
  
  Let the test function $w=D_{t}v_{d}^{m}$. We have the following relation.
  \[
[\partial_{t}v_{d}^{m}\cdot \mathcal{N}^{n},D_{t}v_{d}^{m}\cdot \mathcal{N}^{n}]_{\pi}=[D_{t}v_{d}^{m}\cdot \mathcal{N}^{n}]^{2}_{\pi}-[D_{t}v_{d}^{m}\cdot \mathcal{N}^{n},R^{n}v_{d}^{m}\cdot \mathcal{N}^{n}]_{\pi},
  \]
  The second term in the equation above vanishes since $R^{n}v_{d}^{m}\cdot \mathcal{N}^{n}(\pm\pi)=-v_{d}^{m}\cdot \partial_{t}\mathcal{N}^{n}(\pm \pi)=0$. Moreover, by the same reason, we have
  \begin{equation}\label{eq:bc_dtum}
      [\partial_{t}v_{d}^{m}\cdot \partial_{t}\mathcal{N}^{n},w \cdot \mathcal{N}^{n}]_{\pi}=[D_{t}v_{d}^{m}\cdot \partial_{t}\mathcal{N}^{n},w \cdot \mathcal{N}^{n}]_{\pi}+[R^{n}v_{d}^{m}\cdot \partial_{t}\mathcal{N}^{n},w \cdot \mathcal{N}^{n}]=0,
  \end{equation}
  Here, we used the the fact that $v_{d}^{m}$ and $D_{t}v_{d}^{m}$ are functions in $\mathcal{W}_{\sigma}(t)$, which implies that $(D_{t}v_{d}^{m}\cdot \partial_{t}\mathcal{N}^{n})(\pm \pi)=0$, and $R^{n}v_{d}^{m}\cdot \partial_{t}\mathcal{N}^{n}(\pm\pi)=v_{d}^{m}\cdot \partial_{t}\mathcal{N}^{n}(\pm \pi)=0$.
  
   For the term involving $1,\Sigma$ inner product, using the kinematic boundary condition
   \begin{align}
       \partial_{t}^{2}\xi_{d}^{m}=\frac{1}{\rho}D_{t}v_{d}^{m}\cdot \mathcal{N}^{n}-\frac{\p_{t}\eta}{\rho^{2}}v_{d}^{m}\cdot \mathcal{N}^{n}-\frac{1}{\rho}u_{\theta}^{k}\p_{t}\p_{\theta}\xi_{d}^{m}
    -\mathfrak{m}^{\prime}(t)\xi_{s}+\mathfrak{n}^{\prime}(t)\frac{\p_{t}\p_{\theta}\xi_{d}^{m}}{\rho}\sin\theta+F^{6_{+}},
   \end{align}
   we obtain
  \[
  \begin{aligned}
  (\partial_{t}\xi_{d}^{m}, D_{t}v_{d}^{m}\cdot\mathcal{N}^{n})_{1,\Sigma_{k}}=&(\partial_{t}\xi_{d}^{m},\partial_{t}^{2}\xi_{d}^{m})_{1,\Sigma_{k}}+(\partial_{t}\xi_{d}^{m},\frac{\p_{t}\eta}{\rho^{2}}v_{d}^{m}\cdot \mathcal{N}^{n})_{1,\Sigma_{k}}-(\partial_{t}\xi_{d}^{m}, -\frac{1}{\rho}u_{\theta}^{k}\p_{t}\p_{\theta}\xi_{d}^{m})_{1,\Sigma_{k}}\\&-(\p_{t}\xi_{d}^{m},\mathfrak{n}^{\prime}(t)\frac{\p_{t}\p_{\theta}\xi_{d}^{m}}{\rho}\sin\theta)_{1,\Sigma_{k}}+(\p_{t}\xi_{d}^{m},F^{6})_{1,\Sigma_{k}}+(\mathcal{R}_{1a}(\rho_{0},\p_{\theta}\eta^{k},\eta^{k})\p_{\theta}\p_{t}\xi_{d}^{m},\mathfrak{m}^{\prime}(t)\xi_{s})_{L^{2}(\Sigma)}\\
  =&\frac{1}{2}\frac{d}{dt}\vert \vert \partial_{t}\xi_{d}\vert \vert_{1,\Sigma_{k}}^{2}-\frac{1}{2}\sigma\int_{0}^{\pi} \mathcal{R}_{1aa}(\rho_{0},\p_{\theta}\eta^{k},\eta^{k}) \p_{\theta}\partial_{t}\eta^{k} (\p_{\theta}\partial_{t}\xi_{d})^{2}+(\p_{t}\xi_{d},F^{6_{+}})_{1,\Sigma_{k}}\\
     &-\frac{1}{2}\sigma \int_{0}^{\pi}\mathcal{R}_{1ab}(\rho_{0},\p_{\theta}\eta^{k},\eta^{k})\p_{t}\eta^{k}(\p_{\theta}\p_{t}\xi_{d})^{2}+(\mathcal{R}_{1a}(\rho_{0},\p_{\theta}\eta^{k},\eta^{k})\p_{\theta}\p_{t}\xi_{d}^{m},\mathfrak{m}^{\prime}(t)\xi_{s})_{L^{2}(\Sigma)}\\
  &+(\partial_{t}\xi_{d}^{m},\frac{\p_{t}\eta}{\rho^{2}}v_{d}^{m}\cdot \mathcal{N}^{n})_{1,\Sigma_{k}}-(\partial_{t}\xi_{d}^{m}, -\frac{1}{\rho}u_{\theta}^{k}\p_{t}\p_{\theta}\xi_{d}^{m})_{1,\Sigma_{k}}-(\p_{t}\xi_{d}^{m},\mathfrak{n}^{\prime}(t)\frac{\p_{t}\p_{\theta}\xi_{d}^{m}}{\rho}\sin\theta)_{1,\Sigma_{k}},
  \end{aligned}
  \]
  \noindent where we used the identity $(\p_{t}\xi_{d},\mathfrak{m}(t)\xi_{s})_{1,\Sigma_{0}}=0,$ and the following equation from definition of $1,\Sigma_{k}$ inner product

      \begin{align*}
     \begin{aligned}(\partial_{t}\xi_{d},\partial_{t}^{2}\xi_{d})_{1,\Sigma_{k}}=&\frac{1}{2}\frac{d}{dt}\vert \vert \partial_{t}\xi_{d}\vert \vert_{1,\Sigma_{k}}^{2}-\frac{1}{2}\sigma\int_{0}^{\pi} \mathcal{R}_{1aa}(\rho_{0},\p_{\theta}\eta^{k},\eta^{k}) \p_{\theta}\partial_{t}\eta^{k} (\p_{\theta}\partial_{t}\xi_{d})^{2}\\
     &-\frac{1}{2}\sigma \int_{0}^{\pi}\mathcal{R}_{1ab}(\rho_{0},\p_{\theta}\eta^{k},\eta^{k})\p_{t}\eta^{k}(\p_{\theta}\p_{t}\xi_{d})^{2},
     \end{aligned}
 \end{align*}
  
  Applying all of the computation above to equation \eqref{eq:pa_tu_m_1}, setting the test function $w=D_{t}v_{d}^{m}$, and using integration by parts, we deduce
  \[
  \begin{aligned}
  (\partial_{t}D_{t}v_{d}^{m},D_{t}v_{d}^{m})_{\mathcal{H}^{0}}=\frac{1}{2}\frac{d}{dt}\vert \vert D_{t}v_{d}^{m}\vert \vert^{2}_{\mathcal{H}^{0}}-\int_{\Omega}\partial_{t}J^{n}(D_{t}v_{d}^{m})^{2}
  \end{aligned}
  \]
  \noindent Thus, equation \eqref{eq:pa_tu_m_1} can be rewritten as
 
  \begin{equation}\label{eq:e_2}
  \begin{aligned}
  &\frac{d}{dt}\left(\frac12\|D_{t}v_{d}^m\|_{\mathcal{H}^0}^2+\frac12\|\partial_{t}\xi_{d}^{m}\|_{1,\Sigma_{k}}^2\right)+\|D_tv_{d}^m\|_{\mathcal{H}^1}^2+[D_{t}v_{d}^{m}\cdot \mathcal{N}^{n}]_\pi^2=I+II+III+IV+V,
  \end{aligned}
  \end{equation}
  where
  \[
  \begin{aligned}
  I=&-\beta(\partial_{t}v_{d}^m\cdot\tau,(\partial_{t}v_{d}^{m}\cdot\tau)\p_tJ^{n})_{L^2(\Sigma_s)}+(\mathcal{R}_{1a}(\rho_{0},\p_{\theta}\eta^{k},\eta^{k})\p_{\theta}\p_{t}\xi_{d}^{m},\mathfrak{m}^{\prime}(t)\p_\theta\xi_{s})_{L^{2}(\Sigma)}+(\p_{t}\xi_{d}^{m},F^{6_{+}})_{1,\Sigma_{k}}\notag\\
  &+(\partial_{t}\xi_{d}^{m},\frac{\p_{t}\eta}{\rho^{2}}v_{d}^{m}\cdot \mathcal{N}^{n})_{1,\Sigma_{k}}-(\partial_{t}\xi_{d}^{m}, -\frac{1}{\rho}u_{\theta}^{k}\p_{t}\p_{\theta}\xi_{d}^{m})_{1,\Sigma_{k}}-(\p_{t}\xi_{d}^{m},\mathfrak{n}^{\prime}(t)\frac{\p_{t}\p_{\theta}\xi_{d}^{m}}{\rho}\sin\theta)_{1,\Sigma_{k}}\\
  &+(\p_{\theta}(\mathcal{R}_{1aa}(\rho_{0},\p_{\theta}\eta^{k},\eta^{k})\p_{\theta}\p_{t}\xi_{l}^{k}\p_{\theta}\p_{t}\eta^{k}),(D_{t}v_{d}^{m}\cdot \mathcal{N}^{n}))_{L^{2}}+[\mathcal{R}_{1aa}(\rho_{0},\p_{\theta}\eta^{k},\eta^{k})\p_{\theta}\p_{t}\xi_{l}^{k}\p_{\theta}\p_{t}\eta^{k},D_{t}v_{d}^{m}\cdot \mathcal{N}^{n}]_{\pi}\\
  &+(\p_{\theta}(\mathcal{R}_{1ab}(\rho_{0},\p_{\theta}\eta^{k},\eta^{k})\p_{\theta}\p_{t}\xi_{l}^{k}\p_{t}\eta^{k}),(D_{t}v_{d}^{m}\cdot \mathcal{N}^{n}))_{L^{2}}+[\mathcal{R}_{1ab}(\rho_{0},\p_{\theta}\eta^{k},\eta^{k})\p_{\theta}\p_{t}\xi_{l}^{k}\p_{t}\eta^{k},D_{t}v_{d}^{m}\cdot \mathcal{N}^{n}]_{\pi}\\
  &-\frac{1}{2}\sigma\int_{0}^{\pi} \mathcal{R}_{1aa}(\rho_{0},\p_{\theta}\eta^{k},\eta^{k}) \p_{\theta}\partial_{t}\eta^{k} (\p_{\theta}\partial_{t}\xi_{d})^{2}
     -\frac{1}{2}\sigma \int_{0}^{\pi}\mathcal{R}_{1ab}(\rho_{0},\p_{\theta}\eta^{k},\eta^{k})\p_{t}\eta^{k}(\p_{\theta}\p_{t}\xi_{d})^{2},
  \end{aligned}
  \]
  \[
  \begin{aligned}
  II&=((v_{d}^{m},R^{n}D_{t}v_{d}^{m}))-((R^{n}v_{d}^{m},D_{t}v_{d}^{m}))-\int_\Om\frac{\mu}{2}(\mathbb{D}_{\p_t\mathcal{A}^{n}}v_{d}^m:\mathbb{D}_{\mathcal{A}^{n}}(D_{t}v_{d}^{m})J^{n}+((R^{n}v_{l},R^{n}D_{t}v_{d}^{m}))\\
  &\quad-\int_\Om\frac{\mu}{2}(\mathbb{D}_{\mathcal{A}^{n}}v_{d}^m:\mathbb{D}_{\p_t\mathcal{A}^{n}}(D_{t}v_{d}^{m})+\p_tJ^{n}K^{n}\mathbb{D}_{\mathcal{A}^{n}}v_{d}^m:\mathbb{D}_{\mathcal{A}^{n}}(D_{t}v_{d}^{m}))J^{n}+((\p_{t}(R^{n}v_{l}),D_{t}v_{d}^{m})),
  \end{aligned}
  \]
  and
  \[
  \begin{aligned}
  III&=\p_t\mathcal{F}(D_{t}v_{d}^{m})-\mathcal{F}(D_{t}^{2}v_{d}^{m})\\
  &=\int_{\Om}\left[\p_tF^1\cdot(D_{t}v_{d}^{m})+\p_tJ^{n}K^{n}F^1\cdot(D_{t}v_{d}^{m})+F^1\cdot R^{n}(D_{t}v_{d}^{m})\right]J^{n}-\int_{\Sigma}\partial_{t}F^{4}\cdot D_{t}v_{d}^{m}-F^{4}\cdot R^{n}D_{t}v_{d}^{m}\\
  &\quad-\int_{\Sigma_s}\left[\p_tF^5\cdot (D_{t}v_{d}^{m})+\p_tJ^{n}K^{n}F^5\cdot (D_{t}v_{d}^{m})+F^5 \cdot R^{n}(D_{t}v_{d}^{m})\right]\cdot\tau J^{n}-[\p_tF^7,D_tv_{d}^m\cdot\mathcal{N}^{n}]_\pi,
  \end{aligned}
  \]

  \[
  \begin{aligned}
      IV&=(\partial_{t}v_{d}^{m},R^{n}D_{t}v_{d}^{m})_{\mathcal{H}^{0}}-(\p_{t}(R^{n}v_{d}^{m}),D_{t}v_{d}^{m})_{\mathcal{H}^{0}(\Omega)}+((R^{n}v_{d}^{m},D_{t}v_{d}^{m}))+(\p_{t}(R^{n}v_{l}),D_{t}v_{d}^{m})_{\mathcal{H}^{0}}\\
      &\quad+(\p_{t}(R^{n}v_{l}),R^{n}D_{t}v_{d}^{m})_{\mathcal{H}^{0}}+(\p_{t}(R^{n}v_{l}),D_{t}v_{d}^{m}\p_{t}J^{n}(J^{n})^{-1})_{\mathcal{H}^{0}},
  \end{aligned}
  \]

  We now estimate each term on the right-hand side involved in the second line of \eqref{eq:e_2}. We first estimate each term included in $I$. From the fact that $\vert \mathcal{R}_{1aa}(\rho_{0},\p_{\theta}\eta^{k},\eta^{k})\vert\leq C$ for some constant $C$ independent of $k$, we obtain the following inequality
  \begin{align}{\label{est:dt_u}}
      \int_{0}^{\pi}\beta(\partial_{t}v_{d}^{m}\cdot \tau,(\partial_{t}v_{d}^{m} \cdot \tau)\partial_{t}J^{n})_{L^{2}(\Sigma_{s})}\lesssim \vert \vert \partial_{t}J^{n}\vert \vert_{L^{\infty}}\vert \vert \partial_{t}v_{d}^{m}\vert\vert^{2}_{L^{2}(\Sigma_{s})}\lesssim \vert \vert \partial_{t}\eta^{n}\vert \vert_{H^{\frac{3}{2}+\frac{\varepsilon_{-}-\alpha}{2}}}\vert \vert \partial_{t}v_{d}^{m}\vert \vert_{H^{1}}^{2}
  \end{align}
  
 For the terms appearing in the $(1,\Sigma)$ norm, we first observe that Hölder's inequality implies
  \begin{align}{\label{est:f_6}}
  \begin{aligned}
      \|F^{6_{+}}\|_{H^{1}(\Sigma)}
    \lesssim &\|\p_{t}\eta\|_{H^{1}}\|\p_{t}\eta^{n}\|_{H^{\frac{5}{2}+\frac{\varepsilon_{-}-\alpha}{2}}}\|v_{l}^{k}\|_{H^{2}}+\|\p_{t}^{2} \eta^{n}\|_{H^{2}}\|v_{l}^{k}\|_{H^{2+\frac{\varepsilon_{-}}{2}}}+\|\p_{t} \eta^{n}\|_{H^{2}}\|D_{t}v_{l}^{k}\|_{H^{2}}\\
    &\quad+\|\p_{t}u^{k}\|_{H^{2+\frac{\varepsilon_{-}}{2}}}(\|\p_{t} \xi_{l}^{k}\|_{H^{2}})+\|\p_{t}\eta\|_{H^{1}}\|u^{k}\|_{H^{2+\frac{\varepsilon_{-}}{2}}}\|\p_{t} \xi_{l}^{k}\|_{H^{2}}+\|\p_{t}\eta\|_{H^{1}}^{2}\|u^{k}\|_{H^{2}}\\
    &\quad+\|\p_{t}^{2}\eta\|_{H^{1}}\|u^{k}\|_{H^{2+\frac{\varepsilon_{-}}{2}}}+\|\p_{t}\eta\|_{H^{1}}\|D_{t}u^{k}\|_{H^{2}}+|\mathfrak{n}^{\prime}(t)|\|\p_{t}\eta\|_{H^{1}}\|\p_{t}\xi_{l}^{k}\|_{H^{2}}+|\mathfrak{n}^{\prime\prime}(t)|\|\p_{t}\xi_{l}^{k}\|_{H^{2}}\\
    &\quad+\|u^{k}\|_{H^{2+\frac{\varepsilon_{-}}{2}}}\|\p_{t}^{2}\xi_{l}^{k}\|_{H^{2}}+\|\p_{t}F^{6}(u^{k},\eta^{k})\|_{H^{1}}.\\
    \end{aligned}
  \end{align}
 In particular, for each fixed $(n,k)$ and prescribed pair $(v_{l},\xi_{l})$, $F^{6}$ is bounded. For notational simplicity, we shall continue to use $F^{6_{+}}$ to estimate all terms involving $F^{6_{+}}$. Consequently,
  \begin{align}
  \begin{aligned}
      (\partial_{t}\xi_{d}^{m},F^{6})_{1,\Sigma_{k}}\lesssim \|\p_{t}\xi_{d}^{m}\|_{H^{1}}\|F^{6}\|_{H^{1}(\Sigma)}\label{eq:n1}.
    \end{aligned}
  \end{align}

  For the remaining terms in $I$, an integration by parts in space, together with the definition of the $(\cdot,\cdot)_{1,\Sigma_k}$-inner product, yields

  \begin{align}
  \begin{aligned}
      &(\partial_{t}\xi_{d}^{m},\frac{1}{\rho}u_{\theta}^{k}\p_{\theta}\partial_{t}\xi_{d}^{m})_{1,\Sigma_{k}}\\
      &\lesssim|(\p_{t}\xi_{d}^{m},\p_{\theta}(\frac{1}{\rho}u_{\theta}^{k})\p_{t}\xi_{d}^{m})_{1,\Sigma_{k}}|+C(\rho_{0})\|\frac{1}{\rho}u_{\theta}^{k}\|_{L^{\infty}(\Sigma)}\|\p_{t}\xi_{d}^{m}\|^{2}_{H^{1}}+\int_{0}^{\pi}|\mathcal{R}_{1aa}(\rho_{0},\p_{\theta}\eta^{k},\eta^{k})|(|\p_{\theta}^{2}\eta^{k}|)(\p_{t}\p_{\theta}\xi_{d}^{m})^{2}\\
      &\lesssim (\|\eta\|_{H^{\frac{3}{2}+\frac{\varepsilon_{-}-\alpha}{2}}}\|u^{k}\|_{H^{1+\frac{\varepsilon_{-}}{2}}}+\|u^{k}\|_{H^{\frac{5}{2}}})\|\p_{t}\xi_{d}^{m}\|_{H^{1}}^{2}+\|\eta^{k}\|_{H^{3}}\|\p_{t}\xi_{d}^{m}\|^{2}_{H^{1}}
      \label{eq:n2},
    \end{aligned}
  \end{align}
  \noindent Similarly, by H\"older's inequality and spatial integration by parts, we have following estimates
  \begin{align}{\label{eq:n0}}
  \begin{aligned}
    & (\p_{t}\xi_{d}^{m},\mathfrak{n}^{\prime}(t)\frac{\p_{t}\p_{\theta}\xi_{d}^{m}}{\rho}\sin\theta)_{1,\Sigma_{k}}
     \lesssim \|\p_{t}\xi_{d}^{m}\|_{H^{1}}^{2}|\mathfrak{n}^{\prime}(t)|,
    \end{aligned}
  \end{align}
  \begin{align*}
      \begin{aligned}
          &(\p_{\theta}(\mathcal{R}_{1aa}(\rho_{0},\p_{\theta}\eta^{k},\eta^{k})\p_{\theta}\p_{t}\xi_{l}^{k}\p_{\theta}\p_{t}\eta^{k}),(D_{t}v_{d}^{m}\cdot \mathcal{N}^{n}))_{L^{2}}+[\mathcal{R}_{1aa}(\rho_{0},\p_{\theta}\eta^{k},\eta^{k})\p_{\theta}\p_{t}\xi_{l}^{k}\p_{\theta}\p_{t}\eta^{k},D_{t}v_{d}^{m}\cdot \mathcal{N}^{n}]_{\pi}\\
          \lesssim& \|\p_{t}\xi_{l}^{k}\|_{W^{3-\frac{1}{q_{-}},q_{-}}}\|\p_{t}\eta^{k}\|_{H^{\frac{3}{2}+\frac{\varepsilon_{-}-\alpha}{2}}}\|D_{t}v_{d}^{m}\|_{H^{1}}+\|\p_{t}\eta^{k}\|_{W^{3-\frac{1}{q_{-}},q_{-}}}\|\p_{t}\xi_{l}^{k}\|_{H^{\frac{3}{2}+\frac{\varepsilon_{-}-\alpha}{2}}}\|D_{t}v_{d}^{m}\|_{H^{1}}\\
          &+\|\p_{t}\eta^{k}\|_{H^{\frac{3}{2}+\frac{\varepsilon_{-}-\alpha}{2}}}\|\p_{t}\xi_{l}^{k}\|_{H^{\frac{3}{2}+\frac{\varepsilon_{-}-\alpha}{2}}}|[D_{t}v_{d}^{m}\cdot\mathcal{N}^{n}]_{\pi}|\\
          ~\\
          &(\p_{\theta}(\mathcal{R}_{1ab}(\rho_{0},\p_{\theta}\eta^{k},\eta^{k})\p_{\theta}\p_{t}\xi_{l}^{k}\p_{t}\eta^{k}),(D_{t}v_{d}^{m}\cdot \mathcal{N}^{n}))_{L^{2}}+[\mathcal{R}_{1ab}(\rho_{0},\p_{\theta}\eta^{k},\eta^{k})\p_{\theta}\p_{t}\xi_{l}^{k}\p_{t}\eta^{k},D_{t}v_{d}^{m}\cdot \mathcal{N}^{n}]_{\pi}\\
          \lesssim& \|\p_{t}\xi_{l}^{k}\|_{W^{3-\frac{1}{q_{-}},q_{-}}}\|\p_{t}\eta^{k}\|_{H^{\frac{3}{2}+\frac{\varepsilon_{-}-\alpha}{2}}}\|D_{t}v_{d}^{m}\|_{H^{1}}+\|\p_{t}\eta^{k}\|_{W^{3-\frac{1}{q_{-}},q_{-}}}\|\p_{t}\xi_{l}^{k}\|_{H^{\frac{3}{2}+\frac{\varepsilon_{-}-\alpha}{2}}}\|D_{t}v_{d}^{m}\|_{H^{1}}\\
          &+\|\p_{t}\eta^{k}\|_{H^{\frac{3}{2}+\frac{\varepsilon_{-}-\alpha}{2}}}\|\p_{t}\xi_{l}^{k}\|_{H^{\frac{3}{2}+\frac{\varepsilon_{-}-\alpha}{2}}}|[D_{t}v_{d}^{m}\cdot\mathcal{N}^{n}]_{\pi}|\\
          ~\\
          &\frac{1}{2}\sigma\int_{0}^{\pi} \mathcal{R}_{1aa}(\rho_{0},\p_{\theta}\eta^{k},\eta^{k}) \p_{\theta}\partial_{t}\eta^{k} (\p_{\theta}\partial_{t}\xi_{d})^{2}
     +\frac{1}{2}\sigma \int_{0}^{\pi}\mathcal{R}_{1ab}(\rho_{0},\p_{\theta}\eta^{k},\eta^{k})\p_{t}\eta^{k}(\p_{\theta}\p_{t}\xi_{d})^{2}\\
     &\lesssim \|\p_{t}\eta^{k}\|_{W^{1,+\infty}}\|\p_{t}\xi_{d}\|^{2}_{H^{1}},
      \end{aligned}
  \end{align*}

  Finally, we show the estimate for the term including $\mathfrak{m}^{\prime}(t)$. Using the fact that $\int_{0}^{\pi}\p_{t}^{2}\xi_{d}^{m}\xi_{s}=0$, we obtain the following equation by multiplying equation \eqref{eq:kin_m} by $\xi_{s}$

  \begin{align}
      \begin{aligned}
          \mathfrak{m}^{\prime}(t)=\frac{\int_{0}^{\pi}(\frac{1}{\rho}D_{t}v_{d}^{m}\cdot \mathcal{N}^{n}-\frac{\p_{t}\eta}{\rho^{2}}v_{d}^{m}\cdot \mathcal{N}^{n}-\frac{1}{\rho}u_{\theta}^{k}\p_{t}\p_{\theta}\xi_{d}^{m}
    +\mathfrak{n}^{\prime}(t)\frac{\p_{t}\p_{\theta}\xi_{d}^{m}}{\rho}\sin\theta+F^{6_{+}})\xi_{s}d\theta}{\int_{0}^{\pi}\xi_{s}^{2}d\theta},
      \end{aligned}
  \end{align}
  \noindent which implies that
  \begin{align}
      \begin{aligned}
          |\mathfrak{m}^{\prime}(t)|\lesssim \|D_{t}v_{d}^{m}\|_{H^{1}}+\|v_{d}^{m}\|_{H^{1}}+\|\p_{t}\xi_{d}^{m}\|_{H^{1}}\|u^{k}\|_{W^{2,q_{+}}}+|\mathfrak{n}^{\prime}(t)|\|\p_{t}\xi_{d}^{m}\|_{H^{1}}+\|F^{6_{+}}\|_{L^{2}}.
      \end{aligned}
  \end{align}
  \noindent Therefore, the following estimate for the final term included in $I$ holds
  \begin{align}
      \begin{aligned}
          &(\mathcal{R}_{1a}(\rho_{0},\p_{\theta}\eta^{k},\eta^{k})\p_{\theta}\p_{t}\xi_{d}^{m},\mathfrak{m}^{\prime}(t)\p_\theta\xi_{s})_{L^{2}(\Sigma)}\\
          &\lesssim \|\eta^{k}\|_{W^{3-\frac{1}{q_{+}},q_{+}}}\|\p_{t}\xi_{d}^{m}\|_{H^{1}}(\|D_{t}v_{d}^{m}\|_{H^{1}}+\|v_{d}^{m}\|_{H^{1}}+\|\p_{t}\xi_{d}^{m}\|_{H^{1}}\|u^{k}\|_{W^{2,q_{+}}}+|\mathfrak{n}^{\prime}(t)|\|\p_{t}\xi_{d}^{m}\|_{H^{1}}+\|F^{6}\|_{L^{2}})
      \end{aligned}
  \end{align}
\noindent where the estimate for $F^{6}$ is provided  by \eqref{est:f_6}.
  
  In order to estimate terms in $II$, we use the following two relations
  \begin{align}
       R^{n}\sim \nabla\p_t\bar{\eta}^{n}+\nabla\p_t\bar{\eta}^{n}\nabla\bar{\eta}^{n}\quad \text{and}\ \|J^{n}\|_{L^\infty}\lesssim 1,
  \end{align}
  Applying the Cauchy inequality, Sobolev inequality and the trace theory, we begin by estimating the term inside the $\mathcal{W}$ inner product. We have

  \begin{equation}
      \begin{aligned}
          ((v_{d}^{m},R^{n}D_{t}v_{d}^{m}))\lesssim& \int_{\Omega}\vert \nabla v_{d}^{m}\vert\vert \nabla R^{n}\vert\vert D_{t}v_{d}^{m}\vert+\int_{\Omega}\vert \nabla v_{d}^{m}\vert\vert R^{n}\vert\vert \nabla D_{t}v_{d}^{m}\vert\notag\\
          \lesssim& \vert \vert v_{d}^{m}\vert\vert_{H^{1}}\vert \vert \partial_{t}\bar{\eta}^{n}\vert \vert_{H^{2+\frac{\varepsilon_{-}-\alpha}{2}}}\vert \vert D_{t}v_{d}^{m}\vert \vert_{L^{\frac{4}{\varepsilon_{-}-\alpha}}}+\vert \vert \partial_{t}\bar{\eta}^{n}\vert \vert_{W^{1,+\infty}}\vert \vert v_{d}^{m}\vert \vert_{H^{1}}\vert \vert D_{t}v_{d}^{m}\vert \vert_{H^{1}}\notag\\
          \lesssim& \vert \vert \partial_{t}{\eta}^{n}\vert \vert_{H^{\frac{3}{2}+\frac{\varepsilon_{-}-\alpha}{2}}}\vert \vert v_{d}^{m }\vert \vert_{H^{1}}\vert \vert D_{t}v_{d}^{m}\vert \vert_{H^{1}},\\
           ((R^{n}v_{d}^{m},D_{t}v_{d}^{m}))\lesssim& \vert \vert \partial_{t}{\eta}^{n}\vert \vert_{H^{\frac{3}{2}+\frac{\varepsilon_{-}-\alpha}{2}}}\vert \vert v_{d}^{m }\vert \vert_{H^{1}}\vert \vert D_{t}v_{d}^{m}\vert \vert_{H^{1}}.
      \end{aligned}
  \end{equation}
   Then for remaining terms involved in $II$, we have the following estimate
  \begin{align*}
  \begin{aligned}
  \int_{\Omega} \frac{\mu}{2}(\mathbb{D}_{\partial_{t}\mathcal{A}^{n}}v_{d}^{m}:\mathbb{D}_{\mathcal{A}^{n}}(D_{t}v_{d}^{m}))J^{n}\lesssim& \vert \vert D_{t}v_{d}^{m}\vert \vert_{H^{1}}\vert\vert v_{d}^{m}\vert \vert_{H^{1}}\vert \vert \partial_{t}\eta^{n}\vert \vert_{H^{\frac{3}{2}+\frac{\varepsilon_{-}-\alpha}{2}}}\lesssim \vert \vert D_{t}v_{d}^{m}\vert \vert^{2}_{H^{1}}\vert \vert \partial_{t}\eta^{n}\vert \vert^{2}_{H^{\frac{3}{2}+\frac{\varepsilon_{-}-\alpha}{2}}}+\vert \vert v_{d}^{m}\vert \vert_{H^{1}}^{2}\\
   \int_{\Omega} \frac{\mu}{2}\mathbb{D}_{\mathcal{A}^{n}}v_{d}^{m}:\mathbb{D}_{\partial_{t}\mathcal{A}^{n}}(D_{t}v_{d}^{m})\lesssim&\vert \vert D_{t}v_{d}^{m}\vert \vert^{2}_{H^{1}}\vert \vert \partial_{t}\eta^{n}\vert \vert^{2}_{H^{\frac{3}{2}+\frac{\varepsilon_{-}-\alpha}{2}}}+\vert \vert v_{d}^{m}\vert \vert_{H^{1}}^{2},\\
   \int_{\Omega} \partial_{t}J^{n}\mathbb{D}_{\mathcal{A}^{n}}v_{d}^{m}:\mathbb{D}_{\mathcal{A}}^{n}(D_{t}v_{d}^{m})\lesssim& \vert \vert D_{t}v_{d}^{m}\vert \vert^{2}_{H^{1}}\vert \vert \partial_{t}\eta\vert \vert^{2}_{H^{\frac{3}{2}+\frac{\varepsilon_{-}-\alpha}{2}}}+\vert \vert v_{d}^{m}\vert \vert_{H^{1}}^{2}.
  \end{aligned}
  \end{align*}
  \begin{equation*}
  \begin{aligned}
    ((\p_{t}(R^{n}v_{l}),D_{t}v_{d}^{m}))\lesssim&  \|D_{t}v_{d}^{m}\|_{\mathcal{H}^{1}}(\|\p_{t}^{2}\eta^{n}\|_{W^{3-\frac{1}{q_{-}},q_{-}}}\|v_{l}^{k}\|_{H^{1}}+\|\p_{t}\eta^{n}\|_{W^{3-\frac{1}{q_{-}},q_{-}}}\|D_{t}v_{l}^{k}\|_{H^{1}}),\\
    ((R^{n}D_{t}v_{l},R^{n}D_{t}v_{d}^{m}))\lesssim& \|D_{t}v_{d}^{m}\|_{H^{1}}(\|\p_{t}\eta^{n}\|_{W^{3-\frac{1}{q_{-}},q_{-}}}\|D_{t}v_{l}^{k}\|_{H^{1}}).
  \end{aligned}
  \end{equation*}

To estimate terms involved in $III$, we need more refined estimates. We will separate the estimates for $III$ into several estimates. We use the dual space estimate, the standard Sobolev embedding Theorem and H\"older's inequality to bound terms related to one time derivative of forces as follows
\begin{equation*}
\begin{aligned}
  &\int_{\Om}\p_tF^1\cdot(D_{t}v_{d}^{m})J^{n}-\int_{0}^{\pi}\p_tF^4\cdot(D_{t}v_{d}^{m})-\int_{\Sigma_s}\left[\p_tF^5(D_{t}v_{d}^{m})\right]\cdot\tau J^{n}\lesssim \|\p_t(F^1-F^4-F^5)\|_{(\mathcal{H}^1)^{\ast}}(\|D_{t}v_{d}^m\|_{\mathcal{H}^1}),\\
\end{aligned}
\end{equation*}
For terms involving $F^1$, we estimate them by the H\"older's inequality with $\f1{q_-}+\f{\varepsilon_-}2=1$, and Sobolev embedding $H^1\hookrightarrow L^{\varepsilon_-/2}$
\begin{equation}
\begin{aligned}
  &\int_{\Om}\left[\p_tJ^{n}K^{n}F^1\cdot(D_{t}v_{d}^{m})+F^1\cdot R^{n}(D_{t}v_{d}^{m})\right]J^{n}\\
  &\lesssim (\|\p_tJ^{n}K^{n}\|_{L^\infty}+\|R^{n}\|_{L^\infty})\|F^1\|_{L^{q_-}(\Om)}(\|D_tv_{d}^m\|_{L^{\varepsilon_-/2}(\Om)})\\
  &\lesssim\|\p_t\eta^{n}\|_{H^{3/2+(\varepsilon_--\alpha)/2}}\|F^1\|_{L^{q_-}(\Om)}(\|D_tv_{d}^m\|_{\mathcal{H}^1}).
\end{aligned}
\end{equation}
Next, we estimate the integral involving $F^4$. By H\"older's inequality and the Sobolev embedding
\begin{align}
    W^{1-1/q_-, q_-}(\Sigma)\hookrightarrow L^{1/\varepsilon_-}(\Sigma)\quad \text{and} \ H^{1/2}(\Sigma)\hookrightarrow L^{1/\varepsilon_-}(\Sigma),
\end{align}
and trace theory, we have
\begin{equation}
\begin{aligned}
  \int_{0}^{\pi}F^4\cdot R^{n}(D_{t}v_{d}^{m})\lesssim \|F^4\|_{L^{1/(1-\varepsilon_-)}(\Sigma)}\|R^{n}\|_{L^\infty(\Sigma)}(\|D_tv_{d}^m\|_{L^{1/\varepsilon_-}(\Sigma)})\\
  \lesssim\|\p_t\eta^{n}\|_{H^{3/2+(\varepsilon_--\alpha)/2}}\|F^4\|_{W^{1-1/q_-, q_-}}(\|D_tv_{d}^m\|_{\mathcal{H}^1}).
\end{aligned}
\end{equation}
The integral involving $F^5$ is bounded similarly as $F^4$ via
\begin{equation}\label{est:iii}
\begin{aligned}
  &\int_{\Sigma_s}\left[\p_tJ^{n}K^{n}F^5(D_{t}v_{d}^{m})+F^5 R^{n}(D_{t}v_{d}^{m})\right]\cdot\tau J^{n}\\
  &\lesssim (\|\p_tJ^{n}K^{n}\|_{L^\infty(\Sigma_s)}+\|R^{n}\|_{L^\infty(\Sigma_s)})\|F^5\|_{L^{1/(1-\varepsilon_-)}(\Sigma_s)}(\|D_tv_{d}^m\|_{L^{1/\varepsilon_-}(\Sigma_s)})\\
  &\lesssim\|\p_t\eta^{n}\|_{H^{3/2+(\varepsilon_--\alpha)/2}}\|F^5\|_{W^{1-1/q_-, q_-}}(\|D_tv_{d}^m\|_{\mathcal{H}^1}).
\end{aligned}
\end{equation}
Finally, the estimate for the term involving $F^{7}$ is straightforward to derive as follows 
\begin{align}
    [\p_tF^7,D_tv_{d}^m\cdot\mathcal{N}^{n}]_\pi\lesssim [\p_tF^7]_\pi[D_tv_{d}^m\cdot\mathcal{N}^{n}]_\pi.
\end{align}

For the terms involved in $IV$, they can be estimated as follows

\begin{align}
\begin{aligned}
    (\partial_{t}v_{d}^{m},R^{n}D_{t}v_{d}^{m})_{L^{2}(\Omega)}\lesssim& \vert \vert \partial_{t}{\eta}^{n}\vert \vert_{H^{\frac{3}{2}+\frac{\varepsilon_{-}-\alpha}{2}}}\vert \vert D_{t}v_{d}^{m}\vert \vert^{2}_{L^{2}}+\vert \vert \partial_{t}\eta^{n}\vert \vert_{H^{\frac{3}{2}+\frac{\varepsilon_{-}-\alpha}{2}}}^{2}\vert \vert v_{d}^{m}\vert \vert_{L^{2}}\vert \vert D_{t}v_{d}^{m}\vert \vert_{L^{2}},\\
    (\partial_{t}(R^{n}v_{d}^{m}),D_{t}v_{d}^{m})_{L^{2}(\Omega)}=&\vert \vert D_{t}v_{d}^{m}\vert \vert_{L^{2}}\vert \vert \partial_{t}v_{d}^{m}\vert \vert_{L^{2}}\vert \vert \partial_{t}\eta^{n}\vert \vert_{H^{\frac{3}{2}+\frac{\varepsilon_{-}-\alpha}{2}}}+\vert \vert \partial_{t}^{2}\eta^{n}\vert \vert_{H^{\frac{3}{2}+\frac{\varepsilon_{-}-\alpha}{2}}}\vert \vert v_{d}^{m}\vert \vert_{L^{2}(\Omega)}\vert \vert D_{t}v_{d}^{m}\vert \vert_{L^{2}(\Omega)}\notag\\
    \lesssim& \vert \vert D_{t}v_{d}^{m}\vert \vert^{2}_{L^{2}}\vert \vert \partial_{t}\eta^{n}\vert \vert_{H^{\frac{3}{2}+\frac{\varepsilon_{-}-\alpha}{2}}}+\vert \vert \partial_{t}^{2}\eta^{n}\vert \vert_{H^{\frac{3}{2}+\frac{\varepsilon_{-}-\alpha}{2}}}\vert \vert v_{d}^{m}\vert \vert_{L^{2}(\Omega)}\vert \vert D_{t}v_{d}^{m}\vert \vert_{L^{2}(\Omega)}\notag\\
    &+\vert \vert D_{t}v_{d}^{m}\vert \vert_{L^{2}}\vert \vert v_{d}^{m}\vert \vert_{L^{2}}\vert \vert \partial_{t}\eta^{n}\vert \vert_{H^{\frac{3}{2}+\frac{\varepsilon_{-}-\alpha}{2}}}^{2},\\
     ((R^{n}v_{d}^{m},D_{t}v_{d}^{m}))\lesssim& \|D_{t}v_{d}^{m}\|_{H^{1}(\Omega)}\|R^{n}\|_{L^{\infty}}\|v_{d}^{m}\|_{H^{1}(\Omega)}\lesssim \|D_{t}v_{d}^{m}\|_{H^{1}(\Omega)}\|\p_{t}\eta^{n}\|_{W^{3-\frac{1}{q_{-}},q_{-}}}\|v_{d}^{m}\|_{H^{1}(\Omega)},
     \end{aligned}
\end{align}
\begin{align}{\label{est:ep5}}
\begin{aligned}
    (\partial_{t}(R^{n}v_{d}^{m}),D_{t}v_{d}^{m})_{L^{2}(\Omega)}=&\vert \vert D_{t}v_{d}^{m}\vert \vert_{L^{2}}\vert \vert \partial_{t}v_{d}^{m}\vert \vert_{L^{2}}\vert \vert \partial_{t}\eta^{n}\vert \vert_{H^{\frac{3}{2}+\frac{\varepsilon_{-}-\alpha}{2}}}+\vert \vert \partial_{t}^{2}\eta^{n}\vert \vert_{H^{\frac{3}{2}+\frac{\varepsilon_{-}-\alpha}{2}}}\vert \vert v_{d}^{m}\vert \vert_{L^{2}(\Omega)}\vert \vert D_{t}v_{d}^{m}\vert \vert_{L^{2}(\Omega)}\\
    \lesssim& \vert \vert D_{t}v_{d}^{m}\vert \vert^{2}_{L^{2}}\vert \vert \partial_{t}\eta^{n}\vert \vert_{H^{\frac{3}{2}+\frac{\varepsilon_{-}-\alpha}{2}}}+\vert \vert \partial_{t}^{2}\eta^{n}\vert \vert_{H^{\frac{3}{2}+\frac{\varepsilon_{-}-\alpha}{2}}}\vert \vert v_{d}^{m}\vert \vert_{L^{2}(\Omega)}\vert \vert D_{t}v_{d}^{m}\vert \vert_{L^{2}(\Omega)}\\
    &+\vert \vert D_{t}v_{d}^{m}\vert \vert_{L^{2}}\vert \vert v_{d}^{m}\vert \vert_{L^{2}}\vert \vert \partial_{t}\eta^{n}\vert \vert_{H^{\frac{3}{2}+\frac{\varepsilon_{-}-\alpha}{2}}}^{2},\\
     (\p_{t}(R^{n}v_{l}),D_{t}v_{d}^{m})_{\mathcal{H}^{0}}\lesssim& \|D_{t}v_{d}^{m}\|_{L^{2}(\Omega)}\bigg(\|\p_{t}^{2}\bar{\eta}^{n}\|_{H^{1}(\Omega)}\|v_{l}\|_{L^{\infty}}+\|\p_{t}\bar{\eta}^{n}\|_{H^{1}(\Omega)}\|D_{t}v_{l}\|_{L^{\infty}}\bigg)\\
    \lesssim& \|D_{t}v_{d}^{m}\|_{L^{2}(\Omega)}\bigg(\|\p_{t}^{3}{\eta}^{n}\|_{H^{1}(\Omega)}\|v_{l}\|_{W^{2,q_{-}}}+\|\p_{t}^{2}{\eta}^{n}\|_{H^{1}(\Omega)}\|D_{t}v_{l}\|_{W^{2,q_{-}}}\bigg),\\
    ((R^{n}v_{l}),R^{n}v_{d}^{m})_{\mathcal{H}^{0}}\lesssim& \|\p_{t}\eta^{n}\|_{W^{3-\frac{1}{q_{-}},q_{-}}}\|v_{d}^{m}\|_{L^{2}(\Omega)}\|\p_{t}\eta^{n}\|_{W^{3-\frac{1}{q_{-}},q_{-}}}\|v_{l}\|_{L^{2}(\Omega)},\\
     ((R^{n}v_{l}),\p_{t}J^{n}(J^{n})^{-1}v_{d}^{m})_{\mathcal{H}^{0}}\lesssim &\|\p_{t}\eta^{n}\|_{W^{3-\frac{1}{q_{-}},q_{-}}}\|v_{d}^{m}\|_{L^{2}(\Omega)}\|\p_{t}\eta^{n}\|_{W^{3-\frac{1}{q_{-}},q_{-}}}\|v_{l}\|_{L^{2}(\Omega)},
    \end{aligned}
\end{align}

  Thus, by the Cauchy-Schwarz inequality and by combining \eqref{est:dt_u}--\eqref{est:ep5}, we conclude the energy structure
  \[
  \begin{aligned}
&\frac{d}{dt}\frac12\left(\|D_tv_{d}^m\|_{\mathcal{H}^0}^2+\|\partial_{t}\xi_{d}^{m}\|_{H^{1}}^2\right)+\frac14\|D_tv_{d}^m\|_{\mathcal{H}^1}^2+\frac12[D_{t}v_{d}^{m}\cdot \mathcal{N}^{n}]_\pi^2\\
&\le C(\|\p_t\eta^{n}\|_{H^{3/2+(\varepsilon_--\alpha)/2}}+\vert \vert u^{k}\vert \vert_{H^{\frac{5}{2}}}+\vert \vert \eta^{n}\vert \vert_{H^{3}})\left(\frac12\|D_tv_{d}^m\|_{\mathcal{H}^0}^2+\frac{1}{2}\vert \vert \partial_{t}\xi_{d}^{m}\vert \vert_{H^{1}}^{2}\right)+C\left(1+\|\p_t\eta^{n}\|_{H^{3/2+(\varepsilon_--\alpha)/2}}^2\right)\|v_{d}^m\|_{\mathcal{H}^1}^2\\
&\quad +C\left(1+\|\p_t\eta^{n}\|_{H^{3/2+(\varepsilon_--\alpha)/2}}^2\right)\|\p_t(F^1-F^4-F^5)\|_{(\mathcal{H}^1)^{\ast}}^2+C[\p_tF^7]_\pi^2+\|\p_{t}F^{6}(u^{k},\eta^{k})\|_{H^{1}(\Sigma)}\\
&\quad+C\left(1+\|\p_t\eta^{n}\|_{H^{3/2+(\varepsilon_--\alpha)/2}}^2\right)(\|F^1\|_{L^{q_-}}^2+\|F^4\|_{W^{1-1/q_-,q_-}}^2+\|F^5\|_{W^{1-1/q_-,q_-}}^2)+\|\p_{t}u^{k}\|_{H^{\frac{5}{2}}}\|\p_{t}\xi_{l}^{k}\|^{2}_{H^{2}}\\
&\quad+ \|D_{t}v_{l}^{k}\|^{2}_{H^{2}}\|\p_{t}\eta^{n}\|^{2}_{H^{\frac{5}{2}}}+\|v_{l}^{k}\|^{2}_{H^{2}}\|\p_{t}^{2}\eta^{n}\|^{2}_{H^{\frac{5}{2}}}+\|\p_{t}\eta^{n}\|_{W^{3-\frac{1}{q_{-}},q_{-}}}\|\p_{t}\xi_{d}^{m}\|_{H^{1}}^{2}\\&\quad+\|\p_{t}\eta^{n}\|^{2}_{H^{\frac{5}{2}+\frac{\varepsilon_{-}-\alpha}{2}}}\|v_{l}^{k}\|^{2}_{H^{2}}+\|\p_{t}^{2} \eta^{n}\|^{2}_{H^{2}}\|v_{l}^{k}\|^{2}_{H^{2+\frac{\varepsilon_{-}}{2}}}
    +\|\p_{t}\eta\|_{H^{1+\frac{\varepsilon_{-}}{2}}}^{4}
    +\|\p_{t}\eta\|^{2}_{H^{1+\frac{\varepsilon_{-}}{2}}}\|D_{t}u^{k}\|_{H^{2}}^{2}\\
    &\quad+\|u^{k}\|^{2}_{H^{2+\frac{\varepsilon_{-}}{2}}}\|\p_{t}^{2}\xi_{l}^{k}\|^{2}_{H^{2}}+\|\p_{t}\eta\|^{2}_{H^{1+\frac{\varepsilon_{-}}{2}}}\|\p_{t}\xi_{l}^{k}\|^{2}_{H^{2}}\label{est:pa_tu_m1}.
  \end{aligned}
  \]

  \noindent Note that the derivative $\{\|D_tv_{d}^m(0)\|_{H^0}\}$ are uniformly bounded and $D_tv_{d}^m(0)$ converges to $D_t^{2}u(0)$ strongly in $H^0$, as proved in Theorem \ref{thm:initial_convergence}. Integrating both sides of equation \eqref{est:pa_tu_m1} from $0$ to $T$, we then employ the Gronwall's inequality, H\"older's inequality, the smallness of $\mathfrak{K}(\eta)$ and the initial data $\p_tu(0)$ to derive the energy-dissipation relation
  \begin{equation}\label{eq:pa_tu_m1}
  \begin{aligned}
  &\sup_{0\le t\le T}(\|D_tv_{d}^m\|_{0}^2+\|\p_t\xi_{d}^m\|_1^2)+\|D_tv_{d}^m\|_{L^2H^1}^2+\|D_tv_{d}^m\|_{L^2H^0(\Sigma_s)}^2+\|[D_tv_{d}^m\cdot\mathcal{N}^{n}]_\pi\|_{L^2([0,T])}^2\\
  &\lesssim \exp\{(\|\p_t\eta^{n}\|_{L^\infty H^{3/2+\varepsilon_-/2}}+\vert \vert u^{k}\vert \vert_{L^{\infty}H^{\frac{5}{2}}}+\vert \vert \eta^{n}\vert \vert_{L^{\infty}H^{3}})T\}\bigg(\sum_{j=0}^{2}\|D_t^{j}u(0)\|_{L^2(\Om)}^2+\|\p_t^{2}\xi(0)\|_{H^{3/2+(\varepsilon_--\alpha)/2}}^2\\&\quad+\|(F^1-F^4-F^5)(0)\|_{(H^1)^\ast}^2+\mathfrak{F}^{n,k}
  +\left(1+\|\p_t\eta^{n}\|_{L_{t}^{\infty}H^{3/2+(\varepsilon_--\alpha)/2}}^2\right)\|v_{d}^m\|_{L_{t}^{\infty}\mathcal{H}^1}^2+\mathscr{S}^{n,k}\bigg),
  \end{aligned}
  \end{equation}
where
\begin{align*}
    \mathscr{S}^{n,k}:=&\|\p_{t}u^{k}\|_{L_{t}^{\infty}H^{\frac{5}{2}}}\|\p_{t}\xi_{l}^{k}\|^{2}_{L_{t}^{\infty}H^{2}}
+ \|D_{t}v_{l}^{k}\|^{2}_{L_{t}^{\infty}H^{2}}\|\p_{t}\eta^{n}\|^{2}_{L_{t}^{2}H^{\frac{5}{2}}}+\|v_{l}^{k}\|^{2}_{L_{t}^{\infty}H^{2}}\|\p_{t}^{2}\eta^{n}\|^{2}_{L_{t}^{2}H^{\frac{5}{2}}}+\|\p_{t}\eta^{n}\|^{2}_{L_{t}^{\infty}H^{2}}\|v_{l}^{k}\|^{2}_{L_{t}^{\infty}H^{2}}
    \\
    &+\|\p_{t}\eta\|_{L_{t}^{\infty}H^{1+\frac{\varepsilon_{-}}{2}}}^{4}+\|\p_{t}\eta\|^{2}_{L_{t}^{\infty}H^{1+\frac{\varepsilon_{-}}{2}}}\|D_{t}u^{k}\|_{L_{t}^{\infty}H^{2}}^{2}+\|u^{k}\|^{2}_{L_{t}^{\infty}H^{2+\frac{\varepsilon_{-}}{2}}}\|\p_{t}^{2}\xi_{l}^{k}\|^{2}_{L_{t}^{2}H^{2}}+\|\p_{t}\eta\|^{2}_{L_{t}^{\infty}H^{1+\frac{\varepsilon_{-}}{2}}}\|\p_{t}\xi_{l}^{k}\|^{2}_{L_{t}^{\infty}H^{2}},
\end{align*}
and $\mathfrak{F}^{n,k}$ is defined by replacing $\eta$ with $\eta^n$ and $F^6(u,\eta)$ with $F^6(u^k,\eta^k)$ in the definition of the forcing term $F^6$.

  Using the identity
\[
v_{d}^{m}(t)
= v_{d}^{m}(0)
+ \int_{0}^{t}\bigl(D_{t}v_{d}^{m}+R^{n}v_{d}^{m}\bigr)\,ds
\]
together with Theorem~\ref{thm:initial_convergence}, we choose $T>0$ sufficiently
small (specifically, $T\lesssim \varepsilon^{2}$) so that the last term in the
preceding inequality can be estimated by
\begin{align}
C\Bigl(1+\|\partial_{t}\eta^{n}\|_{H^{3/2+(\varepsilon_{-}-\alpha)/2}}^{2}\Bigr)
\|v_{d}^{m}\|_{\mathcal{H}^{1}}^{2}
\;\lesssim\;
T\,\|\partial_{t}v_{d}^{m}\|_{L_{t}^{2}H^{1}}^{2},
\end{align}
and hence absorbed into the left-hand side. Here we have also used the estimate
for the difference between $D_{t}v_{d}$ and $\partial_{t}v_{d}$, which will be
established in Lemma~\ref{lem:difference_u_Dt_u}.

As a consequence, \eqref{eq:pa_tu_m1} can be rewritten as
\begin{equation}\label{eq:pa_tu_m2}
\begin{aligned}
&\sup_{0\le t\le T}
\bigl(
\|\partial_{t}v_{d}^{m}\|_{0}^{2}
+\|\partial_{t}\xi_{d}^{m}\|_{1}^{2}
\bigr)
+\|\partial_{t}v_{d}^{m}\|_{L^{2}H^{1}}^{2}
+\|\partial_{t}v_{d}^{m}\|_{L^{2}H^{0}(\Sigma_{s})}^{2}
+\|[\partial_{t}v_{d}^{m}\cdot\mathcal{N}^{n}]_{\pi}\|_{L^{2}([0,T])}^{2}
\\
&\;\lesssim\;
\exp\!\Bigl\{
\bigl(
\|\partial_{t}\eta^{n}\|_{L^{\infty}H^{3/2+\varepsilon_{-}/2}}
+\|u^{k}\|_{L^{\infty}H^{5/2}}
+\|\eta^{n}\|_{L^{\infty}H^{3}}
\bigr)T
\Bigr\}
\times
\Bigg(
\sum_{j=0}^{2}\|\partial_{t}^{j}u(0)\|_{L^{2}(\Omega)}^{2}
+\|\partial_{t}\xi(0)\|_{H^{3/2+(\varepsilon_{-}-\alpha)/2}}^{2}
\\
&\qquad\quad+\|\xi(0)\|_{H^{3/2+(\varepsilon_{-}-\alpha)/2}}^{2}
+\|(F^{1}-F^{4}-F^{5})(0)\|_{(H^{1})^{*}}^{2}
+\mathfrak{F}^{n,k}+\mathscr{S}^{n,k}\bigg).
\end{aligned}
\end{equation}

This estimate provides a uniform energy–dissipation bound for
$(\partial_{t}v_{d}^{m},\partial_{t}\xi_{d}^{m})$ on the time interval $[0,T]$.

\paragraph{\underline{Step 4 -- energy-dissipation bound for $(v_{d}^{m},\xi_{d}^{m})$}}

The regularity of this lower order term can be derived by integrating their temporal derivatives. Using \eqref{eq:pa_tu_m2}, we obtain the following estimate for $(v_{d}^{m},\xi_{d}^{m})$

\begin{equation}{\label{eq:xi_m}}
\begin{aligned}
  &\sup_{0\le t\le T}(\|v_{d}^m\|_{0}^2+\|\xi_{d}^m\|_1^2)+\|v_{d}^m\|_{L^\infty H^1}^2+\|v_{d}^m\|_{L^\infty H^0(\Sigma_s)}^2+\|[v_{d}^m\cdot\mathcal{N}]_\pi\|_{L^\infty([0,T])}^2\\
  &\lesssim \exp\{(\|\p_t\eta^{n}\|_{L^\infty H^{3/2+\varepsilon_-/2}}+\vert \vert u_{1}^{k}\vert \vert_{L^{\infty}H^{\frac{5}{2}}}+\vert \vert \eta^{n}\vert \vert_{L^{\infty}H^{3}})T\}\bigg(\sum_{j=0}^{2}\|\p_t^{j}u(0)\|_{L^2(\Om)}^2+\|\p_t\xi(0)\|_{H^{3/2+(\varepsilon_--\alpha)/2}}^2\\&\quad+\|\xi(0)\|_{H^{3/2+(\varepsilon_--\alpha)/2}}^2+\|(F^1-F^4-F^5)(0)\|_{(H^1)^\ast}^2+\mathfrak{F}^{n,k}+\mathscr{S}^{n,k}\bigg).
\end{aligned}
\end{equation}

\paragraph{\underline{Step 5 -- Passing to the Limit}}
We now use the energy estimates \eqref{eq:xi_m} and \eqref{eq:pa_tu_m2} to pass to the limit as $m \to \infty$, with $n$ and $k$ fixed. According to Proposition \ref{prop:isomorphism} and energy estimates, the sequences $\{v_{d}^m\}$ and $\{\p_tv_{d}^m\}$ are uniformly bounded both in $L^2H^1$, $L^2H^0(\Sigma_s)$ and $L^\infty H^0$. $\{\xi_{d}^m\}$ and $\{\p_t\xi_{d}^m\}$ are uniformly bounded in $L^\infty H^1$, $\{\p_t\xi^m\}$, $\{[v_{d}^m\cdot\mathcal{N}]_\pi\}$ and $\{[\p_tv_{d}^m\cdot\mathcal{N}]_\pi\}$ are uniformly bounded in $L^2([0,T])$.  Up to the extraction of a subsequence, it holds that
  \begin{align}\label{converge_1}
  \begin{aligned}
  v_{d}^m\rightharpoonup v_{d}\ \text{weakly-}\ \text{in}\ L^2 H^1\cap L^2H^0(\Sigma_s),\quad \p_tv_{d}^m\rightharpoonup\p_tv_{d}\ \text{weakly in}\ L^2H^1\cap L^2H^0(\Sigma_s),\\
  v_{d}^m\stackrel{\ast}\rightharpoonup v_{d} \text{weakly-}\ast\ \text{in}\ L^\infty H^0, \quad\p_t v_{d}^m\stackrel{\ast}\rightharpoonup \p_tv_{d}\ \text{weakly-}\ast\ \text{in}\ L^\infty H^0,\\
  \xi_{d}^m\stackrel{\ast}\rightharpoonup \xi_{d}\ \text{weakly-}\ast\ \text{in}\ L^\infty H^1, \quad\p_t\xi_{d}^m\stackrel{\ast}\rightharpoonup \p_t\xi_{d}\ \text{weakly-}\ast\ \text{in}\ L^\infty H^1,\\
  [v_{d}^m\cdot\mathcal{N}^{n}]_\pi\rightharpoonup[v_{d}\cdot\mathcal{N}^{n}]_\pi\ \text{weakly-}\ \text{in}\ L^2,\ [\p_tv_{d}^m\cdot\mathcal{N}^{n}]_\pi\rightharpoonup[\p_tv_{d}\cdot\mathcal{N}^{n}]_\pi\ \text{weakly in}\ L^2.
\end{aligned}
  \end{align}
  By lower semicontinuity, the energy estimates imply that
  \[
  \begin{aligned}
  \|v_{d}\|_{L^\infty H^0}^2+\|\p_tv_{d}\|_{L^\infty H^0}^2+\|v_{d}\|_{L^2 H^1}^2+\|v_{d}\|_{L^2 H^0(\Sigma_s)}^2+\|\p_tv_{d}\|_{L^2H^1}^2+\|\p_tv_{d}\|_{L^2 H^0(\Sigma_s)}^2+\|[v_{d}\cdot\mathcal{N}^{n}]_\pi\|_{L^2}^2\\
  +\|[\p_tv_{d}\cdot\mathcal{N}^{n}]_\pi\|_{L^2}^2+\|\xi_{d}\|_{L^\infty H^1}^2+\|\p_t\xi_{d}\|_{L^\infty H^1}^2
  \end{aligned}
  \]
  is bounded by the terms in \eqref{eq:xi_m} from second line to the last up to a universal constant.
  
\paragraph{\underline{Step 6 -- Improved Bound for $\p_t\xi_{d}$}}

In this step, we apply the functional calculus developed in Section 8 of \cite{GT2020} to the modified gravity--capillary operator $\mathcal{K}_{k}$ associated with the inner product $(1,\Sigma_{k})$ defined in \eqref{eq:inner_product——1}. More precisely, $\mathcal{K}_{k}$ is defined by
\begin{align*}
    \mathcal{K}_{k}\varphi:=&\bigg(g\varphi\sin\theta+\sigma\Big(\mathcal{P}_1(\rho_0,\rho_0')\varphi+\mathcal{P}_{2}(\rho_{0},\rho_{0}')\varphi'\\
&-\frac{1}{\rho_{0}}\partial_{\theta}\Big(\frac{\rho_{0}^{2}\varphi'}{(\rho_{0}^{2}+\rho_{0}'^{2})^{\frac{3}{2}}}-\frac{\rho_{0}'\rho_{0}\varphi}{(\rho_{0}^{2}+\rho_{0}'^{2})^{\frac{3}{2}}}+\mathcal{R}_{1a}(\rho_{0},\p_{\theta}\eta^{k},\eta^{k})(\p_{\theta}\p_{t}\varphi)\Big)\bigg).
\end{align*}
Since $\eta^k$ is smooth, the coefficients of $\mathcal K_k$ inherit the same regularity properties as those considered in \cite{GT2020}. Consequently, the arguments of \cite{GT2020} apply verbatim, and all the results established there for $\mathcal K$ and the associated operators $D_j^s$ remain valid for $\mathcal K_k$ and its corresponding operators $D_j^s$.

We now improve the regularity of $\partial_{t}\xi_{d}$ using this system. Let $\psi$ be the solution of
\[
 -\Delta\psi=0 \quad \text{in } \Om,\qquad \p_{\nu}\psi=\frac{D_j^s(\partial_{t}\xi_{d}-a_{0}(t)\rho_{0})}{|\mathcal{N}_0|} \quad \text{on } \Sigma,\qquad \p_{\nu}\psi=0 \quad \text{on } \Sigma_s,
\]
where $s=1-2\tilde{\alpha}\in [0,1)$ and $\rho_{0}$ is the steady-state free surface. Here $a_{0}(t)$ is a scalar function depending only on $t$, defined by
\begin{align}\label{def:a}
     \int_{0}^{\pi}(\p_{t}\xi_{d}- a_{0}(t)\rho_{0})\rho_{0}\,d\theta=0.
\end{align}
Using the kinematic boundary condition, namely the fourth equation in \eqref{eq:quasi_linear_{s}}, we obtain
\begin{align}\label{eq:mean}
  \begin{aligned}
      \int_{0}^{\pi}\p_{t}\xi_{d}\rho_{0}\,d\theta
      =&\int_{0}^{\pi}\frac{1}{\rho}v_{d}\cdot \mathcal{N}^{n}\rho_{0}
      +\int_{0}^{\pi}\frac{1}{\rho}(R^{n}v_{l}^{k})\cdot \mathcal{N}^{n}\rho_{0}
      +\int_{0}^{\pi}\int_{0}^{t}\frac{1}{\rho}(\partial_{t}u^{k}\cdot\p_{t}\mathcal{N}(\xi_{l}^{k})+u_{r}^{k}\p_{t}^{2}\xi_{l}^{k}+u_{\theta}^{k}\p_{t}\p_{\theta}\xi_{d})\rho_{0}\\
      &-\int_{0}^{t}\int_{0}^{\pi}\frac{\p_{t}\eta}{\rho^{2}}u^{k}\cdot \p_{t}\mathcal{N}(\xi_{l}^{k})\rho_{0}
      +\int_{0}^{\pi}I_{2}^{n,k}\rho_{0}
      +\int_{0}^{\pi}\rho_{0}I_{3}^{n,k}
      +\int_{0}^{\pi}F^{6}(u^{k},\eta^{k})\rho_{0}\,d\theta\\
      &+\int_{0}^{t}\int_{0}^{\pi}\rho_{0}\Big(\mathfrak{n}^{\prime\prime}(t)\frac{\p_{t}\p_{\theta}\xi_{l}^{k}}{\rho}\sin\theta
      -\mathfrak{n}^{\prime}(t)\p_{t}\eta\frac{\p_{t}\p_{\theta}\xi_{l}^{k}}{\rho^{2}}\sin\theta
      +\mathfrak{n}^{\prime}(t)\frac{\p_{t}\p_{\theta}\xi_{d}}{\rho}\sin\theta\Big).
   \end{aligned}
\end{align}
Substituting \eqref{eq:mean} into \eqref{def:a}, we obtain an explicit expression for $a_{0}(t)$ and, consequently, the estimate
\begin{align}\label{eq:bdd_a_1}
  \begin{aligned}
      |a_{0}(t)|\lesssim&\|v_{d}\|_{H^{1}}+t\|\p_{t}u^{k}\|_{L^{\infty}H^{1}}\|\p_{t}\xi_{l}^{k}\|_{L_{t}^{\infty}H^{1}}+t\|u^{k}\|_{L_{t}^{\infty}H^{1}}\|\p_{t}\xi_{d}\|_{L_{t}^{\infty}H^{1}}\\
      &\quad+\|\p_{t}\eta^{n}\|_{H^{1}}\|v_{l}^{k}\|_{H^{1}}+\|u^{k}\|_{H^{1}}\|\p_{t}^{2}\xi_{l}^{k}\|_{H^{1}}+\|u^{k}\|_{H^{1}}\|\p_{t}\xi_{l}^{k}\|_{H^{1}}+\|u_{0}\|_{H^{1}}\|\xi_{d0}^{n,k}\|_{H^{1}}+\|\p_{t}\eta\|_{H^{1}}\|u^{k}\|_{H^{1}}\\
      &\quad+(\|\p_{t}u\|_{H^{1}}+\|u\|_{H^{1}})\|\xi_{l}^{k}\|_{H^{1}}+\|\xi_{d}\|_{H^{1}}\|u\|_{H^{1}}+\|F^{6}(u^{k},\eta^{k})\|_{L^{2}}.
  \end{aligned}
\end{align}
Taking the temporal derivative of \eqref{eq:mean}, we similarly obtain the following estimate for $a_{0}'(t)$:
\begin{align}\label{eq:bdd_a_2}
  \begin{aligned}
      |a_{0}^{\prime}(t)|\lesssim &\|D_{t}v_{d}\|_{H^{1}}+\|\p_{t}u^{k}\|_{L^{\infty}H^{1}}\|\p_{t}\xi_{l}^{k}\|_{L_{t}^{\infty}H^{1}}+\|u^{k}\|_{L_{t}^{\infty}H^{1}}\|\p_{t}\xi_{d}\|_{L_{t}^{\infty}H^{1}}+\|\p_{t}\eta^{n}\|_{H^{1}}\|\p_{t}v_{l}^{k}\|_{H^{1}}\\
      &\quad+\|\p_{t}^{2}\eta^{n}\|_{H^{1}}\|v_{l}^{k}\|_{H^{1}}+\|u^{k}\|_{H^{1}}\|\p_{t}^{2}\xi_{l}^{k}\|_{H^{1}}+\|u^{k}\|_{H^{1}}\|\p_{t}\xi_{l}^{k}\|_{H^{1}}+\|\p_{t}\eta\|_{H^{1}}\|D_{t}u^{k}\|_{H^{1}}\\
      &\quad+(\|\p_{t}u\|_{H^{1}}+\|u\|_{H^{1}})\|\xi_{l}^{k}\|_{H^{1}}+\|\xi_{d}\|_{H^{1}}\|u\|_{H^{1}}+(\|\p_{t}^{2}\eta\|_{H^{1}}+\|\p_{t}\eta\|_{H^{1}})\|u^{k}\|_{H^{1}}+\|\p_{t}F^{6}(u^{k},\eta^{k})\|_{H^{1}}.
  \end{aligned}
\end{align}

For the test function $\psi$ defined above, \cite[Proposition 9.1]{GT2020} yields the estimates
\begin{equation}\label{est:psi_t_0}
\|\psi\|_{H^1}\lesssim\|(\partial_{t}\xi_{d}-a_{0}(t)\rho_{0})\|_{H^{s-1/2}},\qquad
\|\psi\|_{H^2}\lesssim\|D_j^s(\partial_{t}\xi_{d}-a_{0}(t)\rho_{0})\|_{H^{1/2}},\qquad
\|\p_t\psi\|_{H^1}\lesssim\|\p_t(\partial_{t}\xi_{d}-a_{0}(t)\rho_{0})\|_{H^{s-1/2}}.
\end{equation}
Combining \eqref{eq:bdd_a_1} and \eqref{eq:bdd_a_2} with \eqref{est:psi_t_0}, we deduce
\begin{align}\label{est:psi_t}
\begin{aligned}
  \|\psi\|_{H^1}\lesssim&\|(\partial_{t}\xi_{d})\|_{H^{s-1/2}}+|a_{0}(t)|,\qquad
  \|\psi\|_{H^2}\lesssim\|D_j^s(\partial_{t}\xi_{d})\|_{H^{1/2}}+|a_{0}(t)|,\qquad
  \|\p_t\psi\|_{H^1}\lesssim\|\p_t(\partial_{t}\xi_{d})\|_{H^{s-1/2}}+|a_{0}^{\prime}(t)|.
\end{aligned}
\end{align}

Integrating both sides of \eqref{eq:pa_tu_m_1} over $[0,T]$ and using the convergence result obtained in Step 5, we derive the following identity:

\begin{equation}\label{eq:4th}
\begin{aligned}
&\left<\p_t^2v_{d}, w\right>_{\ast}+\int_0^T\left[((\p_tv_{d},w))+(\partial_{t}v_{d},R^{n}w)+((v_{d},R^{n}w))+(\p_t\xi_{d},w\cdot\mathcal{N}^{n})_{1,\Sigma_{k}}+[D_tv_{d}\cdot\mathcal{N}^{n},w\cdot\mathcal{N}^{n}]_\pi\right]\\
&+\int_{0}^{T}[\p_tF^7,w\cdot\mathcal{N}^{n}]_\pi+\int_{0}^{T}(\partial_{t}v_{d},w \partial_{t}J^{n})_{L^2(\Omega)}+\int_{0}^{T}\beta(\partial_{t}v_{d}\cdot \tau,(w \cdot \tau)\partial_{t}J^{n})\\
&=\int_0^T\int_{\Om}\left[\p_tF^1\cdot wJ^{n}+\p_tJ^{n}KF^1\cdot w\right]+\int_{0}^{T}(\p_{t}(R^{n}v_{l}),w)_{\mathcal{H}^{0}}\\
&\quad+\int_{0}^{T}((\p_{t}(R^{n}v_{l}),w))+(((R^{n}v_{l}),R^{n}w)) +\int_{0}^{T}((R^{n}v_{l}),R^{n}w)_{\mathcal{H}^{0}}+\int_{0}^{T}((R^{n}v_{l}),w\p_{t}J^{n}(J^{n})^{-1})_{\mathcal{H}^{0}}\\
  &\quad-\int_0^T\int_{0}^{\pi}[\p_tF^4\cdot w]-\int_0^T\int_{\Sigma_s}\left[\p_tF^5 w+\p_tJ^{n}K^{n}F^5w\right]\cdot\tau J^{n}\\\
&\quad-\int_0^T\int_\Om\frac{\mu}{2}(\mathbb{D}_{\p_t\mathcal{A}^{n}}v_{d}:\mathbb{D}_{\mathcal{A}^{n}}w+\mathbb{D}_{\mathcal{A}^{n}}v_{d}:\mathbb{D}_{\p_t\mathcal{A}^{n}}w+\p_tJK\mathbb{D}_{\mathcal{A}^{n}}v_{d}:\mathbb{D}_{\mathcal{A}^{n}}w)J^{n}\\
&\quad-\int_0^T\int_{\Sigma_s}\beta(v_{d}\cdot\tau)(w\cdot\tau)\p_tJ-\int_{0}^{T}(\p_tJ^{n}K^{n}\p_tv_{d}, w)_{\mathcal{H}^0_T}\\
&\quad+(\p_{\theta}(\mathcal{R}_{1aa}(\rho_{0},\p_{\theta}\eta^{k},\eta^{k})\p_{\theta}\p_{t}\xi_{l}^{k}\p_{\theta}\p_{t}\eta^{k}),(w\cdot \mathcal{N}^{n}))_{L^{2}}+[\mathcal{R}_{1aa}(\rho_{0},\p_{\theta}\eta^{k},\eta^{k})\p_{\theta}\p_{t}\xi_{l}^{k}\p_{\theta}\p_{t}\eta^{k},w\cdot \mathcal{N}^{n}]_{\pi}\\
&\quad+(\p_{\theta}(\mathcal{R}_{1ab}(\rho_{0},\p_{\theta}\eta^{k},\eta^{k})\p_{\theta}\p_{t}\xi_{l}^{k}\p_{t}\eta^{k}),(w\cdot \mathcal{N}^{n}))_{L^{2}}+[\mathcal{R}_{1ab}(\rho_{0},\p_{\theta}\eta^{k},\eta^{k})\p_{\theta}\p_{t}\xi_{l}^{k}\p_{t}\eta^{k},w\cdot \mathcal{N}^{n}]_{\pi}. 
\end{aligned}
\end{equation}

\noindent where:

\begin{align}
    \left<\partial_{t}^{2}v_{d},w\right>_{*}=(\partial_{t}v_{d},w)_{\mathcal{H}^{0}(\Omega)}(T)-(\partial_{t}v_{d},w)_{\mathcal{H}^{0}(\Omega)}(0)-\int_{0}^{T}(\partial_{t}v_{d},\p_{t} w)_{\mathcal{H}^{0}(\Omega)}-\int_{0}^{T}(\p_{t}v_{d},\p_{t}J^{n}w)_{L^{2}(\Omega)}
\end{align}
\noindent for any $w\in \mathcal{W}_{\sigma}(\Omega)$ such that $\p_{t}w\in \mathcal{H}^{0}(\Omega)$
 
  Let test function $w=M^{n}\nabla\psi\in \mathcal{W}_\sigma$ in equation \eqref{eq:4th}. Using the fact that $\operatorname{div}_{\mathcal{A}^{n}}M^{n}\psi=0$, we obtain the following equality
\begin{equation}\label{eq:test_2}
  \begin{aligned}
  \int_{0}^{T}(\partial_{t}\xi_{d},D_{j}^{s}\partial_{t}\xi_{d})_{1,\Sigma_{k}}=I+II+III+IV+V,
\end{aligned}
  \end{equation}

\noindent where

\begin{align}
    I=&<\partial_{t}^{2}v_{d},M^{n}\nabla\psi>_{*},\notag\\
    II=&\int_{0}^{T} \left[((\p_tv_{d},M^{n}\nabla \psi))+(\partial_{t}v_{d},R^{n}M^{n}\nabla \psi)+((v_{d},R^{n}M^{n}\nabla\psi))+[\p_tv_{d}\cdot\mathcal{N}^{n},M^{n}\nabla\psi\cdot \mathcal{N}^{n}]_\pi\right]\notag\\&+\int_{0}^{T}\left[[\p_tF^7,M^{n}\nabla\psi\cdot\mathcal{N}^{n}]_\pi+(\partial_{t}v_{d},M^{n}\nabla\psi \partial_{t}J^{n})_{L^2(\Omega)}+\beta(\partial_{t}v_{d}\cdot \tau,(M^{n}\nabla\psi \cdot \tau)\partial_{t}J^{n})\right],\\
    III=&\int_0^T\int_{\Om}\left[\p_tF^1\cdot M^{n}\nabla\psi +\p_tJ^{n}K^{n}F^1\cdot M^{n}\nabla\psi\right]J^{n}\notag\\
  &+\int_{0}^{T}\int_{0}^{\pi}\p_tF^4\cdot M^{n}\nabla\psi-\int_0^T\int_{\Sigma_s}\left[\p_tF^5 M^{n}\nabla\psi+\p_tJ^{n}K^{n}F^5M^{n}\nabla\psi\right]\cdot\tau J^{n},\\
  IV=&-\int_0^T\int_\Om\frac{\mu}{2}(\mathbb{D}_{\p_t\mathcal{A}^{n}}v_{d}:\mathbb{D}_{\mathcal{A}^{n}}M^{n}\nabla\psi+\mathbb{D}_{\mathcal{A}^{n}}v_{d}:\mathbb{D}_{\p_t\mathcal{A}^{n}}M^{n}\nabla\psi+\p_tJ^{n}K^{n}\mathbb{D}_{\mathcal{A}^{n}}v_{d}:\mathbb{D}_{\mathcal{A}^{n}}M^{n}\nabla\psi)J^{n}\notag\\
&\quad-\int_{0}^{T}(\p_{t}v_{d},R^{n}M^{n}\nabla\psi)_{\mathcal{H}^{0}}-\int_0^T\int_{\Sigma_s}\beta(v_{d}\cdot\tau)(M^{n}\nabla\psi\cdot\tau)\p_tJ^{n}-\int_{0}^{T}(\p_tJ^{n}K^{n}\p_tv_{d}, M^{n}\nabla\psi)_{\mathcal{H}^0_T}\notag\\
&\quad+\int_{0}^{T}(\p_{t}\xi_{d},a_{0}(t)D_{j}^{s}\rho_{0})_{1,\Sigma_{n}}\label{eq:4_2}.
\end{align}

\begin{align}
    \begin{aligned}
        V=&\int_{0}^{T}(\p_{t}(R^{n}v_{l}),M^{n}\nabla \psi)_{\mathcal{H}^{0}}+\int_{0}^{T}((R^{n}v_{l}),R^{n}M^{n}\nabla \psi)_{\mathcal{H}^{0}}+\int_{0}^{T}(R^{n}v_{l},M^{n}\nabla\psi\p_{t}J^{n}(J^{n})^{-1})_{\mathcal{H}^{0}}\\
&\quad+\int_{0}^{T}((\p_{t}(R^{n}v_{l}),M^{n}\nabla \psi))+(
((R^{n}v_{l}),R^{n}M^{n}\nabla\psi))\\
&\quad+\int_{0}^{T}(\p_{\theta}(\mathcal{R}_{1aa}(\rho_{0},\p_{\theta}\eta^{k},\eta^{k})\p_{\theta}\p_{t}\xi_{l}^{k}\p_{\theta}\p_{t}\eta^{k}),M^{n}\nabla \psi)_{L^{2}}+\int_{0}^{T}[\mathcal{R}_{1aa}(\rho_{0},\p_{\theta}\eta^{k},\eta^{k})\p_{\theta}\p_{t}\xi_{l}^{k}\p_{\theta}\p_{t}\eta^{k},M^{n}\nabla \psi]_{\pi}\\
&\quad+\int_{0}^{T}(\p_{\theta}(\mathcal{R}_{1ab}(\rho_{0},\p_{\theta}\eta^{k},\eta^{k})\p_{\theta}\p_{t}\xi_{l}^{k}\p_{t}\eta^{k}),(w\cdot \mathcal{N}^{n}))_{L^{2}}+\int_{0}^{T}[\mathcal{R}_{1ab}(\rho_{0},\p_{\theta}\eta^{k},\eta^{k})\p_{\theta}\p_{t}\xi_{l}^{k}\p_{t}\eta^{k},w\cdot \mathcal{N}^{n}]_{\pi}.
    \end{aligned}
\end{align}
Then we estimate each term from $I$ to $IV$, individually.

To estimate $I$, we write $I$ as follows by definition:
\begin{align}\label{est:dtu_m1}
<\partial_{t}^{2}v_{d},M^{n}\nabla\psi>_{*}&=(\partial_{t}v_{d},M^{n}\nabla \psi)_{L^{2}(\Omega)}(T)-(\partial_{t}v_{d},M^{n}\nabla\psi)_{L^{2}(\Omega)}(0)-\int_{0}^{T}(\partial_{t}v_{d},\p_{t} (M^{n}\nabla \psi))_{L^{2}(\Omega)}\notag\\
&\lesssim \vert \vert \partial_{t}v_{d}\vert \vert_{L^{\infty}L^{2}}\vert \vert \psi\vert \vert_{L^{\infty}H^{1}}+T^{\frac{1}{2}}\vert \vert \partial_{t}v_{d}\vert \vert_{L^{\infty}L^{2}}\vert \vert \partial_{t}\psi\vert \vert_{L^{2}H^{1}}\notag\\
&\lesssim \vert \vert \partial_{t}v_{d}\vert \vert_{L^{\infty}L^{2}}(\vert \vert (\partial_{t}\xi_{d})\vert \vert_{L_{t}^{\infty}H^{s-\frac{1}{2}}}+T^{\frac{1}{2}}\vert \vert \partial_{t}^{2}\xi_{d}\vert \vert_{L^{2}H^{s-\frac{1}{2}}})\notag\\
&\quad+\|\p_{t}v_{d}\|_{L^{\infty}L^{2}}(\|\p_t(\partial_{t}\xi_{d})\|_{H^{s-1/2}}+\|\p_{t}v_{d}\|_{H^{1}}+|a_{0}(t)|+|a_{0}^{\prime}(t)|).
\end{align}
Using kinematic boundary condition and trace theorem, the equation above yields:
\begin{align*}
    \vert \vert\partial_{t}^{2}\xi_{d}\vert \vert_{H^{s-\frac{1}{2}}}
    \lesssim &\vert \vert \p_{t}v_{d}\vert \vert_{H^{1}}+\|v_{d}\|_{H^{1}}\|\p_{t}\eta^{n}\|_{W^{3-\frac{1}{q_{-}},q_{-}}}+(\vert \vert \partial_{t}u^{k}\vert \vert_{W^{2,q_{-}}}+\|u^{k}\|_{W^{2,q_{-}}})\vert \vert \partial_{t}\xi_{l} ^{k}\vert \vert_{H^{\frac{3}{2}+\frac{\varepsilon_{-}-\alpha}{2}}}\notag\\
    &+\vert \vert u^{k}\vert \vert_{W^{2,q_{-}}}\vert \vert \partial_{t}\xi_{d}\vert \vert_{H^{s+\frac{1}{2}}}+\|\p_{t}^{2}\eta^{n}\|_{H^{\frac{3}{2}-\alpha}}\|v_{l}^{k}\|_{W^{2,q_{-}}}+\|\p_{t}\eta^{n}\|^{2}_{H^{\frac{3}{2}-\alpha}}\|v_{l}^{k}\|_{W^{2,q_{-}}}\\
    &+(\|\p_{t}^{2}\eta\|_{H^{1}}+\|\p_{t}\eta\|_{H^{1}}^{2})\|u^{k}\|_{H^{1}}+\|\p_{t}\eta\|_{H^{1}}\|D_{t}u^{k}\|_{H^{1}}+(\|u\|_{H^{1}}+\|\p_{t}u\|_{H^{1}})\|\p_{t}\xi_{l}^{k}\|_{H^{\frac{3}{2}+\frac{\varepsilon_{-}-\alpha}{2}}}\\
    &+\|u^{k}\|_{H^{1}}\|\p_{t}^{2}\xi_{l}^{k}\|_{H^{1}}.
\end{align*}
Plugging this inequality into equation \eqref{est:dtu_m1} and using the fact that $s=1-2\alpha<1$, we have
\[
\begin{aligned}
<\partial_{t}^{2}v_{d},M^{n}\nabla \psi>_{*}\lesssim &\vert \vert \partial_{t}v_{d}\vert \vert_{L^{\infty}L^{2}}(\vert \vert \p_{t}v_{d}\vert \vert_{L_{t}^{2}H^{1}}+\|v_{d}\|_{L_{t}^{\infty}H^{1}}\|\p_{t}\eta^{n}\|_{L_{t}^{2}W^{3-\frac{1}{q_{-}},q_{-}}}\\
&+(\vert \vert \partial_{t}u^{k}\vert \vert_{L_{t}^{2}W^{2,q_{-}}}+\|u^{k}\|_{L_{t}^{\infty}W^{2,q_{-}}})\vert \vert \partial_{t}\xi_{l} ^{k}\vert \vert_{L_{t}^{\infty}H^{\frac{3}{2}+\frac{\varepsilon_{-}-\alpha}{2}}}+\vert \vert u^{k}\vert \vert_{L_{t}^{\infty}W^{2,q_{-}}}\vert \vert \partial_{t}\xi_{d}\vert \vert_{L_{t}^{2}H^{s+\frac{1}{2}}}\notag\\
    &+\|\p_{t}^{2}\eta^{n}\|_{L_{t}^{2}H^{\frac{3}{2}-\alpha}}\|v_{l}^{k}\|_{L_{t}^{\infty}W^{2,q_{-}}}+\|\p_{t}\eta^{n}\|^{2}_{L_{t}^{\infty}H^{\frac{3}{2}-\alpha}}\|v_{l}^{k}\|_{L_{t}^{\infty}W^{2,q_{-}}}+(\|\p_{t}^{2}\eta\|_{L_{t}^{\infty}H^{1}}+\|\p_{t}\eta\|_{L_{t}^{\infty}H^{1}}^{2})\|u^{k}\|_{L_{t}^{\infty}H^{1}}\\
    &+\|\p_{t}\eta\|_{L_{t}^{\infty}H^{1}}\|D_{t}u^{k}\|_{L_{t}^{\infty}H^{1}}+(\|u\|_{L_{t}^{\infty}H^{1}}+\|\p_{t}u\|_{L_{t}^{\infty}H^{1}})\|\p_{t}\xi_{l}^{k}\|_{L_{t}^{\infty}H^{\frac{3}{2}+\frac{\varepsilon_{-}-\alpha}{2}}}\\
    &+\|u^{k}\|_{L_{t}^{\infty}H^{1}}\|\p_{t}^{2}\xi_{l}^{k}\|_{L_{t}^{\infty}H^{1}})+\|\p_{t}v_{d}\|_{L^{\infty}L^{2}}(\|a_{0}(t)\|_{L_{t}^{\infty}}+\|a_{0}^{\prime}(t)\|_{L_{t}^{2}}).
\end{aligned}
\]

 We now estimate the terms included in II as follows:

 \begin{align*}
     &\int_{0}^{T} ((\partial_{t}v_{d},M^{n}\nabla \psi))\lesssim\vert \vert \partial_{t}v_{d}\vert \vert_{L^{2}H^{1}}\vert \vert \psi\vert \vert_{L^{2}H^{2}}\lesssim \vert \vert \partial_{t}v_{d}\vert \vert_{L^{2}H^{1}}\vert \vert D_{j}^{s}(\partial_{t}\xi_{d})\vert \vert_{L^{2}H^{\frac{1}{2}}} +\|\p_{t}v_{d}\|_{L^{2}H^{1}}\|a_{0}(t)\|_{L_{t}^{\infty}}\notag,\\
     &\int_{0}^{T}(\partial_{t}v_{d},R^{n}M^{n}\nabla \psi)\lesssim T\vert \vert \partial_{t}v_{d}\vert \vert_{L^{\infty}L^{2}}\vert \vert \psi\vert \vert_{L_{t}^{\infty}H^{1}}\lesssim T\vert \vert \partial_{t}v_{d}\vert \vert_{L^{\infty}L^{2}}\vert \vert \partial_{t}\xi_{d}\vert \vert_{L^{\infty}H^{1}}+\|\p_{t}v_{d}\|_{L_{t}^{\infty}L^{2}}\|a_{0}(t)\|_{L_{t}^{\infty}}\notag,\\
     &\int_{0}^{T}((v_{d},R^{n}M^{n}\nabla \psi)) \lesssim T^{\frac{1}{2}}\vert \vert v_{d}\vert \vert_{L^{\infty}H^{1}}\vert \vert D_{j}^{s}(\partial_{t}\xi_{d})\vert \vert_{L^{2}H^{\frac{1}{2}}}+\|v_{d}\|_{L^{\infty}H^{1}}\|a_{0}(t)\|_{L_{t}^{\infty}}\notag,\\
     &\int_{0}^{T}[\partial_{t}v_{d}\cdot \mathcal{N}^{n},M^{n}\nabla \psi\cdot \mathcal{N}^{n}]_{\pi}\lesssim\vert \vert [\partial_{t}v_{d}\cdot \mathcal{N}^{n}]_{\pi}\vert \vert_{L_{t}^{2}}\vert \vert D_{j}^{s}\partial_{t}\xi_{d}\vert \vert_{L^{2}H^{\frac{1}{2}+}}+\|[\p_{t}v_{d}\cdot \mathcal{N}^{n}]_{\pi}\|_{L^{2}H^{1}}\|a_{0}(t)\|_{L_{t}^{\infty}}\notag,\\
     &\int_{0}^{T}[\partial_{t}F^{7},M^{n}\nabla \psi\cdot \mathcal{N}^{n}]_{\pi}\lesssim \vert \vert [\partial_{t}F^{7}]\vert \vert_{\pi}\vert \vert D_{j}^{s}\partial_{t}\xi_{d}\vert \vert_{L^{2}H^{\frac{1}{2}+}}+\|[\p_{t}F^{7}]_{\pi}\|_{L^{2}}\|a_{0}(t)\|_{L_{t}^{\infty}}\notag,\\
     &\int_{0}^{T}(\partial_{t}v_{d},M^{n}\nabla \psi\partial_{t}J^{n})_{\mathcal{H}^{0}(\Sigma)}\lesssim T^{\frac{1}{2}}\vert \vert \partial_{t}v_{d}\vert \vert_{L^{\infty}L^{2}}\vert \vert \p_{t}\xi_{d}\vert \vert_{L^{2}H^{1}}+\|\p_{t}v_{d}\|_{L^{2}H^{1}}\|a_{0}(t)\|_{L_{t}^{\infty}}\notag,\\
     &\int_{0}^{T}\beta(\partial_{t}v_{d}\cdot \tau, (M\nabla \psi\cdot \tau)\partial_{t}J^{n})_{\mathcal{H}^{0}(\Sigma_{s})}=0.
 \end{align*}
 where we used the fact that $\p_{\nu}\psi=0$ on $\Sigma_{s}$ and equation \eqref{eq:bdd_a_2}.

  For terms involved in III, we note that it can be rewritten as:

 \begin{align}
     III=\p_t\mathcal{F}(M^{n}\nabla\psi)-\mathcal{F}(\p_t(M^{n}\nabla\psi))+\mathcal{F}(R^{n}M^{n}\nabla\psi).
 \end{align}
 
 \noindent According to \eqref{eq:cal_f}, for force terms $F^j$ with $j=1, 3, 4, 5$, we apply the same estimates used in the energy bound for $\p_tv_{d}^m$. In this way, we obtain the following bounds:
\[
\begin{aligned}
  &\p_t\mathcal{F}(M^{n}\nabla\psi)-\mathcal{F}(\p_t(M^{n}\nabla\psi))+\mathcal{F}(R^{n}M^{n}\nabla\psi)\\
  &\lesssim\big[\|\p_t(F^1-F^4-F^5)\|_{(\mathcal{H}^1)^{\ast}}+\|\p_t\eta^{n}\|_{H^{3/2+(\varepsilon_--\alpha)/2}}(\|F^1\|_{L^{q_-}}+\|F^4\|_{W^{1-1/q_-,q_-}}
  +\|F^5\|_{W^{1-1/q_-,q_-}})\big]\|\psi\|_{H^2}\\
  &\lesssim\big[\|\p_t(F^1-F^4-F^5)\|_{(\mathcal{H}^1)^{\ast}}+\|\p_t\eta^{n}\|_{H^{3/2+(\varepsilon_--\alpha)/2}}(\|F^1\|_{L^{q_-}}+\|F^4\|_{W^{1-1/q_-,q_-}}
  +\|F^5\|_{W^{1-1/q_-,q_-}})\big]\|D_j^s\p_t\xi_{d}\|_{\mathcal{H}^{1/2}_\mathcal{K}}\\
  &\quad+\big[\|\p_t(F^1-F^4-F^5)\|_{(\mathcal{H}^1)^{\ast}}+\|\p_t\eta^{n}\|_{H^{3/2+(\varepsilon_--\alpha)/2}}(\|F^1\|_{L^{q_-}}+\|F^4\|_{W^{1-1/q_-,q_-}}
  +\|F^5\|_{W^{1-1/q_-,q_-}})\big]
  \cdot |a_{0}(t)|.
\end{aligned}
\]
 We then estimate each term involved in IV, individually. For terms in the first line of IV, we bound them by
\[
\begin{aligned}
  -&\int_\Om\frac{\mu}{2}(\mathbb{D}_{\p_t\mathcal{A}^{n}}v_{d}:\mathbb{D}_{\mathcal{A}^{n}}(M^{n}\nabla\psi)+\mathbb{D}_{\mathcal{A}^{n}}v_{d}:\mathbb{D}_{\p_t\mathcal{A}^{n}}(M^{n}\nabla\psi)+\p_tJ^{n}K^{n}\mathbb{D}_{\mathcal{A}^{n}}v_{d}:\mathbb{D}_{\mathcal{A}^{n}}(M^{n}\nabla\psi))J^{n}\\
  &\lesssim \|\p_t\bar{\eta}^{n}\|_{W^{1,\infty}}\|v_{d}\|_{\mathcal{H}^1}\|\psi\|_{H^2}\\
  &\lesssim\|\p_t\eta^{n}\|_{H^{3/2+\varepsilon_-/2}}\|v_{d}\|_{\mathcal{H}^1}\|D_j^s\p_t\xi_{d}\|_{\mathcal{H}^{1/2}_\mathcal{K}}
  +\|\p_t\eta^{n}\|_{H^{3/2+\varepsilon_-/2}}\|v_{d}\|_{\mathcal{H}^1}|a_{0}(t)|.
\end{aligned}
\]
For terms in the last line of \eqref{eq:test_2}, we bound them by
\begin{align}\label{est:dtu_m2}
\begin{aligned}
  &-(\p_tv_{d},R^{n}M^{n}\nabla\psi)_{\mathcal{H}^0}+\int_\Om\p_tv_{d}\cdot M^{n}\nabla\psi\p_tJ^{n}+\int_{\Sigma_s}\beta(v_{d}\cdot\tau)(M^{n}\nabla\psi\cdot\tau)\p_tJ^{n}+(\p_{t}\xi_{d},a_{0}(t)D_{j}^{s}\rho_{0})_{1,\Sigma_{n}}\\
  &\lesssim\|\p_tv_{d}\|_{\mathcal{H}^0}(\|R^{n}\|_{L^\infty(\Om)}+\|\p_tJ^{n}\|_{L^\infty(\Om)})\|\psi\|_{H^1}+\|v_{d}\|_{L^2(\Sigma_s)}\|\p_tJ^{n}\|_{L^\infty(\Sigma_s)}\|\psi\|_{H^2}\\
  &\lesssim \|\p_t\eta^{n}\|_{H^{3/2+\varepsilon_-/2}}\|\p_tv_{d}\|_{\mathcal{H}^0}\|\p_t\xi_{d}\|_{\mathcal{H}^{s-1/2}}+\|\p_t\eta^{n}\|_{H^{3/2+\varepsilon_-/2}}\|v_{d}\|_{L^2(\Sigma_s)}\|D_j^s\p_t\xi_{d}\|_{\mathcal{H}^{1/2}_\mathcal{K}}\\
  &\quad+|a_{0}(t)|\cdot(\|\p_{t}\eta\|_{H^{\frac{3}{2}+\frac{\varepsilon_{-}-\alpha}{2}}}\|\p_{t}v_{d}\|_{\mathcal{H}^{0}}+\|\p_{t}\eta\|_{H^{\frac{3}{2}+\frac{\varepsilon_{-}-\alpha}{2}}}\|v_{d}\|_{H^{1}}).
\end{aligned}
\end{align}

Finally, for the term $V$, using the computation in Step 4, we obtain the following estimate
\begin{align}
    \begin{aligned}
        V\lesssim& \|\psi\|_{L_{t}^{2}H^{2}}\bigg(\|\p_{t}^{2}{\eta}^{n}\|_{L_{t}^{2}H^{\frac{3}{2}}}\|v_{i}\|_{L_{t}^{\infty}W^{2,q_{-}}}+\|\p_{t}{\eta}^{n}\|_{L_{t}^{2}W^{3-\frac{1}{q_{-}},q_{-}}}\|D_{t}v_{l}\|_{L_{t}^{\infty}H^{1+\frac{\varepsilon_{-}}{2}}}\\
        &\quad+\|\p_{t}\eta^{n}\|_{L_{t}^{\infty}H^{\frac{3}{2}+\frac{\varepsilon_{-}-\alpha}{2}}}\|D_{t}v_{l}\|_{L_{t}^{\infty}W^{2,q_{-}}}+\|\p_{t}{\eta}^{n}\|_{L_{t}^{2}W^{3-\frac{1}{q_{-}},q_{-}}}\|v_{l}\|_{L_{t}^{\infty}W^{2,q_{-}}}\bigg)\\
        &\quad+\|\p_{t}\xi_{l}^{k}\|_{L_{t}^{\infty}H^{\frac{3}{2}+\frac{\varepsilon_{-}-\alpha}{2}}}\|\p_{t}\eta^{k}\|_{L_{t}^{2}W^{3-\frac{1}{q_{-}},q_{-}}}\|\psi\|_{L_{t}^{2}H^{1}}+\|\p_{t}\eta^{k}\|_{L_{t}^{\infty}H^{\frac{3}{2}+\frac{\varepsilon_{-}-\alpha}{2}}}\|\p_{t}\xi^{k}\|_{L_{t}^{2}W^{3-\frac{1}{q_{-}},q_{-}}}\|\psi\|_{L_{t}^{2}H^{1}}\\
        \lesssim&\bigg(\|\p_{t}^{2}{\eta}^{n}\|_{L_{t}^{2}H^{\frac{3}{2}}}\|v_{l}\|_{L_{t}^{\infty}W^{2,q_{-}}}+\|\p_{t}{\eta}^{n}\|_{L_{t}^{2}W^{3-\frac{1}{q_{-}},q_{-}}}\|D_{t}v_{l}\|_{L_{t}^{\infty}H^{1+\frac{\varepsilon_{-}}{2}}}\\
        &\quad+\|\p_{t}\eta^{n}\|_{L_{t}^{\infty}H^{\frac{3}{2}+\frac{\varepsilon_{-}-\alpha}{2}}}\|D_{t}v_{l}\|_{L_{t}^{\infty}W^{2,q_{-}}}+\|\p_{t}{\eta}^{n}\|_{L_{t}^{2}W^{3-\frac{1}{q_{-}},q_{-}}}\|v_{l}\|_{L_{t}^{\infty}W^{2,q_{-}}}\bigg)
        \cdot (\|\p_{t}\xi_{d}\|_{L_{t}^{2}H^{s+\frac{1}{2}}}+\|a_{0}(t)\|_{L_{t}^{\infty}})\\&\quad+(\|\p_{t}\xi_{d}\|_{L_{t}^{2}H^{s+\frac{1}{2}}}+\|a_{0}(t)\|_{L_{t}^{\infty}})
        \cdot (\|\p_{t}\xi_{l}^{k}\|_{L_{t}^{\infty}H^{\frac{3}{2}+\frac{\varepsilon_{-}-\alpha}{2}}}\|\p_{t}\eta^{k}\|_{L_{t}^{\infty}W^{3-\frac{1}{q_{-}},q_{-}}}+\|\p_{t}\eta^{k}\|_{L_{t}^{\infty}H^{\frac{3}{2}+\frac{\varepsilon_{-}-\alpha}{2}}}\|\p_{t}\xi_{l}^{k}\|_{L_{t}^{2}W^{3-\frac{1}{q_{-}},q_{-}}}).
    \end{aligned}
\end{align}
 The left hand side of \eqref{eq:test_2} can be rewritten as follows
\[
\begin{aligned}
(\partial_{t}\xi_{d},D_j^s\p_t\xi_{d})_{1,\Sigma}
=\|D_j^s\p_t\xi_{d}\|_{\mathcal{H}^{1-\f{s}2}_\mathcal{K}}^2.
\end{aligned}
\]

Finally, combining \eqref{est:dtu_m1}--\eqref{est:dtu_m2}, together with the Cauchy inequality, and letting $j\rightarrow +\infty$, we obtain the following enhanced estimate for $\p_t\xi_{d}$:
\begin{equation}\label{est:enhance_2'}
\begin{aligned}
  &\|\p_t\xi_{d}\|_{L^2H^{3/2-\tilde{\alpha}}}^2\\
  &\lesssim \exp\{(\|\p_t\eta^{n}\|_{L^\infty H^{3/2+\varepsilon_-/2}}+\vert \vert u_{1}^{k}\vert \vert_{L^{\infty}H^{\frac{5}{2}}}+\vert \vert \eta^{n}\vert \vert_{L^{\infty}H^{3}})T\}\bigg(\sum_{j=0}^{2}\|\p_t^{j}u(0)\|_{L^2(\Om)}^2+\|\p_t\xi(0)\|_{H^{3/2+(\varepsilon_--\alpha)/2}}^2\\&\quad+\|\xi(0)\|_{H^{3/2+(\varepsilon_--\alpha)/2}}^2+\|(F^1-F^4-F^5)(0)\|_{(H^1)^\ast}^2+\mathfrak{F}^{n,k}+\mathscr{S}^{n,k}+\vert \vert u_{1}^{k}\vert \vert_{L_{t}^{\infty}W^{2,q_{-}}}\vert \vert \partial_{t}\xi_{d}\vert \vert_{L_{t}^{2}H^{\frac{3}{2}-2\tilde{\alpha}}}^{2}\bigg).
\end{aligned}
\end{equation}

Notice that on the RHS of \eqref{est:enhance_2'}, there is a term $\vert \vert \partial_{t}\xi_{d}\vert \vert_{H^{\frac{3}{2}-2\tilde{\alpha}}}$, which cannot be controlled by the existing bound $\partial_{t}\xi_{d}\in L^{2}H^{1}$ from the energy estimate \eqref{eq:xi_m}. Thus we plan to utilize a bootstrapping argument to improve $\dt\xi_d$ estimates.

Setting $\tilde{\alpha}=\frac{1}{4}$ which implies $\frac{3}{2}-2\tilde\alpha=1$, we have $\vert \vert \partial_{t}\xi_{d}\vert \vert_{H^{\frac{3}{2}-2\tilde{\alpha}}}=\vert \vert \partial_{t}\xi_{d}\vert \vert_{H^{1}}$ is controllable. Based on \eqref{est:enhance_2'}, we obtain the improved bound for $\partial_{t}\xi_{d}\in L^{2}H^{\frac{3}{2}-\tilde\alpha}=\partial_{t}\xi_{d}\in L^{2}H^{\frac{5}{4}}$. 
Resetting $\tilde\alpha=\frac{1}{8}$, the term $\vert \vert \partial_{t}\xi_{d}\vert \vert_{H^{\frac{3}{2}-2\tilde{\alpha}}}=\vert \vert \partial_{t}\xi_{d}\vert \vert_{H^{\frac{5}{4}}}$ is bounded by the result of the previous step. Again based on \eqref{est:enhance_2'}, we obtain the improved bound for $\partial_{t}\xi_{d}\in L^{2}H^{\frac{3}{2}-\tilde\alpha}=\partial_{t}\xi_{d}\in L^{2}H^{\frac{11}{8}}$.
Iterating this procedure allows us to bootstrap the regularity of $\p_{t}\xi_{d}$. Let $\tilde{\alpha}_{i}$ denote the value of $\tilde{\alpha}$ chosen at the $i$-th step. {We have $\tilde{\alpha}_{i+1}=\frac{1}{2}\tilde{\alpha}_{i}$}. It is then straightforward to verify that $\tilde{\alpha}_{i}\rightarrow 0$. Therefore, there exists an integer $N$ such that $\tilde{\alpha}_{i}\leq \alpha$ for $i>N$. At that stage, the term including $\|\p_{t}\xi_{d}\|_{L_{t}^{2}H^{\frac{3}{2}-2\tilde{\alpha}}}$ can be absorbed by the LHS, and we obtain the following estimate:

\begin{equation}\label{est:enhance_2}
\begin{aligned}
  &\|\p_t\xi_{d}\|_{L^2H^{3/2-{\alpha}}}^2\\
  &\lesssim \exp\{(\|\p_t\eta^{n}\|_{L^\infty H^{3/2+\varepsilon_-/2}}+\vert \vert u_{1}^{k}\vert \vert_{L^{\infty}H^{\frac{5}{2}}}+\vert \vert \eta^{n}\vert \vert_{L^{\infty}H^{3}})T\}\bigg(\sum_{j=0}^{2}\|\p_t^{j}u(0)\|_{L^2(\Om)}^2+\|\p_t\xi(0)\|_{H^{3/2+(\varepsilon_--\alpha)/2}}^2\\&\quad+\|\xi(0)\|_{H^{3/2+(\varepsilon_--\alpha)/2}}^2+\|(F^1-F^4-F^5)(0)\|_{(H^1)^\ast}^2+\mathfrak{F}^{n,k}+\mathscr{S}^{n,k}\bigg).
\end{aligned}
\end{equation}

%%{\red{\textbf{Remark:This is the only place I find where the zero mean value of $\p_{t}^{k}\eta$ is needed. It seems difficult to show the zero mean-value property for the $\varepsilon$ dependent solution. But finally, when we let $\varepsilon\rightarrow 0$, the final solution satisfies the zero mean value property. This comes from the fact that $\int_{0}^{\pi}\eta(0)=0$ and $\int_{0}^{\pi}\p_{t}\eta=\int_{0}^{\pi}u\cdot \mathcal{N}=\int_{\Omega}\dive_{\mathcal{A}}u=0$}}}.
  
\paragraph{\underline{Step 7 -- Pressure Estimates and Strong Solution}}
  Due to the convergence in \eqref{converge_1} and the estimate \eqref{est:enhance_2}, we may pass to the limit in \eqref{eq:galerkin} for almost every $t\in [0, T]$ so that $(v_{d},\xi_{d})$ is the solution to the following equation for any fixed $(n,k)$ and $(\xi_{l},v_{l})$.
  \begin{equation}\label{eq:weak_limit}
\begin{aligned}
  &(\p_tv_{d},w)_{\mathcal{H}^0}+((v_{d},w))+(\xi_{d},w\cdot\mathcal{N}^{n})_{1,\Sigma_{0}}+(\int_{0}^{t}\mathcal{R}_{1a}(\rho_{0},\p_{\theta}\eta^{k},\eta^{k})\p_{\theta}\partial_{t}\xi_{d},\p_{\theta}(w\cdot\mathcal{N}^{n}))_{L^{2}}+[v_{d}\cdot \mathcal{N}^{n},w\cdot\mathcal{N}^{n}]_{\pi}\\&
  =\int_{\Om}F^1\cdot wJ^{n}+\int_{0}^{\pi}F^4\cdot w-\int_{\Sigma_s}F^5(w\cdot\tau)J^{n}-[F^7,w\cdot\mathcal{N}^{n}]_\pi\\
  &\quad+((R^{n}D_{t}v_{l}^{k}),w)_{\mathcal{H}^{0}}+((R^{n}v_{l}^{k},w))-(I^{n,k}_{1},\p_{\theta}(w\cdot \mathcal{N}^{n}))_{L^{2}(0,\pi)}-(\int_{0}^{t}\mathcal{R}_{1a}(\rho_{0},\p_{\theta}\eta^{k},\eta^{k})\p_{\theta}\p_{\theta}\p_{t}\eta^{k}\p_{\theta}\p_{t}\xi_{l}^{k},\p_{\theta}(w\cdot\mathcal{N}^{n}))_{L^{2}},
\end{aligned}
  \end{equation}
  coupled with kinematic boundary condition:
  \begin{align}
      \partial_{t}\xi_{d}=&\frac{1}{\rho}v_{d}\cdot \mathcal{N}^{n}+\frac{1}{\rho}(R^{n}v_{l}^{k})\cdot \mathcal{N}^{n}+\int_{0}^{t}\frac{1}{\rho}(\partial_{t}u^{k}\cdot\p_{t}\mathcal{N}(\xi_{l}^{k})+u_{r}^{k}\p_{t}^{2}\xi_{l}^{k}+u_{\theta}^{k}\p_{t}\p_{\theta}\xi_{d})-\int_{0}^{t}\frac{\p_{t}\eta}{\rho^{2}}u^{k}\cdot \p_{t}\mathcal{N}(\xi_{l}^{k})+I_{2}^{n,k}\notag\\
    &-\mathfrak{m}(t)\xi_{s}+\int_{0}^{t}(\mathfrak{n}^{\prime\prime}(t)(\frac{\p_{t}\p_{\theta}\xi}{\rho})\sin\theta-\mathfrak{n}^{\prime}(t)\p_{t}\eta\frac{\p_{t}\p_{\theta}\xi}{\rho^{2}}\sin\theta+\mathfrak{n}^{\prime}(t)\frac{\p_{t}\p_{\theta}\xi_{d}}{\rho}\sin\theta)+I_{3}^{n,k}+F^{6}~~\operatorname{on}~~\Sigma
  \end{align}
  for any $w\in \mathcal{W}(t)$.
  Then applying Theorem \ref{thm:pressure}, we recover the pressure $q_{d}$.
Moreover $(v_{d},q_{d},\xi_{d})$ is the strong solution to equation \eqref{eq:quasi_linear_{s}} satisfying the following boundedness
\begin{equation}\label{est:diss_4}
  \begin{aligned}
  &\| v_{d}\|_{L^2W^{2,q_-}}^2 + \|q_{d}\|_{L^2W^{1,q_-}}^2 + \|\xi_{d}\|_{L^2W^{3-1/q_-,q_-}}^2 \\
  &\lesssim  \|\p_tv_{d}\|_{L^2H^0}^2 +\|\xi_{d} \|_{L^2H^1}^2 + \|v_{d}\|_{L^2H^1}^2 + \|\xi_{d}\|_{L^2H^{3/2-\alpha}}^2 + \|[v_{d}\cdot \mathcal{N}]_\pi\|_{L^2_t}^2+\mathfrak{F}^{n,k}+\mathscr{S}^{n,k}+\mathfrak{K}(u,\eta,p).
  \end{aligned}
  \end{equation}

By applying the Sobolev embedding $H^{3/2-\alpha}\hookrightarrow W^{2-1/q_-,q_-}$, we plunge \eqref{est:enhance_2} into \eqref{est:diss_4} to derive the following elliptic estimate.
\begin{equation}\label{est:ellip_1}
\begin{aligned}
  &\|v_{d}\|_{L^2W^{2,q_-}}^2+\|q_{d}\|_{L^2W^{1,q_-}}^2+\|\xi_{d}\|_{L^2W^{3-1/q_-,q_-}}^2\\
  &\lesssim \exp\{(\|\p_t\eta^{n}\|_{L^\infty H^{3/2+\varepsilon_-/2}}+\vert \vert u_{1}^{k}\vert \vert_{L^{\infty}H^{\frac{5}{2}}}+\vert \vert \eta^{n}\vert \vert_{L^{\infty}H^{3}})T\}\bigg(\sum_{j=0}^{2}\|\p_t^{j}u(0)\|_{L^2(\Om)}^2+\|\p_t\xi(0)\|_{H^{3/2+(\varepsilon_--\alpha)/2}}^2\\&\quad+\|\xi(0)\|_{H^{3/2+(\varepsilon_--\alpha)/2}}^2+\|(F^1-F^4-F^5)(0)\|_{(H^1)^\ast}^2+\mathfrak{F}^{n,k}+\mathscr{S}^{n,k}+\mathfrak{K}(u,p,\eta)  \bigg).
\end{aligned}
\end{equation}

\noindent Therefore, the bound \eqref{est:ellip_1} implies that $(v_{d},q_{d},\xi_{d})$ have one additional degree of regularity, besides the weak solutions. This allows us to derive a uniform bound for the solution to \eqref{eq:quasi_linear_{s}}  that is independent of $k$. This uniform estimate will be established in the next step.

\paragraph{\underline{Step 8 -- Uniform $k$ bound for the solution}}

In this step, we keep $n$ and $(v_{l},\xi_{l})$. Now, for any prescribed $k$, we have a strong solution to \eqref{eq:quasi_linear_{s}}. Suppose that this solution obtained by step 1 to step 7 is denoted by $(v_{d}^{k},q_{d}^{k},\xi_{d}^{k})$. In this step, we aim to show that this solution converge to the solution function of the following system
\begin{equation}{\label{eq:quasi_linear_{s2}}}
    \begin{cases}
         \partial_{t}v_{d}+\operatorname{div}_{\mathcal{A}^{n}}S_{\mathcal{A}^{n}}(v_{d},q_{d})+\p_{t}(R^{n}v_{l})+\dive_{\mathcal{A}^{n}}\nabla_{\mathcal{A}^{n}}(R^{n}v_{l})=F^{1}(u,p,\eta)~~~&\operatorname{in}~~\Omega,\\
    \operatorname{div}_{\mathcal{A}^{n}}v_{d}=0~~~&\operatorname{in}~~\Omega,\\
    S_{\mathcal{A}^{n}}(q_{d},v_{d})\mathcal{N}^{n}+\nabla_{\mathcal{A}}(R^{n}v_{l})\mathcal{N}^{n}=(g\xi_{d}\sin\theta+\sigma(\mathcal{P}_1(\rho_0,\rho_0')\xi_{d}+\mathcal{P}_{2}(\rho_{0},\rho_{0}')\xi_{d}'\\\quad\quad\quad\quad\quad\quad\quad\quad-\frac{1}{\rho_{0}}\partial_{\theta}(\frac{\rho_{0}^{2}\xi_{d}'}{(\rho_{0}^{2}+\rho_{0}'^{2})^{\frac{3}{2}}}-\frac{\rho_{0}'\rho_{0}\xi_{d}}{(\rho_{0}^{2}+\rho_{0}'^{2})^{\frac{3}{2}}}))\mathcal{N}^{n}+\p_{\theta}I_{1}\mathcal{N}^{n}+\sigma\int_{0}^{t}\mathcal{R}_{1a}(\rho_{0},\p_{\theta}\eta,\eta)(\p_{\theta}\p_{t}\xi_{l})\p_{\theta}\p_{t}\eta ds\mathcal{N}^{n}\\
    \quad\quad\quad\quad\quad\quad\quad\quad+\sigma\p_{\theta}(\int_{0}^{t}\mathcal{R}_{1a}(\rho_{0},\partial_{\theta}\eta,\eta)\partial_{\theta}\partial_{t}\xi_{d})\mathcal{N}^{n}+F^{4}(u,p,\eta)~~~&\operatorname{on}~~\Sigma,\\
    (S_{\mathcal{A}}(q_{d},v_{d})\nu+\nabla_{\mathcal{A}}(R^{n}v_{l})\nu-\beta v_{d})\cdot \tau=F^{5}(u,\eta,p)~~~&\operatorname{on}~~\Sigma_{s},\\
    v_{d}\cdot \nu=0~~~&\operatorname{on}~~\Sigma_{s},\\
    \partial_{t}\xi_{d}=\frac{1}{\rho}v_{d}\cdot \mathcal{N}^{n}+\frac{1}{\rho}(R^{n}v_{l})\cdot \mathcal{N}^{n}+\int_{0}^{t}\frac{1}{\rho}(\partial_{t}u\cdot\p_{t}\mathcal{N}(\xi_{l})+u_{r}\p_{t}^{2}\xi_{l}+u_{\theta}\p_{t}\p_{\theta}\xi_{d})-\int_{0}^{t}\frac{\p_{t}\eta}{\rho^{2}}u\cdot \p_{t}\mathcal{N}(\xi_{l})+I_{2}^{n}\\
    \quad\quad\quad\quad-\mathfrak{m}(t)\xi_{s}+\int_{0}^{t}(\mathfrak{n}^{\prime\prime}(t)(\frac{\p_{t}\p_{\theta}\xi_{l}}{\rho})\sin\theta-\mathfrak{n}^{\prime}(t)\p_{t}\eta\frac{\p_{t}\p_{\theta}\xi_{l}}{\rho^{2}}\sin\theta+\mathfrak{n}^{\prime}(t)\frac{\p_{t}\p_{\theta}\xi_{d}}{\rho}\sin\theta)+I_{3}^{n}+F^{6}(u,p)~~~&\operatorname{on}~~\Sigma,\\
    \sigma(\mp (\frac{\rho_{0}^{2}\xi_{d}'}{(\rho_{0}^{2}+\rho_{0}'^{2})^{\frac{3}{2}}}-\frac{\rho_{0}'\rho_{0}\xi_{d}}{(\rho_{0}^{2}+\rho_{0}'^{2})^{\frac{3}{2}}})\pm \int_{0}^{t}\mathcal{R}_{1a}(\rho_{0},\partial_{\theta}\eta,\eta)\partial_{t}\partial_{\theta}\xi_{d})(\frac{\pi}{2}\pm \frac{\pi}{2})ds\pm\int_{0}^{t}\mathcal{R}_{1a}(\rho_{0},\p_{\theta}\eta,\eta)(\p_{t}\p_{\theta}\xi_{l})\p_{t}\p_{\theta}\eta\pm I_{1})\\
    =\kappa (v_{d}\cdot \mathcal{N}^{n})(\frac{\pi}{2}\pm \frac{\pi}{2})-{F}^{7},
    \end{cases}
\end{equation}
as $k\rightarrow \infty$, where
\begin{align}
    I_{1}^{n}=\p_{\theta}(\mathcal{R}_{1a}(\rho_{0},\p_{\theta}\eta(0),\eta_{0})\p_{\theta}\p_{t}\xi_{0}^{n})~~~~\operatorname{and }~~~~~~~I_{2}^{n}=u(0)\p_{\theta}\p_{t}\xi_{0}^{n}.
\end{align}
To derive this convergence result, we need to first establish a uniform bound for $(v_{d}^{k},q_{d}^{k},\xi_{d}^{k})$ independent of $k$. Observing that the boundedness for $(v_{d}^{k},\xi_{d}^{k},q_{d}^{k})$ depends on $\vert \vert u^{k}\vert \vert_{L^{\infty}H^{\frac{5}{2}}}$, $\vert \vert \eta^{k}\vert \vert_{L^{\infty}H^{3}}$, which may blow up as $k\rightarrow \infty$, we need to modify the estimates of several terms in Step 3 and Step 4 and Step 6 to avoid this dependence.

 For energy-dissipation estimate, we only need to modify the way we bound \eqref{eq:n1}, \eqref{eq:n2} and \eqref{eq:n0}, while keeping the estimates for all other terms unchanged. For equation \eqref{eq:n1}, we estimate it as follows

  \begin{align}
      (\partial_{t}\xi_{d}^{m},\partial_{t}u_{\theta}^{k}\partial_{t}\p_{\theta}\xi_{l}^{k})_{1,\Sigma_{k}}\lesssim& \vert \vert \partial_{t}\xi_{d}^{m}\vert \vert_{W^{1,\frac{1}{ \varepsilon_{-}}}}\vert \vert \partial_{t}u_{1}^{k}\vert \vert_{W^{1,\frac{1}{1-\varepsilon_{-}}}(\Sigma)}\vert \vert \partial_{t}\xi_{l}^{k}\vert \vert_{W^{1,+\infty}}
      +\vert \vert \partial_{t}\xi_{d}^{m}\vert \vert_{W^{1,\frac{1}{\varepsilon_{-}}}} \vert \vert  \p_{t}u_{1}^{k}\vert\vert_{L^{\infty}(\Sigma)}\vert \vert \partial_{t}\xi_{l}^{k}\vert \vert_{W^{2,\frac{1}{1-\varepsilon_{-}}}}\notag\\\lesssim& \vert \vert \partial_{t}\xi_{d}^{m}\vert \vert_{H^{\frac{3}{2}-\alpha}}\vert \vert \partial_{t}u^{k}\vert \vert_{W^{2,q_{-}}}\vert \vert \partial_{t}\xi_{l}^{k}\vert \vert_{H^{\frac{3}{2}+\frac{\varepsilon_{-}-\alpha}{2}}}+ \vert \vert \partial_{t}\xi_{d}^{m}\vert \vert_{H^{\frac{3}{2}-\alpha}}\vert \vert \partial_{t}u^{k}\vert \vert_{H^{1+\frac{\varepsilon_{-}}{2}}}\vert \vert \partial_{t}\xi_{l}^{k}\vert \vert_{W^{3-\frac{1}{q_{-}},q_{-}}}\label{eq:n3}.
  \end{align}

  \noindent Similarly for equation \eqref{eq:n2}, we have

 \begin{align}{\label{eq:n4}}
 \begin{aligned}
(\partial_{t}\xi_{d}^{k},u_{\theta}^{k}\p_{\theta}\partial_{t}\xi_{d}^{k})_{1,\Sigma_{k}}\lesssim& \|\p_{t}\xi_{d}^{k}\|^{2}_{W^{1,\frac{2}{\varepsilon_{-}}}}\|u\|_{W^{1,\frac{1}{1-\varepsilon_{-}}}(\Sigma)}\lesssim \|\p_{t}\xi_{d}^{k}\|^{2}_{H^{\frac{3}{2}-\alpha}}\|u\|_{W^{2,q_{-}}},\\
      (\partial_{t}\xi_{d}^{k},\frac{\p_{t}\eta}{\rho^{2}}u\cdot \p_{t}\mathcal{N}(\xi_{l}))_{1,\Sigma_{k}}\lesssim& \|\p_{t}\eta\|_{H^{\frac{3}{2}+\frac{\varepsilon_{-}-\alpha}{2}}}\|\p_{t}\xi_{d}^{k}\|_{W^{1,\frac{2}{\varepsilon_{-}}}}\|u\|_{W^{1,\frac{1}{1-\varepsilon_{-}}}(\Sigma)}\|\p_{t}\xi_{l}\|_{W^{2,\frac{1}{1-\varepsilon_{-}}}}\\
      &\lesssim \|\p_{t}\xi_{d}^{k}\|_{H^{\frac{3}{2}-\alpha}}\|u\|_{W^{2,q_{-}}}\|\p_{t}\xi_{l}\|_{W^{3-\frac{1}{q_{-}},q_{-}}},\\
      (\partial_{t}\xi_{d}^{k},\mathfrak{n}^{\prime \prime}(t)\frac{\p_{t}\p_{\theta}\xi_{l}^{k}}{\rho}\sin\theta)_{1,\Sigma_{k}}\lesssim& \|\p_{t}\xi_{d}^{k}\|_{W^{1,\frac{2}{\varepsilon_{-}}}}\|D_{t}u\|_{H^{1}}\|\p_{t}\xi_{l}^{k}\|_{W^{2,\frac{1}{1-\varepsilon_{-}}}}\lesssim \|\p_{t}\xi_{d}^{k}\|_{H^{\frac{3}{2}-\alpha}}\|D_{t}u\|_{H^{1}}\|\p_{t}\xi_{l}^{k}\|_{W^{3-\frac{1}{q_{-}},q_{-}}},\\
      (\partial_{t}\xi_{d}^{k},\mathfrak{n}^{\prime}(t)\frac{\p_{t}\p_{\theta}\xi_{l}^{k}}{\rho}\sin\theta)_{1,\Sigma_{k}}\lesssim& \|\p_{t}\xi_{d}^{k}\|_{W^{1,\frac{2}{\varepsilon_{-}}}}\|u\|_{H^{1}}\|\p_{t}\xi_{l}^{k}\|_{W^{2,\frac{1}{1-\varepsilon_{-}}}}\lesssim \|\p_{t}\xi_{d}^{k}\|_{H^{\frac{3}{2}-\alpha}}\|u\|_{H^{1}}\|\p_{t}\xi_{l}^{k}\|_{W^{3-\frac{1}{q_{-}},q_{-}}},\\
      (\partial_{t}\xi_{d}^{k},\mathfrak{n}^{\prime}(t)\frac{\p_{t}\p_{\theta}\xi_{d}^{k}}{\rho}\sin\theta)_{1,\Sigma_{k}}\lesssim& \|\p_{t}\xi_{d}^{k}\|_{W^{1,\frac{2}{\varepsilon_{-}}}}^{2}\|u\|_{H^{1}}\lesssim \|\p_{t}\xi_{d}^{k}\|^{2}_{H^{\frac{3}{2}-\alpha}}\|u\|_{H^{1}},\\
      (\p_{t}\xi_{d}^{k},\frac{\p_{t}\xi_{l}^{k}}{\rho^{2}}u^{k}\cdot \mathcal{N}^{n})\lesssim& \|\p_{t}\xi_{d}^{k}\|_{H^{\frac{3}{2}-\alpha}}\|u^{k}\|_{W^{2,q_{-}}}\|\p_{t}\xi_{l}^{k}\|_{H^{\frac{3}{2}+\frac{\varepsilon_{-}-\alpha}{2}}}
      , \\
      \end{aligned}
  \end{align}
  \begin{align}
      \begin{aligned}
         (\partial_{t}\xi_{d}^{k},D_{t}(R^{n}v_{l}^{k})\cdot \mathcal{N}^{n})_{1,\Sigma_{k}}\lesssim &\|\p_{t}\xi_{d}^{k}\|_{H^{\frac{3}{2}-\alpha}}(\|\p_{t}\eta^{n}\|_{H^{\frac{3}{2}+\frac{\varepsilon_{-}-\alpha}{2}}}\|D_{t}v_{l}^{k}\|_{W^{2,q_{-}}}+\|\p_{t}\eta^{n}\|_{W^{3-\frac{1}{q_{-}},q_{-}}}\|D_{t}v_{l}^{k}\|_{1+\frac{\varepsilon_{-}}{2}}\\
         &\quad\quad\quad\quad\quad +\|\p_{t}^{2}\eta^{n}\|_{H^{2}}\|v_{l}^{k}\|_{W^{2,q_{-}}}+\|\p_{t}^{2}\eta^{n}\|_{H^{\frac{3}{2}-\alpha}}\|v_{l}^{k}\|_{W^{2,q_{-}}}),\\
         ~\\
         (\p_{t}\xi_{d}^{k},F^{6})_{1,\Sigma_{k}}\lesssim&\|\p_{t}\xi_{d}^{k}\|_{H^{\frac{3}{2}-\alpha}}\|F^{6}(u,\eta)\|_{W^{1,\frac{1}{1-\alpha}}}.
      \end{aligned}
  \end{align}
  
  For the uniform $\frac{3}{2}-\alpha$ estimate, we apply the functional calculus for Gravity-Capillary operator $\mathcal{K}$(Not the modified operator). Most of the computations are identical to those in Step 6 except for the estimate of the term including $\mathcal{R}_{1a}(\rho_{0},\p_{\theta}\eta^{k},\eta^{k})$. We estimate this term as follows

  \begin{align}{\label{eq:n5}}
  \begin{aligned}
      \int_{0}^{\pi}\p_{\theta}D_{j}^{s}(\p_{t}\xi_{d}-a_{0}(t)\rho_{0})\mathcal{R}_{1a}(\rho_{0},\p_{\theta}\eta^{k},\eta^{k})\p_{\theta}\p_{t}\xi_{d}^{k}\lesssim& \|\p_{\theta}D_{j}^{s}(\p_{t}\xi_{d}-a_{0}(t)\rho_{0})\|_{H^{-\frac{s}{2}}}\|\mathcal{R}_{1a}(\rho_{0},\p_{\theta}\eta^{k},\eta^{k})\|_{H^{\frac{1}{2}+}}\|\p_{\theta}\p_{t}\xi_{d}^{k}\|_{H^{\frac{s}{2}}}\\
      \lesssim& \big(\|\p_{t}\xi_{d}\|_{H^{\frac{3}{2}-\alpha}}+\|v_{d}\|_{H^{1}}+\|\p_{t}u^{k}\|_{L_{t}^{\infty}H^{1}}\|\p_{t}\xi_{l}^{k}\|_{L_{t}^{\infty}H^{1}}\\&+\|u^{k}\|_{L_{t}^{\infty}H^{1}}\|\p_{t}\xi_{d}^{k}\|_{L_{t}^{\infty}H^{1}}+\|\p_{t}\eta^{n}\|_{W^{3-\frac{1}{q_{+}},q_{+}}}\|v_{l}^{k}\|_{W^{2,q_{-}}}\\
      &\quad\big)
      \cdot\|\eta^{k}\|_{W^{3-\frac{1}{q_{-}},q_{-}}}\|\p_{t}\xi_{d}^{k}\|_{H^{\frac{3}{2}-\alpha}}.
       \end{aligned}
  \end{align}

We now substitute the estimates \eqref{eq:n3} and \eqref{eq:n4} into the bounds obtained in Step~3, apply \eqref{eq:n5} to control the $H^{\frac32-\alpha}$ norm, and then repeat the elliptic estimates from Step~7. Combining these estimates, we obtain the following uniform bounds for $(v_d^k,\xi_d^k,q_d^k)$.

  \begin{equation}\label{est:uniform_n1}
\begin{aligned}
  &\|v_{d}^{k}\|_{L^2W^{2,q_-}}^2+\|q_{d}^{k}\|_{L^2W^{1,q_-}}^2+\|\xi_{d}^{k}\|_{L^2W^{3-1/q_-,q_-}}^2+\sup_{0\le t\le T}(\|\p_tv_{d}^{k}\|_{0}^2+\|\p_t\xi_{d}^{k}\|_1^2)+\|\p_tv_{d}^{k}\|_{L^2H^1}^2+\|\p_tv_{d}^{k}\|_{L^2H^0(\Sigma_s)}^2\\&+\|[\p_tv_{d}^{k}\cdot\mathcal{N}]_\pi\|_{L^2([0,T])}^2+\|\p_t\xi_{d}^{k}\|_{L^2H^{3/2-\alpha}}^2
   +\sup_{0\le t\le T}(\|v_{d}^{k}\|_{0}^2+\|\xi_{d}^{k}\|_1^2)+\|v_{d}^{k}\|_{L^\infty H^1}^2\\&+\|v_{d}^{k}\|_{L^\infty H^0(\Sigma_s)}^2+\|[v_{d}^{k}\cdot\mathcal{N}]_\pi\|_{L^\infty([0,T])}^2+\|\xi_{d}^{k}\|_{L^\infty H^{3/2-\alpha}}^2\\
   &\lesssim\exp\{(\|\p_t\eta^{n}\|_{L^\infty H^{3/2+\varepsilon_-/2}}+\vert \vert \eta^{n}\vert \vert_{L^{\infty}H^{3}})T\}\bigg(\mathcal{E}(0)+\|(F^1-F^4-F^5)(0)\|_{(H^1)^\ast}^2+\|\p_{t}F^{6}\|^{2}_{L_{t}^{2}W^{1,\frac{1}{1-\alpha}}}\\
  &\quad+(1+\|\p_t\eta^{n}\|_{L^\infty H^{3/2+\varepsilon_-/2}}^2)(\|F^1\|_{L^2L^{q_-}}^2+\|F^4\|_{L^2W^{1-1/q_-,q_-}}^2+\|F^5\|_{L^2W^{1-1/q_-,q_-}}^2)+\|F^{6}\|^{2}_{L_{t}^{2}W^{2-\frac{1}{q_{-}},q_{-}}}\\
  &\quad+(1+\|\p_t\eta^{n}\|_{L^\infty H^{3/2+\varepsilon_-/2}}^2)\|\p_t(F^1-F^4-F^5)\|_{(\mathcal{H}^1_T)^{\ast}}^2+\sum_{j=0}^1\|[F^{7,j}]_\pi\|_{L^2}^2+\mathfrak{K}(u,p,\eta)\mathfrak{K}_{-}(v_{l},\xi_{l})+(T^{\frac{1}{2}}\|\p_{t}^{2}\eta^{n}\|^{2}_{L_{t}^{\infty}H^{\frac{5}{2}}}\\
  &\quad+T^{\frac{1}{2}}\|\p_{t}^{3}\bar{\eta}^{n}\|_{L_{t}^{\infty}H^{1}}^{2})(\mathfrak{K}(u,p,\eta)+\mathfrak{K}_{-}(v_{l},\xi_{l}))\bigg),
\end{aligned}
\end{equation}

\noindent where the definition of $\mathfrak{K}_{-}$ will be given in the next subsection by \eqref{def:DEK_0-}. Using the convergence result for $(\eta^{k},u^{k},v_{l}^{k},\xi_{l}^{k})$, we obtain the following boundedness from \eqref{est:uniform_n1}.
  \begin{equation}\label{est:uniform_n2}
\begin{aligned}
  &\|v_{d}^{k}\|_{L^2W^{2,q_-}}^2+\|q_{d}^{k}\|_{L^2W^{1,q_-}}^2+\|\xi_{d}^{k}\|_{L^2W^{3-1/q_-,q_-}}^2+\sup_{0\le t\le T}(\|\p_tv_{d}^{k}\|_{0}^2+\|\p_t\xi_{d}^{k}\|_1^2)+\|\p_tv_{d}^{k}\|_{L^2H^1}^2+\|\p_tv_{d}^{k}\|_{L^2H^0(\Sigma_s)}^2\\&+\|[\p_tv_{d}^{k}\cdot\mathcal{N}]_\pi\|_{L^2([0,T])}^2+\|\p_t\xi_{d}^{k}\|_{L^2H^{3/2-\alpha}}^2
   +\sup_{0\le t\le T}(\|v_{d}^{k}\|_{0}^2+\|\xi_{d}^{k}\|_1^2)+\|v_{d}^{k}\|_{L^\infty H^1}^2\\&+\|v_{d}^{k}\|_{L^\infty H^0(\Sigma_s)}^2+\|[v_{d}^{k}\cdot\mathcal{N}]_\pi\|_{L^\infty([0,T])}^2+\|\xi_{d}^{k}\|_{L^\infty H^{3/2-\alpha}}^2\\
   &
   \lesssim\exp\{(\|\p_t\eta^{n}\|_{L^\infty H^{3/2+\varepsilon_-/2}}+\vert \vert \eta^{n}\vert \vert_{L^{\infty}H^{3}})T\}\bigg(\mathcal{E}(0)+\|(F^1-F^4-F^5)(0)\|_{(H^1)^\ast}^2\\
  &\quad+\mathfrak{F}^{n}+(T^{\frac{1}{2}}\|\p_{t}^{2}\eta^{n}\|^{2}_{L_{t}^{\infty}H^{\frac{5}{2}}}+T^{\frac{1}{2}}\|\p_{t}^{3}\bar{\eta}^{n}\|_{L_{t}^{\infty}H^{1}}^{2})(\mathfrak{K}(u,p,\eta)+\mathfrak{K}_{-}(v_{l},\xi_{l}))+\mathfrak{K}(u,p,\eta)\mathfrak{K}_{-}(v_{l},\xi_{l})\bigg),
\end{aligned}
\end{equation}
where $\mathfrak{F}^{n}$ is obtained from $\mathfrak{F}$ by replacing $\eta$ with $\eta^{n}$.

\paragraph{\underline{Step 9 -- Strong solution to \eqref{eq:quasi_linear_{s2}}}}:
Using step 8, for any prescribed $(v_{l},\xi_{l})$ and $n$, we have a uniform bound for the solution of \eqref{eq:quasi_linear_{s}} denoted as $(v^{k}_{d},q^{k}_{d},\xi^{k}_{d})$ and $(\partial_{t}v_{d}^{k},\partial_{t}\xi_{d}^{k})$ independent of $k$. This uniform boundedness implies the following weak convergence

\begin{align*}
    (v_{d}^{k},\xi_{d}^{k},q_{d}^{k})\rightharpoonup& (v_{d},\xi_{d},q_{d})~~\operatorname{in}~~(L^{2}W^{2,q_{-}}\times L^{2}W^{3-\frac{1}{q_{-}},q_{-}}\times L^{2}W^{1,q_{-}}),\\
    (v_{d}^{k},\xi_{d}^{k},q_{d}^{k})\rightharpoonup &(v_{d},\xi_{d},q_{d})~~\operatorname{in}~~(L^{\infty}H^{1}\times L^{\infty}H^{1}\times L^{\infty}L^{2}),\\
     (\partial_{t} v_{d}^{k},\partial_{t}\xi_{d}^{k})\rightharpoonup& (\partial_{t}v_{d},\partial_{t}\xi_{d})~~\operatorname{in}~~(L^{\infty}L^{2}\times L^{\infty}H^{1})\cap (L_{t}^{2}H^{1}\times L_{t}^{2}H^{\frac{3}{2}-\alpha}).
\end{align*}

\noindent From the weak convergence above, together with the strong convergence of $\eta^{k}$ to $\eta$, we conclude that $(v_d,\xi_{d})$ is the weak solution to the following equation

\begin{equation}\label{eq:weak_limit1}
  \begin{aligned}
  & (\p_tv_{d}, w)_{\mathcal{H}^0}+((v_{d},w))+ (\xi_{d}, w\cdot\mathcal{N}^{n})_{1,\Sigma_{0}}+(\int_{0}^{t}\mathcal{R}_{1a}(\rho_{0},\p_{\theta}\eta,\eta)\p_{\theta}\p_{t}\xi_{d},\p_{\theta}(w\cdot\mathcal{N}^{n}))_{L^{2}}\\ &+ [v_{d}\cdot\mathcal{N}^{n},w\cdot\mathcal{N}^{n}]_\pi
  =\int_\Om F^1\cdot wJ^{n}-\int_{0}^{\pi} F^4\cdot w-\int_{\Sigma_s}F^5(w\cdot\tau)J^{n} -[F^7,w\cdot\mathcal{N}^{n}]_\pi\\
  &\quad\quad\quad\quad\quad\quad+(\p_{t}(R^{n}v_{l}),w)_{\mathcal{H}^{0}}+((R^{n}v_{l},w))-(I_{1}^{n},\p_{\theta}(w\cdot \mathcal{N}^{n}))_{L^{2}(0,\pi)}\\
   &\quad\quad\quad\quad\quad\quad+(\p_{\theta}\int_{0}^{t}\mathcal{R}_{1a}(\rho_{0},\p_{\theta}\eta,\eta)\p_{\theta}\p_{t}\xi_{l}\p_{t}\p_{\theta}\eta,(w\cdot\mathcal{N}^{n}))_{L^{2}}+[\int_{0}^{t}\mathcal{R}_{1a}(\rho_{0},\p_{\theta}\eta,\eta)\p_{\theta}\p_{t}\xi_{l}\p_{t}\p_{\theta}\eta,w\cdot \mathcal{N}^{n}]_{\pi}.
  \end{aligned}
  \end{equation}

  For the kinematic boundary condition, we note that the kinematic boundary condition for smooth-value problem is

  \begin{align}{\label{eq:kinematic_s}}
  \begin{aligned}
      \partial_{t}\xi_{d}=\frac{1}{\rho}v_{d}\cdot \mathcal{N}^{n}+\frac{1}{\rho}(R^{n}v_{l}^{k})\cdot \mathcal{N}^{n}+\int_{0}^{t}\frac{1}{\rho}(\partial_{t}u^{k}\cdot\p_{t}\mathcal{N}(\xi_{l}^{k})+u_{r}^{k}\p_{t}^{2}\xi_{l}^{k}+u_{\theta}^{k}\p_{t}\p_{\theta}\xi_{d})-\int_{0}^{t}\frac{\p_{t}\eta}{\rho^{2}}u^{k}\cdot \p_{t}\mathcal{N}(\xi_{l}^{k})+I_{2}^{n,k}\\
    \quad\quad\quad\quad-\mathfrak{m}(t)\xi_{s}+\int_{0}^{t}(\mathfrak{n}^{\prime\prime}(t)(\frac{\p_{t}\p_{\theta}\xi_{l}^{k}}{\rho})\sin\theta-\mathfrak{n}^{\prime}(t)\p_{t}\eta\frac{\p_{t}\p_{\theta}\xi_{l}^{k}}{\rho^{2}}\sin\theta+\mathfrak{n}^{\prime}(t)\frac{\p_{t}\p_{\theta}\xi_{d}}{\rho}\sin\theta)+I_{3}^{n,k}+F^{6}(u^{k},\eta^{k}).
    \end{aligned}
  \end{align}

  \noindent 
For the right-hand side of the equation above, the
$H^{\frac12-\alpha}$-norm is uniformly bounded with respect to $k$.
Consequently, both sides of the kinematic boundary condition in the
smooth given-data problem are uniformly bounded in
$L^\infty(0,T;H^{\frac12-\alpha})$.
Combining this uniform bound with the compact embedding theorem, we conclude that
\eqref{eq:kinematic_s} converges to

  \begin{align}
  \begin{aligned}
       \partial_{t}\xi_{d}=\frac{1}{\rho}v_{d}\cdot \mathcal{N}^{n}+\frac{1}{\rho}(R^{n}v_{l})\cdot \mathcal{N}^{n}+\int_{0}^{t}\frac{1}{\rho}(\partial_{t}u\cdot\p_{t}\mathcal{N}(\xi_{l})+u_{r}\p_{t}^{2}\xi_{l}+u_{\theta}\p_{t}\p_{\theta}\xi_{d})-\int_{0}^{t}\frac{\p_{t}\eta}{\rho^{2}}u\cdot \p_{t}\mathcal{N}(\xi_{l})+I_{2}^{n}\\
    \quad\quad\quad\quad-\mathfrak{m}(t)\xi_{s}+\int_{0}^{t}(\mathfrak{n}^{\prime\prime}(t)(\frac{\p_{t}\p_{\theta}\xi}{\rho})\sin\theta-\mathfrak{n}^{\prime}(t)\p_{t}\eta\frac{\p_{t}\p_{\theta}\xi}{\rho^{2}}\sin\theta+\mathfrak{n}^{\prime}(t)\frac{\p_{t}\p_{\theta}\xi_{d}}{\rho}\sin\theta)+I_{3}^{n}+F^{6}(u,\eta)
    \end{aligned}
  \end{align}

  \noindent strongly in $L^{\infty}L^{2}$ as $k\rightarrow \infty$. Therefore, it holds that $\xi_{d},v_{d}$ is the weak solution to \eqref{eq:quasi_linear_{s2}}. Moreover, since $(v_{d},\xi_{d},q_{d})\in (L^{2}W^{2,q_{-}}\times L^{2}W^{3-\frac{1}{q_{-}},q_{-}}\times L^{2}W^{1,q_{-}})$, this weak solution is a strong solution to \eqref{eq:quasi_linear_{s2}} which is bounded by the right-hand side of \eqref{est:uniform_n2}.

  \paragraph{\underline{Step 10 -- Contraction Map}}
In this step, we fix $n$. Now we have $v_{d}=D_{t}v, q_{d}=\p_{t}q, \xi_{d}=\p_{t}\xi$. Combining this with the initial data constructed in Appendix \ref{sec:initial_l}, we recover $v,q,\xi$. To proceed to construct the contraction map, we give the following definitions

  \begin{equation}\label{def:dissipation-}
\begin{aligned}
\mathscr{D}_{-}(u,\eta)&:=(\|u\|_{L^{2}W^{2,q_{-}}}^2+\|\eta\|_{L^{2}W^{3-1/q_-, q_-}}^2)+\sum_{j=0}^2\Big(\|\p_t^ju\|_{L^{2}H^{1}}^2+\|\p_t^ju\|_{L^{2}L^2(\Sigma_s)}^2\Big)\\&\quad
+\sum_{j=0}^2\Big(\|\p_t^j\eta\|_{L^{2}H^{3/2-\alpha}}^2+\|[\p_t^{j}u\cdot \mathcal{N^{n}}]_\pi^2\|_{L_{t}^{2}}\Big)+\|\p_t^3\eta\|_{L^{2}H^{1/2-\alpha}}^2+(\|\p_tu\|_{L^{2}W^{2,q_{-}}}^2+\|\p_{t}\eta\|_{L^{2}W^{3-1/q_-, q_-}}^2),
\end{aligned}
\end{equation}
\begin{equation}\label{def:energy-}
\begin{aligned}
\mathscr{E}_{-}(u,\eta)&: =\|u\|_{L^{\infty}W^{2,q_-}}^2+\sum_{i=0}^{1}\|\p_t^{i}u\|_{L_{t}^{\infty}H^{1+\varepsilon_-/2}}^2+\sum_{k=0}^2\|\p_t^ku\|_{L_{t}^{\infty}}^2\\&\quad+\|\eta\|_{L_{t}^{\infty}W^{3-1/q_-, q_-}}^2+\sum_{i=0}^{1}\|\p_t^{i}\eta\|_{L_{t}^{\infty}H^{3/2+(\varepsilon_--\alpha)/2}}^2+\sum_{j=0}^2(\|\p_t^j\eta\|_{L_{t}^{\infty}H^{1}}^2+\|[\p_{t}^{j}u\cdot \mathcal{N}^{n}]_{\pi}\|_{L_{t}^{\infty}}^{2}),
\end{aligned}
\end{equation}
\begin{equation}\label{def:DEK_0-}
\mathfrak{K}_{-}(u,\eta ):=\mathscr{E}_{-}(u,\eta)+\mathscr{D}_{-}(u,\eta).
\end{equation}
\noindent together with the functional space
\begin{align}
    \mathscr{H}:=\{(v,\xi)\in L_{t}^{\infty}\mathcal{W}_{\sigma}(\Omega)\times L_{t}^{\infty}H^{1}|\mathfrak{K}_{-}(v,\xi)<+\infty,~v(0,x)=v_{0}^{n},~\p_{t}v(0,x)=\p_{t}v_{0}^{n},~\xi(0,x)=\xi_{0}^{n},\p_{t}\xi(0,x)=\p_{t}\xi_{0}^{n}\},
\end{align}
\noindent where $v_{0}^{n},\p_{t}v_{0}^{n},\xi_{0}^{n},\p_{t}\xi_{0}^{n}$ are defined as in Appendix \ref{sec:initial_l}. 

We denote the metric induced by norm $\mathfrak{K}_{-}(u,\eta)$ by $d(u,\eta)$. Based on a similar discussion as in \cite{YXD2}, this functional space is complete. Then the estimate \eqref{est:uniform_n2} implies that
\begin{align*}
   \mathfrak{K}_{-}(v,\xi )\lesssim& \exp\{(\|\p_{t}\eta^{n}\|_{L^{\infty}H^{\frac{3}{2}+\frac{\varepsilon_{-}}{2}}}+\|\eta^{n}\|_{L^{\infty}H^{3}})T\}\big(\mathscr{E}(u(0),p(0),\eta(0))\\
    &+ \mathfrak{F}^n +(\delta+(T^{\frac{1}{2}}\|\p_{t}^{2}\eta^{n}\|^{2}_{L_{t}^{\infty}H^{\frac{5}{2}}}+T^{\frac{1}{2}}\|\p_{t}^{3}\bar{\eta}^{n}\|_{L_{t}^{\infty}H^{1}}^{2})) \mathfrak{K}_{-}(v_{l},\xi_{l})\big).
\end{align*}

Moreover, for two different given $(v_{l}^{1},\xi_{l}^{1})$ and $(v_{l}^{2},\xi_{l}^{2})$. We subtract equation \eqref{eq:quasi_linear_{s2}} with respect to each given function to obtain a new system for the difference functions $(v^{1}-v^{2},\xi^{1}-\xi^{2},q^{1}-q^{2})$. Then applying the similar energy estimate and elliptic estimate in the previous steps, we obtain
\[
\begin{aligned}
\mathfrak{K}_{-}\bigl(
v^{1}-v^{2},
\xi^{1}-\xi^{2}
\bigr)
\;\lesssim&\;
\delta\exp\{(\|\p_{t}\eta^{n}\|_{L^{\infty}H^{\frac{3}{2}+\frac{\varepsilon_{-}}{2}}}+\|\eta^{n}\|_{L^{\infty}H^{3}})T\}\,
\bigg(\delta+(T^{\frac{1}{2}}\|\p_{t}^{2}\eta^{n}\|^{2}_{L_{t}^{\infty}H^{\frac{5}{2}}}\\
& +T^{\frac{1}{2}}\|\p_{t}^{3}\bar{\eta}^{n}\|_{L_{t}^{\infty}H^{1}}^{2}) \cdot \mathfrak{K}_{-}\bigl(
v_{l}^{1}-v_{l}^{2},
\xi_{l}^{1}-\xi_{l}^{2}
\bigr)\bigg).
\end{aligned}
\]

Therefore, the
Picard iteration induced by \eqref{eq:quasi_linear_{s}} defines a contraction
mapping in the complete functional space $\mathscr{H}$. Consequently, for each fixed $n$, the iteration admits a unique fixed
point $(v^{n},\xi^{n},q^{n})$, which solves the following system \eqref{eq:quasi_linear_n}

\begin{equation}{\label{eq:quasi_linear_n}}
    \begin{cases}
         \partial_{t}^{2}v+\operatorname{div}_{\mathcal{A}^{n}}S_{\mathcal{A}^{n}}(\p_{t}v,\p_{t}q)=F^{1}(u,p,\eta)~~~&\operatorname{in}~~\Omega,\\
    \operatorname{div}_{\mathcal{A}^{n}}\p_{t}v=0~~~&\operatorname{in}~~\Omega,\\
    S_{\mathcal{A}^{n}}(\p_{t}q,\p_{t}v)\mathcal{N}^{n}=(g\p_t\xi\sin\theta+\sigma(\mathcal{P}_1(\rho_0,\rho_0')\p_{t}\xi+\mathcal{P}_{2}(\rho_{0},\rho_{0}')\p_{t}\xi'\\\quad\quad\quad\quad\quad\quad\quad\quad-\frac{1}{\rho_{0}}\partial_{\theta}(\frac{\rho_{0}^{2}\p_{t}\xi'}{(\rho_{0}^{2}+\rho_{0}'^{2})^{\frac{3}{2}}}-\frac{\rho_{0}'\rho_{0}\p_{t}\xi}{(\rho_{0}^{2}+\rho_{0}'^{2})^{\frac{3}{2}}}))\mathcal{N}^{n}+\p_{\theta}(\sigma\mathcal{R}_{1a}(\rho_{0},\p_{\theta}\eta,\eta)(\p_{\theta}\p_{t}\xi))\mathcal{N}^{n}+F^{4}(u,p,\eta)~~~&\operatorname{on}~~\Sigma,\\
    (S_{\mathcal{A}}(\p_{t}q,\p_{t}v)\nu-\beta \p_{t}v)\cdot \tau=F^{5}(u,\eta,p)~~~&\operatorname{on}~~\Sigma_{s},\\
    \p_{t}v\cdot \nu=0~~~&\operatorname{on}~~\Sigma_{s},\\
    \partial_{t}^{2}\xi=\frac{1}{\rho}\p_{t}v\cdot \mathcal{N}^{n}+\frac{1}{\rho}u\cdot \p_{t}\mathcal{N}(\xi)-\mathfrak{m}(t)\xi_{s}+(\mathfrak{n}^{\prime}(t)(\frac{\p_{t}\p_{\theta}\xi}{\rho})\sin\theta)+F^{6}(u,p)~~~&\operatorname{on}~~\Sigma,\\
    \sigma(\mp (\frac{\rho_{0}^{2}\p_{t}\xi'}{(\rho_{0}^{2}+\rho_{0}'^{2})^{\frac{3}{2}}}-\frac{\rho_{0}'\rho_{0}\p_{t}\xi}{(\rho_{0}^{2}+\rho_{0}'^{2})^{\frac{3}{2}}})\pm \mathcal{R}_{1a}(\rho_{0},\partial_{\theta}\eta,\eta)\partial_{t}\partial_{\theta}\xi)(\frac{\pi}{2}\pm \frac{\pi}{2})dst)
    =\kappa (\p_{t}v\cdot \mathcal{N}^{n})(\frac{\pi}{2}\pm \frac{\pi}{2})-{F}^{7}
    \end{cases}
\end{equation}

Moreover, the solution satisfies the estimate
\begin{align}
\begin{aligned}
\mathfrak{K}_{-}(v^{n},\xi^{n},q^{n})
\;\lesssim\;&
\exp\!\Bigl\{
\bigl(
\|\partial_{t}\eta^{n}\|_{L^{\infty}H^{3/2+\varepsilon_{-}/2}}
+\|\eta^{n}\|_{L^{\infty}H^{3}}
\bigr)T
\Bigr\}\\
\cdot& \Bigg(\mathcal{E}(0)
+\|(F^{1}-F^{4}-F^{5})(0)\|_{(H^{1})^{*}}^{2}+\mathfrak{F}^{n}+(T^{\frac{1}{2}}\|\p_{t}^{2}\eta^{n}\|^{2}_{L_{t}^{\infty}H^{\frac{5}{2}}}+T^{\frac{1}{2}}\|\p_{t}^{3}\bar{\eta}^{n}\|_{L_{t}^{\infty}H^{1}}^{2})(\mathfrak{K}(u,p,\eta))
\Bigg).
\end{aligned}
\end{align}
    \paragraph{\underline{Step 11 -- The uniform $n$ bound}}
    First, for any prescribed $n$, applying Theorem \ref{thm:pressure_+} to equation \eqref{eq:quasi_linear_n}, we obtain
    \begin{align}
        \begin{aligned}
  &\| v^{n}\|_{L^\infty W^{2,q_+}}^2 + \|q^{n}\|_{L^\infty W^{1,q_+}}^2 + \|\xi^{n}\|_{L^\infty W^{3-1/q_+,q_+}}^2 \\
  &\lesssim  \mathcal{Z}+\exp\!\Bigl\{
\bigl(
\|\partial_{t}\eta^{n}\|_{L^{\infty}H^{3/2+(\varepsilon_{-}-\alpha)/2}}
+\|\eta^{n}\|_{L^{\infty}H^{3}}
\bigr)T
\Bigr\}\bigg\{ \mathcal{E}(u_{0},p_{0},\xi_{0}) \\
&\quad\quad\quad\quad+\|(F^1-F^4-F^5)(0)\|^{2}_{(\mathcal{H}^1)^\ast}+\mathfrak{F}^{n}+(T^{\frac{1}{2}}\|\p_{t}^{2}\eta^{n}\|^{2}_{L_{t}^{\infty}H^{\frac{5}{2}}}+T^{\frac{1}{2}}\|\p_{t}^{3}\bar{\eta}^{n}\|_{L_{t}^{\infty}H^{1}}^{2})(\mathfrak{K}(u,p,\eta))\bigg\}.
  \end{aligned}
    \end{align}
    With this $q_{+}$ estimate, we define function $w$ by
    \begin{align}
        \operatorname{div}_{\mathcal{A}}w=J^{n}(-2\operatorname{div}_{\p_{t}\mathcal{A}^{n}}\p_{t}v^{n}-\operatorname{div}_{\mathcal{A}^{n}}\p_{t}^{2}v^{n})-\langle J^{n}(-2\operatorname{div}_{\p_{t}\mathcal{A}^{n}}\p_{t}v^{n}-\operatorname{div}_{\mathcal{A}^{n}}\p_{t}^{2}v^{n})\rangle,
    \end{align}
    \noindent where $ \langle f\rangle:=\frac{1}{|\Omega|}\int_{\Omega} f$.
    With $q_{+}$ estimate, this function $w$ is well-defined and has good regularity. The details are given in Proposition 10.1 of \cite{GT2020}.
    
   After differentiating \eqref{eq:quasi_linear_n} with respect to time, we take $\partial_{t}^{2}v^{n} - w$ as the test function. Combining the results of Section 10.2 in \cite{GT2020}, the $\frac{3}{2}-\alpha$ estimate, the elliptic estimates derived in Step 8, and convergence result \eqref{convergence_n},  we obtain the following uniform boundedness for the functions $(v^{n},\xi^{n},q^{n})$
     \begin{align}
        \begin{aligned}
            \mathfrak{K}(v^{n},\xi^{n},q^{n})\lesssim&(1+T)\exp\big\{T(\|\p_t\eta\|_{L^\infty H^{3/2+(\varepsilon_--\alpha)/2}}+\|\eta\|_{L^{\infty}W^{3-\frac{1}{q_{-}},q_{-}}})\big\}\\
  &\quad\quad \times \bigg\{ \mathcal{E}(u_{0},p_{0},\xi_{0}) +\|(F^1-F^4-F^5)(0)\|^{2}_{(\mathcal{H}^1)^\ast}+\mathfrak{F}+\mathcal{Z}+\mathfrak{K}(u,p,\eta)\bigg\}.
        \end{aligned}
    \end{align}
    \noindent
With these uniform bounds, which are independent of $n$, we may pass to the limit as $n\to\infty$. By the same argument used in Step~9 to establish convergence, the sequence $(v^n,\xi^n,q^n)$ converges to a solution of \eqref{eq:quasi_linear}. This completes the proof.

\end{proof}

  Now it remains to prove Lemma \ref{lem:difference_u_Dt_u} showing the difference between $D_{t}v_{d}$ and $\p_{t}v_{d}$.
  
\begin{lemma}\label{lem:difference_u_Dt_u}

$D_{t}=\p_{t}-R$ where $R$ is defined in Proposition \ref{prop:solid_boundary}. It holds that
\begin{equation}
\begin{aligned}
\|\p_tv_{d}-D_tv_{d}\|_{L^2W^{2,q_-}}^2\lesssim \vert \vert \p_{t}\eta\vert \vert_{L_{t}^{\infty}W^{3-\frac{1}{q_{-}},q_{-}}}^{2}\vert \vert v_{d}\vert \vert_{L^{2}W^{2,q_{-}}}^{2},\\
  \|\p_t v_{d}-D_tv_{d}\|_{L^2H^1}^2+\|\p_tv_{d}-D_tv_{d}\|_{L^2H^0(\Sigma_s)}^2\lesssim \vert \vert \p_{t}\eta\vert \vert^{2}_{L_{t}^{\infty}H^{\frac{3}{2}+\frac{\varepsilon_{-}-\alpha}{2}}}(\|v_{d}\|_{L^2H^1}^2),\\
  \|\operatorname{div}_{\mathcal{A}}\partial_{t}v_{d}-\operatorname{div}_{\mathcal{A}}D_{t}v_{d}\|^{2}_{L_{t}^{2}L^{2}}\lesssim T\vert \vert \p_{t}\eta\vert \vert^{2}_{L_{t}^{\infty}H^{\frac{3}{2}+\frac{\varepsilon_{-}-\alpha}{2}}}\|v_{d}\|_{L^\infty H^1}^2.
  \end{aligned}
  \end{equation}
\end{lemma}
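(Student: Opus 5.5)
Since $\p_t v_d - D_t v_d = R v_d$ with $R=\p_t M M^{-1}$, all three inequalities reduce to product estimates for $R v_d$. First record the pointwise structure of $R$. By the construction of $\Phi$ from \cite{YXD1}, with $|\p_t^j\nabla\Phi|\lesssim|\nabla\p_t^j\xi|$, we have $M=K\nabla\Phi$ and hence
\[
R\sim \nabla\p_t\bar\eta+\nabla\p_t\bar\eta\,\nabla\bar\eta,
\]
as already used in Step 3 of Theorem \ref{thm:linear_low}. Here $\bar\eta$ is the harmonic-type extension of $\eta$ into $\Omega$. The smallness assumption on $\eta$ lets us replace $M^{-1}$ and $K$ by bounded multipliers.

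For the second estimate, write $\nabla(Rv_d)=(\nabla R)v_d+R\nabla v_d$. Bound the term $R\nabla v_d$ by $\|R\|_{L^\infty}\|v_d\|_{H^1}$. Bound the term $(\nabla R)v_d$ by Hölder with $\|\nabla R\|_{L^{2/(1-\epsilon)}}\|v_d\|_{L^{2/\epsilon}}$, choosing the exponent so that $H^{1+(\varepsilon_--\alpha)/2}(\Omega)\hookrightarrow W^{1,2/(1-\epsilon)}$ and $H^1\hookrightarrow L^{2/\epsilon}$. Both pieces are then controlled by $\|\p_t\bar\eta\|_{H^{2+(\varepsilon_--\alpha)/2}}\lesssim\|\p_t\eta\|_{H^{3/2+(\varepsilon_--\alpha)/2}}$, exactly as in the estimate of $((v_d^m,R^nD_tv_d^m))$. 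The $L^2(\Sigma_s)$ part follows from the trace theorem. Squaring and integrating in time, while pulling out the $L^\infty_t$ norm of $\p_t\eta$, gives the bound.

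For the $W^{2,q_-}$ estimate, expand
\[
\nabla^2(Rv_d)=(\nabla^2R)v_d+2\nabla R\nabla v_d+R\nabla^2 v_d.
\]
The pieces are bounded as follows:
\begin{itemize}
\item $(\nabla^2R)v_d$: use $\nabla^2R\in L^{q_-}$ against $v_d\in L^\infty$. Since $\p_t\eta\in W^{3-1/q_-,q_-}$ gives $\p_t\bar\eta\in W^{3,q_-}$, this is controlled by $\|\p_t\eta\|_{W^{3-1/q_-,q_-}}$, and $W^{2,q_-}\hookrightarrow L^\infty$ in 2D.
\item $\nabla R\nabla v_d$: use $W^{2,q_-}\hookrightarrow W^{1,r}$ for all $r<\infty$, with $\nabla R$ in a matching Lebesgue space.
\item $R\nabla^2v_d$: use $R\in L^\infty$.
\end{itemize}
The main obstacle here is the first piece. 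It requires the extension estimate $\|\bar f\|_{W^{3,q_-}}\lesssim\|f\|_{W^{3-1/q_-,q_-}}$ for the moving-polar extension, together with a product estimate in fractional $W^{s,q}$ spaces. I would invoke the corresponding lemma of \cite{GT2020} (Appendix A) rather than reprove it.

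For the divergence estimate, use $J\dive_{\mathcal A}(D_tv_d)=\p_t(J\dive_{\mathcal A}v_d)$ together with $\dive_{\mathcal A}v_d=0$. This gives $\dive_{\mathcal A}D_tv_d=0$, so the difference equals $\dive_{\mathcal A}\p_tv_d=\dive_{\mathcal A}(Rv_d)$. This is bounded pointwise in time by $\|Rv_d\|_{H^1}\lesssim\|\p_t\eta\|_{H^{3/2+(\varepsilon_--\alpha)/2}}\|v_d\|_{H^1}$. Integrating the square over $[0,T]$ and taking $\sup_t\|v_d\|_{H^1}$ produces the factor $T$.
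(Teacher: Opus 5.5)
Your proposal is correct and takes essentially the paper's route: the paper only cites Theorem 4.6 of \cite{YXD}, and your reduction to $\p_tv_d-D_tv_d=Rv_d$, followed by product estimates for $R\sim\nabla\p_t\bar\eta+\nabla\p_t\bar\eta\,\nabla\bar\eta$ in $W^{2,q_-}$ and $H^1$ (plus trace), with the factor $T$ in the divergence bound coming from $L^\infty_t\subset L^2_t$, is the standard argument behind such a statement. One small correction: since $q_-<2$, $W^{2,q_-}$ embeds into $W^{1,r}$ only for $r\le 2/(1-\varepsilon_-)$, not for all $r<\infty$; this is harmless, because $\nabla R\in L^2$ and $\nabla v_d\in L^{2/(1-\varepsilon_-)}$ already give $\nabla R\,\nabla v_d\in L^{q_-}$, as $\frac12+\frac{1-\varepsilon_-}{2}=\frac1{q_-}$.
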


\begin{proof}
    The proof follows from the same argument as that of Theorem 4.6 in \cite{YXD}.
\end{proof}

  %%%%%%%%%%%%%%%%%%%%%%%%%%%%%%%%%%%%%%%%%%%%%%
\section{Local Well-Posedness for Nonlinear Navier-Stokes System}
%%%%%%%%%%%%%%%%%%%%%%%%%%%%%%%%%%%%%%%%%%%%%%

%%%%%%%%%%%%%%%%%%%%%%%%%%%%%%%%%%%%%%%%%%%%%%
\subsection{Forcing Terms}\label{sec:nonlinear}
%%%%%%%%%%%%%%%%%%%%%%%%%%%%%%%%%%%%%%%%%%%%%%

In this section, we establish the estimate for the forcing term appearing above. We first recall the definition of $\mathscr{D}$ and $\mathscr{E}$ by \eqref{def:dissipation} and \eqref{def:energy}.
 Then for convenience, we introduce two new spaces
\begin{equation}\label{def:xy}
\begin{aligned}
\mathcal{X}=\Big\{(u,p,\eta)|\mathscr{E}(u,p,\eta)<\infty\Big\},\quad \|(u,p,\eta)\|_{\mathcal{X}}=\left[\mathscr{E}(u,p,\eta)\right]^{1/2},\\ \mathcal{Y}=\Big\{(u,p,\eta)|\mathscr{D}(u,p,\eta)<\infty\Big\}, \quad\|(u,p,\eta)\|_{\mathcal{Y}}=\left[\mathscr{D}(u,p,\eta)\right]^{1/2}.
\end{aligned}
\end{equation}

From the formulation of \eqref{equ:fix_2}, we have the nonlinear interaction terms as the forcing given by Appendix \ref{sec:dive_forcing}.
In order to close the energy estimates, we need to control the forcing terms in the sense of Theorem \ref{thm:linear_low}. We first estimate the forcing terms in the bulk.
\begin{proposition}\label{prop:force_bulk}
  $F^{1},F^{4},F^{5}$ are forcing terms defined in Appendix \ref{sec:dive_forcing}. It holds that
  \begin{align}
  \begin{aligned}
  &\|F^1\|_{L^2 L^{q_-}}+\|F^{4}\|_{L^2W^{1-\frac{1}{q_{-}},q_{-}}}+\|F^{5}\|_{L^{2}W^{1-\frac{1}{q_{-}},q_{-}}}+\|[F^{7}]_{\pi}\|_{L_{t}^{2}}+\|[\p_{t}F^{7}]_{\pi}\|_{L_{t}^{2}}+\|\p_{t}(F^{1}-F^{4}-F^{5})\|_{L^{2}(H^{1})^{*}}\\
  &\lesssim \mathscr{E}^{1/2}\mathscr{D}^{1/2}.
  \end{aligned}
  \end{align}
\end{proposition}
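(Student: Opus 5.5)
The forcing terms $F^1,F^4,F^5,F^7$ in Appendix~\ref{sec:dive_forcing} are at least quadratic in $(u,p,\eta,\mathfrak n')$. Each one is a sum of products in which one factor is built from $\mathcal A-I$, $J-1$, $\mathcal N-\mathcal N_0$, $\partial_t\bar\eta$, $\mathcal R_{1}$, $\mathcal R_2$ or $\mathfrak n'$, and the other factor is a derivative of $u$, $p$ or $\xi$. The plan is to estimate every product with Hölder's inequality, placing one factor in a norm from $\mathscr E$ taken in $L^\infty_t$ and the other in a norm from $\mathscr D$ taken in $L^2_t$. This is the same scheme as the corresponding forcing estimates in \cite{GT2020} and \cite{YXD2}, so only the new terms need separate attention.

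\textbf{Step 1: bulk and boundary forces.} For $F^1$, the typical terms are $\partial_t\bar\eta\,W\mathcal A\nabla u$, $\mathfrak n'\partial_x u$, $u\cdot\nabla_{\mathcal A}u$, and $(\mathcal A-I)$ times second derivatives of $u$ and first derivatives of $p$.
\begin{itemize}
\item The coefficients satisfy $\|\mathcal A-I\|_{L^\infty}+\|J-1\|_{L^\infty}\lesssim\|\eta\|_{W^{3-1/q_+,q_+}}$, and $\|\partial_t\bar\eta\|_{W^{1,\infty}}\lesssim\|\partial_t\eta\|_{H^{3/2+(\varepsilon_--\alpha)/2}}$. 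Together with $|\mathfrak n'|\le\mathscr E^{1/2}$, these bounds give the $L^2L^{q_-}$ estimate.
\item The remaining factor is placed in $\|u\|_{L^2W^{2,q_+}}$, $\|p\|_{L^2W^{1,q_+}}$ or $\|u\|_{L^2H^1}$, each of which is controlled by $\mathscr D$.
\end{itemize}
For $F^4$ and $F^5$, I use the algebra property of $W^{1-1/q_-,q_-}(\Sigma)$ against $W^{1,\infty}$ multipliers and the trace theorem. The curvature remainders $\mathcal R_1,\mathcal R_2$ are handled by Theorem~A.1 of \cite{GT2020}, as in Lemma~\ref{lem:lemma1}. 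For $F^7$, the Taylor remainder of $\mathscr W$ at the contact points is quadratic. It is bounded by $|\partial_t\xi(\frac\pi2\pm\frac\pi2)|^2+|\mathfrak n'|^2$, and also by products of $\xi$, $\xi'$ evaluated at the endpoints. This gives $[F^7]_\pi\lesssim\mathscr E^{1/2}\big([\partial_t\xi]_\pi+|\mathfrak n'|+\|\xi\|_{H^{3/2-\alpha}}\big)$ via the embedding $H^{3/2-\alpha}\hookrightarrow C^1$. Differentiating once in time gives the same structure, with one extra time derivative on exactly one factor. That factor is then bounded by $\|\partial_t^2\xi\|_{H^{3/2-\alpha}}$ or $|\mathfrak n''|$, both of which lie in $\mathscr D$.

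\textbf{Step 2: the dual norm $\|\partial_t(F^1-F^4-F^5)\|_{L^2(H^1)^*}$.} I test against $w\in\mathcal H^1$. Terms of the form $\partial_t\mathcal A\,\nabla^2u$ are integrated by parts so that one derivative moves onto $w$, using the divergence-form structure $\dive_{\mathcal A}\mathbb D_{\mathcal A}$. The boundary contributions then cancel against $\partial_tF^4$, which is why the combination $F^1-F^4-F^5$ appears in the statement. Factors such as $\partial_t^2\bar\eta$ are placed in $L^2_tH^1$ and controlled by $\|\partial_t^2\eta\|_{L^2H^{3/2-\alpha}}$. The duality with $w$ is then closed using $H^1\hookrightarrow L^r$ for every $r<\infty$ in two dimensions.

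\textbf{Main obstacle.} I expect the hardest part to be the terms in which $\partial_t$ lands on the highest-order factor. Examples are $(\mathcal A-I)\nabla\partial_t p$ and $\partial_t\mathcal R_{1}$, the latter containing $\partial_\theta\partial_t^2\xi$. The available regularity is asymmetric here: only $\partial_tp\in L^2W^{1,q_-}$, and only $\partial_t^2\xi\in L^2H^{3/2-\alpha}$.
\begin{itemize}
\item For the pressure term, I integrate by parts in the weak form so that $\partial_tp$ appears only in $L^2$.
\item For the term with $\partial_\theta\partial_t^2\xi$, I put it in $H^{1/2-\alpha}$. It is paired with $w\cdot\mathcal N\in H^{1/2}$ against a small multiplier of regularity $W^{2-1/q_+,q_+}$.
\end{itemize}
The new term $\partial_t(\mathfrak n'\partial_xu)$ is harmless, since it equals $\mathfrak n''\partial_xu+\mathfrak n'\partial_x\partial_tu$ and both $\mathfrak n''$ and $\mathfrak n'$ lie in $\mathscr E$. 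Summing all the bounds yields the claimed $\mathscr E^{1/2}\mathscr D^{1/2}$ estimate.
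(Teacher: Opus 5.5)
Your overall scheme is the right one, and it is essentially all the paper relies on. It puts one factor in $L^\infty_t$ from $\mathscr E$ and the other in $L^2_t$ from $\mathscr D$, uses trace and Sobolev embeddings on $\Sigma$, and tests against $w$ with an integration by parts whose boundary terms cancel against $\partial_tF^4,\partial_tF^5$. The paper gives no argument beyond invoking Section~5 of \cite{YXD1}.

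As written, however, you estimate terms one time derivative lower than those in the statement. In Appendix~\ref{sec:dive_forcing}, $F^1,F^4,F^5,F^7$ are already the forcing terms of the once-differentiated system \eqref{eq:first}. Concretely:
\begin{itemize}
\item $F^1$ consists of $\operatorname{div}_{\partial_t\mathcal A}S_{\mathcal A}(p,u)$, $\operatorname{div}_{\mathcal A}\mathbb D_{\partial_t\mathcal A}u$, the differentiated convection terms, $\partial_t\big((\cos\theta W\partial_t\bar\eta,\sin\theta W\partial_t\bar\eta)\mathcal A(\partial_xu,\partial_yu)^T\big)$ and $\partial_t(\mathfrak n'\partial_xu)$. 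It has no $(\mathcal A-I)\nabla^2u$ or $(\mathcal A-I)\nabla p$ terms, because the full $\mathcal A$ stays in the linear operator.
\item $F^7$ is $\kappa\hat{\mathscr W}'(u\cdot\mathcal N\mp\mathfrak n')(\partial_tu\cdot\mathcal N\mp\mathfrak n'')\pm\mathcal R_{1b}\partial_t\eta$ at the contact points, not the undifferentiated Taylor remainder.
\end{itemize}
Consequently $\partial_t(F^1-F^4-F^5)$ contains $\partial_t^2\mathcal A$, $\partial_t^3\bar\eta$ and $\mathfrak n'''$, and $\partial_tF^7$ contains $[\partial_t^2u\cdot\mathcal N]_\pi$ and $\mathfrak n'''$.

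Your scheme still closes, but only with the top-level dissipation norms $\|\partial_t^3\eta\|_{L^2H^{1/2-\alpha}}$, $\|\partial_t^2u\|_{L^2H^1}$ and $\|[\partial_t^2u\cdot\mathcal N]_\pi\|_{L^2_t}$. You also need to recover $\mathfrak n',\mathfrak n'',\mathfrak n'''$ from \eqref{equ:ODE} in terms of $u,\partial_tu,\partial_t^2u$ and $\eta$. Since $\mathscr E$ and $\mathscr D$ contain no $\mathfrak n$ at all, your statement that $\mathfrak n',\mathfrak n''$ lie in $\mathscr E$ only holds through that ODE.

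The integration by parts you describe is genuinely needed, but for $\operatorname{div}_{\mathcal A}\mathbb D_{\partial_t^2\mathcal A}u$, where $\nabla^2\partial_t^2\bar\eta$ is only in $L^2H^{-\alpha}(\Omega)$. Its boundary terms cancel against $\mu\mathbb D_{\partial_t^2\mathcal A}u\,\mathcal N$ in $\partial_tF^4$ and the analogous term in $\partial_tF^5$. What remains is $\int_\Omega\mathbb D_{\partial_t^2\mathcal A}u:\nabla_{\mathcal A}w\,J$, which is bounded using $\nabla\partial_t^2\bar\eta\in L^2L^{2/\alpha}$ and $\nabla u\in L^\infty L^{2/(1-\varepsilon_+)}$.

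Two further points need fixing.

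First, the embedding $H^{3/2-\alpha}(0,\pi)\hookrightarrow C^1$ is false; one only gets $C^{0,1-\alpha}$. The endpoint values of $\partial_\theta\eta$ entering $\mathcal R_{1b}$ in $F^7$ must instead come from $W^{3-1/q_\pm,q_\pm}\hookrightarrow C^1$, that is, from $\|\eta\|_{L^\infty W^{3-1/q_-,q_-}}$ or $\|\eta\|_{L^2W^{3-1/q_+,q_+}}$.

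Second, your ``main obstacle'' is harmless only because of how the problem is formulated. The top-order curvature remainder $\sigma\partial_\theta\big(\int_0^t\mathcal R_{1a}\partial_\theta\partial_t^2\xi\big)$ has been placed in the linear operator (see \eqref{eq:first} and the modified inner product $(\cdot,\cdot)_{1,\Sigma}$), not in $F^4$. A forcing term of the form $\partial_\theta(\mathcal R_{1a}\partial_\theta\partial_t^2\xi)$ could not be handled by pairing $H^{1/2-\alpha}$ against $w\cdot\mathcal N\in H^{1/2}$. After integration by parts it would meet $\partial_\theta(w\cdot\mathcal N)\in H^{-1/2}$, with undefined endpoint values.

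What actually occurs in $\partial_tF^4$, for example $(\mathcal K(\eta)-\sigma\partial_\theta\mathcal R_1)\,\partial_t^2\mathcal N$, contains $\partial_\theta\partial_t^2\eta$ without an outer derivative. It closes by H\"older in $L^{1/(1-\varepsilon_-)}\times L^{1/\alpha}\times L^r$ for large $r$, using $\alpha<\varepsilon_-$.
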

\begin{proof}
   The proof follows directly from the a priori estimates established in Section 5 of \cite{YXD1}.
   \end{proof}
   
\begin{proposition}\label{prop:force_bulk_f_6}
  $F^{6}$ is a forcing term defined in Appendix \ref{sec:dive_forcing}. It holds that
  \begin{align}
  \begin{aligned}
  \|F^{6}\|_{L_{t}^{2}W^{2-\frac{1}{q_{-}},q_{-}}(\Sigma)}\lesssim \mathscr{E}^{1/2}\mathscr{D}^{1/2}, \qquad\operatorname{and}\qquad \|\p_{t}F^{6}\|_{L_{t}^{2}W^{1,\frac{1}{1-\varepsilon_{-}}}(\Sigma)}\lesssim \mathscr{E}^{1/2}\mathscr{D}^{1/2}.
  \end{aligned}
  \end{align}
\end{proposition}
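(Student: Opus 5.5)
The plan is to write $F^{6}$ explicitly from the kinematic boundary condition in \eqref{equ:fix_2}, following Appendix \ref{sec:dive_forcing}. There it is the remainder of $G$ after the linear part $\frac{1}{\rho_0}u\cdot\mathcal N$ and the $\mathfrak n'$-transport terms have been isolated. So $F^6$ should be a finite sum of terms of three types:
\[
\Bigl(\tfrac{1}{\rho}-\tfrac{1}{\rho_0}\Bigr)u\cdot\mathcal N,\qquad \tfrac{1}{\rho_0}\,u_\theta\,\partial_\theta\xi \ \ \text{(from } u\cdot(\mathcal N-\mathcal N_0)\text{)},\qquad \mathfrak n'(t)\sin\theta\Bigl(\tfrac{\rho'}{\rho}-\tfrac{\rho_0'}{\rho_0}\Bigr),
\]
with the parts already written out in \eqref{eq:quasi_linear} subtracted off. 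Each term is at least quadratic, and its coefficients are smooth functions of $(\rho_0,\xi,\partial_\theta\xi)$, for instance $\frac{1}{\rho}-\frac{1}{\rho_0}=-\xi/(\rho\rho_0)$. I will then estimate each product on $\Sigma$, putting one factor in an energy norm and the other in a dissipation norm.

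For the first bound, the space $W^{2-1/q_-,q_-}(\Sigma)$ on the one-dimensional interval is an algebra modulo multiplication by $W^{2-1/q_+,q_+}$, which embeds into $C^{1}$. I will use the product estimate $\|fg\|_{W^{2-1/q_-,q_-}}\lesssim\|f\|_{C^1}\|g\|_{W^{2-1/q_-,q_-}}+\|f\|_{W^{2-1/q_-,q_-}}\|g\|_{C^1}$, together with Theorem A.1 of \cite{GT2020} for the smooth composition coefficients, as in Lemma~\ref{lem:lemma1}. The trace of $u$ is controlled by $\|u\|_{W^{2-1/q_+,q_+}(\Sigma)}\lesssim\|u\|_{W^{2,q_+}}$, in $L^\infty_t$ (energy) and in $L^2_t$ (dissipation). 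The quantity $\xi$ is controlled in $L^\infty_tW^{3-1/q_+,q_+}$ (energy) and $L^2_tW^{3-1/q_+,q_+}$ (dissipation). The $\mathfrak n'$ term is bounded by $|\mathfrak n'|\,\|\xi\|_{W^{3-1/q_-,q_-}}$, where $\sup|\mathfrak n'|^2\le\mathscr E$ and $\int|\mathfrak n'|^2\le\mathscr D$. Placing the $L^\infty_t$ norm on one factor and the $L^2_t$ norm on the other gives $\lesssim\mathscr E^{1/2}\mathscr D^{1/2}$.

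For $\partial_tF^6$ in $L^2_tW^{1,1/(1-\varepsilon_-)}$, I will differentiate each product in time. The time derivative falls on $u$, on $\xi$, on $\mathfrak n'$, or on a smooth coefficient. I will then use the one-dimensional embeddings $H^{1/2+}\hookrightarrow L^\infty$ and $W^{1,1/(1-\varepsilon_-)}\hookleftarrow W^{2-1/q_-,q_-}$, and the fact that $W^{1,r}$ is stable under multiplication by $W^{1,\infty}$ functions. Terms with $\partial_tu$ use $\|\partial_tu\|_{L^2W^{2,q_-}}\le\mathscr D^{1/2}$ paired with $\xi$ in $L^\infty W^{3-1/q_+,q_+}$. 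Terms with $\partial_t\xi$ use $\partial_t\xi\in L^\infty H^{3/2+(\varepsilon_--\alpha)/2}\hookrightarrow W^{1,\infty}$ paired with $u\in L^2W^{2,q_+}$. Terms with $\mathfrak n''$ use $\int|\mathfrak n''|^2\le\mathscr D$ paired with $\xi$ in $L^\infty_t$.

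The main obstacle I expect is the factor $\partial_\theta\partial_t\xi$ in $\partial_t$ of the $u_\theta\partial_\theta\xi$ and $\mathfrak n'\rho'/\rho$ terms. Taking one more spatial derivative of it requires $\partial_t\xi\in W^{2,1/(1-\varepsilon_-)}$. In the dissipation this must come from $\|\partial_t\xi\|_{L^2W^{3-1/q_-,q_-}}$, so I need to check the embedding $W^{3-1/q_-,q_-}\hookrightarrow W^{2,1/(1-\varepsilon_-)}$ in one dimension, using the relation $q_-=2/(2-\varepsilon_-)$. The paired factors $u_\theta$ and $\mathfrak n'$ are then taken in $L^\infty_t$ energy norms. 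If the exponent turns out to be borderline, I would fall back on the weight $\sin\theta$ and the vanishing $u_\theta(0)=u_\theta(\pi)=0$ to gain a Hardy-type factor near the contact points.
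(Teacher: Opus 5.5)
Your plan estimates a different quantity from the one in the statement. Appendix~\ref{sec:dive_forcing} defines
\[
F^{6}=-\frac{\partial_t\eta}{\rho^{2}}\,u\cdot\mathcal N-\frac{\partial_t\eta}{\rho^{2}}\,\mathfrak n'(t)-\mathfrak n''(t)\Bigl(\frac{\rho_0'}{\rho_0}-\frac{\rho'}{\rho}\Bigr).
\]
This is the forcing of the \emph{time-differentiated} kinematic condition, i.e.\ the equation for $\partial_t\xi_d=\partial_t^2\xi$ in \eqref{eq:quasi_linear}. It comes from $\partial_t\bigl(\frac1\rho u\cdot\mathcal N\bigr)$ and from $\partial_t$ of the $\mathfrak n'$-terms. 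It is not the undifferentiated remainder of $G$. The terms $(\frac1\rho-\frac1{\rho_0})u\cdot\mathcal N$ and $u_\theta\partial_\theta\xi/\rho_0$ that you list do not occur in $F^6$; the transport-type terms are kept explicitly in the linear system.

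As a result, every factor $\xi$ and $\mathfrak n'$ in your bookkeeping should be $\partial_t\eta$ and $\mathfrak n''$. More seriously, $\partial_tF^6$ contains $\partial_t^2\eta$, $(\partial_t\eta)^2$, $\partial_t\eta\,\partial_tu$ and $\mathfrak n'''$, and your plan controls none of these. The obstacle you flag survives only in the form $\mathfrak n''\,\partial_t\partial_\theta\eta/\rho$.

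The energy--dissipation pairing itself is the right method, and it is what the paper does term by term with H\"older and Sobolev. The ingredients differ from yours, as follows.

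\textbf{First bound.} Pair $\partial_t\eta\in L^2_tW^{3-1/q_-,q_-}$ (dissipation) with $u\in L^\infty_tW^{2,q_+}$ and with $|\mathfrak n'|\lesssim\|u\|_{H^1}$ in $L^\infty_t$. Pair $\mathfrak n''\in L^\infty_t$ (energy) with $\eta\in L^2_tW^{3-1/q_+,q_+}$.

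\textbf{Second bound.}
\begin{itemize}
\item The terms $\frac{\partial_t^2\eta}{\rho^2}u\cdot\mathcal N$ and $\frac{\partial_t^2\eta}{\rho^2}\mathfrak n'$ need the dissipation norm $\|\partial_t^2\eta\|_{L^2_tH^{3/2-\alpha}}$. They also need the embedding $H^{3/2-\alpha}(0,\pi)\hookrightarrow W^{1,1/(1-\varepsilon_-)}(0,\pi)$, which holds because $\alpha<(\varepsilon_+-\varepsilon_-)/2<1-\varepsilon_-$ by \eqref{parameters}.
\item The term $\mathfrak n'''\bigl(\frac{\rho_0'}{\rho_0}-\frac{\rho'}{\rho}\bigr)$ needs $\mathfrak n'''\in L^2_t$ (a dissipation quantity, ultimately tied to $\partial_t^2u$), paired with $\eta\in L^\infty_t$.
\item The term $\mathfrak n''\,\partial_t\partial_\theta\eta/\rho$ is handled with $\mathfrak n''\in L^\infty_t$ and $\partial_t\eta\in L^2_tW^{3-1/q_-,q_-}$.
\item The quadratic and cubic terms use $\partial_t\eta\in L^\infty_tH^{3/2+(\varepsilon_--\alpha)/2}\hookrightarrow W^{1,\infty}$, paired with $\partial_tu\in L^2_tW^{2,q_-}$ or $u\in L^2_tW^{2,q_+}$.
\end{itemize}

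With $F^6$ identified correctly and these terms added, your scheme closes.
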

\begin{proof}
    By the definition of $F^{6}$ and H\"older's inequality, we have the following estimates for terms included in $F^{6}$
    \begin{align*}
    \begin{aligned}
        \|\frac{\p_{t}\eta}{\rho^{2}}u\cdot \mathcal{N}\|_{L_{t}^{2}W^{2-\frac{1}{q_{-}},q_{-}}(\Sigma)}\lesssim& \|\p_{t}\eta\|_{L_{t}^{2}W^{2,q_{-}}}\|u\cdot \mathcal{N}\|_{L_{t}^{\infty}W^{2-\frac{1}{q_{-}},q_{-}}(\Sigma)}\lesssim \|\p_{t}\eta\|_{L_{t}^{2}W^{3-\frac{1}{q_{-}},q_{-}}}\|u\cdot \mathcal{N}\|_{L_{t}^{\infty}W^{2,q_{-}}},\\
        \|\mathfrak{n}^{\prime}(t)\frac{\p_{t}\eta \rho^{\prime}}{\rho^{2}}\|_{L_{t}^{2}W^{2-\frac{1}{q_{-}},q_{-}}(\Sigma)}\lesssim& \|u\|_{L_{t}^{\infty}H^{1}}\|\p_{t}\eta\|_{L_{t}^{2}W^{3-\frac{1}{q_{-}},q_{-}}}\lesssim \mathscr{E}^{1/2}\mathscr{D}^{1/2},\\
         \|\mathfrak{n}^{\prime \prime}(t)(\frac{\p_{\theta}\rho}{\rho}-\frac{\p_{\theta}\rho_{0}}{\rho_{0}})\sin\theta\|_{L_{t}^{2}W^{2-\frac{1}{q_{-}},q_{-}}}\lesssim &\|\mathfrak{n}^{\prime\prime}(t)\|_{L_{t}^{\infty}}\|\eta \|_{L_{t}^{2}W^{3-\frac{1}{q_{+}},q_{+}}}\lesssim \mathscr{E}^{1/2}\mathscr{D}^{1/2}.
         \end{aligned}
    \end{align*}

    We next derive the estimate for $\p_{t}F^{6}$, we have following estimates for each term included in $\p_{t}F^{6}$
    \begin{align}
    \begin{aligned}
        \|\p_{t}(\frac{\p_{t}\eta}{\rho^{2}}u\cdot \mathcal{N})\|_{L_{t}^{2}W^{1,\frac{1}{1-\varepsilon_{-}}}(\Sigma)}\lesssim& \|\frac{\p_{t}^{2}\eta}{\rho^{2}}u\cdot \mathcal{N}\|_{L_{t}^{2}W^{1,\frac{1}{1-\varepsilon_{-}}}(\Sigma)}+\|\frac{(\p_{t}\eta)^{2}}{\rho^{3}}u\cdot \mathcal{N}\|_{L_{t}^{2}W^{1,\frac{1}{1-\varepsilon_{-}}}(\Sigma)}\\
        &+\|\frac{(\p_{t}\eta)}{\rho^{2}}u\cdot \p_{t}\mathcal{N}\|_{L_{t}^{2}W^{1,\frac{1}{1-\varepsilon_{-}}}(\Sigma)}+\|\frac{(\p_{t}\eta)}{\rho^{2}}\p_{t}u\cdot \mathcal{N}\|_{L_{t}^{2}W^{1,\frac{1}{1-\varepsilon_{-}}}(\Sigma)}\\
        \lesssim& \|\p_{t}^{2}\eta \|_{L_{t}^{2}W^{1,\frac{1}{1-\varepsilon_{-}}}}\|u\|_{L_{t}^{\infty}L^{\infty}(\Sigma)}+\|\p_{t}^{2}\eta \|_{L_{t}^{2}L^{\infty}}\|u\|_{L_{t}^{\infty}W^{1,\frac{1}{1-\varepsilon_{-}}}(\Sigma)}\\
        &+\|\p_{t}\eta\|_{L_{t}^{\infty}W^{1,\infty}}\|\p_{t}u\|_{L_{t}^{2}W^{1,\frac{1}{1-\varepsilon_{-}}}(\Sigma)}+\|\p_{t}\eta\|_{L_{t}^{\infty}W^{1,+\infty}}^{2}\|u\|_{L_{t}^{2}W^{1,\frac{1}{1-\varepsilon_{-}}}(\Sigma)}\\
        \lesssim& \|\p_{t}^{2}\eta \|_{L_{t}^{2}H^{\frac{3}{2}-\alpha}}\|u\|_{L_{t}^{\infty}W^{2,q_{+}}}+\|\p_{t}^{2}\eta \|_{L_{t}^{2}H^{\frac{3}{2}-\alpha}}\|u\|_{L_{t}^{\infty}W^{2,q_{+}} }\\
        &+\|\p_{t}\eta\|_{L_{t}^{\infty}H^{\frac{3}{2}+\frac{\varepsilon_{-}-\alpha}{2}}}\|\p_{t}u\|_{L_{t}^{2}W^{2,q_{-}}}+\|\p_{t}\eta\|_{L_{t}^{\infty}H^{\frac{3}{2}+\frac{\varepsilon_{-}-\alpha}{2}}}^{2}\|u\|_{L_{t}^{2}W^{2,q_{+}}},\\
         \|\p_{t}(\mathfrak{n}^{\prime}\frac{\p_{t}\eta\rho^{\prime}}{\rho^{2}})\|_{L_{t}^{2}W^{2-\frac{1}{q_{-}},q_{-}}}\lesssim &\|\mathfrak{n}^{\prime\prime}\|_{L_{t}^{\infty}}\|\p_{t}\eta\|_{L_{t}^{\infty}W^{2-\frac{1}{q_{-}},q_{-}}}\\
            &+\|\mathfrak{n}^{\prime}\|_{L_{t}^{\infty}}\|\p_{t}^{2}\eta\|_{L_{t}^{2}W^{2-\frac{1}{q_{-}},q_{-}}}+\|\mathfrak{n}^{\prime}\|_{L_{t}^{\infty}}\|\p_{t}\eta\|_{L_{t}^{\infty}W^{2-\frac{1}{q_{-}},q_{-}}}\|\p_{t}\eta\|_{L_{t}^{2}W^{3-\frac{1}{q_{-}},q_{-}}}\\
            \lesssim &\|\mathfrak{n}^{\prime\prime}\|_{L_{t}^{\infty}}\|\p_{t}\eta\|_{L_{t}^{\infty}H^{\frac{3}{2}-\alpha}}+\|\mathfrak{n}^{\prime}\|_{L_{t}^{\infty}}\|\p_{t}^{2}\eta\|_{L_{t}^{2}H^{\frac{3}{2}-\alpha}}+\|\mathfrak{n}^{\prime}\|_{L_{t}^{\infty}}\|\p_{t}\eta\|_{L_{t}^{\infty}H^{\frac{3}{2}-\alpha}}\|\p_{t}\eta\|_{L_{t}^{2}W^{3-\frac{1}{q_{-}},q_{-}}}\\
            \lesssim& \mathscr{E}^{1/2}\mathscr{D}^{1/2},\\
            &\|\p_{t}(\mathfrak{n}^{\prime\prime}(\frac{\p_{\theta}\rho}{\rho}-\frac{\p_{\theta}\rho_{0}}{\rho})\sin\theta)\|_{L_{t}^{2}W^{2-\frac{1}{q_{-}},q_{-}}}\\
            \lesssim &\|\mathfrak{n}^{\prime\prime\prime}\|_{L_{t}^{2}}\|\eta\|_{L_{t}^{\infty}W^{2-\frac{1}{q_{-}},q_{-}}}+\|\mathfrak{n}^{\prime\prime}\|_{L_{t}^{\infty}}\|\p_{t}\eta\|_{L_{t}^{2}W^{2-\frac{1}{q_{-}},q_{-}}}\|\eta\|_{L_{t}^{\infty}W^{3-\frac{1}{q_{-}},q_{-}}}\\
            &+\|\mathfrak{n}^{\prime\prime}\|_{L_{t}^{\infty}}\|\eta\|_{L_{t}^{\infty}W^{2-\frac{1}{q_{-}},q_{-}}}\|\p_{t}\eta\|_{L_{t}^{2}W^{3-\frac{1}{q_{-}},q_{-}}}\\
            \lesssim &\|\mathfrak{n}^{\prime\prime\prime}\|_{L_{t}^{\infty}}\|\eta\|_{L_{t}^{\infty}H^{\frac{3}{2}-\alpha}}+\|\mathfrak{n}^{\prime\prime}\|_{L_{t}^{\infty}}\|\eta\|_{L_{t}^{\infty}W^{3-\frac{1}{q_{-}},q_{-}}}\|\p_{t}\eta\|_{L_{t}^{2}W^{3-\frac{1}{q_{-}},q_{-}}}\\
            \lesssim& \mathscr{E}^{1/2}\mathscr{D}^{1/2}.
        \end{aligned}
    \end{align}
   
\end{proof}

   Next, we establish the $q_{+}$ boundedness for the nonlinear forcing terms by the following Theorem.

   \begin{theorem}{\label{thm:forcing_+}}
    $F^{1},F^{4},F^{5},F^{6},F^{7}$ are defined in Appendix \ref{sec:dive_forcing}. They satisfy the following estimate
    \begin{align}
         \|\int_{0}^{t}F^1\|_{L^\infty 
  L^{q_+}}^2 + \|\int_{0}^{t}F^4\|_{L^\infty W^{1-1/q_+,q_+}}^2 + \|\int_{0}^{t}F^5\|_{L^\infty W^{1-1/q_+,q_+}}^2 + \|\int_{0}^{t}[F^7]_\pi\|_{L_t^{\infty}}^2+\|\int_{0}^{t}F^{6}\|_{L_{t}^{\infty}W^{2-\frac{1}{q_{+}},q_{+}}}^{2}\lesssim \mathscr{E}\mathscr{D}.
    \end{align}
\end{theorem}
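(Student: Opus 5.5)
Each time-integrated forcing term will be handled by one device, integration by parts in time. In every $F^i$ listed in Appendix \ref{sec:dive_forcing}, every summand is at least quadratic in $(u,p,\eta,\mathfrak n')$ and their derivatives. For such a product we write $\int_0^t \partial_t(\cdot)$-type terms as a boundary value at time $t$ minus one at time $0$, plus an integral of the product with the time derivative moved onto the other factor. The quantities in the statement are $L^\infty_t$ norms of the integrals $\int_0^t F^i$. So for each summand it suffices to show one of two things. Either the summand is a product $fg$ with $\|fg\|_{X}\lesssim \|f\|_{Y_1}\|g\|_{Y_2}$, where both factors are measured in dissipation norms (and then Cauchy--Schwarz in time gives $\int_0^t\|fg\|_X\le \|f\|_{L^2Y_1}\|g\|_{L^2Y_2}\lesssim \mathscr{D}$). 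Or, after integrating by parts in time, it is bounded by an energy factor times a dissipation factor. Since $\mathscr D\lesssim \mathscr E^{1/2}\mathscr D^{1/2}$ is false in general, I will aim for the bound $\sup_t\|\int_0^tF\|\lesssim \mathscr{E}^{1/2}\mathscr{D}^{1/2}$, which squares to the claimed $\mathscr E\mathscr D$. To get it I put one factor in $L^\infty_t$, bounded by $\mathscr E^{1/2}$, and the other in $L^1_t\subset L^2_t$ on the finite interval $[0,T]$, bounded by $\mathscr D^{1/2}$.

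The key steps, in order, are as follows. First, I treat $F^1$, $F^4$, $F^5$ in $L^{q_+}$ and $W^{1-1/q_+,q_+}$ exactly as in the $q_+$ a priori estimates of Section 5 of \cite{YXD1}, just as Proposition \ref{prop:force_bulk} was handled. These terms come from $\mathcal A-I$, $J-1$ and $\mathcal N-\mathcal N_0$, which are controlled by $\|\eta\|_{W^{3-1/q_+,q_+}}$ in $L^\infty_t$ (part of $\mathscr E$), multiplied by $\nabla^2 u$, $\nabla p$, or $\partial_t u$. I use the Sobolev product estimates on $\Omega$ and on $\Sigma$ with exponent $q_+$, and when $u$ appears with two derivatives I keep it in $L^2_tW^{2,q_+}$, which is part of $\mathscr D$. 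Second, for terms containing $\partial_t \bar\eta$ (the transport term in the momentum equation), $\partial_t u$ or $\partial_t\eta$, I integrate by parts in time: $\int_0^t \partial_t\bar\eta\,\nabla u = \bar\eta\nabla u|_0^t-\int_0^t \bar\eta\,\nabla\partial_t u$. The boundary term is bounded by $\mathscr E$. It is quadratic in energy, and I absorb it as $\mathscr E\lesssim \mathscr E^{1/2}\mathscr D^{1/2}$ using $\mathscr E\le\mathscr D$ up to constants. This holds because the $L^\infty_t$ energy norms in \eqref{def:energy} are dominated via $f(t)=f(0)+\int_0^t\partial_tf$ together with the time-derivative dissipation. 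Third, for $F^7$ the contact-point terms are scalar, and for $[\cdot]_\pi$ a pointwise-in-$t$ Cauchy--Schwarz suffices.

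For $F^6$ in $W^{2-1/q_+,q_+}(\Sigma)$ I follow the computation in the proof of Theorem \ref{thm:pressure_+}. The term $\frac{\partial_t\eta}{\rho^2}u\cdot\mathcal N$ is handled by integrating by parts in time to move $\partial_t$ off $\eta$; then $\eta\in L^\infty W^{3-1/q_+,q_+}$ and $\partial_t u\in L^2W^{2,q_-}$. The terms in $\mathfrak n'(\frac{\rho'}{\rho}-\frac{\rho_0'}{\rho_0})\sin\theta$ are handled with $|\mathfrak n'|\lesssim\|u\|_{H^1}$ from \eqref{equ:ODE}. The algebra property of $W^{2-1/q_+,q_+}$ in one dimension is available since $(2-1/q_+)q_+>1$.

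The main obstacle is the loss of integrability between $q_-$ and $q_+$. The dissipation controls $\partial_t u$ only in $W^{2,q_-}$ and $\partial_t\eta$ only in $W^{3-1/q_-,q_-}$, whereas the target is in $q_+$-based spaces. So wherever the time integration by parts leaves $\partial_t u$ or $\partial_t\eta$ carrying the top-order derivatives, the estimate fails. My first attempt is to arrange each integration by parts so that the top-order spatial derivatives always land on an undifferentiated factor ($u\in W^{2,q_+}$, $\eta\in W^{3-1/q_+,q_+}$). The time-differentiated factor then only needs a low norm such as $H^{3/2+(\varepsilon_--\alpha)/2}$ or $H^{1+\varepsilon_-/2}$, which embeds into $W^{1,\infty}$ or $L^\infty$. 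If this is impossible for some term, I fall back on the Sobolev embedding $H^{1+\varepsilon_-/2}\hookrightarrow W^{1,q_+}$, using the gap $q_+<q_{\max}$ from \eqref{parameters}.
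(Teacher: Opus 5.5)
Your basic pairing, with one factor in an $L^\infty_t$ energy norm and the other in an $L^2_t$ dissipation norm, is the right one and is what the paper does. There is, however, a genuine gap in how you close the estimate.

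\textbf{The absorption step is false.} After integrating by parts in time you bound the endpoint term by $\mathscr{E}$ and absorb it via $\mathscr{E}\lesssim\mathscr{D}$. That inequality fails.
\begin{itemize}
\item Every norm in $\mathscr{D}$ is an $L^2(0,T)$ norm, so for fixed nonzero data $\mathscr{D}\to 0$ as $T\to 0$, while $\mathscr{E}$ stays bounded below by the initial energy.
\item The identity $f(t)=f(0)+\int_0^t\partial_t f$ leaves $f(0)$ uncontrolled.
\item The time-derivative dissipation lives only in the $q_-$ scale ($\partial_t u\in L^2W^{2,q_-}$, $\partial_t\eta\in L^2W^{3-1/q_-,q_-}$). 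It therefore cannot dominate $\|u\|_{L^\infty W^{2,q_+}}$ or $\|\eta\|_{L^\infty W^{3-1/q_+,q_+}}$.
\end{itemize}
Consequently every boundary term your device creates is controlled only by $\mathscr{E}$, which does not square to $\mathscr{E}\mathscr{D}$. The same holds for the exact time derivatives if they are evaluated at $0$ and $t$: in $F^1$ these are $\partial_t(u\cdot\nabla_{\mathcal A}u)$, the time derivative of the transport term, and $\partial_t(\mathfrak n'\partial_x u)$; in $F^4$ it is $\partial_t(\mathcal R_2\mathcal N)$; in $F^7$ it is $\hat{\mathscr W}'(\cdot)\,\partial_t(\cdot)$. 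The paper's Step 1 passes from $\|u\|_{L^\infty W^{2,q_+}}^2$ to $\mathscr{E}^{1/2}\mathscr{D}^{1/2}$ in the same unsupported way, so imitating it does not repair this.

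\textbf{How to repair it.} If you integrate by parts, use the antiderivative $a(s)-a(0)$, whose norm is at most $T^{1/2}\|\partial_t a\|_{L^2_t}$. The boundary term is then $\lesssim T^{1/2}\mathscr{E}^{1/2}\mathscr{D}^{1/2}$. More simply, skip the integration by parts altogether. The $F^i$ of Appendix A are built from $\partial_t\mathcal A$, $\partial_t\mathcal N$ and time derivatives of the nonlinearities, not from $\mathcal A-I$, $J-1$, $\mathcal N-\mathcal N_0$ as you describe. So each summand has a time-differentiated factor $g$, which goes into a dissipation norm, while the remaining factors $f$ go into energy norms:
\[
\sup_t\|\int_0^t fg\|\le T^{1/2}\|f\|_{L^\infty_t}\|g\|_{L^2_t}.
\]
Note that on $[0,T]$ one has $L^2_t\subset L^1_t$, not the reverse. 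The $q_-$ dissipation suffices in every slot because of three embeddings:
\begin{itemize}
\item $W^{2,q_-}(\Omega)\hookrightarrow L^\infty\cap W^{1,2/(1-\varepsilon_-)}$, with $2/(1-\varepsilon_-)>q_+$;
\item $W^{3,q_-}(\Omega)\hookrightarrow W^{1,\infty}$, applied to $\partial_t\bar\eta$;
\item $W^{3-1/q_-,q_-}(0,\pi)\hookrightarrow W^{2-1/q_+,q_+}(0,\pi)$.
\end{itemize}
Exact derivatives are expanded and treated the same way, and $T\le 1$ then gives the claim.

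\textbf{The $F^6$ step fails for exactly the reason you flag at the end.} Write $\partial_t\eta/\rho^2=-\partial_t(1/\rho-1/\rho_0)$; without subtracting $1/\rho_0$, the integration by parts would even produce a term linear in $u$. Moving $\partial_t$ onto $u\cdot\mathcal N$ leaves $\int_0^t(1/\rho-1/\rho_0)\,\partial_t(u\cdot\mathcal N)$. This must be estimated in $W^{2-1/q_+,q_+}(0,\pi)$ with the top-order derivatives on $\partial_t u$.
\begin{itemize}
\item $\partial_t u\in L^2W^{2,q_-}$ has trace only in $W^{2-1/q_-,q_-}(0,\pi)$.
\item That space does not embed into $W^{2-1/q_+,q_+}(0,\pi)$, because $2-2/q_-=\varepsilon_-<\varepsilon_+=2-2/q_+$.
\item Your fallback $H^{1+\varepsilon_-/2}\hookrightarrow W^{1,q_+}$ supplies one derivative, not two.
\end{itemize}
The paper estimates this term directly,
\[
\|\int_0^t\tfrac{\partial_t\eta}{\rho^2}u\cdot\mathcal N\|_{L^\infty W^{2-1/q_+,q_+}}\lesssim T^{1/2}\|\partial_t\eta\|_{L^2W^{3-1/q_-,q_-}}\|u\|_{L^\infty W^{2,q_+}},
\]
keeping the time derivative on $\eta$, where the $q_-$ dissipation is enough. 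The paper's Steps 1--4 are, for the most part, term-by-term H\"older-in-time bounds of this type with explicit factors of $T$ or $T^{1/2}$.
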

\begin{proof}
   \textbf{Step 1.} In this step, we estimate \(F^{1}\).

By the definition of \(F^{1}\) and integration by parts in time, we have
\begin{align}
    \begin{aligned}
        \biggl\|\int_{0}^{t}\dive_{\partial_{t}\mathcal{A}}S_{\mathcal{A}}(p,u)\biggr\|_{L_{t}^{\infty}L^{q_{+}}}
        \lesssim{}& \|\partial_{t}\bar{\eta}\|_{L_{t}^{\infty}W^{1,\infty}}
        \bigl(1+\|\eta\|_{L_{t}^{\infty}W^{3-\frac{1}{q_{-}},q_{-}}}\bigr)
        \bigl(\|p\|_{L_{t}^{\infty}W^{1,q_{+}}}+\|u\|_{L_{t}^{\infty}W^{2,q_{+}}}\bigr) \\
        \lesssim{}& \|\partial_{t}\eta\|_{L_{t}^{\infty}H^{\frac{3}{2}+\frac{\varepsilon_{-}-\alpha}{2}}}
        \bigl(1+\|\eta\|_{L_{t}^{\infty}W^{3-\frac{1}{q_{-}},q_{-}}}\bigr)
        \bigl(\|p\|_{L_{t}^{\infty}W^{1,q_{+}}}+\|u\|_{L_{t}^{\infty}W^{2,q_{+}}}\bigr) \\
        \lesssim{}& \mathscr{E}^{\frac{1}{2}}\mathscr{D}^{\frac{1}{2}},\\
        \biggl\|\int_{0}^{t}\dive_{\mathcal{A}}S_{\partial_{t}\mathcal{A}}(p,u)\biggr\|_{L_{t}^{\infty}L^{q_{+}}}
        \lesssim{}& T^{\frac{1}{2}}\|\nabla^{2}\bar{\eta}\|_{L_{t}^{\infty}L^{\frac{2}{1-\varepsilon_{+}}}}
        \Bigl(\|\partial_{t}p\|_{L_{t}^{2}L^{\frac{2}{1-\varepsilon_{-}}}}
        +\|\partial_{t}u\|_{L_{t}^{2}W^{1,\frac{2}{1-\varepsilon_{-}}}}\Bigr) \\
        & + T\|\partial_{t}\nabla\bar{\eta}\|_{L_{t}^{\infty}L^{\infty}}
        \bigl(\|p\|_{L_{t}^{\infty}W^{1,q_{+}}}+\|u\|_{L_{t}^{\infty}W^{2,q_{+}}}\bigr) \\
        \lesssim{}& T\|\partial_{t}\eta\|_{L_{t}^{\infty}H^{\frac{3}{2}+\frac{\varepsilon_{-}-\alpha}{2}}}
        \bigl(\|p\|_{L_{t}^{\infty}W^{1,q_{+}}}+\|u\|_{L_{t}^{\infty}W^{2,q_{+}}}\bigr) \\
        & + T^{\frac{1}{2}}\|\eta\|_{L_{t}^{\infty}W^{3-\frac{1}{q_{+}},q_{+}}}
        \Bigl(\|\partial_{t}p\|_{L_{t}^{2}W^{1,q_{-}}}
        +\|\partial_{t}u\|_{L_{t}^{2}W^{2,q_{-}}}\Bigr) \\
        \lesssim{}& (1+T)\mathscr{E}^{\frac{1}{2}}\mathscr{D}^{\frac{1}{2}}.
    \end{aligned}
\end{align}

For the remaining terms appearing in \(\int_{0}^{t}F^{1}\), we have
\begin{align}
    \begin{aligned}
        \|u\cdot \nabla_{\mathcal{A}}u\|_{L_{t}^{\infty}L^{q_{+}}}
        \lesssim{}& \|u\|_{L_{t}^{\infty}L^{\infty}}\|u\|_{L_{t}^{\infty}W^{1,q_{+}}}
        \lesssim \|u\|_{L_{t}^{\infty}W^{2,q_{+}}}^{2}
        \lesssim \mathscr{E}^{\frac{1}{2}}\mathscr{D}^{\frac{1}{2}}, \\
        \|(\cos\theta W\partial_{t}\bar{\rho},\sin\theta W\partial_{t}\bar{\rho})
        \mathcal{A}(\partial_{x}u,\partial_{y}u)^{T}\|_{L_{t}^{\infty}L^{q_{+}}}
        \lesssim{}& \|\partial_{t}\eta\|_{L_{t}^{\infty}H^{1}}\|u\|_{L_{t}^{\infty}W^{2,q_{+}}}
        \lesssim \mathscr{E}^{\frac{1}{2}}\mathscr{D}^{\frac{1}{2}}, \\
        \|\mathfrak{n}'(t)\partial_{x}u\|_{L_{t}^{\infty}L^{q_{+}}}
        \lesssim{}& \|\mathfrak{n}'(t)\|_{L_{t}^{\infty}}\|u\|_{L_{t}^{\infty}H^{1}}
        \lesssim \mathscr{E}^{\frac{1}{2}}\mathscr{D}^{\frac{1}{2}}.
    \end{aligned}
\end{align}

\textbf{Step 2.} In this step, we estimate \(\int_{0}^{t}F^{4}\). Since \(\int_{0}^{t}F^{5}\) has a similar form, we only provide the detailed estimate for \(\int_{0}^{t}F^{4}\).

First, we have
\begin{align}
    \begin{aligned}
        \biggl\|\int_{0}^{t}S_{\mathcal{A}}(p,u)\partial_{t}\mathcal{N}\biggr\|_{L_{t}^{\infty}W^{1-\frac{1}{q_{+}},q_{+}}}
        \lesssim{}& T^{\frac{1}{2}}\|\partial_{t}\eta\|_{L_{t}^{2}W^{3-\frac{1}{q_{-}},q_{-}}}
        \bigl(1+\|\eta\|_{L_{t}^{\infty}W^{3-\frac{1}{q_{-}},q_{-}}}\bigr)
        \bigl(\|p\|_{L_{t}^{\infty}W^{1,q_{+}}}+\|u\|_{L_{t}^{\infty}W^{2,q_{+}}}\bigr) \\
        \lesssim{}& T\mathscr{E}^{\frac{1}{2}}\mathscr{D}^{\frac{1}{2}}, \\
        \biggl\|\int_{0}^{t}S_{\partial_{t}\mathcal{A}}(p,u)\mathcal{N}\biggr\|_{L_{t}^{\infty}W^{1-\frac{1}{q_{+}},q_{+}}}
        \lesssim{}& T^{\frac{1}{2}}\|\partial_{t}\eta\|_{L_{t}^{2}W^{3-\frac{1}{q_{-}},q_{-}}}
        \bigl(1+\|\eta\|_{L_{t}^{\infty}W^{3-\frac{1}{q_{-}},q_{-}}}\bigr)
        \bigl(\|p\|_{L_{t}^{\infty}W^{1,q_{+}}}+\|u\|_{L_{t}^{\infty}W^{2,q_{+}}}\bigr) \\
        \lesssim{}& T\mathscr{E}^{\frac{1}{2}}\mathscr{D}^{\frac{1}{2}}, \\
        \biggl\|\int_{0}^{t}\bigl(\mathcal{K}(\eta)-\partial_{\theta}\mathcal{R}_{1}(\rho_{0},\partial_{\theta}\eta, \eta)\bigr)\partial_{t}\mathcal{N}\biggr\|_{L_{t}^{\infty}W^{1-\frac{1}{q_{+}},q_{+}}}
        \lesssim{}& T^{\frac{1}{2}}\|\partial_{t}\eta\|_{L_{t}^{2}W^{3-\frac{1}{q_{-}},q_{-}}}
        \|\eta\|_{L_{t}^{\infty}W^{3-\frac{1}{q_{-}},q_{-}}}
        \lesssim{} T\mathscr{E}^{\frac{1}{2}}\mathscr{D}^{\frac{1}{2}}, \\
        \|\mathcal{R}_{2}(\rho_{0},\partial_{\theta}\eta,\eta)\|_{L_{t}^{\infty}W^{1-\frac{1}{q_{+}},q_{+}}}
        \lesssim& \|\eta\|_{W^{2-\frac{1}{q_{+}},q_{+}}}^{2}
        \lesssim \mathscr{E}^{\frac{1}{2}}\mathscr{D}^{\frac{1}{2}},\\
        \|\p_{\theta}(\int_{0}^{t}\mathcal{R}_{1b}(\rho_{0},\partial_{\theta}\eta,\eta)\p_{t}\eta)\|_{L_{t}^{\infty}W^{1-\frac{1}{q_{+}},q_{+}}}
        \lesssim& T\|\eta\|_{L_{t}^{\infty}W^{3-\frac{1}{q_{+}},q_{+}}}\|\p_{t}\eta\|_{L_{t}^{\infty}H^{\frac{3}{2}+\frac{\varepsilon_{-}-\alpha}{2}}}
        \lesssim T\mathscr{E}^{\frac{1}{2}}\mathscr{D}^{\frac{1}{2}}.
    \end{aligned}
\end{align}

\textbf{Step 3.} In this step, we estimate \(\int_{0}^{t}F^{7}\).

By the definition of \(F^{7}\), we have
\begin{align*}
    \begin{aligned}
        \biggl\|\int_{0}^{t}F^{7}\biggr\|_{L_{t}^{\infty}}
        \lesssim{}& \|\hat{\mathcal{W}}(u\cdot\mathcal{N})(\tfrac{\pi}{2}\pm \tfrac{\pi}{2})\|_{L_{t}^{\infty}}
        +\|[u\cdot \partial_{t}\mathcal{N}]_{\pi}\|_{L_{t}^{\infty}} 
        \lesssim{} \|[u\cdot \mathcal{N}]_{\pi}^{2}\|_{L_{t}^{\infty}}
        +\|u\|_{L_{t}^{\infty}W^{2,q_{+}}}\|\partial_{t}\eta\|_{L_{t}^{\infty}H^{\frac{3}{2}+\frac{\varepsilon_{-}-\alpha}{2}}} 
        \lesssim{} \mathscr{E}^{\frac{1}{2}}\mathscr{D}^{\frac{1}{2}}.
    \end{aligned}
\end{align*}

\textbf{Step 4.} In this step, we estimate \(\int_{0}^{t}F^{6}\).

By the definition of \(F^{6}\), we estimate each term in \(F^{6}\) separately. We have
\begin{align*}
\begin{aligned}
    \biggl\|\int_{0}^{t}\frac{\partial_{t}\eta}{\rho^{2}}u\cdot \mathcal{N}\biggr\|_{L_{t}^{\infty}W^{2-\frac{1}{q_{+}},q_{+}}}
    \lesssim &T^{\frac{1}{2}}\|\partial_{t}\eta\|_{L_{t}^{2}W^{3-\frac{1}{q_{-}},q_{-}}}\|u\|_{L_{t}^{\infty}W^{2,q_{+}}}
    \lesssim T^{\frac{1}{2}}\mathscr{E},\\
    \biggl\|\int_{0}^{t}\mathfrak{n}'(t)\frac{\partial_{t}\eta \rho'}{\rho^{2}}\biggr\|_{L_{t}^{\infty}W^{2-\frac{1}{q_{+}},q_{+}}}
    \lesssim& T^{\frac{1}{2}}\|\partial_{t}\eta\|_{L_{t}^{2}W^{3-\frac{1}{q_{-}},q_{-}}}\|u\|_{L_{t}^{\infty}W^{2,q_{+}}}\|\eta\|_{L_{t}^{\infty}W^{3-\frac{1}{q_{+}},q_{+}}}
    \lesssim T^{\frac{1}{2}}\mathscr{E}^{\frac{3}{2}},\\
    \biggl\|\int_{0}^{t}\mathfrak{n}''(t)\Bigl(\frac{\partial_{\theta}\rho}{\rho}-\frac{\partial_{\theta}\rho_{0}}{\rho_{0}}\Bigr)\sin\theta\biggr\|_{L_{t}^{\infty}W^{2-\frac{1}{q_{+}},q_{+}}}
    \lesssim& T\|\eta\|_{L_{t}^{\infty}W^{3-\frac{1}{q_{+}},q_{+}}}
    \bigl(\|\partial_{t}u\|_{L_{t}^{\infty}H^{1}}+\mathscr{E}\bigr)
    \lesssim T\mathscr{E}.
    \end{aligned}
\end{align*}

Combining Steps 1--4, the proof is finished.
\end{proof}

  \textbf{Initial Data for the Nonlinear System}. We assume that the initial data:

  \begin{align}
      (u_{0},p_{0},\eta_{0},\partial_{t}u(0),\partial_{t}p(0),\partial_{t}\eta(0),\partial_{t}^{2}u(0),\partial_{t}^{2}\eta(0))
  \end{align}

  \noindent are in the space $X$ defined via:

  \begin{align*}
      X:=W^{2,q_{+}}(\Omega)\times W^{1,q_{+}}(\Omega)\times W^{3-\frac{1}{q_{+}},q_{+}}(\Sigma)\times H^{1+\frac{\varepsilon_{-}}{2}}(\Omega)\times H^{0}(\Omega)\times H^{\frac{3}{2}+\frac{\varepsilon_{-}-\alpha}{2}}(\Sigma)
      \times H^{0}(\Omega)\times H^{1}(\Sigma)
  \end{align*}

  \noindent which satisfies the compatibility condition and zero average conditions. Here, $X$ is a Banach space, equipped with the square norm:

  \begin{align}
      \vert \vert (u_{0},p_{0},\eta_{0},\partial_{t}u(0),\partial_{t}p(0),\partial_{t}\eta(0),\partial_{t}^{2}u(0),\partial_{t}^{2}\eta(0))\vert \vert_{X}^{2}
      =\mathcal{E}(0)
  \end{align}

  \noindent The initial data set is non-empty provided that $X+\vert \vert \partial_{t}^{2}\eta(0)\vert \vert_{W^{2-\frac{1}{q_{+}},q_{+}}}$ is sufficiently small. 

  Then we define a distance as follows. We consider working in a complete space, that requires high regularity bounds, but that is endowed with a metric only involving low regularity. It has a lot of benefits when applying the fixed point theorem. We define the space as follows:

\textbf{Definition.} Suppose that $T>0$. For $\delta \in (0,\infty)$, we define the space:

\begin{align}
    S(T,\delta)=\{(u,p,\eta):\Omega \rightarrow \mathbb{R}^{2}\times \mathbb{R}\times \mathbb{R}|(u,p,\eta)\in \mathcal{X}\cap\mathcal{Y},\operatorname{with} \mathscr{K}(u,p,\eta)^{\frac{1}{2}}\leq \delta \notag\\
    ~\operatorname{and} ~(u,p,\eta)~\operatorname{achieve}~\operatorname{the}~\operatorname{initial}~\operatorname{data}\}.
\end{align}

We endow this space with the metric:

\begin{equation}\label{def:metric}
  \begin{aligned}
  d((u,p,\eta),(v,q,\xi))&=\|u-v\|_{L^\infty H^1}+\|u-v\|_{L^{2}W^{2,q_{-}}}+\|\p_{t}u-\p_{t}v\|_{L^2 H^1}\\&\quad+\sum_{i=0}^{1}(\|\p_{t}^{i}u-\p_{t}^{i}v\|_{L^{\infty}H^{0}})+\|p-q\|_{L^{2}W^{1,q_{-}}}+\|p-q\|_{L_{t}^{\infty}H^{0}}\\&\quad+\sum_{i=0}^{1}\|\p_{t}^{i}\eta-\p_{t}^{i}\xi\|_{L^\infty H^1}
  +\|\eta-\xi\|_{L^\infty H^{3/2-\alpha}}+\|\eta-\xi\|_{L^2 W^{3-1/q_-,q_-}}
  \\
  &\quad+\|\p_t\eta-\p_t\xi\|_{L^2 H^{3/2-\alpha}}+\sum_{i=0}^{1}\|[\p_t^{i}\eta-\p_t^{i}\xi]_\pi\|_{L^\infty([0,T])}+\|[\p_t^2\eta-\p_t^2\xi]_\pi\|_{L^2([0,T])}.
  \end{aligned}
  \end{equation}

\noindent It is easy  to see that the space $S(T,\delta)$ is a complete metric space for each fixed $\delta$. Now we employ the metric space $S(T,\delta)$ and a contraction mapping argument to construct the solution. In the following theorem, we will show how to construct a contraction map with respect to the weaker norm d and then bound the solution in higher regularity norm.

\begin{theorem}{\label{thm:contraction}}
    There exists a universal constant $\delta>0$ sufficiently small such that if the initial data for the nonlinear system are given as in Section \ref{sec:initial} satisfying
\begin{align}
\|(u_0, p_0, \eta_0, \p_tu(0), \p_tp(0), \p_t\eta(0), \p_t^2u(0), \p_t^2\eta(0)\|_{X}^2\le \delta,
\end{align}
then there exists a unique solution $(u, p, \eta)$ to \eqref{equ:fix_2}, belonging to the metric space $S(T, \delta)$, where $T>0$ is sufficiently small.  In particular $(u, p, \eta)\in \mathcal{X}\cap\mathcal{Y}$, where $\mathcal{X}$ and $\mathcal{Y}$ are defined in \eqref{def:xy}.
\end{theorem}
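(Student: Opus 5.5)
We construct the solution as the fixed point of the map $\Lambda$ on $S(T,\delta)$ that sends prescribed data $(u,p,\eta)\in S(T,\delta)$ to the solution $(v,q,\xi)$ of the linear problem \eqref{eq:quasi_linear}. The forcing terms $F^1,\dots,F^7$, the coefficients $\mathcal A,J,\mathcal N,R$ and the pole velocity $\mathfrak n'$ from \eqref{equ:ODE} are all evaluated at $(u,p,\eta)$, and the initial data are those built in Section~\ref{sec:initial}.

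\textbf{Step 1: $\Lambda$ is well defined and maps $S(T,\delta)$ into itself.} Membership in $S(T,\delta)$ gives $\mathfrak K(u,p,\eta)\le\delta^2$, which is the hypothesis of Theorem~\ref{thm:linear_low}. That theorem therefore produces a unique strong solution $(v,q,\xi)$ satisfying \eqref{est:bound_linear}.

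\textbf{Step 2: bound the right-hand side of \eqref{est:bound_linear}.} Proposition~\ref{prop:force_bulk} and Proposition~\ref{prop:force_bulk_f_6} bound $\mathfrak F$ by $\mathscr E\mathscr D\lesssim\delta^4$. Theorem~\ref{thm:forcing_+} bounds $\mathcal Z$ by $\mathscr E\mathscr D$, with several contributions carrying factors of $T$ or $T^{1/2}$. The quantities $\mathcal E(0)$ and $\|(F^1-F^4-F^5)(0)\|_{(\mathcal H^1)^*}$ are controlled by the smallness of the initial data. The exponential prefactor is at most $e^{CT\delta}\le 2$.

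\textbf{Step 3: close the bound.} The one delicate term is the linear term $\mathfrak K(u,p,\eta)$ on the right-hand side of \eqref{est:bound_linear}. A bare $C\delta^2$ would not close, so I will track where it comes from in the proof of Theorem~\ref{thm:linear_low}. It enters only through products such as $\mathfrak K(u,p,\eta)\mathfrak K_-(v_l,\xi_l)$ and through terms multiplied by $T^{1/2}$. At the fixed point these give quadratic quantities $\delta^2\mathfrak K(v,q,\xi)$ and $T^{1/2}\delta^2$.

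Choosing $\delta$ and then $T$ small, the result is $\mathfrak K(v,q,\xi)\le C\,\mathcal E(0)+\tfrac12\delta^2$. With $\mathcal E(0)\le\delta^2/(2C)$ this gives $\mathfrak K(v,q,\xi)\le\delta^2$. Finally, $(v,q,\xi)$ attains the initial data by construction, so $\Lambda(S(T,\delta))\subset S(T,\delta)$.

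\textbf{Step 4: contraction in the weak metric $d$ of \eqref{def:metric}.} Take two prescribed triples $(u^i,p^i,\eta^i)$ with images $(v^i,q^i,\xi^i)$, $i=1,2$. The difference $(v^1-v^2,q^1-q^2,\xi^1-\xi^2)$ solves a linear system of the form \eqref{eq:quasi_linear} with coefficients from $\eta^1$ and zero initial data. Its forcing consists of $F^j(u^1,p^1,\eta^1)-F^j(u^2,p^2,\eta^2)$ together with coefficient differences, for example $(\mathcal A^1-\mathcal A^2)$ acting on $(v^2,q^2)$ and $(\mathfrak n'^1-\mathfrak n'^2)$ multiplying $\xi_s$ terms.

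Each such difference is bilinear, with one factor a difference controlled by $d$ and the other a high-norm factor bounded by $\delta$ or by $T^{1/2}\delta$. I plan to apply the low-order energy estimate of Lemma~\ref{elemma}, the pressure and elliptic estimates of Theorem~\ref{thm:pressure}, and the $\mathfrak m$ bounds of Theorem~\ref{thm:m}. This should give $d(\Lambda(u^1),\Lambda(u^2))\le C(\delta+T^{1/2})\,d(u^1,u^2)$, which is a contraction for small $\delta$ and $T$.

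\textbf{Main obstacle.} I expect the hardest part to be this difference estimate. It must hold in norms below the energy level, since $d$ is weaker than $\mathfrak K$. The quasilinear surface-tension term $\p_\theta(\mathcal R_{1a}(\eta)\p_\theta\p_t\xi)$ and the transport terms $\mathfrak n'\p_\theta\p_t\xi/\rho$ generate differences like $(\mathcal R_{1a}(\eta^1)-\mathcal R_{1a}(\eta^2))\p_\theta\p_t\xi^2$. These must be placed in $L^2H^{1/2-\alpha}$-type spaces, using the $L^2W^{3-1/q_-,q_-}$ control of $\p_t\xi^2$ and the $d$-control of $\eta^1-\eta^2$.

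\textbf{Conclusion.} The Banach fixed point theorem on the complete space $S(T,\delta)$ gives a fixed point with $(v,q,\xi)=(u,p,\eta)$. At this fixed point $\mathfrak m=\mathfrak n''$, so \eqref{eq:quasi_linear} coincides with \eqref{equ:fix_2} and the fixed point solves \eqref{equ:fix_2}. Uniqueness within $S(T,\delta)$ follows from the same contraction estimate.
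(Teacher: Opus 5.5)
Your Steps 1--3 match the paper's Step~1, and you handle the bare $\mathfrak K(u,p,\eta)$ in \eqref{est:bound_linear} more explicitly than the paper, which silently counts it as $O(\delta^4)$. Make sure ``at the fixed point'' means the inner fixed point $(v_l,\xi_l)=(v,\xi)$ of Step~10 of Theorem~\ref{thm:linear_low}, because the self-map property has to hold for every $(u,p,\eta)\in S(T,\delta)$.

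The gap is in Step~4: the difference of two images does \emph{not} solve a problem of the form \eqref{eq:quasi_linear}. Each image is divergence free only with respect to its own coefficients, $\dive_{\mathcal A^i}v^i=0$. With coefficients frozen at $\eta^1$ you therefore get
\[
\dive_{\mathcal A^1}\p_t(v^1-v^2)=-\dive_{\mathcal A^1-\mathcal A^2}\p_tv^2-\dive_{\p_t\mathcal A^1-\p_t\mathcal A^2}v^2-\dive_{\p_t\mathcal A^1}(v^1-v^2)\neq 0 ,
\]
which is the role of $R^{2,1}$ in \eqref{linear_fix2}. Also, the kinematic and contact-point conditions involve $\p_tv^1\cdot\mathcal N^1-\p_tv^2\cdot\mathcal N^2$ rather than $\p_t(v^1-v^2)\cdot\mathcal N^1$. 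The trace mismatch can be moved into the forcing, but the divergence cannot, because \eqref{eq:quasi_linear} has no divergence datum. Lemma~\ref{elemma} and Theorem~\ref{thm:pressure} concern pressureless solutions in $\mathcal W_\sigma(t)$: they are obtained by testing with $\dive_{\mathcal A}$-free functions so that the pressure drops out. If you test the differenced momentum equation with $\p_t(v^1-v^2)$ itself, you are left with $\int_\Omega J^1\,\p_t(q^1-q^2)\,R^{2,1}$. The metric $d$ of \eqref{def:metric} controls no time derivative of the pressure, so the contraction estimate does not close as you set it up.

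The missing ingredient is the corrector of the paper's Step~3. Take $w\in H^1_0(\Omega)$ solving \eqref{eq:Bogo}, $J^1\dive_{\mathcal A^1}w=J^1R^{2,1}-\langle J^1R^{2,1}\rangle$; the paper takes this Bogovskii-type solvability from Section~10 of Guo--Tice. Then test with $J^1\bigl(\p_t(v^1-v^2)-w\bigr)$. This generates the extra terms in \eqref{energy_fix}:
\begin{itemize}
\item $\int_\Omega J^1\p_t(v^1-v^2)\cdot w$ inside the time derivative;
\item $\int_\Omega\p_t(v^1-v^2)\cdot\p_t(J^1w)$;
\item the pairings of the viscous and forcing terms with $w$;
\item a pressure term $\int_\Omega\p_t(q^1-q^2)\,\langle J^1R^{2,1}\rangle$, which involves only the mean of $J^1R^{2,1}$.
\end{itemize}
All of these need bounds for $w$ and $\p_tw$, hence for $R^{2,1}$ and $\p_tR^{2,1}$, of size $\delta^{1/2}d$.

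The same issue recurs in the elliptic step. The zeroth-order differences must be estimated from the time-integrated difference system \eqref{linear_fix1}, whose divergence datum $\tilde R^{2,1}$ is nonzero. That requires the elliptic theory with divergence forcing, not Theorem~\ref{thm:pressure}.

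An alternative device is to compare $v^1$ with $M^1(M^2)^{-1}v^2$. It is $\dive_{\mathcal A^1}$-free by $J\dive_{\mathcal A}v=\dive(M^{-1}v)$, and by Proposition~\ref{prop:isomorphism}(v) its normal trace against $\mathcal N^1$ is $v^2\cdot\mathcal N^2$. The cost is extra terms generated by $M^1(M^2)^{-1}-I$. With one of these corrections in place, the rest of your Step~4 is what the paper does in its Steps~4--7.
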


\begin{proof}
\
    \paragraph{\underline{Step 1 -- Solving the Linear Problem}}

    Suppose that $\mathfrak{K}(u,p,\eta)\le\delta^2$ is sufficiently small. For every $(u,p,\eta)\in S(T, \delta)$ given, let $(\tilde{u},\tilde{p},\tilde{\eta})$ instead of $(u, p, \xi)$ be the unique solution to the linear problem \eqref{eq:quasi_linear}.
Then we apply Theorem \ref{thm:linear_low} to the system \eqref{eq:quasi_linear}, with the Propositions \ref{prop:force_bulk}--\ref{thm:forcing_+} under the assumption on $\mathfrak{K}(u,p,\eta)$ to obtain that
\begin{equation}
\mathfrak{K}(\tilde{u}, \tilde{p}, \tilde{\eta})\le C_0\Big[1+T\Big]e^{T\delta}\left(C_1\mathcal{E}(0)+C_2\Big(1+{T}\Big)\delta^4\right),
\end{equation}
for some universal constants $C_0$, $C_1$ and $C_2$. We first choose $T$ such that $T\le 1$. Then we choose $\delta\in (0, 1)$ such that $4C_0C_2e^{\delta}\delta^2\le\frac12$. Finally, we restrict the initial data $\mathcal{E}(0)$ so that $4C_0C_1e\mathcal{E}(0)\le\frac12\delta^2$. Consequently, we obtain $\mathfrak{K}(\tilde{u}, \tilde{p}, \tilde{\eta})\le\delta^2$, which implies that $(\tilde{u}, \tilde{p}, \tilde{\eta})\in S(T, \delta)$.

    \paragraph{\underline{Step 2 -- PDEs for the differences}} We now derive the system for the differences. From Step 1, we have a map from $S(T,\delta)\rightarrow S(T,\delta)$ via the linear system. To prove that it is a contraction map, we choose two elements $(u^{i},p^{i},\eta^{i})\in S(T,\delta)$, and then abuse the notation and denote $u=u^{1}-u^{2}$, $p=p^{1}-p^{2}$, $\eta=\eta^{1}-\eta^{2}$ and the same for $\tilde{u},\tilde{p},\tilde{\eta}$. From the difference of the equation for the two sets of functions, we have the following system:

    \begin{equation}\label{linear_fix2}
  \left\{
  \begin{aligned}
&\p_t(\p_t\tilde{u})+\dive_{\mathcal{A}^1}S_{\mathcal{A}^1}(\p_t\tilde{p},\p_t\tilde{u})=-\mu \dive_{\mathcal{A}^1}(S_{\p_t\mathcal{A}^1-\p_t\mathcal{A}^2}({u}^2,{p}^{2}))+R^{1,1} \  &\text{in}&\ \Om,\\
&\dive_{\mathcal{A}^1}\p_t\tilde{u}=R^{2,1} \  &\text{in}&\ \Om,\\
&S_{\mathcal{A}^1}(\p_t\tilde{p},\p_t\tilde{u})\mathcal{N}^1=\mu\mathbb{D}_{\p_t\mathcal{A}^1-\p_t\mathcal{A}^2}\tilde{u}^2\mathcal{N}^1+\mathcal{K}(\p_{t}\tilde{\eta})\mathcal{N}^{1}-\sigma\p_{\theta}(\mathcal{R}_{1a}(\rho_{0},\p_{\theta}\eta^{1},\eta^{1})\p_{\theta}\p_{t}\tilde{\eta})\mathcal{N}^{1}(t)\\&\quad\quad\quad\quad\quad\quad\quad-\sigma \p_{\theta}((\mathcal{R}_{1a}(\rho_{0},\p_{\theta}\eta^{2},\eta^{2})-\mathcal{R}_{1a}(\rho_{0},\p_{\theta}\eta^{1},\eta^{1}))\p_{\theta}\p_{t}\tilde{\eta}^{2})\mathcal{N}^{1}(t)+R^{4,1} \ &\text{on}&\ \Sigma,\\
&(S_{\mathcal{A}^1}(\p_t\tilde{p},\p_t\tilde{u})\nu-\beta \p_t\tilde{u})\cdot\tau=\mu\mathbb{D}_{\p_t\mathcal{A}^1-\p_t\mathcal{A}^2}{u}^2\nu\cdot\tau+R^{5,1} \  &\text{on}&\ \Sigma_s,\\
&\p_t\tilde{u}\cdot\nu=0 \  &\text{on}&\ \Sigma_s,\\
&\p_t(\p_t\tilde{\eta})=\frac{1}{\rho^{1}}\p_t\tilde{u}\cdot\mathcal{N}^1+R^{6,1}+\frac{1}{\rho^{1}}u\cdot\p_{t}\mathcal{N}(\tilde{\eta})+(\mathfrak{n}^{\prime})^{1}(t)\frac{\p_{\theta}\p_{t}\tilde{\eta}}{\rho^{1}}\sin\theta+(\mathfrak{m}^{1}(t)-\mathfrak{m}^{2}(t))\xi_{s} \  &\text{on}&\ \Sigma,\\
&\kappa(\p_{t}\tilde{u}^{1}\cdot \mathcal{N}^{1}-\p_{t}\tilde{u}^{2}\cdot \mathcal{N}^{2})(\frac{\pi}{2}\pm \frac{\pi}{2})=\sigma(\mp (\frac{\rho_{0}^{2}\p_{\theta}\p_{t}\tilde{\eta}}{(\rho_{0}^{2}+\rho_{0}'^{2})^{\frac{3}{2}}}-\frac{\rho_{0}'\rho_{0}\p_{t}\tilde{\eta}}{(\rho_{0}^{2}+\rho_{0}'^{2})^{\frac{3}{2}}})\pm \mathcal{R}_{1a}(\rho_{0},\partial_{\theta}\eta^{1},\eta^{1})\partial_{t}\partial_{\theta}\tilde{\eta})(\frac{\pi}{2}\pm \frac{\pi}{2})ds\\&\quad\quad\quad\quad\quad\quad\quad\pm\sigma ((\mathcal{R}_{1a}(\rho_{0},\p_{\theta}\eta^{2},\eta^{2})-\mathcal{R}_{1a}(\rho_{0},\p_{\theta}\eta^{1},\eta^{1}))\p_{\theta}\p_{t}\tilde{\eta}^{2})(\frac{\pi}{2}\pm \frac{\pi}{2})-R^{7,1}
  \end{aligned}
  \right.
\end{equation}

    \noindent with zero initial data, where $R^{1},R^{4},R^{7}$,$R^{1,1}$, $R^{2,1}$, $R^{4,1}$,$R^{5,1}$, $R^{6,1}$, $R^{7,1}$ are defined by
\begin{equation}{\label{remainder0'}}
    \begin{aligned}
        R^{1}=&u\cdot \nabla_{\mathcal{A}^{1}}u^{1}+u^{2}\cdot \nabla_{\mathcal{A}^{1}-\mathcal{A}^{2}}u^{1}+u^{2}\cdot \nabla_{\mathcal{A}^{2}}u+(\cos\theta (W^{1}-W^{2})\partial_{t}\bar{\eta}^{1},\sin\theta (W^{1}-W^{2})\partial_{t}\bar{\eta}^{1})\mathcal{A}^{1}(\partial_{x}u^{1},\partial_{y}u^{1})^{T}\\
        &\quad+(\cos\theta (W^{2})\partial_{t}\bar{\eta}^{1},\sin\theta (W^{2})\partial_{t}\bar{\eta}^{1})\mathcal{A}^{1}(\partial_{x}u^{1},\partial_{y}u^{1})^{T}+(\cos\theta (W^{2})\partial_{t}\bar{\eta}^{2},\sin\theta (W^{2})\partial_{t}\bar{\eta}^{2})(\mathcal{A}^{1}-\mathcal{A}^{2})(\partial_{x}u^{1},\partial_{y}u^{1})^{T}\\
        &\quad+(\cos\theta (W^{2})\partial_{t}\bar{\eta}^{2},\sin\theta (W^{2})\partial_{t}\bar{\eta}^{2})(\mathcal{A}^{2})(\partial_{x}u,\partial_{y}u)^{T},\\
        R^{4}=&S_{\mathcal{A}^{2}}(\tilde{p}^{2},\tilde{u}^{2})(\mathcal{N}^{1}-\mathcal{N}^{2})+(\mathcal{R}_{2}(\rho_{0},\p_{\theta}\eta^{1},\eta^{1})-\mathcal{R}_{2}(\rho_{0},\p_{\theta}\eta^{2},\eta^{2}))\mathcal{N}^{1}+(\mathcal{R}_{2}(\rho_{0},\p_{\theta}\eta^{1},\eta^{1}))(\mathcal{N}^{1}-\mathcal{N}^{2}),&\\
        R^{7}=&\hat{\mathcal{W}}(\p_{t}\eta^{1})-\hat{\mathcal{W}}(\p_{t}\eta^{2}),&
    \end{aligned}
\end{equation}
 \begin{equation}\label{remainder2'}
  \begin{aligned}
R^{1,1}&=\p_tR^1+\mu\dive_{\p_t\mathcal{A}^1}(S_{\mathcal{A}^1}({u},{p}))+\mu\dive_{\p_t\mathcal{A}^1}(S_{(\mathcal{A}^1-\mathcal{A}^2)}({u}^2,{p}^{2}))\\
&\quad+\mu\dive_{\mathcal{A}^1}(S_{(\p_{t}\mathcal{A}^1)}({u},{p}))+\mu \dive_{\mathcal{A}^{1}-\mathcal{A}^{2}}S_{\p_{t}\mathcal{A}^{2}}S(u^{2},p^{2})\\
&\quad+\mu \operatorname{div}_{\p_{t}\mathcal{A}^{1}-\p_{t}\mathcal{A}^{2}}S_{\mathcal{A}^{2}}(p,u)+\mu\operatorname{div}_{\mathcal{A}^{1}}S_{\mathcal{A}^{1}-\mathcal{A}^{2}}(\p_{t}\tilde{p}^{2},\p_{t}\tilde{u}^{2})+\mu\operatorname{div}_{\mathcal{A}^{1}-\mathcal{A}^{2}}S_{\mathcal{A}^{1}}(\p_{t}\tilde{p}^{2},\p_{t}\tilde{u}^{2}),\\
R^{2,1}&=-(\operatorname{div}_{\p_{t}\mathcal{A}^{1}-\p_{t}\mathcal{A}^{2}}{u}^{2})-(\operatorname{div}_{\mathcal{A}^{1}-\mathcal{A}^{2}}\p_{t}\tilde{u}^{2})-\operatorname{div}_{\p_{t}\mathcal{A}^{1}}u,\\
R^{4,1}&=\p_tR^4 +\mu S_{\mathcal{A}^1}({u},{p})\p_t\mathcal{N}^1 +\mu S_{\p_{t}\mathcal{A}^{1}}({u},{p})\mathcal{N}^{1}+\mathcal{K}(\tilde{\eta}^{1})\p_t\mathcal{N}^1\\
&\quad-\sigma \p_{\theta}(\mathcal{R}_{1a}(\rho_{0},\p_{\theta}\eta^{1},\eta^{1})\p_{\theta}\p_{t}\tilde{\eta}^{1})(\mathcal{N}^{1}-\mathcal{N}^{2})-\p_{\theta}(\mathcal{R}_{1}(\rho_{0},\p_{\theta}\eta^{1},\eta^{1}))\p_{t}(\mathcal{N}^{1}-\mathcal{N}^{2})\\
&\quad-(\p_{\theta}\mathcal{R}_{1}(\p_{\theta}\rho_{0},\p_{\theta}\eta^{1}
,\eta^{1})-\p_{\theta}\mathcal{R}_{1}(\rho_{0},\p_{\theta}\eta^{2},\eta^{2}))\p_{t}\mathcal{N}^{2}+\mu S_{\mathcal{A}^{1}-\mathcal{A}^{2}}(\p_{t}\tilde{u}^{2},\p_{t}\tilde{p}^{2})\mathcal{N}^{2}+\mu S_{\mathcal{A}^{1}-\mathcal{A}^{2}}({u}^{2},{p}^{2})\p_{t}\mathcal{N}^{2}\\
&\quad+\mathcal{K}(\eta^{2})(\p_t\mathcal{N}^1-\p_{t}\mathcal{N}^{2})+\mathcal{K}(\p_{t}\tilde{\eta}^{2})(\mathcal{N}^1-\mathcal{N}^{2})-\sigma \p_{\theta}(\mathcal{R}_{1b}(\rho_{0},\p_{\theta}\eta^{1},\eta^{1})\p_{t}\tilde{\eta}^{1})(\mathcal{N}^{1}-\mathcal{N}^{2})\\
&\quad-\sigma\p_{\theta}(\mathcal{R}_{1b}(\rho_{0},\p_{\theta}\eta^{1},\eta^{1})\p_{t}\tilde{\eta})\mathcal{N}^{1}(t)-\sigma \p_{\theta}((\mathcal{R}_{1b}(\rho_{0},\p_{\theta}\eta^{2},\eta^{2})-\mathcal{R}_{1b}(\rho_{0},\p_{\theta}\eta^{1},\eta^{1}))\p_{t}\tilde{\eta}^{2})\mathcal{N}^{1}(t),\\
R^{5,1}&=-S_{\p_{t}\mathcal{A}^{1}}({p},{u})
+\mu S_{\mathcal{A}^{1}-\mathcal{A}^{2}}(\p_{t}\tilde{u}^{2},\p_{t}\tilde{p}^{2}),\\
\quad R^{6,1}&= \frac{1}{\rho^{1}}\p_{t}\tilde{u}^{2}\cdot(\mathcal{N}^{1}-\mathcal{N}^{2})+(\frac{1}{\rho^{2}}-\frac{1}{\rho^{1}})\p_{t}\tilde{u}^{2}\cdot \mathcal{N}^{2}+(\frac{1}{\rho^{2}}-\frac{1}{\rho^{1}}){u}^{2}\cdot \p_{t}\mathcal{N}(\xi^{2})\\
&\quad+\frac{1}{\rho^{1}}(u^{1}-u^{2})\cdot\p_{t}\mathcal{N}(\xi^{1})-\frac{\p_{t}\eta^{1}}{(\rho^{1})^{2}}(u^{1}-u^{2})\cdot \mathcal{N}^{1}-\frac{\p_{t}\eta^{1}}{(\rho^{1})^{2}}(u^{2})\cdot (\mathcal{N}^{1}-\mathcal{N}^{2})-(\frac{\p_{t}\eta^{1}}{(\rho^{1})^{2}}-\frac{\p_{t}\eta^{2}}{(\rho^{2})^{2}})(u^{2})\cdot (\mathcal{N}^{2})
\\
&\quad+((\mathfrak{n}^{\prime})^{2}-(\mathfrak{n}^{\prime})^{1})\frac{\p_{\theta}\p_{t}\tilde{\eta}^{2}}{\rho^{1}}\sin\theta+(\mathfrak{n}^{\prime})^{2}{\p_{\theta}\p_{t}\tilde{\eta}^{2}}(\frac{1}{\rho^{1}}-\frac{1}{\rho^{2}})\sin\theta\\
&\quad+((\mathfrak{n}^{\prime\prime})^{2}-(\mathfrak{n}^{\prime\prime})^{1})(\frac{\rho^{\prime}}{\rho}-\frac{\rho_{0}^{\prime}}{\rho_{0}})\sin\theta+((\mathfrak{n}^{\prime\prime})^{2})(\frac{(\rho^{\prime})^{2}}{\rho^{2}}-\frac{(\rho^{\prime})^{1}}{\rho^{1}})\sin\theta,\\
\quad R^{7,1}&=\p_tR^7+(\tilde{u}^{1}\cdot \p_{t}\mathcal{N}^{1}-\tilde{u}^{2}\cdot \p_{t}\mathcal{N}^{2})\pm (\mathfrak{n}^{\prime \prime})^{2}(t)\\
&\quad\mp((\mathcal{R}_{1b}(\rho_{0},\p_{\theta}\eta^{2},\eta^{2})-\mathcal{R}_{1b}(\rho_{0},\p_{\theta}\eta^{1},\eta^{1}))\p_{t}\tilde{\eta}^{2})(\frac{\pi}{2}\pm \frac{\pi}{2})\mp \mathcal{R}_{1b}(\rho_{0},\p_{\theta}\eta^{1},\eta^{1})\p_{t}\eta(\frac{\pi}{2}\pm \frac{\pi}{2}).
\end{aligned}
\end{equation}

        \paragraph{\underline{Step 3 -- Introducing the Test Function}}
In this step, we introduce the test function and give the estimate with respect to this test function. We begin by introducing the following notations about the metrics.
\begin{align}
    d(u,p,\eta):=d((u^{1},p^{1},\eta^{1}),(u^{2},p^{2},\eta^{2})), \quad
     d(\tilde{u},\tilde{p},\tilde{\eta}):=d((\tilde{u}^{1},\tilde{p}^{1},\tilde{\eta}^{1}),(\tilde{u}^{2},\tilde{p}^{2},\tilde{\eta}^{2})).
\end{align}
\noindent We then introduce $w\in H_{0}^{1}(\Omega)$ solving the following equation
\begin{align}{\label{eq:Bogo}}
    \begin{aligned}
        J^{1}\dive_{\mathcal{A}^{1}}w=J^{1}R^{2,1}-\left<J^{1}R^{2,1}\right>_{\Omega},~~~~\operatorname{where}~ \left<J^{1}R^{2,1} \right>=\frac{1}{|\Omega|}\int_{\Omega}J^{1}R^{2,1}.
    \end{aligned}
\end{align}
We refer to Section 10 of \cite{GT2020} for the details of solvability of \eqref{eq:Bogo}.

       \paragraph{\underline{Step 4 -- Energy Estimates for $(\p_t\tilde{u}, \p_t\tilde{\eta})$}}
In this step, we aim to establish the energy estimates for $(\p_t\tilde{u}, \p_t\tilde{\eta})$. Multiplying the first equation of \eqref{linear_fix2} by {$J^{1}(\p_{t}\tilde{u}-w)$}, integrating over $\Om$ and using integration by parts, we obtain the following equation
\begin{equation}\label{energy_fix}
\begin{aligned}
  &\frac{d}{dt}\left(\int_\Om\frac{|\p_t\tilde{u}|^2}2J^1+
  (\p_{t}\tilde{\eta},\p_{t}\tilde{\eta})_{1,\Sigma_{0}}-\sigma\int_{0}^{\pi}\mathcal{R}_{1a}(\rho_{0},\p_{\theta}\eta^{1},\eta^{1})(\p_{\theta}\p_{t}\tilde{\eta})^{2}-\int_{\Omega}J^{1}\p_{t}\tilde{u}w\right)\\  &\quad+\frac\mu2\int_\Om|\mathbb{D}_{\mathcal{A}^1}\p_t\tilde{u}|^2J^1+\beta\int_{\Sigma_s}|\p_t\tilde{u}|^2J^1+[\p_t\tilde{u}^{1}\cdot\mathcal{N}^1-\p_{t}\tilde{u}^{2}\cdot \mathcal{N}^{2}]_\pi^2 \\
  &=\int_\Om\frac{|\p_t\tilde{u}|^2}2\p_tJ^1-\frac\mu2\int_\Om\left( \mathbb{D}_{\p_t\mathcal{A}^1-\p_t\mathcal{A}^2}{u}^2\right):\mathbb{D}_{\mathcal{A}^1}(\p_t\tilde{u}-w)J^1+\int_{\Omega}\p_{t}\tilde{u}\p_{t}(J^{1}w)+\int_{\Omega}\p_{t}\tilde{p}\langle R^{2,1}J^{1}\rangle\\
  &\quad-\int_{0}^\pi R^{4,1}\cdot\p_t\tilde{u}+(R^{6,1},\p_{t}\tilde{\eta})_{1,\Sigma_{0}}-(\p_{\theta}R^{6,1},\sigma(\mathcal{R}_{1a}(\rho_{0},\p_{\theta}\eta^{1},\eta^{1})\p_{\theta}\p_{t}\tilde{\eta})\mathcal{N}^{1}(t))_{L^{2}(\Sigma)}\\
  &\quad+\int_{0}^{\pi} (\p_{\theta}R^{6,1})(\sigma ((\mathcal{R}_{1a}(\rho_{0},\p_{\theta}\eta^{2},\eta^{2})-\mathcal{R}_{1a}(\rho_{0},\p_{\theta}\eta^{1},\eta^{1}))\p_{\theta}\p_{t}\tilde{\eta}^{2})\mathcal{N}^{1}(t))\\
  &\quad+(\frac{1}{\rho^{1}}u\cdot\p_{t}\mathcal{N}(\xi)+(\mathfrak{n}^{\prime})^{1}(t)\frac{\p_{\theta}\p_{t}\tilde{\eta}}{\rho^{1}}\sin\theta+(\mathfrak{m}^{1}(t)-\mathfrak{m}^{2}(t))\xi_{s},\p_{t}\tilde{\eta})_{1,\Sigma_{0}}\\
  &\quad+\int_{0}^{\pi} \bigg(\p_{\theta}(\frac{1}{\rho^{1}}u\cdot\p_{t}\mathcal{N}(\xi)+(\mathfrak{n}^{\prime})^{1}(t)\frac{\p_{\theta}\p_{t}\tilde{\eta}}{\rho^{1}}\sin\theta+(\mathfrak{m}^{1}(t)-\mathfrak{m}^{2}(t))\xi_{s}),(-\sigma(\mathcal{R}_{1a}(\rho_{0},\p_{\theta}\eta^{1},\eta^{1})\p_{\theta}\p_{t}\tilde{\eta})\bigg)\\
  &\quad+ \int_{0}^{\pi} \p_{\theta}(u^{1}\p_{\theta}\p_{t}\tilde{\eta}+u\p_{\theta}\p_{t}\tilde{\eta}^{2})(\sigma ((\mathcal{R}_{1a}(\rho_{0},\p_{\theta}\eta^{2},\eta^{2})-\mathcal{R}_{1a}(\rho_{0},\p_{\theta}\eta^{1},\eta^{1}))\p_{\theta}\p_{t}\tilde{\eta}^{2})\mathcal{N}^{1}(t))\\
  &\quad+\int_{0}^{\pi}\p_{\theta}\p_{t}\tilde{\eta}\sigma ((\mathcal{R}_{1a}(\rho_{0},\p_{\theta}\eta^{2},\eta^{2})-\mathcal{R}_{1a}(\rho_{0},\p_{\theta}\eta^{1},\eta^{1}))\p_{\theta}\p_{t}\tilde{\eta}^{2})+\int_\Om R^{1,1}\cdot(\p_t\tilde{u}J^1-wJ^{1})\\
  &\quad+\int_{0}^{\pi} R^{5,1}\cdot \p_{t}\tilde{u}-[\p_t\tilde{u}\cdot\mathcal{N}^1,  R^{7,1}]_\pi-[\p_{t}u^{1}\cdot \mathcal{N}^{1}-\p_{t}u^{2}\cdot \mathcal{N}^{2},\p_{t}u^{2}\cdot (\mathcal{N}^{2}-\mathcal{N}^{1})]_{\pi}.
\end{aligned}
\end{equation}

       We now estimate each term on the right-hand side of \eqref{energy_fix} individually. For simplicity, we only consider the terms that do not appear in Theorem 5.5 of \cite{YXD2}. For the estimates of the remaining terms, we refer to the proof of Theorem 5.5 in \cite{YXD2}.

For the terms in the first line on the right-hand side of \eqref{energy_fix}, H\"older's inequality yields
\begin{align}
\begin{aligned}
    \int_{0}^{T}\int_{\Omega} \frac{|\p_{t}\tilde{u}|^{2}}{2}\p_{t}J^{1}\lesssim &\|\p_{t}u\|_{L_{t}^{2}L^{2}}^{2}\|\p_{t}\bar{\eta}^{1}\|_{L_{t}^{\infty
    }W^{1,\infty}}\lesssim \|\p_{t}\tilde{u}\|_{L_{t}^{2}L^{2}}^{2}\|\p_{t}{\eta}^{1}\|_{L_{t}^{\infty
    }H^{\frac{3}{2}+\frac{\varepsilon_{-}-\alpha}{2}}}\lesssim\delta^{\frac{1}{2}}d^{2}(\tilde{u},\tilde{\eta},\tilde{p}),\\
    \int_{0}^{T}\int_{\Omega}\frac{\mu}{2}(\mathbb{D}_{\p_{t}\mathcal{A}^{1}-\p_{t}\mathcal{A}^{2}}u^{2}):\mathbb{D}_{\mathcal{A}^{1}}(\p_{t}\tilde{u})J^{1}&\lesssim \|\p_{t}\bar{\eta}\|_{L_{t}^{\infty}H^{1}}\|u^{2}\|_{L_{t}^{\infty}H^{1}}\|\p_{t}^{2}\tilde{u}\|_{L_{t}^{2}H^{1}}\lesssim \delta^{\frac{1}{2}}d(\tilde{u},\tilde{\eta},\tilde{p})d(u,\eta,p).
\end{aligned}
\end{align}

For terms in the bulk, by definition in \eqref{remainder2'}, we first handle terms involving $\p_tR^1$ by the following estimate
\begin{equation}\label{est:remain_bulk11}
\begin{aligned}
&\int_\Om \p_tR^1\cdot\p_t\tilde{u}J^1\\
&\lesssim
\int_\Om ((|\p_t\bar{\eta}^1|+|\p_t\bar{\eta}^2|)|\nabla\p_{t}\bar{\eta}|+|\p_t^2\bar{\eta}^2||\nabla\bar{\eta}|)|\p_2u^1||\p_t\tilde{u}|+\int_\Om ((|\p_t\bar{\eta}^1|+|\p_t\bar{\eta}^2|)|\nabla\bar{\eta}|)|\p_2\p_{t}u^1||\p_t\tilde{u}|\\
&\quad+\int_\Om ((|\p_t\bar{\eta}^1|+|\p_t\bar{\eta}^2|)|\nabla\p_t\bar{\eta}^{1}|+(|\p_t^{2}\bar{\eta}^1|+|\p_t^{2}\bar{\eta}^2|)|\nabla\bar{\eta}^{1}|)|\p_2u||\p_t\tilde{u}|+\int_\Om ((|\p_t\bar{\eta}^1|+|\p_t\bar{\eta}^2|)|\nabla\bar{\eta}^{1}|)|\p_2\p_{t}u||\p_t\tilde{u}|\\
&\quad+\int_\Om ((|\p_t^{2}\bar{\eta}|)|\nabla\bar{\eta}^{2}|)|\p_2u^{2}||\p_t\tilde{u}|+\int_\Om ((|\p_t\bar{\eta}|)|\nabla\bar{\eta}^{2}|)|\p_2\p_{t}u^{2}||\p_t\tilde{u}|+((|\p_t\bar{\eta}|)|\nabla\p_t\bar{\eta}^{2}|)|\p_2u^{2}||\p_t\tilde{u}|\\
&\quad+\int_\Om(|\p_tu|+|u||\nabla\p_t\bar{\eta}^1|)|\nabla u^1||\p_t\tilde{u}|+\int_{\Omega}(|\p_{t}u|)|\nabla \p_tu^1||\p_t\tilde{u}|+\int_{\Omega}(|u^{1}||\nabla \bar{\eta}|)|\nabla \p_{t}u^{1}||\p_{t}\tilde{u}|\\
&\quad+\int_\Om(|\p_tu^2|+|u^{2}|\nabla \p_{t}\bar{\eta}^{2}||)|\nabla u||\p_t\tilde{u}|+(u^2)|\nabla\p_tu||\p_t\tilde{u}|+\int_{\Omega}(|\p_{t}u^{1}||\nabla\bar{\eta}|+|u^{1}||\p_{t}\bar{\eta}|)|\nabla u||\p_{t}\tilde{u}|,
\end{aligned}
\end{equation}
where we have used $\|\mathcal{A}^i\|_{L^\infty}+\|J^i\|_{L^\infty}+\|K^i\|_{L^\infty}\lesssim 1$, $i=1, 2$.
Then by H\"older's inequality, Sobolev embedding and trace theory, after integrating over $[0, T]$, the second line in \eqref{est:remain_bulk11} is bounded as follows
\begin{equation}\label{est:remain_bulk12}
\begin{aligned}
&\int_0^T\|\nabla\p_t\bar{\eta}\|_{L^{2/\alpha}}((\|\p_{t}\bar{\eta}^2\|_{L^{\infty}}+\|\p_{t}\bar{\eta}^{1}\|_{L^{\infty}})\|\nabla\tilde{u}^2\|_{L^2}
)\|\p_t\tilde{u}\|_{L^{2/(1-\alpha)}}\\
&\quad+\|\nabla\bar{\eta}\|_{L^\infty}(\|\p_t\bar{\eta}^2\|_{L^{\infty}}+\|\p_{t}\bar{\eta}^{1}\|_{L^{\infty}})\|\p_{t}\nabla\tilde{u}^2\|_{L^{q_-}}\|\p_t\tilde{u}\|_{L^{2/\varepsilon_-}}\\&\quad+\|\nabla \bar{\eta}\|_{L^{\frac{2}{\alpha}}}(\|\p_t^{2}\bar{\eta}^2\|_{L^{\infty}}+\|\p_{t}^{2}\bar{\eta}^{1}\|_{L^{\infty}})\|\nabla\tilde{u}^2\|_{L^{q_-}}\|\p_{t}\tilde{u}\|_{L^{\frac{2}{\varepsilon_{-}-\alpha}}}\\
&\lesssim \int_0^T\|\p_t\eta\|_{H^{3/2-\alpha}}(\|\p_{t}\eta^{1}\|_{H^{\frac{3}{2}+\frac{\varepsilon_{-}-\alpha}{2}}}+\|\p_{t}\eta^{2}\|_{H^{\frac{3}{2}+\frac{\varepsilon_{-}-\alpha}{2}}})\|u^{2}\|_{W^{2,q_{-}}}\|\p_t\tilde{u}\|_1\\
&\quad+\int_0^T\|\eta\|_{W^{3-\frac{1}{q_{-}},q_{-}}}(\|\p_{t}\eta^{1}\|_{H^{\frac{3}{2}+\frac{\varepsilon_{-}-\alpha}{2}}}+\|\p_{t}\eta^{2}\|_{H^{\frac{3}{2}+\frac{\varepsilon_{-}-\alpha}{2}}})\|\p_{t}u^{2}\|_{W^{2,q_{-}}}\|\p_t\tilde{u}\|_1\\
&\quad+\int_0^T\|\eta\|_{W^{3-\frac{1}{q_{-}},q_{-}}}(\|\p_{t}^{2}\eta^{1}\|_{H^{1}}+\|\p_{t}^{2}\eta^{2}\|_{H^{1}})\|u^{2}\|_{W^{2,q_{-}}}\|\p_t\tilde{u}\|_1\\
&\lesssim \delta^{1/2}(\|\p_t\eta\|_{L^2H^{3/2-\alpha}}+\|\eta\|_{L^{\infty}W^{3-\frac{1}{q_{-}},q_{-}}})\|\p_t\tilde{u}\|_{L^2H^1}.
\end{aligned}
\end{equation}

The third line in \eqref{est:remain_bulk11}, after an integration over $[0, T]$, can be estimated similarly by
\begin{equation}\label{est:remain_bulk13}
\begin{aligned}
&\int_0^T\|\nabla\p_t\bar{\eta}^{1}\|_{L^{2/\alpha}}((\|\p_{t}\bar{\eta}^2\|_{L^{\infty}}+\|\p_{t}\bar{\eta}^{1}\|_{L^{\infty}})\|\nabla{u}\|_{L^2}
)\|\p_t\tilde{u}\|_{L^{2/(1-\alpha)}}\\
&\quad+\|\nabla\bar{\eta}^{1}\|_{L^{4/(\alpha-\varepsilon_-)}}(\|\p_t\bar{\eta}^2\|_{L^{\infty}}+\|\p_{t}\bar{\eta}^{1}\|_{L^{\infty}})\|\nabla\p_{t}{u}\|_{L^{2}}\|\p_t\tilde{u}\|_{L^{2/(1+\varepsilon_{-}-\alpha)}}\\
&\quad+\|\nabla\bar{\eta}^{1}\|_{L^{\frac{2}{\alpha}}}(\|\p_t\bar{\eta}^2\|_{L^{\infty}}+\|\p_{t}\bar{\eta}^{1}\|_{L^{\infty}})\|\p_{t}\nabla{u}\|_{L^{q_-}}\|\p_{t}\tilde{u}\|_{L^{\frac{2}{\varepsilon_{-}-\alpha}}}\\
&\lesssim \int_0^T\|\p_t\eta^{1}\|_{3/2-\alpha}(\|\p_{t}\eta^{1}\|_{H^{\frac{3}{2}+\frac{\varepsilon_{-}-\alpha}{2}}}+\|\p_{t}\eta^{2}\|_{H^{\frac{3}{2}+\frac{\varepsilon_{-}-\alpha}{2}}})\|u\|_{W^{2,q_{-}}}\|\p_t\tilde{u}\|_1\\
&\quad+\int_0^T\|\eta^{1}\|_{W^{3-\frac{1}{q_{-}},q_{-}}}(\|\p_{t}\eta^{1}\|_{H^{\frac{3}{2}+\frac{\varepsilon_{-}-\alpha}{2}}}+\|\p_{t}\eta^{2}\|_{H^{\frac{3}{2}+\frac{\varepsilon_{-}-\alpha}{2}}})\|u\|_{W^{2,q_{-}}}\|\p_t\tilde{u}\|_1\\
&\quad+\int_0^T\|\eta^{1}\|_{W^{3-\frac{1}{q_{-}},q_{-}}}(\|\p_{t}\eta^{1}\|_{H^{\frac{3}{2}+\frac{\varepsilon_{-}-\alpha}{2}}}+\|\p_{t}\eta^{2}\|_{H^{\frac{3}{2}+\frac{\varepsilon_{-}-\alpha}{2}}})\|\p_{t}u\|_{H^{1}}\|\p_t\tilde{u}\|_1\\
&\lesssim \delta^{1/2}(\|u\|_{L^{2}W^{2,q_{+}}}+\|\p_{t}u\|_{L^{2}H^{1}})\|\p_t\tilde{u}\|_{L^2H^1}.
\end{aligned}
\end{equation}
\noindent For the fourth line, we can use similar computation to show that it is bounded by:
\begin{align}
    \begin{aligned}
&\int_0^T\|\nabla\bar{\eta}^{2}\|_{L^{\infty}}((\|\p_{t}^{2}\bar{\eta}\|_{L^{\frac{2}{\alpha}}})\|\nabla{u}^{2}\|_{L^2}
)\|\p_t\tilde{u}\|_{L^{2/(1-\alpha)}}+\|\nabla\p_{t}\bar{\eta}^{2}\|_{L^{\infty}}(\|\p_{t}\bar{\eta}\|_{L^{\infty}})\|\nabla{u}^{2}\|_{L^{2}}\|\p_t\tilde{u}\|_{L^{2}}\\
&\quad+\int_{0}^T{}\|\nabla\bar{\eta}^{1}\|_{L^\infty}(\|\p_{t}\bar{\eta}\|_{L^{\infty}})\|\p_{t}\nabla{u}^{2}\|_{L^{2}}\|\p_t\tilde{u}\|_{L^{2}}\\
&\lesssim \int_0^T\|\p_t^{2}\eta\|_{1/2-\alpha}(\|\eta^{2}\|_{H^{\frac{3}{2}+\frac{\varepsilon_{-}-\alpha}{2}}})\|u^{2}\|_{H^{1}}\|\p_t\tilde{u}\|_1+\int_0^T\|\p_t\eta^{2}\|_{H^{\frac{3}{2}+\frac{\varepsilon_{-}-\alpha}{2}}}(\|\p_{t}\eta\|_{H^{\frac{3}{2}-\alpha}})\|u^{2}\|_{W^{2,q_{-}}}\|\p_{t}\tilde{u}\|_1\\
&\quad+\int_0^T\|\eta^{1}\|_{W^{3-\frac{1}{q_{-}},q_{-}}}(\|\p_{t}\eta\|_{H^{\frac{3}{2}-\alpha}})\|\p_{t}u^{2}\|_{H^{1}}\|\p_t\tilde{u}\|_1\\
&\lesssim \delta^{1/2}(\|\p_{t}\eta\|_{L^{2}H^{\frac{3}{2}-\alpha}}+\|\p_{t}\eta\|_{L^{2}H^{\frac{1}{2}-\alpha}})\|\p_t\tilde{u}\|_{L^2H^1}.
\end{aligned}
\end{align}
where we have used the Sobolev embedding $H^{\varepsilon_-/2}\hookrightarrow L^{4/(2-\varepsilon_-)}$ and $H^1\hookrightarrow L^{4/(2\varepsilon_++\varepsilon_--2\alpha)}$ for bounded domain $\Om$.
The fifth line in \eqref{est:remain_bulk11}, after an integration over $[0, T]$ (without loss of generality, we may assume $T<1$), is estimated by
\begin{equation}\label{est:remain_bulk14}
 \begin{aligned}
&\int_{0}^{T}(\|\p_{t}u\|_{L^{\frac{4}{\varepsilon_{-}}}}+\|u\|_{L^{\infty}}\|\p_{t}\bar\eta^{1}\|_{W^{1,\infty}})\|u^{1}\|_{W^{1,\frac{4}{2-\varepsilon_{-}}}}\|\p_{t}\tilde{u}\|_{H^{1}}+\int_{0}^{T}\|u\|_{L^{\infty}}\|\p_{t}u^{1}\|_{H^{1}}\|\p_{t}\tilde{u}\|_{L^{2}}\\
&+\int_{0}^{T}(\|u^{1}\|_{L^{\infty}}\|\bar{\eta}\|_{W^{1,\infty}})\|\p_{t}u^{1}\|_{H^{1}}\|\p_{t}\tilde{u}\|_{H^{1}}\\
\lesssim & \delta^{\frac{1}{2}} (\|\p_{t}u\|_{L^{2}H^{1}}+\|u\|_{L^{2}W^{2,q_{+}}}+\|\eta\|_{L^{\infty}H^{\frac{3}{2}+\frac{\varepsilon_{-}-\alpha}{2}}})\|\p_{t}\tilde{u}\|_{L^{2}H^{1}}.
\end{aligned}
\end{equation}
After an integration over $[0, T]$, the last line in \eqref{est:remain_bulk11} can be similarly estimated by
\begin{equation}\label{est:remain_bulk15_1}
\begin{aligned}
\delta^{\frac{1}{2}}(\|u\|_{L^{2}W^{2,q_{-}}}+\|\p_{t}u\|_{L^{2}H^{1}})\|\p_{t}\tilde{u}\|_{L^{2}H^{1}}.
\end{aligned}
\end{equation}
\noindent We omit the details here.
\noindent Combining \eqref{est:remain_bulk11}--\eqref{est:remain_bulk15_1}, we obtain the following estimate
\begin{equation}\label{est:remain_bulk15}
\begin{aligned}
\int_{0}^{T}\int_{\Omega}\p_{t}R^{1}\cdot \p_{t}\tilde{u}J^{1}\lesssim \delta^{1/2}d(u,p,\eta)d(\tilde{u},\tilde{p},\tilde{\eta}).
\end{aligned}
\end{equation}

The remaining terms in $R^{1}$ have the same structure as the corresponding terms in $R^{1}$ considered in \cite{YXD2}. Hence, the same estimates apply, and we omit the details.

We now estimate the integrals involving $R^{4,1}$. We begin with the terms arising from $\partial_t R^4$. Throughout the following estimates, we use $\tilde{u}$ to denote the trace of $\tilde{u}$ on $\Sigma$. By definition,
\begin{equation}\label{est:remain_b21}
  \begin{aligned}
&|\int_0^T\int_{0}^\pi \p_t(S_{\mathcal{A}^{2}}(\tilde{p}^{2},\tilde{u}^{2})(\mathcal{N}^{1}-\mathcal{N}^{2}))\cdot\p_t\tilde{u}|\\
&\lesssim\int_0^T\int_{0}^\pi\left[(|\p_t{p}^2|+|\nabla\p_t{u}^2|+|\nabla \p_{t}{\eta}^{2}||\nabla {u}^{2}|)|\p_{\theta}\eta|\right]|\p_t\tilde{u}|
+\int_{0}^{T}\int_{0}^{\pi}(|{p}^2|+|\nabla {u}^2|)|\p_{\theta}\p_t\eta||\p_t\tilde{u}|.
  \end{aligned}
\end{equation}

The second line of \eqref{est:remain_b21}, by the H\"older and Sobolev inequalities, is estimated by
\begin{equation}\label{est:remain_b22}
  \begin{aligned}
&\int_0^T\bigg[(\|\p_t{p}^2\|_{L^{1/(1-\varepsilon_-)}(\Sigma)}+\|\nabla\p_t{\eta}^2\|_{L^\infty}\|\nabla {u}^2\|_{L^{1/(1-\varepsilon_-)}(\Sigma)}+\|\nabla\p_t{u}^2\|_{L^{1/(1-\varepsilon_-)}(\Sigma)})\bigg]\|\p_{t}\tilde{u}\|_{L^{1/(\varepsilon_--\alpha)}(\Sigma)}
 \times\|\p_{\theta}\eta\|_{L^{1/\alpha}}\\&\quad\quad+(\|{p}^2\|_{L^{1/(1-\varepsilon_-)}(\Sigma)}+\|\nabla {u}^2\|_{L^{1/(1-\varepsilon_-)}(\Sigma)})\|\p_{\theta}\p_t\eta\|_{L^{1/\alpha}}\|\p_t\tilde{u}\|_{L^{1/(\varepsilon_--\alpha)}(\Sigma)}\\
&\lesssim\int_0^T\bigg[(\|\p_t{p}^2\|_{W^{1,q_-}}+\|\p_t\eta^2\|_{3/2+(\varepsilon_--\alpha)/2}\| {u}^2\|_{W^{2,q_-}}+\|\p_t{u}^2\|_{W^{2,q_-}})\bigg]\|\p_t\tilde{u}\|_{1}\|\eta\|_{3/2-\alpha}\\
&\quad\quad+(\|{p}^2\|_{W^{1,q_-}}+\| {u}^2\|_{W^{2,q_-}})\|\p_t\eta\|_{3/2-\alpha}\|\p_t\tilde{u}\|_{1}\\
&\lesssim\delta^{1/2}(\|\eta\|_{L^{2}W^{3-\frac{1}{q_{-}},q_{-}}}+\|\p_t\eta\|_{L^2H^{3/2-\alpha}})\|\p_t\tilde{u}\|_{L^2H^1}.
  \end{aligned}
\end{equation}

 For the other terms included in $\p_{t}R^{4}$, we have
\begin{align}
    \begin{aligned}
&|\int_0^T\int_{0}^\pi \p_t(\mathcal{R}_{2}(\rho_{0},\p_{\theta}\eta^{1},\eta^{1})-\mathcal{R}_{2}(\rho_{0},\p_{\theta}\eta^{2},\eta^{2})\mathcal{N}^{1})\cdot\p_t\tilde{u}|\\
&\lesssim\int_0^T\int_{0}^\pi\left[(|\p_{\theta}\eta^{1}|+|\p_{\theta}\eta^{2}|)|\p_{\theta}\p_{t}\eta|\right]|\p_t\tilde{u}|
+\int_{0}^{T}\int_{0}^{\pi}\left[(|\p_{\theta}\p_{t}\eta^{1}|+|\p_{\theta}\p_{t}\eta^{2}|)|\p_{\theta}\eta|\right]|\p_{t}\tilde{u}|\\
&\lesssim (\|\eta^{1}\|_{L_{t}^{\infty}W^{3-\frac{1}{q_{+}},q_{+}}}+\|\eta^{2}\|_{L_{t}^{\infty}W^{3-\frac{1}{q_{+}},q_{+}}}+\|\p_{t}\eta^{1}\|_{L_{t}^{\infty}H^{\frac{3}{2}+\frac{\varepsilon_{-}-\alpha}{2}}}+\|\p_{t}\eta^{2}\|_{L_{t}^{\infty}H^{\frac{3}{2}+\frac{\varepsilon_{-}-\alpha}{2}}})\\
&\quad\cdot \big(\|\p_{t}\eta\|_{L_{t}^{\infty}H^{1}}+\|\eta\|_{L_{t}^{\infty}H^{1}}\big)\|\p_{t}\tilde{u}\|_{L_{t}^{\infty}L^{2}}\\
&\lesssim \delta^{\frac{1}{2}}d(u,\eta,p)d(\tilde{u},\tilde{\eta},\tilde{p}),
  \end{aligned}
\end{align}
\begin{align}
    \begin{aligned}
        &|\int_0^T\int_{0}^\pi \p_t(\mathcal{R}_{2}(\rho_{0},\p_{\theta}\eta^{1},\eta^{1})(\mathcal{N}^{1}-\mathcal{N}^{2}))\cdot\p_t\tilde{u}|\\
        \lesssim& \int_0^T\int_{0}^\pi(|\p_{t}\p_{\theta}\eta|+|\p_{t}\eta|)(|\p_{\theta}\eta^{1}|+|\eta^{1}|)^{2}|\p_{t}\tilde{u}|+\int_0^T\int_{0}^\pi(|\p_{\theta}\eta|+|\eta|)(|\p_{\theta}\eta^{1}|+|\eta^{1}|)(|\p_{\theta}\p_{t}\eta^{1}|+|\p_{t}\eta^{1}|)|\p_{t}\tilde{u}|\\
        \lesssim& (T^{\frac{1}{2}}\|\p_{t}\eta\|_{L_{t}^{\infty}H^{1}}\|\eta^{1}\|_{L_{t}^{\infty}H^{1}}\|\eta^{1}\|_{L_{t}^{\infty}W^{3-\frac{1}{q_{+}},q_{+}}}+T^{\frac{1}{2}}\|\eta\|_{L_{t}^{\infty}H^{1}}\|\p_{t}\eta^{1}\|_{L_{t}^{\infty}H^{1}}\|\eta^{1}\|_{L_{t}^{\infty}W^{3-\frac{1}{q_{+}},q_{+}}})\|\p_{t}\tilde{u}\|_{L_{t}^{2}W^{2,q_{-}}}\\
        \lesssim& T\delta d(u,\eta ,p)d(\tilde{u},\tilde{\eta} ,\tilde{p}).
    \end{aligned}
\end{align}

\noindent For the other terms included in $R^{4,1}$, they have the form similar to the corresponding terms estimated in \cite{YXD2}, whose estimates are therefore omitted here.

Therefore, combining the estimate above, we obtain the following results
\begin{align}{\label{eq:remain_b3}}
    \begin{aligned}
        \|R^{4,1}\|_{L_{t}^{2}(H^{1})^{\ast}}\lesssim \delta^{\frac{1}{2}}d(u,\eta,p),
    \end{aligned}
\end{align}
\begin{equation}\label{est:remain_b3}
  \begin{aligned}
\int_0^T\int_{0}^\pi R^{4,1}\cdot\p_t\tilde{u}\lesssim\delta^{1/2}d(u,p,\eta)d(\tilde{u},\tilde{p},\tilde{\eta}).
  \end{aligned}
\end{equation}

The terms involving $R^{5,1}$ can be treated in exactly the same manner, since every term in $R^{5,1}$ is already contained in $R^{4,1}$. Thus, we obtain

\begin{align}
    \begin{aligned}
|\int_0^T\int_{0}^\pi R^{5,1}\cdot\p_t\tilde{u}\lesssim\delta^{1/2}d(u,p,\eta)d(\tilde{u},\tilde{p},\tilde{\eta}).
  \end{aligned}
\end{align}

We now estimate the integrals involving $R^{6,1}$. From \cite{YXD2}, it suffices to bound the following norm of $R^{6,1}$
\begin{align}
\|\p_{\theta}R^{6,1}\|_{L_{t}^{2}L^{\frac{1}{1-\alpha}}}.
\end{align}

By the definition of $R^{6,1}$, we first have the following estimates for two terms included in $R^{6,1}$
\begin{align*}
    &\|\p_{\theta}(\frac{1}{\rho^{1}}\p_{t}\tilde{u}^{2}\cdot(\mathcal{N}^{1}-\mathcal{N}^{2})+(\frac{1}{\rho^{2}}-\frac{1}{\rho^{1}})\p_{t}\tilde{u}^{2}\cdot \mathcal{N}^{2}+(\frac{1}{\rho^{2}}-\frac{1}{\rho^{1}}){u}^{2}\cdot \p_{t}\mathcal{N}(\tilde{\eta}^{2}))\|_{L_{t}^{2}L^{\frac{1}{1-\alpha}}}\\
    \lesssim& \|\p_{t}\tilde{u}^{2}\|_{L_{t}^{2}W^{2,q_{-}}}\|\eta\|_{L_{t}^{\infty}H^{\frac{3}{2}-\alpha}}+\|\p_{t}\tilde{u}^{2}\|_{L_{t}^{\infty}H^{1+\frac{\varepsilon_{-}}{2}}}\|\eta\|_{L_{t}^{2}W^{3-\frac{1}{q_{-}},q_{-}}}+\|\eta\|_{L_{t}^{\infty}H^{\frac{3}{2}-\alpha}}\|\p_{t}\tilde{u}^{2}\|_{L_{t}^{\infty}H^{1+\frac{\varepsilon_{-}}{2}}}\|\eta^{2}\|_{L_{t}^{2}W^{3-\frac{1}{q_{-}},q_{-}}}\\
    &+\|\eta\|_{L_{t}^{\infty}H^{\frac{3}{2}-\alpha}}\|\p_{t}\tilde{u}^{2}\|_{L_{t}^{2}W^{2,q_{-}}}\|\eta^{2}\|_{L_{t}^{\infty}H^{\frac{3}{2}-\alpha}}+\|\eta\|_{L_{t}^{\infty}H^{\frac{3}{2}-\alpha}}\|\tilde{u}^{2}\|_{L_{t}^{\infty}H^{1+\frac{\varepsilon_{-}}{2}}}\|\p_{t}\eta^{2}\|_{L_{t}^{2}W^{3-\frac{1}{q_{-}},q_{-}}}\\
    &+\|\eta\|_{L_{t}^{\infty}H^{\frac{3}{2}-\alpha}}\|\tilde{u}^{2}\|_{L_{t}^{\infty}W^{2,q_{-}}}\|\p_{t}\eta^{2}\|_{L_{t}^{2}H^{\frac{3}{2}-\alpha}}\\
    \lesssim&\delta^{\frac{1}{2}}d(u,\eta,p)\\
    &\|\frac{1}{\rho^{1}}(u^{1}-u^{2})\cdot\p_{t}\mathcal{N}(\tilde{\eta}^{1})-\frac{\p_{t}\eta^{1}}{(\rho^{1})^{2}}(u^{1}-u^{2})\cdot \mathcal{N}^{1}-\frac{\p_{t}\eta^{1}}{(\rho^{1})^{2}}(u^{2})\cdot (\mathcal{N}^{1}-\mathcal{N}^{2})-(\frac{\p_{t}\eta^{1}}{(\rho^{1})^{2}}-\frac{\p_{t}\eta^{2}}{(\rho^{2})^{2}})(u^{2})\cdot (\mathcal{N}^{2})\|_{L_{t}^{2}W^{1,\frac{1}{1-\alpha}}}\\
    \lesssim& \delta^{\frac{1}{2}}d(u,\eta,p)
\end{align*}

\noindent
The terms in $R^{6,1}$ involving $\mathfrak{n}$ can be estimated as follows:

\begin{align}
\begin{aligned}
    &\|((\mathfrak{n}^{\prime})^{2}-(\mathfrak{n}^{\prime})^{1})\frac{\p_{\theta}\p_{t}\tilde{\eta}^{2}}{\rho^{1}}\sin\theta+(\mathfrak{n}^{\prime})^{2}{\p_{\theta}\p_{t}\tilde{\eta}^{2}}(\frac{1}{\rho^{1}}-\frac{1}{\rho^{2}})\sin\theta\|_{L_{t}^{2}W^{1,\frac{1}{1-\alpha}}}\\
    \lesssim& \|\eta\|_{L_{t}^{\infty}H^{\frac{3}{2}-\alpha}}(\|u^{1}\|_{L_{t}^{\infty}H^{1}}+\|u^{2}\|_{L_{t}^{\infty}H^{1}})\|\p_{t}\tilde{\eta}^{2}\|_{L_{t}^{2}W^{3-\frac{1}{q_{-}},q_{-}}}+\|u\|_{L_{t}^{\infty}H^{1}}\|\p_{t}\tilde{\eta}^{2}\|_{L_{t}^{2}W^{3-\frac{1}{q_{-}},q_{-}}}\\
    &\quad+\|u^{2}\|_{L_{t}^{\infty}H^{1}}\|\p_{t}\tilde{\eta}^{2}\|_{L_{t}^{2}W^{3-\frac{1}{q_{-}},q_{-}}}\|\eta\|_{L_{t}^{\infty}H^{\frac{3}{2}-\alpha}}\\
    \lesssim& \delta^{\frac{1}{2}}d(u,p,\eta),
    \end{aligned}
\end{align}
\begin{align}
    \begin{aligned}
        &\|((\mathfrak{n}^{\prime\prime})^{2}-(\mathfrak{n}^{\prime\prime})^{1})(\frac{(\rho^{\prime})^{1}}{\rho^{1}}-\frac{\rho_{0}^{\prime}}{\rho_{0}})\sin\theta+((\mathfrak{n}^{\prime\prime})^{2})(\frac{(\rho^{\prime})^{2}}{\rho^{2}}-\frac{(\rho^{\prime})^{1}}{\rho^{1}})\sin\theta\|_{L_{t}^{2}W^{1,\frac{1}{1-\alpha}}}\\
        \lesssim& \|\p_{t}u\|_{L_{t}^{2}H^{1}}\|\eta^{1}\|_{L_{t}^{\infty}W^{3-\frac{1}{q_{-}},q_{-}}}+\|\p_{t}u^{2}\|_{L_{t}^{\infty}H^{1}}\|\eta\|_{L_{t}^{2}W^{3-\frac{1}{q_{-}},q_{-}}}\\
        \lesssim&\delta^{\frac{1}{2}}d(u,p,\eta).
    \end{aligned}
\end{align}
\noindent Combining all the estimates above, we obtain the following estimate for $R^{6,1}$
\begin{align}
    \int_{0}^{T}(R^{6,1},\p_{t}\tilde{\eta})_{1,\Sigma}\lesssim \|R^{6,1}\|_{L_{t}^{2}W^{1,\frac{1}{1-\alpha}}}\|\p_{t}\tilde{\eta}\|_{L_{t}^{2}H^{\frac{3}{2}-\alpha}}\lesssim \delta^{\frac{1}{2}}d(u,\eta,p)d(\tilde{u},\tilde{\eta},\tilde{p}).
\end{align}

We now estimate all of the remaining boundary terms on the right-hand side of equation \eqref{energy_fix}. Specifically, we estimate the following term
\begin{align}
    \begin{aligned}
        ((\mathfrak{m}^{1}(t)-\mathfrak{m}^{2}(t))\xi_{s},\p_{t}\tilde{\eta})_{1,\Sigma_{0}}.
    \end{aligned}
\end{align}
\noindent From equation \eqref{eq:m} and the similar computation used in Theorem \ref{thm:m}, we have
\begin{align}
    |(\mathfrak{m}^{1}(t)-\mathfrak{m}^{2}(t))-\frac{1}{\|\xi_{s}\|_{L^{2}}^{2}}\int_{0}^{\pi}\frac{1}{\rho_{0}}(\p_{t}u_{1}\cdot \mathcal{N}^{1}-\p_{t}u_{2}\cdot \mathcal{N}^{2})\xi_{s}|\lesssim \delta^{\frac{1}{2}}d(u,\eta,p).
\end{align}
\noindent Note that
\begin{align}
    \int_{0}^{\pi}\frac{1}{\rho_{0}}\p_{t}u^{1}\cdot \mathcal{N}^{1}=\int_{\Omega}J^{1}\operatorname{div}_{\mathcal{A}^{1}}u^{1}=\int_{\Omega}J^{1}-\operatorname{div}_{\p_{t}\mathcal{A}^{1}}u^{1},\\
    \int_{0}^{\pi}\frac{1}{\rho_{0}}\p_{t}u^{2}\cdot \mathcal{N}^{2}=\int_{\Omega}J^{2}\operatorname{div}_{\mathcal{A}^{2}}u^{2}=-\int_{\Omega}J^{2}\operatorname{div}_{\p_{t}\mathcal{A}^{2}}u^{2},
\end{align}
\noindent by H\"older's inequality, we have
\begin{align}
\begin{aligned}
    &\|\int_{0}^{\pi}\frac{1}{\rho_{0}}(\p_{t}u^{1}\cdot \mathcal{N}^{1}-\p_{t}u^{2}\cdot \mathcal{N}^{2})\xi_{s}\|_{L_{t}^{2}W^{2-\frac{1}{q_{-}},q_{-}}}\\
    &\lesssim \|\eta\|_{L_{t}^{\infty}W^{3-\frac{1}{q_{-}},q_{-}}}\|\p_{t}\eta^{1}\|_{L_{t}^{\infty}H^{\frac{3}{2}+\frac{\varepsilon_{-}-\alpha}{2}}}\|u^{1}\|_{L_{t}^{\infty}H^{1}}+\|\p_{t}\eta\|_{L_{t}^{2}H^{L_{t}^{\infty}\frac{3}{2}-\alpha}}\|u^{1}\|_{H^{1}}+\|\p_{t}\eta^{2}\|_{L_{t}^{2}H^{\frac{3}{2}+\frac{\varepsilon_{-}-\alpha}{2}}}\|u\|_{L_{t}^{\infty}H^{1}}\\
    &\lesssim \delta^{\frac{1}{2}}d(u,\eta,p).
    \end{aligned}
\end{align}
\noindent Combining all of the computations above, we have
\begin{align}
    \begin{aligned}
        \int_{0}^{T}((\mathfrak{m}^{1}(t)-\mathfrak{m}^{2}(t))\xi_{s},\p_{t}\tilde{\eta})_{1,\Sigma_{0}}\lesssim \delta^{\frac{1}{2}}d(u,\eta,p)\|\p_{t}\tilde{\eta}\|_{L_{t}^{2}H^{1}}\lesssim\delta^{\frac{1}{2}}d(u,\eta,p)d(\tilde{u},\tilde{\eta},\tilde{p}).
    \end{aligned}
\end{align}

The estimates for the terms involving $w$ and for the terms in $R^{7,1}$ are identical to those established in \cite{YXD2}. Therefore, combining all of the estimates derived in this step, we obtain the following energy-dissipation estimate for difference functions

\begin{align*}
    &\|\p_{t}\tilde{u}\|^{2}_{L^2 H^1}+\|\p_{t}\tilde{u}\|_{L^{\infty}H^{0}}^{2}+\|\p_{t}\tilde{\eta}\|^{2}_{L^\infty H^1}+\|[\p_t\tilde{\eta}]_\pi\|^{2}_{L^\infty([0,T])}+\|[\p_t^2\tilde{\eta}]_\pi\|^{2}_{L^2([0,T])} \\
    &\le e^{C_1T\delta^{1/2}}C_2\delta^{1/2}(d(u,p,\eta)+d(\tilde{u},\tilde{p},\tilde{\eta}))^{2}.
\end{align*}

\paragraph{\underline{Step 6 -- Enhanced Estimate for $\p_t\tilde{\eta}$ and the lower order estimate}}
To close the estimates for $(\partial_t\tilde{u},\partial_t\tilde{\eta})$, it remains to estimate
$\|\partial_t\tilde{\eta}\|_{L^2H^{\frac32-\alpha}}$.
The argument is identical to that of \cite{YXD2}, so we omit the details. Moreover, using integration in time, we obtain the following estimate from Step 5

\begin{align*}
    &\|\p_{t}\tilde{u}\|^{2}_{L^2 H^1}+\|\p_{t}\tilde{u}\|_{L^{\infty}H^{0}}^{2}+\|\p_{t}\tilde{\eta}\|^{2}_{L^\infty H^1}+\|[\p_t\tilde{\eta}]_\pi\|^{2}_{L^\infty([0,T])}+\|[\p_t^2\tilde{\eta}]_\pi\|^{2}_{L^2([0,T])} \\
    &+\|\tilde{u}\|^{2}_{L^\infty H^1}+\|\tilde{u}\|_{L^{\infty}H^{0}}^{2}+\|\tilde{\eta}\|^{2}_{L^\infty H^1}+\|[\tilde{\eta}]_\pi\|^{2}_{L^\infty([0,T])}+\|[\p_t\tilde{\eta}]_\pi\|^{2}_{L^\infty([0,T])}\\
    &\le e^{C_1T\delta^{1/2}}C_2\delta^{1/2}(d(u,p,\eta)+d(\tilde{u},\tilde{p},\tilde{\eta}))^{2}.
\end{align*}

It remains to estimate the elliptic norms of the zeroth-order quantities $(\tilde{u},\tilde{p},\tilde{\eta})$. To this end, we first derive the corresponding system for the zeroth-order terms. Integrating \eqref{linear_fix2} from $0$ to $t$, we obtain

\begin{equation}\label{linear_fix1}
  \left\{
  \begin{aligned}
&(\p_t\tilde{u})+\int_{0}^{t}\dive_{\mathcal{A}^1}S_{\mathcal{A}^1}(\p_t\tilde{p},\p_t\tilde{u})=\tilde{R}^{1,1} \  &\text{in}&\ \Om,\\
&\int_{0}^{t}\dive_{\mathcal{A}^1}\p_t\tilde{u}=\tilde{R}^{2,1} \  &\text{in}&\ \Om,\\
&\int_{0}^{t}S_{\mathcal{A}^1}(\p_t\tilde{p},\p_t\tilde{u})\mathcal{N}^1=\int_{0}^{t}g\mathcal{K}(\p_t\tilde{\eta})\mathcal{N}^1-\sigma\int_{0}^{t}\p_{\theta}(\mathcal{R}_{1a}(\rho_{0},\p_{\theta}\eta^{1},\eta^{1})\p_{\theta}\p_{t}\tilde{\eta})\mathcal{N}^{1}(t)\\&\quad\quad\quad\quad\quad\quad\quad-\sigma \int_{0}^{t}\p_{\theta}((\mathcal{R}_{1a}(\rho_{0},\p_{\theta}\eta^{2},\eta^{2})-\mathcal{R}_{1a}(\rho_{0},\p_{\theta}\eta^{1},\eta^{1}))\p_{\theta}\p_{t}\tilde{\eta}^{2})\mathcal{N}^{1}(t)+\tilde{R}^{4,1} \ &\text{on}&\ \Sigma,\\
&\int_{0}^{t}(S_{\mathcal{A}^1}(\p_t\tilde{p},\p_t\tilde{u})\nu-\beta \p_t\tilde{u})\cdot\tau=\tilde{R}^{5,1} \  &\text{on}&\ \Sigma_s,\\
&\p_t\tilde{u}\cdot\nu=0 \  &\text{on}&\ \Sigma_s,\\
&(\p_t\tilde{\eta})=\int_{0}^{t}\p_t\tilde{u}\cdot\mathcal{N}^1+\tilde R^{6,1} \  &\text{on}&\ \Sigma,\\
&\kappa(\p_{t}^{2}\tilde{\eta})(\frac{\pi}{2}\pm\frac{\pi}{2},t)=\mp\sigma\frac{\p_{\theta}\p_{t}\tilde{\eta}}{(1+|\zeta_0|^2)^{3/2}}(\frac{\pi}{2}\pm\frac{\pi}{2},t)\mp\sigma\int_{0}^{t}\p_{\theta}(\mathcal{R}_{1a}(\rho_{0},\p_{\theta}\eta^{1},\eta^{1})\p_{\theta}\p_{t}\tilde{\eta}\\&\quad\quad\quad\quad\quad\quad\quad\mp\sigma \int_{0}^{t}\p_{\theta}((\mathcal{R}_{1a}(\rho_{0},\p_{\theta}\eta^{2},\eta^{2})-\mathcal{R}_{1a}(\rho_{0},\p_{\theta}\eta^{1},\eta^{1}))\p_{\theta}\p_{t}\tilde{\eta}^{2})-\tilde R^{7,1}
  \end{aligned}
  \right.
\end{equation}
\noindent where:
\begin{equation}\label{remainder1'}
  \begin{aligned}
\tilde R^{1,1}&=R^1+\operatorname{div}_{\mathcal{A}^{1}}S_{\mathcal{A}^{1}}(p,u)+\operatorname{div}_{\mathcal{A}^{1}}S_{\mathcal{A}^{1}-\mathcal{A}^{2}}(p^{2},u^{2})+\operatorname{div}_{\mathcal{A}^{1}-\mathcal{A}^{2}}S_{\mathcal{A}^{2}}(p^{2},u^{2}),\\
\tilde{R}^{2,1}&=-\int_{0}^{t}(\operatorname{div}_{\p_{t}\mathcal{A}^{1}-\p_{t}\mathcal{A}^{2}}{u}^{2})-\int_{0}^{t}\dive_{\p_{t}\mathcal{A}^1}{u}-\int_{0}^{t}\dive_{\mathcal{A}^{1}-\mathcal{A}^{2}}\tilde{u}^{2},\\
\tilde{R}^{4,1}&=R^4 +\int_{0}^{t}\mu S_{\p_{t}\mathcal{A}^1}({u},{p})\mathcal{N}^1+\int_{0}^{t}\mu S_{\mathcal{A}^{1}}(u,p)\p_{t}\mathcal{N}^{1}+\int_{0}^{t}\mu S_{\p_{t}\mathcal{A}^{1}-\p_{t}\mathcal{A}^{2}}(u^{2},p^{2})\mathcal{N}^{2}+\int_{0}^{t}\mu S_{\mathcal{A}^{1}-\mathcal{A}^{2}}(\p_{t}\tilde{u}^{2},\p_{t}\tilde{p}^{2})\mathcal{N}^{2}\\&\quad +\int_{0}^{t}\mathcal{K}(\eta) \p_t\mathcal{N}^1-\int_{0}^{t}(\mathcal{K}(\eta^{1})-\p_{\theta}(\mathcal{R}_{1}(\rho_{0},\p_{\theta}\eta^{1},\eta^{1}))\p_{t}(\mathcal{N}^{1}-\mathcal{N}^{2})\\
&\quad-(\mathcal{K}(\eta)-\p_{\theta}\mathcal{R}_{1}(\rho_{0},\p_{\theta}\eta^{1}
,\eta^{1})+\p_{\theta}\mathcal{R}_{1}(\rho_{0},\p_{\theta}\eta^{2},\eta^{2}))\p_{t}\mathcal{N}^{2}-\sigma \int_{0}^{t}(\mathcal{K}(\tilde{\eta}^{1})\\
&\quad-\int_{0}^{t}\p_{\theta}(\mathcal{R}_{1a}(\rho_{0},\p_{\theta}\eta^{1},\eta^{1})\p_{\theta}\p_{t}\tilde{\eta}^{1}))(\mathcal{N}^{1}-\mathcal{N}^{2})-\int_{0}^{t}\p_{\theta}(\mathcal{R}_{1b}(\rho_{0},\p_{\theta}\eta^{1},\eta^{1})\p_{t}\tilde{\eta}^{1}))(\mathcal{N}^{1}-\mathcal{N}^{2})\\
&\quad-\int_{0}^{t}\p_{\theta}(\mathcal{R}_{1b}(\rho_{0},\p_{\theta}\eta^{1},\eta^{1})\p_{t}\tilde{\eta}^{1}-\mathcal{R}_{1b}(\rho_{0},\p_{\theta}\eta^{2},\eta^{2})\p_{t}\tilde{\eta}^{2})\mathcal{N}^{2},\\
\tilde R^{5,1}&=-\int_{0}^{t}S_{\p_{t}\mathcal{A}^{1}}({p},{u})+\mu\int_{0}^{t}S_{\p_{t}\mathcal{A}^{1}-\p_{t}\mathcal{A}^{2}}({u}^{2},p^{2})+\int_{0}^{t}\mu S_{\mathcal{A}^{1}-\mathcal{A}^{2}}(\p_{t}\tilde{u}^{2},\p_{t}\tilde{p}^{2}),\\
\quad \tilde R^{6,1}=& \int_{0}^{t}R^{6,1}+\int_{0}^{t}(\frac{1}{\rho^{1}}u\cdot\p_{t}\mathcal{N}(\tilde{\eta})+(\mathfrak{n}^{\prime})^{1}(t)\frac{\p_{\theta}\p_{t}\tilde{\eta}}{\rho^{1}}\sin\theta+(\mathfrak{m}^{1}(t)-\mathfrak{m}^{2}(t))\xi_{s}),\\
\quad \tilde{R}^{7,1}=&R^7.
  \end{aligned}
  \end{equation}

  $\tilde{R}^{1,1},\tilde{R}^{2,1},\tilde{R}^{4,1},\tilde{R}^{5,1},\tilde{R}^{7,1}$ can be estimated as in \cite{YXD2}. Therefore, it suffices to estimate \(\tilde{R}^{6,1}\). First, by the definition of \(R^{6,1}\), we have the following estimates for two terms included in $\tilde{R}^{6,1}$.
\begin{align}
\begin{aligned}
    &\biggl\|\int_{0}^{T}\biggl(
    \frac{1}{\rho^{1}}\partial_{t}\tilde{u}^{2}\cdot(\mathcal{N}^{1}-\mathcal{N}^{2})
    +\Bigl(\frac{1}{\rho^{2}}-\frac{1}{\rho^{1}}\Bigr)\partial_{t}\tilde{u}^{2}\cdot \mathcal{N}^{2}
    +\Bigl(\frac{1}{\rho^{2}}-\frac{1}{\rho^{1}}\Bigr)u^{2}\cdot \partial_{t}\mathcal{N}(\tilde{\eta}^{2})
    \biggr)\biggr\|_{L_{t}^{2}W^{2-\frac{1}{q_{-}},q_{-}}} \\
    &\lesssim T^{\frac{1}{2}}\|\partial_{t}\tilde{u}^{2}\|_{L_{t}^{2}W^{2,q_{-}}}\|\eta\|_{L_{t}^{2}W^{3-\frac{1}{q_{-}},q_{-}}}
    +T^{\frac{1}{2}}\|u^{2}\|_{L_{t}^{\infty}W^{2,q_{-}}}\|\eta\|_{L_{t}^{2}W^{3-\frac{1}{q_{-}},q_{-}}}\|\partial_{t}\tilde{\eta}^{2}\|_{L_{t}^{2}W^{3-\frac{1}{q_{-}},q_{-}}} \\
    &\lesssim T^{\frac{1}{2}}d(u,\eta,p).
\end{aligned}
\end{align}
\begin{align}
    \begin{aligned}
        &\biggl\|
        \frac{1}{\rho^{1}}(u^{1}-u^{2})\cdot\partial_{t}\mathcal{N}(\xi^{1})
        -\frac{\partial_{t}\eta^{1}}{(\rho^{1})^{2}}(u^{1}-u^{2})\cdot \mathcal{N}^{1}
        -\frac{\partial_{t}\eta^{1}}{(\rho^{1})^{2}}u^{2}\cdot (\mathcal{N}^{1}-\mathcal{N}^{2}) \\
        &\qquad\qquad
        -\biggl(\frac{\partial_{t}\eta^{1}}{(\rho^{1})^{2}}-\frac{\partial_{t}\eta^{2}}{(\rho^{2})^{2}}\biggr)u^{2}\cdot \mathcal{N}^{2}
        \biggr\|_{L_{t}^{2}W^{2-\frac{1}{q_{-}},q_{-}}}
        \lesssim T^{\frac{1}{2}}\delta^{\frac{1}{2}}d(u,\eta,p).
    \end{aligned}
\end{align}

For terms involving \(\mathfrak{n}'(t)\), we have
\begin{align}
\begin{aligned}
    &\biggl\|
    \bigl((\mathfrak{n}')^{2}-(\mathfrak{n}')^{1}\bigr)\frac{\partial_{\theta}\partial_{t}\tilde{\eta}^{2}}{\rho^{1}}\sin\theta
    +(\mathfrak{n}')^{2}\partial_{\theta}\partial_{t}\tilde{\eta}^{2}
    \biggl(\frac{1}{\rho^{1}}-\frac{1}{\rho^{2}}\biggr)\sin\theta
    \biggr\|_{L_{t}^{2}W^{2-\frac{1}{q_{-}},q_{-}}} \\
    \lesssim{}&
    \|u\|_{L_{t}^{\infty}H^{1}}\|\partial_{t}\tilde{\eta}^{2}\|_{L_{t}^{2}W^{3-\frac{1}{q_{-}},q_{-}}}
    +\|u^{2}\|_{L_{t}^{\infty}H^{1}}\|\partial_{t}\tilde{\eta}^{2}\|_{L_{t}^{2}W^{3-\frac{1}{q_{-}},q_{-}}}\|\eta\|_{L_{t}^{\infty}H^{\frac{3}{2}-\alpha}} \\
    \lesssim{}& \delta^{\frac{1}{2}}d(u,\eta,p).\\
     &\biggl\|
    \bigl((\mathfrak{n}'')^{2}-(\mathfrak{n}'')^{1}\bigr)\biggl(\frac{\rho'}{\rho}-\frac{\rho_{0}'}{\rho_{0}}\biggr)\sin\theta
    +(\mathfrak{n}'')^{2}\biggl(\frac{(\rho')^{2}}{(\rho^{2})^{2}}-\frac{(\rho')^{1}}{\rho^{1}}\biggr)\sin\theta
    \biggr\|_{L_{t}^{2}W^{2-\frac{1}{q_{-}},q_{-}}} 
    \lesssim{} \delta^{\frac{1}{2}}d(u,\eta,p).
\end{aligned}
\end{align}

For the remaining terms included in \(\tilde{R}^{6,1}\), integration by parts yields
\begin{align}
    \begin{aligned}
        &\biggl\|\int_{0}^{t}\biggl(
        \frac{1}{\rho^{1}}u^{1}\cdot\partial_{t}\mathcal{N}(\tilde{\eta})
        +(\mathfrak{n}')^{1}(t)\frac{\partial_{\theta}\partial_{t}\tilde{\eta}}{\rho^{1}}\sin\theta
        \biggr)\biggr\|_{L_{t}^{2}W^{2-\frac{1}{q_{-}},q_{-}}} \\
        &\lesssim \|\partial_{t}u^{1}\|_{L_{t}^{2}W^{2,q_{+}}}\|\tilde{\eta}\|_{L_{t}^{2}W^{3-\frac{1}{q_{-}},q_{-}}}
        +\|u^{1}\|_{L_{t}^{\infty}W^{2,q_{+}}}\|\tilde{\eta}\|_{L_{t}^{2}W^{3-\frac{1}{q_{-}},q_{-}}} \\
        &\lesssim \delta^{\frac{1}{2}}d(u,\eta,p).
    \end{aligned}
\end{align}
Finally, by the previous computation,
\begin{align}
    \biggl\|\int_{0}^{t}\bigl(\mathfrak{m}^{1}(t)-\mathfrak{m}^{2}(t)\bigr)\xi_{s}\biggr\|_{L_{t}^{2}W^{2-\frac{1}{q_{-}},q_{-}}}
    \lesssim T\delta^{\frac{1}{2}}\|u\|_{L_{t}^{\infty}H^{1}}
    +T^{\frac{1}{2}}\delta^{\frac{1}{2}}d(\tilde{u},\tilde{\eta},\tilde{p}).
\end{align}

  \paragraph{\underline{Step 7 --  Fixed Point}} 

  Combining the estimates obtained in Steps~1--6 and choosing $\delta>0$ and $T>0$ sufficiently small, we obtain
\begin{equation}\label{est:fixed_point1}
\begin{aligned}
d^{2}(\tilde{u},\tilde{p},\tilde{\eta})
\leq
C e^{\delta^{1/2}}
(\delta^{1/2}+T)
\bigl(d({u},{p},{\eta})+d(\tilde{u},\tilde{p},\tilde{\eta})\bigr)^{2}.
\end{aligned}
\end{equation}
It follows that the solution map is a contraction on the corresponding metric space. Therefore, Banach's fixed-point theorem guarantees the existence of a unique local solution to \eqref{equ:fix_2}.

\end{proof}

\begin{proof}[Proof of Theorem~\ref{thm:main}]
By Theorem~\ref{thm:contraction}, there exists a local-in-time solution to
\eqref{equ:fix_2}. Moreover, the corresponding existence time depends only
on the size of the initial energy $\mathcal E(0)$.

Let $T_{\max}$ denote the maximal time of existence of this solution. We
claim that $T_{\max}=\infty$. Suppose, for contradiction, that
$T_{\max}<\infty$. By the continuation criterion associated with the local
well-posedness theory, the solution can be continued beyond $T_{\max}$
provided that its energy remains finite. Hence, $\limsup_{t\nearrow T_{\max}} \mathcal E(t)=\infty .$
On the other hand, the local solution constructed above satisfies the
regularity, compatibility, volume, and orthogonality assumptions required
for the a priori estimate established in \cite{YXD1}. Therefore, for every
$T<T_{\max}$, this estimate applies on $[0,T]$ and yields
\begin{align}
    \sup_{0\le t<T_{\max}} \mathcal E(t)
    \le C\mathcal E(0)<\infty,
\end{align}
which contradicts the blow-up alternative above. Consequently,
$T_{\max}=\infty$, and the solution exists globally in time. Letting
$T\to\infty$ in the preceding estimate gives the desired global energy
bound.
\end{proof}

%%%%%%%%%%%%%%%%%%%%%%%%%%%%%%%%%%%%%%%%%%%%%%
\begin{appendix}
%%%%%%%%%%%%%%%%%%%%%%%%%%%%%%%%%%%%%%%%%%%%%%

\makeatletter
\renewcommand \theequation {%
A.%
%\ifnum \c@section>\z@ \@arabic\c@section.%
%\fi
% \ifnum\c@subsection>\z@\@arabic\c@subsection.%
%\fi\ifnum \c@subsubsection>\z@\@arabic\c@subsubsection.
% \fi
\@arabic\c@equation} \@addtoreset{equation}{section}
% \@addtoreset{equation}{subsection}
\makeatother

%%%%%%%%%%%%%%%%%%%%%%%%%%%%%%%%%%%%%%%%%%%%%%
\section{Forcing Terms in System}\label{sec:dive_forcing}
%%%%%%%%%%%%%%%%%%%%%%%%%%%%%%%%%%%%%%%%%%%%%%

Define
\begin{align}{\label{eq:def1}}
    X:=\{(u,p)|(u,p)\in W^{2,q_{-}}\times W^{1,q_{-}}\},
\end{align}
and
\begin{align}{\label{eq:def2}}
    Y:=\{(F^{1},F^{2},F^{4},F^{4_{+}},F^{5})|(F^{1},F^{2},F^{4},F^{4_{+}},F^{5})\in L^{q_{-}}\times W^{1,q_{-}}\times W^{1-\frac{1}{q_{-}},q_{-}}\times W^{2-\frac{1}{q_{-}},q_{-}}\times W^{1-\frac{
    1
    }{q_{-}},q_{-}}\}.
\end{align}
We now present the nonlinear interaction terms in \eqref{eq:quasi_linear}. From \eqref{equ:fix_2}, we set
\begin{align}
\begin{aligned}
  F^{1}&=\operatorname{div}_{\partial_{t}\mathcal{A}}S_{\mathcal{A}}(p,u)
+\operatorname{div}_{\mathcal{A}}\mathbb{D}_{\partial_{t}\mathcal{A}}u+\partial_{t}u\cdot \nabla_{\mathcal{A}}u+u\cdot \nabla_{\partial_{t}\mathcal{A}}u+u\cdot \nabla_{\mathcal{A}}\partial_{t}u\\
&\quad+\partial_{t}((\cos\theta W\partial_{t}\bar{\eta},\sin\theta W\partial_{t}\bar{\eta})\mathcal{A}(\partial_{x}u,\partial_{y}u)^{T})+\p_{t}(\mathfrak{n}^{\prime}(t)\p_{x}u),\\F^{2}&=\operatorname{div}_{\partial_{t}\mathcal{A}}u,\\
  F^{4}&=\mu\mathbb{D}_{\partial_{t}\mathcal{A}}u\mathcal{N}+\mu S_{\mathcal{A}}(p,u)\partial_{t}\mathcal{N}+\left[\mathcal{K}(\eta)-\sigma\partial_{\theta}\mathcal{R}_{1}(\rho_{0},\partial_{\theta}\eta,\eta)\right]\p_t\mathcal{N}+\p_{t}(\mathcal{R}_{2}(\rho_{0},\p_{\theta}\eta,\eta)\mathcal{N}),\\
  F^{5}&=\mu\mathbb{D}_{\p_t\mathcal{A}}u\nu\cdot\tau ,\\
  F^{6}&=-\frac{\p_{t}\eta}{\rho^{2}}u\cdot \mathcal{N}-\frac{\p_{t}\eta}{\rho^{2}}\mathfrak{n}^{\prime}(t)-\mathfrak{n}^{\prime \prime}(t)(\frac{\rho_{0}^{\prime}}{\rho_{0}}-\frac{\rho^{\prime}}{\rho}),\\
  F^{7}&=\kappa\hat{\mathscr{W}}^\prime(u\cdot \mathcal{N}\mp\mathfrak{n}^{\prime}(t))(\p_tu\cdot \mathcal{N}\mp \mathfrak{n}^{\prime\prime}(t))(\frac{\pi}{2}\pm\frac{\pi}{2})\pm \mathcal{R}_{1b}(\rho_{0},\p_{\theta}\eta,\eta)\p_{t}\eta(\frac{\pi}{2}\pm\frac{\pi}{2}).
  \end{aligned}
\end{align}

\makeatletter
\renewcommand \theequation {%
B.%
%\ifnum \c@section>\z@ \@arabic\c@section.%
%\fi
% \ifnum\c@subsection>\z@\@arabic\c@subsection.%
%\fi\ifnum \c@subsubsection>\z@\@arabic\c@subsubsection.
% \fi
\@arabic\c@equation} \@addtoreset{equation}{section}
% \@addtoreset{equation}{subsection}
\makeatother
%%%%%%%%%%%%%%%%%%%%%%%%%%%%%%%%%%%%%%%%%%%%%%
\section{Initial Data for Nonlinear Problem}\label{sec:initial}
%%%%%%%%%%%%%%%%%%%%%%%%%%%%%%%%%%%%%%%%%%%%%%

Our aim is to use the Galerkin method to establish the existence and uniqueness of solutions to \eqref{equ:fix_2}. So the key point is to guarantee that the initial data of the sequence of approximate solutions converge to the initial data for PDE \eqref{equ:fix_2}.
Now we state our theorem for the construction of initial conditions for nonlinear system \eqref{equ:fix_2}.
 \begin{theorem}\label{thm:initial}
Suppose that the initial data $ D_t^2u(0)\in H^0(\Om)$ are given and satisfy $\dive_{\mathcal{A}_0}D_t^2u(0)=0$, and that $\p_{t}^{2}(\rho^{2})(0)\in W^{2-\frac{1}{q_{+}},q_{+}}\cap \mathring{H}^{1}(\Sigma)$ is also given. Then we can  construct $(u_0, p_0, \eta_0, D_tu(0), \p_tp(0), \p_t\eta(0))$
solving the system \eqref{equ:fix_2} at $t=0$ such that
\begin{align}
\mathcal{E}(0) + \|\p_t^2\eta(0)\|_{W^{2-1/q_+, q_+}}^2\le \delta
\end{align}
for a universal constant $\delta>0$ sufficiently small.
 \end{theorem}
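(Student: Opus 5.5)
The plan is to construct the data backwards, from the highest time derivative down, by solving at each level a stationary elliptic problem obtained by freezing \eqref{equ:fix_2} and its formal time derivatives at $t=0$. This follows the vessel construction of \cite{YXD2}, with two additions: the pole speeds $\mathfrak n'(0),\mathfrak n''(0)$, and the auxiliary profile $\xi_5$, which replaces the constant shift of \cite{YXD2}.

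We begin with $\xi_0$ small in $W^{3-1/q_+,q_+}$ and $u_0\in W^{2,q_+}$ satisfying $\dive_{\mathcal A_0}u_0=0$ and $u_0\cdot\nu=0$ on $\Sigma_s$. Such a pair can be obtained by taking $u_0=M(0)\tilde u_0$ with $\tilde u_0$ solenoidal and using Proposition~\ref{prop:isomorphism}(iv). We then set
\[
\mathfrak n'(0)=\lambda(0)\int_0^\pi \rho_0^{-1}u_0\cdot\mathcal N\,\xi_s\,d\theta,
\]
as dictated by \eqref{equ:ODE}. Next we recover $p_0$ from the normal stress condition. This is a Stokes problem at $t=0$ whose data are $\mathcal K(\xi_0)$, and it is solvable by the elliptic theory of \cite[Theorem 4.7]{GT2020}. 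The unknown mean of $p_0$ is fixed exactly as in Step 5 of Theorem~\ref{thm:pressure}. The kinematic equation then gives
\[
\p_t\xi(0)=\rho_0^{-1}u_0\cdot\mathcal N+G-\mathfrak n'(0)\xi_s .
\]
Differentiating \eqref{equ:fix_2} once in time and evaluating at $t=0$ yields a Stokes-type problem for $(D_tu(0),\p_tp(0))$. Its data are built from the already known quantities, with $F^1,F^4,F^5$ of Appendix~\ref{sec:dive_forcing} evaluated at $t=0$. The contact point relation, differentiated, provides the boundary data at $0,\pi$. The prescribed $D_t^2u(0)$ and $\p_t^2(\rho^2)(0)$ then close the chain: they determine $\p_t^2\xi(0)$ and $\mathfrak n''(0)$ through the differentiated kinematic law and the orthogonality constraint $\int\xi_s\p_t^2\xi=0$. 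They also supply the compatibility conditions \eqref{compat_C2} for $j=2$, because $\dive_{\mathcal A_0}D_t^2u(0)=0$ is assumed and $D_t$ preserves $\dive_{\mathcal A}$-freeness by \eqref{eq:dive_dt}.

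Two normalizations must then be imposed: the volume (zero average) conditions \eqref{cond:zero} and the orthogonality $\int\xi\xi_s=0$. For this we use the auxiliary function $\xi_5$, chosen in the kernel of the linearized operator in the following sense: $\mathcal K(\xi_5)$ is a constant, absorbed into the pressure mean, and the contact point expressions $\rho_0^2\xi_5'-\rho_0'\rho_0\xi_5$ vanish at $0,\pi$. A natural candidate is the infinitesimal dilation mode of the equilibrium family, combined with $\xi_s$ if needed, and $\xi_5$ should satisfy $\int\rho_0\xi_5\neq0$. Replacing $(\xi_0,\p_t\xi(0))$ by $(\xi_0+c_0\xi_5,\ \p_t\xi(0)+c_1\xi_5)$, and adjusting $\p_tp(0)$ by constants, keeps \eqref{equ:fix_2} valid at $t=0$. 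The scalars $c_0,c_1$ are then fixed by \eqref{cond:zero} for $k=0,1$. The case $k=2$ holds by hypothesis, since $\p_t^2(\rho^2)(0)\in\mathring H^1$, and orthogonality is adjusted by a multiple of $\xi_s$ via $\mathfrak n$. Since $\rho\xi$ is nonlinear in $\xi$, the equation for $c_0$ is a small perturbation of a linear one and is solved by the implicit function theorem.

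Finally, $\mathcal E(0)+\|\p_t^2\eta(0)\|^2_{W^{2-1/q_+,q_+}}\lesssim\delta$ follows from the elliptic estimates at each stage. The nonlinear terms are quadratic and are bounded as in Proposition~\ref{prop:force_bulk}, Proposition~\ref{prop:force_bulk_f_6} and Theorem~\ref{thm:forcing_+}, evaluated at $t=0$.

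The main obstacle will be the existence of $\xi_5$. One must verify that a nonzero function exists which is annihilated by the linearized capillary and contact point operators modulo constants, and which is not orthogonal to $\rho_0$. Differentiating the equilibrium family in the volume parameter should supply it. A second point requiring care is that $\mathfrak n'(0)$ and $\mathfrak n''(0)$ enter the data of the later elliptic problems; the resulting coupled system would be solved by a contraction in the small data regime.
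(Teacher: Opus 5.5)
You announce a backward construction, but the chain you actually describe runs forward. You fix $(u_0,\xi_0)$ first and then generate $p_0$, $\p_t\xi(0)$ and $(D_tu(0),\p_tp(0))$ from them. This fails in two ways.

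\textbf{The scheme is over-determined.} Once $(u_0,p_0,\xi_0)$ are fixed, the undifferentiated momentum equation at $t=0$ already gives $D_tu(0)=-R(0)u_0-\dive_{\mathcal A}S_{\mathcal A}(p_0,u_0)+\mathfrak a^1$. The kinematic law gives $\p_t\xi(0)$, and the once-differentiated system then determines $D_t^2u(0)$ and $\p_t^2\xi(0)$. So the prescribed $D_t^2u(0)$ and $\p_t^2(\rho^2)(0)$ cannot also be imposed. Concretely, in your ``Stokes-type problem for $(D_tu(0),\p_tp(0))$'' the function $\p_t\xi(0)$ is already fixed. On $\Sigma$ you would then be prescribing the full stress vector and the normal velocity, and the contact-point relation becomes a constraint between known quantities. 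Likewise, ``recovering $p_0$ from a Stokes problem'' presupposes a forcing $D_tu(0)$ that you do not yet have.

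\textbf{The forward route loses regularity.} Even with a consistent ordering, $\dive_{\mathcal A}S_{\mathcal A}(p_0,u_0)$ with $(u_0,p_0)\in W^{2,q_+}\times W^{1,q_+}$ lies only in $L^{q_+}$, whereas $\mathcal E(0)$ needs $D_tu(0)\in H^{1+\varepsilon_-/2}$. Similarly $\rho_0^{-1}u_0\cdot\mathcal N$ lies only in $W^{2-1/q_+,q_+}$, whereas $\mathcal E(0)$ needs $\p_t\xi(0)\in H^{3/2+(\varepsilon_--\alpha)/2}$. So the bound $\mathcal E(0)\lesssim\delta$ cannot come out of your chain, and this loss is exactly why the data must be built backward.

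\textbf{The missing idea} is to make the surface function an \emph{unknown} of a coupled Stokes--capillary problem at each level, solved in decreasing order of time derivatives, as in \eqref{eq:initial_1}--\eqref{eq:initial_0}.
\begin{itemize}
\item First level: the unknowns are $(D_tu(0),\p_tp(0),\p_t\xi(0))$. The forcing is $D_t^2u(0)$ and the normal-velocity datum is $\p_t^2(\rho^2)(0)$. The function $\p_t\xi(0)$ is fixed by the normal-stress and contact-point equations.
\item Zeroth level: the unknowns are $(u_0,p_0,\xi_0)$. The forcing is $D_tu(0)$ and the normal datum is $(\rho_0+\xi_0)\p_t\xi(0)$.
\end{itemize}
All nonlinear terms $\mathfrak a^1,\mathfrak b^1,\mathfrak b^4$, not only the $\mathfrak n'(0),\mathfrak n''(0)$ coupling, are evaluated at the unknowns and closed by a contraction.

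\textbf{The $\xi_5$ correction depends on this ordering.} In your scheme, shifting $\p_t\xi(0)$ by $c_1\xi_5$ while $u_0$ is held fixed violates the zeroth-order kinematic equation you used to define $\p_t\xi(0)$. In a forward scheme that volume condition is dictated by $u_0$ and cannot be adjusted independently. In the backward scheme the shift is made after the first level and before the zeroth level. It leaves the first level intact because $\xi_5$ is annihilated by the capillary and contact operators (modulo a constant pressure shift in your variant). There it enforces $\int_0^\pi\p_t\xi_0(\rho_0+\xi_0)=0$, which is precisely the solvability condition for a $\dive_{\mathcal A}$-free $u_0$ with that normal trace. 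Only afterwards is $\xi_0$ shifted to fix the volume. Your description of $\xi_5$ itself (a kernel element modulo constants, with $\int\rho_0\xi_5\neq0$) is a reasonable variant of what the paper takes from \cite{YXD}.
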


 \begin{proof}

 We separate the proof into several parts.

 \paragraph{\underline{Step 1 -- Solving the linear problem}}
     Suppose that
     \begin{align}
     (u_{0},p_{0},\eta_{0},D_{t}u_{0},\p_{t}\eta_{0}) \in W^{2,q_{+}}\times W^{1,q_{+}}\times W^{3-\frac{1}{q_{+}},q_{+}}(\Sigma)\times W^{2,q_{+}} \times W^{3-\frac{1}{q_{+}},q_{+}}(\Sigma)
     \end{align}
     are all given functions such that
     \begin{align}
         \int_{0}^{\pi}(\rho_{0}+\eta_{0})^{2}=\int_{0}^{\pi}\rho_{0}^{2}~~\operatorname{and}~~\int_{0}^{\pi}(\rho_{0}+\eta_{0})\p_{t}\eta_{0}=0, \qquad \operatorname{and}\qquad  \mathcal{E}(u_{0},p_{0},\eta_{0})\leq \delta.
     \end{align}
   By Appendix~A of \cite{YXD2}, there exists a solution $(v_0,q_0,\xi_0,D_t v_0,\partial_t q_0,\partial_t \xi_0)$
to \eqref{eq:initial_0}--\eqref{eq:initial_1}, provided that
 \begin{align}{\label{eq:compat}}
     \int_{0}^{\pi}\p_{t}\xi_{0}(\rho_{0}+\xi_{0})=0.
 \end{align}
 The condition \eqref{eq:compat} serves as the
compatibility condition for the system. Moreover, the solution satisfies the estimate
\eqref{est:initial_linear}.
     \begin{align}{\label{eq:initial_0}}
             \begin{cases}
        D_{t}v_0+R(0)v_{0}+\operatorname{div}_{\mathcal{A}}(S_{\mathcal{A}}(q_{0},v_{0}))=\mathfrak{a}^{1}(u_{0},\eta_{0},p_{0})~~~&\operatorname{in}~\Omega ,\\
        \dive_{\mathcal{A}(0)}v_{0}=0&\operatorname{in}~\Omega,\\
        S_{\mathcal{A}}(q_{0},v_{0})\mathcal{N}=(g\xi_{0}\sin\theta+\sigma\mathcal{H}(\xi_{0})-\p_{\theta}\mathcal{R}_{1}(\rho_{0},\p_{\theta}\eta_{0},\eta_{0})+\mathcal{R}_{2}(\rho_{0},\p_{\theta}\eta_{0},\eta_{0}))\mathcal{N}(0)~~&\operatorname{on}~\Sigma,\\
        (\rho_{0}+\xi_{0})\partial_{t}\xi_{0}=v_{0}\cdot \mathcal{N}(0)~~&\operatorname{on}~\Sigma,\\
        (S_{\mathcal{A}}(q_{0},v_{0})\cdot \nu-\beta v_{0})\cdot \tau=0~~&\operatorname{on}~\Sigma_{s},\\
        v_{0}\cdot \nu=0~&\operatorname{on}~\Sigma_{s},\\
        \mathcal{W}(u_{0}\cdot \mathcal{N}(\pi))=-\sigma\sin\gamma_1+[\![\gamma]\!],\\
        \mathcal{W}(u_{0}\cdot \mathcal{N}(0))=\sigma\sin\gamma_2+[\![\gamma]\!],
    \end{cases}
     \end{align}
     \begin{align}{\label{eq:initial_1}}
         \begin{cases}
        D_{t}^{2}v_0+R(0)D_{t}v_{0}
         +\operatorname{div}_{\mathcal{A}}(S_{\mathcal{A}}(\p_{t}q_{0},D_{t}v_{0}))+\mathfrak{b}^{1}=0~~~&\operatorname{in}~\Omega ,\\
        \dive_{\mathcal{A}(0)}D_{t}v_{0}=0&\operatorname{in}~\Omega,\\
        S_{\mathcal{A}}(\p_{t}q(0),D_{t}v_{0})\mathcal{N}(0)=\big(g\p_{t}\xi_{0}\sin\theta+\sigma\mathcal{H}(\p_{t}\xi_{0})\big)\mathcal{N}(0)+\mathfrak{b}^{4}~~&\operatorname{on}~\Sigma,\\
        \p_{t}^{2}(\rho^{2})(0)=D_{t}u_{0}\cdot \mathcal{N}(0)~~&\operatorname{on}~\Sigma,\\
        (S_{\mathcal{A}(0)}(\p_{t}q_{0},D_{t}v_{0})\cdot \nu-\beta D_{t}v_{0})\cdot \tau=(-S_{\p_{t}\mathcal{A}(0)}(p_{0},u_{0})-S_{\mathcal{A}(0)}(0,R(0)u_{0}))\cdot \tau~~&\operatorname{on}~\Sigma_{s},\\
        D_{t}u_{0}\cdot \nu=0~&\operatorname{on}~\Sigma_{s},\\
        \mathcal{W}^{\prime}(u_{0}\cdot \mathcal{N}(\pi))D_{t}u_{0}\cdot\mathcal{N}(\pi)=\sigma\frac{\rho_{0}^{2}\p_{t}\xi_{0}'}{(\rho_{0}^{2}+\rho_{0}'^{2})^{\frac{3}{2}}}(\pi)-\sigma\frac{\rho_0'\rho_{0}\p_{t}\xi_{0}}{(\rho_{0}^{2}+\rho_{0}'^{2})^{\frac{3}{2}}}(\pi)+\mathcal{R}_{1a}(\rho_{0},\p_{\theta}\eta_{0},\eta_{0})(\p_{t}\p_{\theta}\eta_{0})(\pi)\\
        \quad\quad\quad\quad\quad\quad\quad\quad\quad\quad+\mathcal{R}_{1b}(\rho_{0},\p_{\theta}\eta_{0},\eta_{0})(\p_{t}\eta_{0})(\pi),\\
         \mathcal{W}^{\prime}(u_{0}\cdot \mathcal{N}(0))D_{t}u_{0}\cdot \mathcal{N}(0)=-\sigma\frac{\rho_{0}^{2}\p_{t}\xi_{0}'}{(\rho_{0}^{2}+\rho_{0}'^{2})^{\frac{3}{2}}}(0)+\sigma\frac{\rho_0'\rho_{0}\p_{t}\xi_{0}}{(\rho_{0}^{2}+\rho_{0}'^{2})^{\frac{3}{2}}}(0)+\mathcal{R}_{1a}(\rho_{0},\p_{\theta}\eta_{0},\eta_{0})(\p_{t}\p_{\theta}\eta_{0})(0)\\
        \quad\quad\quad\quad\quad\quad\quad\quad\quad\quad+\mathcal{R}_{1b}(\rho_{0},\p_{\theta}\eta_{0},\eta_{0})(\p_{t}\eta_{0})(0).
    \end{cases}
     \end{align}

      where
      \begin{align}{\label{eq:initial_a}}
          \begin{aligned}
              \mathfrak{a}^{1}(u_{0},p_{0},\eta_{0})=&-(\cos\theta W\partial_{t}\bar{\eta_{0}},\sin\theta W\partial_{t}\bar{\eta_{0}})\mathcal{A}(0)(\partial_{x}u_{0},\partial_{y}u_{0})^{T}+u_{0}\cdot \nabla_{\mathcal{A}(0)} u_{0}+\p_{t}R(0)u_{0}+R(0)^{2}u_{0},
          \end{aligned}
      \end{align}
 \begin{align}{\label{eq:initial_b}}
 \begin{aligned}
     \mathfrak{b}^{1}=&\p_{t}\mathfrak{a}^{1}+D_{t}u_{0}\cdot \nabla_{\mathcal{A}(0)} u_{0}+R(0)u_{0}\cdot \nabla_{\mathcal{A}(0)} u_{0}+u_{0}\cdot \nabla_{\p_{t}\mathcal{A}(0)}u_{0}-(\cos\theta \p_{t}W(0)\partial_{t}\bar{\eta_{0}},\sin\theta \p_{t}W(0)\partial_{t}\bar{\eta_{0}})\mathcal{A}(0)(\partial_{x}u_{0},\partial_{y}u_{0})^{T}\\
        &+(\cos\theta W(0)\partial_{t}^{2}\bar{\eta_{0}},\sin\theta W(0)\partial_{t}^{2}\bar{\eta_{0}})\mathcal{A}(0)(\partial_{x}u_{0},\partial_{y}u_{0})^{T}+(\cos\theta W(0)\partial_{t}\bar{\eta_{0}},\sin\theta W(0)\partial_{t}\bar{\eta_{0}})\mathcal{A}(0)(\partial_{x}D_{t}u_{0},\partial_{y}D_{t}u_{0})^{T}\\
        &+(\cos\theta W(0)\partial_{t}\bar{\eta_{0}},\sin\theta W(0)\partial_{t}\bar{\eta_{0}})\p_{t}\mathcal{A}(0)(\partial_{x}u_{0},\partial_{y}u_{0})^{T}+u_{0}\cdot \nabla_{\mathcal{A}^{0}}D_{t}u_{0}+u_{0}\cdot \nabla_{\mathcal{A}}R(0)u_{0}+\operatorname{div}_{\mathcal{A}(0)}(S_{\p_{t}\mathcal{A}(0)}(p_{0},u_{0}))\\
        &+(\cos\theta W(0)\partial_{t}\bar{\eta_{0}},\sin\theta W(0)\partial_{t}\bar{\eta_{0}})\p_{t}\mathcal{A}(0)(\partial_{x}R(0)u_{0},\partial_{y}R(0)u_{0})^{T}+\operatorname{div}_{\p_{t}\mathcal{A}(0)}(S_{\mathcal{A}(0)}(p_{0},u_{0}))+\operatorname{div}_{\mathcal{A}(0)}(S_{\mathcal{A}(0)}(0,R(0)u_{0})),\\
        \mathfrak{b}^{4}=&-S_{\p_{t}\mathcal{A}(0)}(p_{0},u_{0})\mathcal{N}(0)-S_{\mathcal{A}(0)}(0,R(0)u_{0})\mathcal{N}(0)-S_{\mathcal{A}(0)}(p_{0},u_{0})\p_{t}\mathcal{N}(0)+(\mathcal{K}(\eta_{0})+\p_{\theta}(\mathcal{R}_{1a}(\rho_{0},\p_{\theta}\eta_{0},\eta_{0})\p_{t}\p_{\theta}\xi_{0}))\p_{t}\mathcal{N}(0)\\
        &\quad+(\mathcal{R}_{2}(\rho_{0},\p_{\theta}\eta_{0},\eta_{0}))\p_{t}\mathcal{N}(0)+\p_{t}(\mathcal{R}_{2}(\rho_{0},\p_{\theta}\eta_{0},\eta_{0}))\mathcal{N}(0)+\p_{\theta}(\mathcal{R}_{1b}(\rho_{0},\p_{\theta}\eta_{0},\eta_{0})(\p_{t}\p_{\theta}\eta_{0}))\mathcal{N}(0),
        \end{aligned}
 \end{align}
 
 \begin{align}{\label{est:initial_linear}}
 \begin{aligned}
     &\|v_{0}\|_{W^{2,q_{+}}}^{2}+\|\xi_{0}\|_{W^{3-\frac{1}{q_{-}},q_{-}}(\Sigma)}^{2}+\|q_{0}\|_{W^{1,q_{-}}}^{2}+\|\p_{t}v_{0}\|_{W^{2,q_{+}}}^{2}+\|\p_{t}\xi_{0}\|_{W^{3-\frac{1}{q_{-}},q_{-}}(\Sigma)}^{2}+\|\p_{t}q_{0}\|_{W^{1,q_{-}}}^{2}\\
     \lesssim&\|u_{0}\|_{W^{2,q_{+}}}^{2}+\|\eta_{0}\|_{W^{3-\frac{1}{q_{-}},q_{-}}(\Sigma)}^{2}+\|p_{0}\|_{W^{1,q_{-}}}^{2}+\|\p_{t}u_{0}\|_{W^{2,q_{+}}}^{2}+\|\p_{t}\eta_{0}\|_{W^{3-\frac{1}{q_{-}},q_{-}}(\Sigma)}^{2}+\|\p_{t}p_{0}\|_{W^{1,q_{-}}}^{2}\\
     &\quad+\|D_{t}^{2}v_{0}\|_{L^{2}}^{2}+\|\p_{t}^{2}(\rho^{2})(0)\|_{W^{2-\frac{1}{q_{+}},q_{+}}}.
     \end{aligned}
 \end{align}
 
 For any prescribed data with sufficiently small initial energy $(u_{0},\eta_{0},p_{0},\partial_{t}u_{0},\partial_{t}\eta_{0}) \in \mathcal{E}(0),$ Appendix~A of \cite{YXD2} yields a solution to \eqref{eq:initial_0}--\eqref{eq:initial_1} satisfying \eqref{est:initial_linear}. Thus, it suffices to enforce the compatibility condition \eqref{eq:compat}. Moreover, it follows from \cite{YXD} that there exists a function $\xi_{5}$ and a constant $C>0$ such that 
 \begin{align} \int_{0}^{\pi}\rho_{0}\xi_{5}=C, \qquad g\xi_{5}\sin\theta+\sigma\mathcal{H}(\xi_{5})=0, 
 \end{align} 
 \begin{align} -\sigma\frac{\rho_{0}^{2}\xi_{5}'}{(\rho_{0}^{2}+(\rho_{0}')^{2})^{3/2}}(0) +\sigma\frac{\rho_{0}'\rho_{0}\xi_{5}} {(\rho_{0}^{2}+(\rho_{0}')^{2})^{3/2}}(0) = -\sigma\frac{\rho_{0}^{2}\xi_{5}'}{(\rho_{0}^{2}+(\rho_{0}')^{2})^{3/2}}(\pi) +\sigma\frac{\rho_{0}'\rho_{0}\xi_{5}} {(\rho_{0}^{2}+(\rho_{0}')^{2})^{3/2}}(\pi) =0. \end{align} 
 If $ \int_{0}^{\pi}\partial_{t}\xi_{0}(\rho_{0}+\xi_{0})\neq0,$ we modify $\partial_{t}\xi_{0}$ by adding a suitable multiple of $\xi_{5}$. More precisely, we seek a constant $c$ such that \begin{align} \int_{0}^{\pi} (\partial_{t}\xi_{0}+c\xi_{5})(\rho_{0}+\xi_{0})=0. \end{align} This is achieved by choosing \begin{align} c = -\frac{\displaystyle\int_{0}^{\pi} \partial_{t}\xi_{0}(\rho_{0}+\xi_{0})} {\displaystyle\int_{0}^{\pi}\xi_{5}(\rho_{0}+\xi_{0})} = -\frac{\displaystyle\int_{0}^{\pi} \partial_{t}\xi_{0}(\rho_{0}+\xi_{0})} {\displaystyle C+\int_{0}^{\pi}\xi_{0}\xi_{5}}. \end{align} Since $\delta$ is sufficiently small,  we have the following estimate for the constant $c$
 \begin{align} |c| \lesssim \frac{\|\partial_{t}\xi_{0}\|_{L^{2}}}{C}. \end{align} Replacing $\partial_{t}\xi_{0}$ by $\partial_{t}\xi_{0}+c\xi_{5}$, we obtain a solution of \eqref{eq:initial_0}--\eqref{eq:initial_1} satisfying the compatibility condition \eqref{eq:compat}. Moreover, the estimate \eqref{est:initial_linear} remains valid.

\paragraph{\underline{Step 2 -- Mass conservation}}

$~$

In this step, we verify the volume constraint
\begin{align}\label{eq:conserve}
    \int_{0}^{\pi}(\rho_{0}+\xi_{0})^{2}
    =
    \int_{0}^{\pi}\rho_{0}^{2}.
\end{align}

As in the construction of $\partial_t\xi_0$, if the function $\xi_0$
obtained in the previous step fails to satisfy \eqref{eq:conserve},
we modify it by adding a suitable multiple of $\xi_5$. More precisely,
we choose a constant $c_1$ such that
\begin{align}
\int_{0}^{\pi}(\rho_{0}+\xi_{0}+c_{1}\xi_{5})^{2}
=
\int_{0}^{\pi}\rho_{0}^{2}.
\end{align}
Expanding the square yields
\begin{align}\label{eq:quadratic}
\int_{0}^{\pi}\xi_{0}^{2}
+2\int_{0}^{\pi}\rho_{0}\xi_{0}
+2c_{1}\int_{0}^{\pi}(\rho_{0}+\xi_{0})\xi_{5}
+c_{1}^{2}\int_{0}^{\pi}\xi_{5}^{2}
=0.
\end{align}
Equation \eqref{eq:quadratic} is quadratic in $c_{1}$. Since
$\|\xi_{0}\|_{L^{2}}^{2}\le \delta$ and $\delta$ is sufficiently small,
\eqref{eq:quadratic} admits a solution satisfying $|c_{1}|
    \lesssim
    \|\xi_{0}\|_{L^{2}}.$

Next, we choose a constant $c_{2}$ such that
\begin{align}
\int_{0}^{\pi}
(\partial_{t}\xi_{0}+c_{2}\xi_{5})
(\rho_{0}+\xi_{0}+c_{1}\xi_{5})
=0.
\end{align}

Replacing $\xi_{0}$ with $\xi_{0}+c_{1}\xi_{5}$ and
$\partial_{t}\xi_{0}$ with $\partial_{t}\xi_{0}+c_{2}\xi_{5}$,
we obtain a solution $(v_{0},D_{t}v_{0},\xi_{0},\partial_{t}\xi_{0},q_{0},\partial_{t}q_{0})$
of \eqref{eq:initial_0}--\eqref{eq:initial_1} satisfying
\eqref{est:initial_linear} together with the compatibility conditions
\begin{align}
\int_{0}^{\pi}(\rho_{0}+\xi_{0})^{2}
=
\int_{0}^{\pi}\rho_{0}^{2},
\qquad
\int_{0}^{\pi}\partial_{t}\xi_{0}(\rho_{0}+\xi_{0})
=
0.
\end{align}

 \paragraph{\underline{Step 3 -- Contraction map}}

 Define the following functional space
 \begin{align}
     \mathscr{H}:=\{(u_{0},\xi_{0},p_{0},\p_{t}u_{0},\p_{t}\xi_{0},\p_{t}p_{0})\in \mathcal{Z}|\mathcal{E}(0)\lesssim \delta,~~\int_{0}^{\pi}(\rho_{0}+\xi_{0})^{2}=\int_{0}^{\pi}\rho_{0}^{2},~~~\int_{0}^{\pi}\p_{t}\xi_{0}(\rho_{0}+\xi_{0})=0\},
 \end{align}

 \noindent where $\mathcal{Z}$ is defined to be
 \begin{align}
     \mathcal{Z}:=W^{2,q_{+}}\times W^{3-\frac{1}{q_{+}},q_{+}}(\Sigma)\times W^{1,q_{+}}\times W^{2,q_{+}}\times W^{3-\frac{1}{q_{+}},q_{+}}\times W^{1,q_{+}}.
 \end{align}
\noindent We endow this functional space with the following metric
\begin{align*}
    d(h^{1},h^{2}):=&\|u_{0}^{1}-u_{0}^{2}\|_{W^{2,q_{+}}}+\|q_{0}^{1}-q_{0}^{2}\|_{W^{1,q_{+}}}+\|\xi_{0}^{1}-\xi_{0}^{2}\|_{W^{3-\frac{1}{q_{+}},q_{+}}}+\|\p_{t}u_{0}^{1}-\p_{t}u_{0}^{2}\|_{H^{1}}\\
    &+\|\p_{t}q_{0}^{1}-\p_{t}q_{0}^{2}\|_{H^{0}}+\|\p_{t}\xi_{0}^{1}-\p_{t}\xi_{0}^{2}\|_{H^{\frac{3}{2}}},
\end{align*}
\noindent where $h^{i}=(u_{0}^{i},\xi_{0}^{i},p_{0}^{i},D_{t}u_{0}^{i},\p_{t}\xi_{0}^{i},\p_{t}p_{0}^{i})$.

From Appendix B of \cite{YXD2}, the linear map from $(u_{0},\eta_{0},p_{0})$ to $(v_{0},\xi_{0},q_{0})$ defined by equations \eqref{eq:initial_0} and \eqref{eq:initial_1} is a contraction map with respect to the space $\mathscr{H}$ and metric $d$. Therefore, there exists a unique fixed point of this map solving \eqref{eq:initial_0} and \eqref{eq:initial_1}. This solution admits the following boundedness
\begin{align}
\begin{aligned}
   &\|v_{0}\|_{W^{2,q_{+}}}^{2}+\|\xi_{0}\|_{W^{3-\frac{1}{q_{-}},q_{-}}(\Sigma)}^{2}+\|q_{0}\|_{W^{1,q_{-}}}^{2}+\|\p_{t}v_{0}\|_{W^{2,q_{+}}}^{2}+\|\p_{t}\xi_{0}\|_{W^{3-\frac{1}{q_{-}},q_{-}}(\Sigma)}^{2}+\|\p_{t}q_{0}\|_{W^{1,q_{-}}}^{2}\\
   &\quad+\|\p_{t}v_{0}\|^{2}_{H^{1+\frac{\varepsilon_{-}}{2}}}+\|\p_{t}\xi_{0}\|_{H^{\frac{3}{2}+\frac{\varepsilon_{-}-\alpha}{2}}(\Sigma)}^{2}+\|\p_{t}q_{0}\|_{H^{0}}^{2}
   \lesssim\|\p_{t}^{2}(\rho^{2})(0)\|_{W^{2-\frac{1}{q_{+}},q_{+}}}^{2}+\|D_{t}^{2}u_{0}\|_{L^{2}}^{2}\leq \delta_{1}.
\end{aligned}
\end{align}

\paragraph{\underline{Step 4 -- The bound on $\p_{t}^{2}\xi_{0}$}}

From the assumption that $\|\p_{t}^{2}(\rho^{2})(0)\|_{W^{2-\frac{1}{q_{+}},q_{+}}}\leq \delta_{1}$, we have
\begin{align}
    \|(\rho_{0}+\xi_{0})\p_{t}^{2}\xi_{0}\|^{2}_{W^{2-\frac{1}{q_{+}},q_{+}}}\leq 4\|(\p_{t}\xi_{0})^{2}\|^{2}_{W^{2-\frac{1}{q_{+}},q_{+}}}+\delta_{1}^{2},
\end{align}
\noindent which implies that
\begin{align}
    \|\p_{t}^{2}\xi_{0}\|_{W^{2-\frac{1}{q_{+}},q_{+}}}^{2}\leq \frac{C\delta_{1}^{2}}{\|\rho_{0}\|^{2}_{W^{2-\frac{1}{q_{+}},q_{+}}}}.
\end{align}
\noindent Choosing a small $\delta_{1}$ such that
\begin{align}
    \max(\frac{C\delta_{1}^{2}}{\|\rho_{0}\|^{2}_{W^{2-\frac{1}{q_{+}},q_{+}}}},\delta_{1})\leq \frac{1}{4}\delta,
\end{align}
we obtain the following result
\begin{align}
\mathcal{E}(0)+\|\p_{t}^{2}\xi_{0}\|^{2}_{W^{2-\frac{1}{q_{+}},q_{+}}}\leq \delta.
\end{align}
This completes the proof.
 \end{proof}

\makeatletter
\renewcommand \theequation {%
C.%
%\ifnum \c@section>\z@ \@arabic\c@section.%
%\fi
% \ifnum\c@subsection>\z@\@arabic\c@subsection.%
%\fi\ifnum \c@subsubsection>\z@\@arabic\c@subsubsection.
% \fi
\@arabic\c@equation} \@addtoreset{equation}{section}
% \@addtoreset{equation}{subsection}
\makeatother
%%%%%%%%%%%%%%%%%%%%%%%%%%%%%%%%%%%%%%%%%%%%%%
\section{Initial Data for Linear Problem }\label{sec:initial_l}
%%%%%%%%%%%%%%%%%%%%%%%%%%%%%%%%%%%%%%%%%%%%%%

\subsection{The choice of $\mathfrak{n}(0)$, $\mathfrak{n}^{\prime}(t)$ and $\mathfrak{n}^{\prime\prime}(t)$}

Suppose that $(u_{0},\xi_{0},p_{0})$ is the initial data constructed in Appendix \ref{sec:initial}. We choose $\mathfrak{n}(0)$ to be the constant satisfying
\begin{align}
    \int_{0}^{\pi}\bigl(\rho_{0,\mathfrak{n}(0)}(\theta)-\rho(0,\theta)\bigr)^{2}\,d\theta
    =
    \min_{c\in(x_{1}(0),x_{2}(0))}
    \int_{0}^{\pi}\bigl(\rho_{0,c}(\theta)-\rho(0,\theta)\bigr)^{2}\,d\theta.
\end{align}
We then introduce the polar coordinate system centered at $(\mathfrak{n}(0),0)$ and transform all initial data constructed in the previous step into this new coordinate system. For simplicity, we continue to denote the transformed initial data by
\[
(u_{0},\xi_{0},p_{0},D_{t}u_{0},\partial_{t}\xi_{0},\partial_{t}p_{0}).
\]

We now define $\mathfrak{n}'(0)$ and $\mathfrak{n}''(0)$ by
\begin{align}
\begin{aligned}
    \mathfrak{n}'(0)
    =
    \frac{1}{\int_{0}^{\pi}\xi_{s}\xi_{3}}
    \int_{0}^{\pi}
    \frac{1}{\rho_{0}+\xi_{0}}\,u_{0}\cdot \mathcal{N}(\xi_{0})\,\xi_{s}\,d\theta,\quad\operatorname{and}\quad
    \mathfrak{n}''(0)
    =
    \partial_{t}\!\left(
    \frac{1}{\int_{0}^{\pi}\xi_{s}\xi_{3}}
    \int_{0}^{\pi}
    \frac{1}{\rho_{0}+\xi_{0}}\,u_{0}\cdot \mathcal{N}(\xi_{0})\,\xi_{s}\,d\theta
    \right).
\end{aligned}
\end{align}
Here, the time derivative in the definition of $\mathfrak{n}''(0)$ is understood in the formal sense.

By the definitions of $\mathfrak{n}(0)$, $\mathfrak{n}'(0)$, and $\mathfrak{n}''(0)$, it follows immediately that
\begin{align}
    \int_{0}^{\pi}\partial_{t}^{\,i}\xi_{0}\,\xi_{s}\,d\theta=0,
    \qquad \text{for } i=0,1,2.
\end{align}

\subsection{Initial Data for System with Smooth Surface Function \texorpdfstring{$\eta^{n}$}{}}
In this step, based on the initial data for nonlinear problem constructed in \ref{sec:initial}, we construct the initial data for system with smooth prescribed data \eqref{eq:quasi_linear_{s}}.

 For any prescribed $n$, let $\p_{t}^{2}\xi_{0}^{n}(0)=\p_{t}^{2}\xi(0)$ and $D_{t}^{2}v(0)=D_{t}^{2}u(0)$ where $\p_{t}^{2}\xi(0)$ and $D_{t}^{2}u(0)$ are initial functions constructed in section \ref{sec:initial}. Moreover, suppose $((v_{0}),(\xi_{0}),(q_{0}),(\p_{t}v_{0}),(\p_{t}\xi_{0}),(\p_{t}q_{0}))$ are the initial data for nonlinear initial problem constructed in Appendix \ref{sec:initial}.  We then define the smoothing process for any function $f$ as follows
 \begin{align}{\label{def:smooth_0}}
\begin{aligned}
    (f)^{n}(t,x):=\frac{C_{K}}{\frac{1}{n}}\int_{-\infty}^{\infty}K\big(\frac{x-y}{\frac{1}{n}}\big)Ef(t,y)dy,
\end{aligned}
\end{align}
\[
    K(x):=
    \begin{cases}
        C_{K}\exp(\frac{1}{x^{2}-1})\quad\quad\quad\quad&|x|<1,\\
        0\quad\quad\quad\quad\quad\quad\quad\quad\quad&|x|\geq1.
    \end{cases}
\]

Let $(u_{0},\eta_{0},p_{0},\p_{t}u_{0},\p_{t}\eta_{0},\p_{t}p_{0})=((v_{0})^{n},(\xi_{0})^{n},(q_{0})^{n},(\p_{t}v_{0})^{n},(\p_{t}\xi_{0})^{n},(\p_{t}q_{0})^{n})$.
 We construct the initial condition for the linear problem by solving the following system. Let $(v^{n}(0),\xi^{n}(0),q^{n}(0),D_{t}v^{n}(0),\p_{t}\xi^{n}(0),\p_{t}q^{n}(0))$ be the solution of the following systems
\begin{align}{\label{eq:smooth_n0}}
   \begin{cases}
         D_{t}v^{n}_{0}+R^{n}v^{n}_{0}+\operatorname{div}_{\mathcal{A}^{n}}S_{\mathcal{A}^{n}}(v^{n}_{0},q^{n}_{0})=\mathfrak{a}(v_{0},q_{0},\xi_{0})-\mathfrak{n}^{\prime}(0)\p_{x}u_{0}~~~&\operatorname{in}~~\Omega,\\
    \operatorname{div}_{\mathcal{A}^{n}}v^{n}_{0}=0~~~&\operatorname{in}~~\Omega,\\
    S_{\mathcal{A}^{n}}(q^{n}_{0},v^{n}_{0})\mathcal{N}^{n}=\mathcal{K}(\xi^{n}_{0})\mathcal{N}^{n}
   +(\sigma\p_{\theta}(\mathcal{R}_{1}(\rho_{0},\p_{\theta}\xi_{0},\xi_{0}))+\mathcal{R}_{2}(\rho_{0},\p_{\theta}\xi_{0},\xi_{0}))\mathcal{N}^{n}~~~&\operatorname{on}~~\Sigma,\\
    (S_{\mathcal{A}^{n}}(q^{n}_{0},v^{n}_{0})\nu-\beta v^{n}_{0})\cdot \tau=0~~~&\operatorname{on}~~\Sigma_{s},\\
    v^{n}_{0}\cdot \nu=0~~~&\operatorname{on}~~\Sigma_{s},\\
    (\rho_{0}+\xi_{0})\partial_{t}\xi^{n}_{0}=v_{0}\cdot \mathcal{N}^{n}~~~&\operatorname{on}~~\Sigma,\\
    \mathcal{W}(v_{0}\cdot \mathcal{N}^{n}) (\pi)=\sigma\frac{\rho_{0}^{2}{\xi^{n}_{0}}'}{(\rho_{0}^{2}+\rho_{0}'^{2})^{\frac{3}{2}}}(\pi)-\sigma\frac{\rho_0'\rho_{0}\xi^{n}_{0}}{(\rho_{0}^{2}+\rho_{0}'^{2})^{\frac{3}{2}}}(\pi)+\mathcal{R}_{1}(\rho_{0},\p_{\theta}\xi_{0},\xi_{0})(\pi),\\
         \mathcal{W}(v_{0}\cdot \mathcal{N}^{n})(0)=-\sigma\frac{\rho_{0}^{2}{\xi^{n}_{0}}'}{(\rho_{0}^{2}+\rho_{0}'^{2})^{\frac{3}{2}}}(0)+\sigma\frac{\rho_0'\rho_{0}\xi^{n}_{0}}{(\rho_{0}^{2}+\rho_{0}'^{2})^{\frac{3}{2}}}(0)+\mathcal{R}_{1}(\rho_{0},\p_{\theta}\xi_{0},\xi_{0})(0),
    \end{cases}
\end{align}
\begin{align}{\label{eq:smooth_n1}}
   \begin{cases}
          D_{t}^{2}v^{n}_0+\p_{t}R^{n}(0)v^{n}_{0}+R^{n}(0)^{2}v^{n}_{0}+R^{n}(0)D_{t}v^{n}_{0}
         +\operatorname{div}_{\mathcal{A}^{n}}(S_{\mathcal{A}^{n}}(\p_{t}q^{n}_{0},D_{t}v^{n}_{0}))+\mathfrak{b}^{1}(u_{0},\xi_{0},p_{0})\\
         =-\mathfrak{n}^{\prime \prime}(0)\p_{x}u_{0}-\mathfrak{n}^{\prime}(0)\p_{x}\p_{t}u_{0}~~~&\operatorname{in}~~\Omega,\\
    \operatorname{div}_{\mathcal{A}^{n}}D_{t}v^{ n}_{0}=0~~~&\operatorname{in}~~\Omega,\\
    S_{\mathcal{A}^{n}}(\p_{t}q^{n}_{0},D_{t}v^{n}_{0}+R^{n}v^{n}_{0})\mathcal{N}^{n}=(\mathcal{K}(\p_{t}\xi^{n}_{0})\mathcal{N}^{n}+\sigma\p_{\theta}\big(\mathcal{R}_{1b}(\rho_{0},\p_{\theta}\xi_{0},\xi_{0})(\p_{t}\xi^{n}_{0})\\
    \quad\quad\quad\quad\quad\quad\quad\quad+\mathcal{R}_{1a}(\rho_{0},\p_{\theta}\xi_{0},\xi_{0})(\p_{\theta}\p_{t}\xi^{n}_{0})\big)\mathcal{N}^{n}+\mathfrak{b}^{4}~~~&\operatorname{on}~~\Sigma,\\
    (S_{\mathcal{A}^{n}}(\p_{t}q^{n}_{0},D_{t}v^{n}_{0}+R^{n}v^{n}_{0})\nu-\beta D_{t}v^{n}_{0})\cdot \tau=S_{\p_{t}\mathcal{A}^{n}(0)}(q_{0},v_{0})~~~&\operatorname{on}~~\Sigma_{s},\\
    D_{t}v^{n}_{0}\cdot \nu=0~~~&\operatorname{on}~~\Sigma_{s},\\
    \partial_{t}^{2}\xi^{n}_{0}(0)=\frac{1}{\rho_{0}+\xi_{0}}D_{t}v^{n}_{0}\cdot \mathcal{N}^{n}+\frac{1}{\rho_{0}+\xi_{0}}R^{n}v_{0}\cdot \mathcal{N}^{n}-\frac{1}{\rho_{0}+\xi_{0}}v_{\theta}(0)\p_{t}\p_{\theta}\xi^{n}_{0}+\mathfrak{n}^{\prime}(0)\frac{\p_{\theta}\p_{t}\xi^{n}_{0}}{\rho_{0}+\xi_{0}}+\mathfrak{b}^{6}(u_{0},\xi_{0},p_{0})~~~&\operatorname{on}~~\Sigma,\\
    \mathcal{W}^{\prime}(u_{0}\cdot \mathcal{N}(\pi))D_{t}u_{0}\cdot\mathcal{N}(\pi)=\sigma\frac{\rho_{0}^{2}\p_{t}{\xi^{n}_{0}}'}{(\rho_{0}^{2}+\rho_{0}'^{2})^{\frac{3}{2}}}(\pi)-\sigma\frac{\rho_0'\rho_{0}\p_{t}\xi^{n}_{0}}{(\rho_{0}^{2}+\rho_{0}'^{2})^{\frac{3}{2}}}(\pi)+\mathcal{R}_{1a}(\rho_{0},\p_{\theta}\xi_{0},\xi_{0})(\p_{t}\p_{\theta}\xi^{n}_{0})(\pi)\\
        \quad\quad\quad\quad\quad\quad\quad\quad\quad\quad+\mathcal{R}_{1b}(\rho_{0},\p_{\theta}\xi_{0},\xi_0)(\p_{t}\xi^{n}_{0})(\pi),\\
         \mathcal{W}^{\prime}(v_{0}\cdot \mathcal{N}(0))D_{t}v_{0}\cdot \mathcal{N}(0)=-\sigma\frac{\rho_{0}^{2}\p_{t}{\xi^{n}_{0}}'}{(\rho_{0}^{2}+\rho_{0}'^{2})^{\frac{3}{2}}}(0)+\sigma\frac{\rho_0'\rho_{0}\p_{t}\xi^{n}_{0}}{(\rho_{0}^{2}+\rho_{0}'^{2})^{\frac{3}{2}}}(0)+\mathcal{R}_{1a}(\rho_{0},\p_{\theta}\xi_{0},\xi_{0})(\p_{t}\p_{\theta}\xi^{n}_{0})(0)\\
        \quad\quad\quad\quad\quad\quad\quad\quad\quad\quad+\mathcal{R}_{1b}(\rho_{0},\p_{\theta}\xi_{0},\xi_{0})(\p_{t}\xi^{n}_{0})(0),
    \end{cases}
\end{align}
\noindent where $\mathfrak{a},\mathfrak{b}^{1},\mathfrak{b}^{4}$ are defined as in Appendix \ref{sec:initial}. $\mathcal{A}^{n}(0),R^{n}(0),\mathcal{N}^{n}(0)$ are defined with respect to $(\xi_{0})^{n}$. Moreover, $\mathfrak{b}^{6}$ is defined as follows
\begin{align}
    \mathfrak{b}^{6}:=\p_{t}(\frac{1}{\rho_{0}+\xi_{0}})u_{0}\cdot \mathcal{N}^{n}(0)+\p_{t}(\frac{1}{\rho_{0}+\xi_{0}}u_{0})\cdot \mathcal{N}(\xi_{0})+\frac{1}{\rho_{0}+\xi_{0}}u_{r}(0)\p_{t}\xi_{0}+\mathfrak{n}^{\prime\prime}(0)\xi_{s}+\mathfrak{n}^{\prime}(0)\frac{\p_{\theta}\xi_{0}+\p_{\theta}\rho_{0}}{(\rho_{0}+\xi_{0})^{2}}\p_{t}\xi_{0}.
\end{align}

Using the discussion and the estimates for nonlinear terms derived in the Appendix B of \cite{YXD2}, there exists a solution to the system above with $\mathcal{E}(v_{0}^{n},p_{0}^{n},\eta_{0}^{n},D_{t}v_{0}^{n},\p_{t}p_{0}^{n},\p_{t}\eta_{0}^{n})\leq \delta$. Moreover, when $n\rightarrow +\infty$, this solution converges to $u_{0},\xi_{0},p_{0},D_{t}u_{0},\p_{t}\xi_{0},\p_{t}q_{0}$ strongly in the energy norm.

For the linear problem with smooth sequences $u^{k},\eta^{k},v_{l}^{k},\xi_{l}^{k},\eta^{n}$, using a similar argument as above, there exists a sequence of functions $(v_{d}^{k,m}(0),\xi_{d}^{k,m}(0),q_{d}^{k,m}(0))$ such that
they solve equation \eqref{eq:quasi_linear_{s}} at time $t=0$.
\end{appendix}
\begin{center}
        {\large A}CKNOWLEDGEMENTS
    \end{center}

    The author thanks his advisor Yan Guo for numerous comments. His mentorship and constructive feedback contribute significantly to the development of this work.

    This work is supported in part by NSF Grant DMS-2405051. 
\section*{Conflict of Interest}
The author declares no conflict of interests.
\section*{Data Availability}
No data were generated or analyzed during this study.

\end{document}